\numberwithin{equation}{section}
\theoremstyle{plain}
\newtheorem{thm}{Theorem}[section]
\newtheorem{introthm}{Theorem}
\newtheorem{prop}[thm]{Proposition}
\newtheorem{lem}[thm]{Lemma}
\newtheorem{lemma}[thm]{Lemma}
\newtheorem{cor}[thm]{Corollary}
\theoremstyle{definition}
\newtheorem{defn}[thm]{Definition}
\newtheorem{notation}[thm]{Notation}
\theoremstyle{remark}
\newtheorem{remark}[thm]{Remark}
\newcommand{\bC}{{\mathbb C}}
\newcommand{\bF}{{\mathbb F}}
\newcommand{\bH}{{\mathbb H}}
\newcommand{\bN}{{\mathbb N}}
\newcommand{\bR}{{\mathbb R}}
\newcommand{\bZ}{{\mathbb Z}}
\newcommand{\cC}{{\mathcal C}}
\newcommand{\cE}{{\mathcal E}}
\newcommand{\cF}{{\mathcal F}}
\newcommand{\cH}{{\mathcal H}}
\newcommand{\cI}{{\mathcal I}}
\newcommand{\cK}{{\mathcal K}}
\newcommand{\cM}{{\mathcal M}}
\newcommand{\cN}{{\mathcal N}}
\newcommand{\cQ}{{\mathcal Q}}
\newcommand{\cS}{{\mathcal S}}
\newcommand{\cU}{{\mathcal U}}
\newcommand{\cZ}{{\mathcal Z}}
\newcommand{\actson}{{\curvearrowright}}
\newcommand\Ad{\operatorname{Ad}}
\newcommand\Aut{\operatorname{Aut}}
\newcommand\dom{\operatorname{dom}}
\newcommand\Hom{\operatorname{Hom}}
\newcommand\id{\operatorname{id}}
\newcommand\Span{\operatorname{span}}
\newcommand\Tr{\operatorname{Tr}}
\newcommand{\R}{\mathbb{R}}
\newcommand{\vphi}{\varphi}
\newcommand{\si}{\sigma}
\newcommand{\AmEx}{\operatorname{AE}}
\newcommand{\core}{\operatorname{c}}
\let\@wraptoccontribs\wraptoccontribs
\newcommand{\ip}[1]{\langle #1 \rangle}
\renewcommand{\>}{\rangle}
\newcommand{\cross}{\rotatebox[origin=c]{180}{\textnormal{\tiny\dag}}}
\begin{document}

\title{General solidity phenomena and anticoarse spaces \\for type $\mathrm{III}_1$ factors}

\author{Ben Hayes}
\address{Department of Mathematics, University of Virginia\hfill \url{brh5c@virginia.edu}}

\author{David Jekel}
\address{Department of Mathematical Sciences, University of Copenhagen, Denmark\hfill \url{daj@math.ku.dk}}

\author{Srivatsav Kunnawalkam Elayavalli}
\address{Department of Mathematics, University of California, San Diego\hfill \url{skunnawalkamelayaval@ucsd.edu}}

\author{Brent Nelson}
\address{Department of Mathematics, Michigan State University\hfill \url{brent@math.msu.edu}}

\contrib[with an appendix by]{Stefaan Vaes}
\address{Department of Mathematics, KU~Leuven,  Leuven (Belgium)\hfill \url{stefaan.vaes@kuleuven.be}}

\begin{abstract}
By developing a theory of anticoarse spaces in the purely infinite setting and using  1-bounded entropy techniques along with recent strong convergence results in random matrix theory,  we show that free Araki--Woods factors offer the first examples of type $\mathrm{III}$ factors satisfying vastly general degrees of indecomposability phenomena. Notably this includes strong solidity with respect to any weakening of the normalizer currently in the literature and the Peterson--Thom property. 
% Other applications include new non-embeddability results for free Araki--Woods factors.
\end{abstract}

\maketitle

\section*{Introduction}

A central theme of modern von Neumann algebra research is investigating whether von Neumann algebras can be ``built up'' from amenable pieces via various constructions such as crossed products, tensor products, inductive limits, etc. 
In the finite setting,  free group factors have driven a substantial amount of research in this direction with several major developments showing that they \emph{cannot} be built up from amenable pieces \cite{OzawaSolidActa, OzPopaCartan, Jung2007, Voiculescu1996}. 
Based on their work on $L^{2}$-Betti numbers, with additional motivation from work of Jung \cite{Jung2007}, Ozawa \cite{OzawaSolidActa}, Ozawa--Popa \cite{OzPopaCartan}, and Peterson \cite{PetersonDeriva}, Peterson--Thom conjectured \cite{PetersonThom} that if $Q$ is a diffuse, amenable von Neumann subalgebra of a  free group factor, then $Q$ has a unique maximal amenable extension. This conjecture became known as the \emph{Peterson--Thom conjecture} and it inspired much work on various special cases, including showing specific maximal abelian subalgebras of free group factors are the unique maximal amenable extension of any diffuse subalgebra \cite{WenAOP, 2AuthorsOneCup, ParShiWen}.
It was listed on several problem lists \cite{JesseProblemlist, PopaProblemlist, PopaProblemlist2} and as a question in numerous papers \cite{2AuthorsOneCup, Cyril<3PT, Ozawa2015, PopaWeakInter, ParShiWen, WenAOP}.
%For example,  Popa's 2019 mini-course in Kyoto \cite{PopaLectureSeries} highlighted four key problems of future research:   the free group factor isomorphism problem, the Connes embedding problem, the Connes bicentralizer problem, and the Peterson--Thom conjecture. 

A reduction of the Peterson--Thom conjecture to a natural question about strong convergence  in random matrices was given by the first author in \cite{hayespt}. Strong convergence arose from the seminal work \cite{HTExt} (see also \cite{Male, MaleCollins}), and has since bloomed into a rich subject with connections to graph theory \cite{BVH024universalitysharpmatrixconcentration, CollinsBordenave}, representation theory \cite{BCCompactGrp, mageethomasstrongly}, and geometry \cite{MWHyper}
(see also \cite{CollinsICM,MaidaBourbaki} for detailed surveys). Recently, the strong convergence conjecture given in \cite{hayespt} was
affirmatively settled in 
\cite{belinschi2022strong, bordenave2023norm, MdLSstrongasymptoticfreenesshaar, Parraud2024strong} (see also \cite{CGVTVHStrong,  BBvHConc, CGPStrongTen}), thereby resolving  the Peterson--Thom conjecture. Importantly, a strengthening of the Peterson--Thom conjecture was conjectured in \cite[Conjecture 1.12]{Hayes2018}, in terms of anticoarse spaces (there called singular subspaces). Relevant to our discussion, such a strengthening not only implies the Peterson--Thom conjecture but also recovers and generalizes all previous solidity results about free group factors, including to various well-established weakenings of the normalizer such as the wq-normalizer of \cite{GalatanPopa, IPP, PopaCohomologyOE}, the one sided quasi-normalizer of \cite{IzumiLongoPopa, PopaOrthoPairs, PopaMC}, and the weak intertwining space of \cite{PopaMC, PopaWeakInter}. This conjecture was later recast as the \emph{coarseness conjecture} in \cite{PopaWeakInter}. The strong convergence results in \cite{belinschi2022strong, bordenave2023norm} also imply the coarseness conjecture, as discussed in \cite{UselessResolutionOfPT}.

In the finite setting, this anticoarse space may be viewed as the largest $N$-$N$ subbimodule of $L^{2}(M)$ which is disjoint from the coarse bimodule. It also has the advantage of being more directly of an analytic flavor than the a priori algebraic notions of normalizer, quasi-normalizer etc., particularly since it can be studied via noncommutative harmonic analysis techniques. A precursor to this space already appeared in the work of Voiculescu \cite{Voiculescu1996}, who showed that (in different language) the anticoarse space of a diffuse abelian algebra $A$ of a free group factor cannot be equal to all of $L^{2}(M)$. In the finite setting, similar applications to crossed products by free Bogoliubov actions were given by Houdayer-Shlyakhtenko \cite{HoudShlStrongSolid}. 
A systematic development of the anticoarse space and its relation to weak normalizers was given in \cite{Hayes2018}, which showed that free group factors could not be decomposed as iterated von Neumann algebras generated by anticoarse spaces starting from hyperfinite algebras, vastly generalizing Voiculescu's work.

With the primary motivation of generalizing the above degrees of indecomposability to the purely infinite case, in this article we develop a non-tracial ``dynamical anticoarse space'' $L^{2,\textnormal{dyn}}_{\cross}(N\leq M,\psi)$ of $N\leq M$ where $N$ is a subalgebra of $M$ with expectation, and $\psi$ is a state on $M$ whose modular automorphism group leaves $N$ globally invariant. We show that this new anticoarse space contains the one-sided quasi-normalizer (and hence the stable normalizer as discussed in \cite{BHVSS}; see Theorem~\ref{thm:quasi anticoarse stuff}). An appropriate adaptation of the coarseness conjecture in the type $\mathrm{III}$ setting is then that if $P\leq M$ is maximal among amenable subalgebras which are with expectation, then $L^{2,\textnormal{dyn}}_{\cross}(P\leq M,\psi)=L^{2}(P,\psi)$ for any state $\psi$ whose modular automorphism group leaves $P$ globally invariant. We establish this property for type $\mathrm{III}_{1}$ factors whose continuous core is isomorphic to $L(\bF_{\infty})\overline{\otimes} B(\ell^2)$. 
More generally, we show that this adaptation of the coarseness conjecture holds for algebras whose continuous core satisfies a strengthening of the Peterson--Thom property (see Theorem \ref{thm:dynamical_anticoarse_contained_in_Pinsker} and Proposition \ref{prop: apparently free entropy is useless}); namely, the property that all Pinsker algebras are amenable (see Definition \ref{defn: Pinsker algebra}). As we discuss in Section \ref{sec: mining the gold}, the resolution of the Peterson--Thom conjecture shows that all Pinsker algebras for  $L(\bF_{\infty})\overline{\otimes} B(\ell^2)$ are amenable. We also show that having amenable Pinsker algebras automatically passes to subalgebras with expectation. 

As with the consequences of the coarseness conjecture in the finite setting (see \cite{UselessResolutionOfPT}), we can use the dynamical anticoarse space to prove new solidity phenomena for free Araki--Woods factors such as $\Gamma(U)''$ where $U\colon \bR\actson \cH_{\bR}$ embeds into an infinite direct sum of the left regular representation. 
For example, we can give the first example of an \emph{ultrastrongly solid} von Neumann algebra.
Using $M^{\omega}$ for the Ocneau ultrapower (see \cite{OcneauActions, AndoHaagerup}, or Section~\ref{sec:ultrapowers} below), we say that a von Neumann algebra $M$ is \textbf{ultrastrongly solid} if for any cofinal ultrafilter $\omega$ on a directed set, and any $Q\leq M^{\omega}$ which is diffuse, amenable, and with expectation, we have that every $N\leq \cN_{M^{\omega}}(Q)''\cap M$ with expectation in $M$ is amenable. Note this is stronger than both strong solidity and ultrasolidity as defined in \cite{HRSAsy}, see Proposition \ref{prop: defns work}. Solidity of the free Araki--Woods factors was established in \cite{HRApprox}, followed by ultrasolidity \cite{HRSAsy}, and strong solidity in \cite{BHVSS} (see also \cite{HCores, HURig, DImaFullIII, VV07}). This parallels the history of the free group factors, for which solidity was first proven in \cite{OzawaSolidActa}, followed by ultrasolidity in \cite{OzawaComment}, and strong solidity in \cite{OzPopaCartan}.
Other properties and results in this direction include absence of Cartan subalgebras \cite{ChifanSinclair, OzPopaII, PetersonDeriva, Po01a, PopaL2Betti,  PopaVaesHyp, Voiculescu1996} and proper proximality \cite{BIP18, DKEP21, DKE21, dke24, ding2023biexactvonneumannalgebras}.

\begin{introthm}\label{introthm: ultrasolid III}
Let $M$ be $\sigma$-finite von Neumann algebra whose continuous core has amenable Pinsker algebras. Then  $M$ is ultrastrongly solid.
\end{introthm}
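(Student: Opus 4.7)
The plan is to fix $\omega$ a cofinal ultrafilter on a directed set, $Q\leq M^\omega$ diffuse amenable with expectation, and $N\leq \cN_{M^\omega}(Q)''\cap M$ with expectation in $M$, and deduce that $N$ is amenable. First, using that an increasing union of amenable subalgebras with expectation is again amenable with expectation, I would extend $Q$ by Zorn's lemma to a subalgebra $P\leq M^\omega$ that is maximal among amenable with-expectation subalgebras containing $Q$, and by Takesaki fix a faithful normal state $\psi$ on $M^\omega$ whose modular flow preserves $P$.

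The crucial analytic input is then the equality
\[
L^{2,\textnormal{dyn}}_{\cross}(P\leq M^\omega,\psi)=L^2(P,\psi).
\]
To apply Theorem~\ref{thm:dynamical_anticoarse_contained_in_Pinsker} together with Proposition~\ref{prop: apparently free entropy is useless}, one needs amenable Pinsker algebras for the continuous core of $M^\omega$, not merely of $M$. I would establish this by identifying $\core(M^\omega)$ with a suitable Ocneanu-type ultrapower of $\core(M)$ (via Ando--Haagerup machinery), and propagating the amenable-Pinsker-algebras property from $\core(M)$ to its ultrapower using the 1-bounded entropy techniques developed earlier in the paper, combined with the fact that this property passes to with-expectation subalgebras.

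Granted the equality, two consequences follow. First, the maximal amenable with-expectation extension of any diffuse amenable subalgebra of $M^\omega$ is unique: any second such extension $P'$ of $Q$ is amenable and shares the diffuse $Q\subseteq P$, which by the anticoarse equality gives $L^2(P')\subseteq L^{2,\textnormal{dyn}}_{\cross}(P\leq M^\omega,\psi)=L^2(P)$, hence $P'\subseteq P$ and equality by maximality. Second, by Theorem~\ref{thm:quasi anticoarse stuff}, $\cN_{M^\omega}(P)$ sits inside the one-sided quasi-normalizer, hence inside $L^2(P)$, forcing $\cN_{M^\omega}(P)''=P$. Combining these, for every $u\in\cN_{M^\omega}(Q)$ the algebra $uPu^*$ is a maximal amenable extension of $uQu^*=Q$ and therefore equals $P$, so $\cN_{M^\omega}(Q)\subseteq\cN_{M^\omega}(P)$ and hence $\cN_{M^\omega}(Q)''\subseteq P$, giving $N\subseteq P$. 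Composing $E_N\colon M\to N$ with the canonical Ocneanu expectation $E_M\colon M^\omega\to M$ yields a conditional expectation $E_N\circ E_M\colon P\to N$, exhibiting $N$ as a with-expectation subalgebra of the amenable $P$, hence amenable.

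The main obstacle is the ultrapower transfer in the second paragraph: carefully identifying $\core(M^\omega)$ with an Ocneanu-type ultrapower of $\core(M)$ and verifying that amenability of Pinsker algebras is preserved by this ultrapower. This requires threading the non-tracial Ocneanu technology together with the 1-bounded entropy machinery, beyond the purely tracial theory where such ultrapower behavior is classical. The remaining steps are then a clean application of the anticoarse equality combined with standard normalizer manipulations.
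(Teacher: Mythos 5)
Your overall architecture (anticoarse space controls the normalizer, which is then trapped in a maximal amenable algebra) is in the spirit of the paper, but the load-bearing step in your second paragraph has a genuine gap. You propose to prove that the continuous core of $M^\omega$ has amenable Pinsker algebras by identifying $\core(M^\omega)$ with an Ocneanu-type ultrapower of $\core(M)$ and ``propagating'' the property. Neither half of this is available. By Masuda--Tomatsu one only gets inclusions $M\rtimes_{\sigma^\varphi}\bR \leq M^\omega\rtimes_{\sigma^{\varphi^\omega}}\bR \leq (M\rtimes_{\sigma^\varphi}\bR)^\omega$, not an identification of $\core(M^\omega)$ with an ultrapower of $\core(M)$; and, more seriously, amenability of Pinsker algebras is \emph{not} known to pass to (tracial or Ocneanu) ultrapowers. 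If it did, one could conclude for instance that every diffuse amenable subalgebra of $L(\bF_n)^\omega$ has a unique, amenable maximal amenable extension inside $L(\bF_n)^\omega$ --- a statement far stronger than anything the strong convergence results yield. Indeed your plan, if it worked, would show that $\cN_{M^\omega}(Q)''$ itself is contained in an amenable subalgebra of $M^\omega$; but ultrastrong solidity only asserts amenability of subalgebras of $\cN_{M^\omega}(Q)''\cap M$ with expectation in $M$, and the paper's introduction explicitly flags that even for freely complemented $Q$ the conclusion about $\cdot\cap M$ is the hard-won one. The paper's proof is arranged precisely to avoid your step: the hypothesis on amenable Pinsker algebras is only ever invoked for $\core(M)$ (Proposition \ref{prop: apparently free entropy is useless}, applied to $N\leq M$), while the ultrapower is handled purely at the level of $1$-bounded entropy via the monotonicity
\begin{align*}
h_{\tau_{\varphi^\omega}}\bigl(p(Q\rtimes_{\sigma^\varphi}\bR)p : p(M^\omega\rtimes_{\sigma^{\varphi^\omega}}\bR)p\bigr) &\leq h_{\tau_{\varphi^\omega}}\bigl(p(Q\rtimes_{\sigma^\varphi}\bR)p : p(M\rtimes_{\sigma^\varphi}\bR)^\omega p\bigr) \\
&= h_{\tau_\varphi}\bigl(p(Q\rtimes_{\sigma^\varphi}\bR)p : p(M\rtimes_{\sigma^\varphi}\bR)p\bigr),
\end{align*}
(Proposition \ref{prop: all too easy}), which needs no amenability statement upstairs.

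A secondary, smaller gap: even granting the equality $L^{2,\textnormal{dyn}}_{\cross}(P\leq M^\omega,\psi)=L^2(P,\psi)$, your deduction that a second maximal amenable extension $P'$ satisfies $L^2(P')\subseteq L^{2,\textnormal{dyn}}_{\cross}(P\leq M^\omega,\psi)$ does not follow: $P'$ need not normalize or quasi-normalize $P$, so nothing places $L^2(P')$ inside the dynamical anticoarse space of $P$. The mechanism that actually makes ``two amenable algebras with diffuse with-expectation intersection generate an amenable algebra'' work is the Pinsker-algebra absorption (Corollary \ref{cor: its over vince carter gif}): both $P$ and $P'$ are $\sigma$-entropy free, hence both lie in the common $\sigma$-Pinsker algebra of the diffuse intersection. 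To repair your argument you would need to route everything through the $\sigma$-entropy free formalism for subalgebras of $M^\omega$ and only descend to $M$ at the very last step, which is exactly what the paper does.
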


% \begin{introthm}\label{introthm: ultrasolid III}
% Let $M$ be a von Neumann algebra which embeds with expectation in a von Neumann algebra with  continuous core isomorphic to $L(\bF_{\infty})\overline{\otimes} B(\ell^2)$. Then  $M$ is ultrastrongly solid.
% \end{introthm}

%Showing that the normalizer is contained in the anticoarse space is rather straightforward in the finite setting, but 

Despite the fact that Theorem \ref{introthm: ultrasolid III} (as well as Theorem \ref{thm: the game is over and you should quit} below) only rely on assumptions about the continuous core, there  are significant difficulties in transferring properties from the core back to the original von Neumann algebras. For instance, a key step in our analysis is to show that the dynamical anticoarse space contains the normalizer. Showing that the normalizer is contained in the anticoarse space is rather straightforward in the finite setting. However, the dynamical anticoarse space is ultimately defined via passing to the continuous core, and, as discussed above, the normalizer of a subalgebra need not be contained in the normalizer of its continuous core. In fact, this issue persists even when considering any of the aforementioned weakenings of the normalizer in the continuous core. We instead show the dynamical anticoarse space absorbs the normalizer and even the one-sided quasi-normalizer by proving that the anticoarse space of a semifinite inclusion $N\leq M$ absorbs an analogue of the quasi-normalizer of $pNp \leq pMp$ relative to a subalgebra $pAp$, where $A\leq M$ is diffuse and abelian and $p\in A$ is an arbitrary finite trace projection. Ultimately, this is a delicate limiting argument using Popa's intertwining theory \cite{PopaStrongRigidity}, as well as a further, and non-trivial, reduction to the finite setting (see the proof of Theorem~\ref{thm:quasi anticoarse stuff}). This is one of the main technical novelties of the article.

Because the dynamical anticoarse space contains various weakenings of the normalizer, we are able to go beyond strong solidity to prove new degrees of indecomposability phenomena in the purely infinite setting.  In fact, our results continue to hold in an ultraproduct framework. The particular weakenings of the normalizer covered by our result include the one-sided quasi-normalizer, 
    \[
        q^{1}\cN_{M}(Q)=\left\{x\in M: \textnormal{ there exists } x_{1},\cdots,x_{n}\in M \textnormal{ with } xQ\subseteq \sum_{j}Qx_{j}\right\},
    \]
and the wq-normalizer
    \[
        \cN_{M}^{wq}(Q)=\{u\in \cU(M): \textnormal{ there exists } N\leq uQu^{*}\cap Q \textnormal{ with $N$ diffuse and with expectation in $M$}\}.
    \]
These solidity results can be still further generalized by showing that they continue to hold even after iterating along any ordinal.

\begin{introthm}\label{thm: the game is over and you should quit}
Let $M$ be a $\sigma$-finite von Neumann algebra whose continuous core has amenable Pinsker algebras. Then $M$ enjoys the following generalized solidity properties.
\begin{enumerate}[(i)]
    \item Suppose that $Q\leq M^{\omega}$ is diffuse, amenable, and $\sigma^\psi$-invariant for some faithful normal state $\psi$ on $M^{\omega}$. Let
        \[
            X \subseteq (L^{2,\textnormal{dyn}}_{\cross}(Q\leq M^{\omega},\psi)\cap M^\omega)\cup \cN_{M^{\omega}}^{wq}(Q).
        \]
    If $W^{*}(X,Q)$ is with expectation in $M^{\omega}$, then for any $N\leq W^{*}(X,Q)\cap M$ with expectation in $M$ is amenable. In particular:
    \begin{itemize}
    \item if $Q\leq M$ and if $X\in \{\cN_{M^{\omega}}(Q),q^{1}\cN_{M^{\omega}}(Q),\cN_{M^{\omega}}^{wq}(Q)\}$, then $W^{*}(X,Q)\cap M$ is amenable;
        \item if $Q\leq M$ and if $X\in \{\cN_{M}(Q),q^{1}\cN_{M}(Q),\cN_{M}^{wq}(Q)\}$, then $W^{*}(X,Q)$ is amenable.
        \end{itemize}
    \item More generally, suppose that $Q\leq M^{\omega}$ is diffuse, amenable, and $\sigma^\psi$-invariant for some faithful normal state $\psi$ on $M^{\omega}$. Assume we have subalgebras $Q_{\alpha}\leq M^{\omega}$ for every ordinal $\alpha$ so that:
    \begin{itemize}
        \item $Q_{0}=Q$,
        \item for a successor ordinal $\alpha$ we have $Q_{\alpha}=W^{*}(X_{\alpha},Q_{\alpha-1})$ where
            \[
                X_{\alpha}\subseteq (L^{2,\textnormal{dyn}}_{\cross}(Q\leq M^{\omega},\psi)\cap M^\omega) \cup \cN_{M^{\omega}}^{wq}(Q)
            \]
       is $\sigma^{\psi}$-invariant,
        \item for a limit ordinal $\alpha$, we have $Q_{\alpha}=\overline{\bigcup_{\beta<\alpha}Q_{\beta}}^{SOT}$. 
    \end{itemize}
Then for every $\alpha$, any $N\leq Q_{\alpha}\cap M$ with expectation in $M$, we have that $N$ is amenable.
\end{enumerate}
\end{introthm}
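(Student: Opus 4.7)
The strategy is to reduce everything to the continuous core, use the amenable-Pinsker hypothesis there to locate all $Q_\alpha$ inside a single amenable subalgebra, and then descend amenability to $N \leq Q_\alpha \cap M$. Since part (i) is essentially the instance $\alpha = 1$ of part (ii), I focus on (ii). For the setup, form the semifinite cores $\widetilde{M} = M \rtimes_{\sigma^\psi} \mathbb{R}$ and $\widetilde{M^\omega} = M^\omega \rtimes_{\sigma^\psi} \mathbb{R}$. By hypothesis $Q$ is $\sigma^\psi$-invariant, and by induction (using the $\sigma^\psi$-invariance of each $X_\alpha$) so is every $Q_\alpha$; hence each $Q_\alpha$ lifts to a well-defined $\widetilde{Q_\alpha} = Q_\alpha \rtimes_{\sigma^\psi} \mathbb{R} \leq \widetilde{M^\omega}$, with $\widetilde{Q_0} = \widetilde{Q}$ diffuse and amenable.

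The first key step is to locate $\widetilde{Q_\alpha}$ inside an amenable Pinsker algebra of $\widetilde{M^\omega}$. Theorem~\ref{thm:quasi anticoarse stuff} (together with the extension to the wq-normalizer, handled via Popa's intertwining and the reduction to the finite setting flagged in the introduction) says that the dynamical anticoarse space $L^{2,\textnormal{dyn}}_{\cross}(Q \leq M^\omega, \psi)$ absorbs the one-sided quasi-normalizer, the stable normalizer, and the wq-normalizer of $Q$. Combined with Theorem~\ref{thm:dynamical_anticoarse_contained_in_Pinsker} and Proposition~\ref{prop: apparently free entropy is useless}, this anticoarse space, viewed at the core level, sits inside $L^2$ of a Pinsker algebra $P \leq \widetilde{M^\omega}$ containing $\widetilde{Q}$. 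The amenable-Pinsker hypothesis on the continuous core of $M$ then transfers, via the Ando--Haagerup-style identification $\widetilde{M^\omega} \cong \widetilde{M}^\omega$ and the ultraproduct stability of the $1$-bounded entropy formulation of ``all Pinsker algebras are amenable,'' to make $P$ amenable.

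A transfinite induction then gives $\widetilde{Q_\alpha} \leq P$ for all $\alpha$: the base case is $\widetilde{Q} \leq P$; the successor case follows because $\widetilde{Q_\alpha} = W^*(\widetilde{X_\alpha}, \widetilde{Q_{\alpha-1}})$ and both generators lie in $P$; the limit case is immediate since $P$ is SOT-closed and contains each $\widetilde{Q_\beta}$ with $\beta < \alpha$. Consequently each $\widetilde{Q_\alpha}$ is amenable. Now fix $N \leq Q_\alpha \cap M$ with expectation $E \colon M \to N$; by standard modular theory $E$ is automatically $\psi$-preserving, so $\sigma^\psi(N) = N$ and $N$ lifts to $\widetilde{N} = N \rtimes_{\sigma^\psi} \mathbb{R} \leq \widetilde{M} \cap \widetilde{Q_\alpha}$, with inherited expectation. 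Injectivity of $\widetilde{Q_\alpha}$ passes to $\widetilde{N}$ through this expectation, and amenability of $\widetilde{N}$ is equivalent to amenability of $N$ via the canonical splitting of the core. This establishes (ii), and (i) as the special case $\alpha = 1$.

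The main obstacle I anticipate is the transfer of the amenable-Pinsker property from $\widetilde{M}$ to $\widetilde{M^\omega}$: one must verify that the statement ``every Pinsker algebra is amenable'' is stable under ultraproducts, which is most naturally argued through the $1$-bounded entropy characterization, where ultraproduct behavior is well-understood but still requires care to reconcile the Ocneanu ultrapower with the tracial ultrapower of the core. A secondary delicate step is the absorption of the wq-normalizer (not merely the one-sided quasi-normalizer) into the dynamical anticoarse space; this strengthens Theorem~\ref{thm:quasi anticoarse stuff} along the lines of the finite-setting reduction and Popa's intertwining machinery highlighted in the introduction.
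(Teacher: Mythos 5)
Your overall architecture (pass to the continuous core, run the anticoarse/Pinsker machinery, do a transfinite induction, descend to $N$) matches the paper's, but the central step of your argument is not correct. You propose to find an \emph{amenable} Pinsker algebra $P\leq M^{\omega}\rtimes_{\sigma^{\psi}}\bR$ containing every $\widetilde{Q_{\alpha}}$, by transferring the hypothesis ``the continuous core of $M$ has amenable Pinsker algebras'' to the core of $M^{\omega}$ via an identification $M^{\omega}\rtimes\bR\cong(M\rtimes\bR)^{\omega}$ together with ``ultraproduct stability'' of the amenable-Pinsker property. Both halves fail. First, there is no such isomorphism: one only has the inclusions $M\rtimes_{\sigma^{\varphi}}\bR\leq M^{\omega}\rtimes_{\sigma^{\varphi^{\omega}}}\bR\leq(M\rtimes_{\sigma^{\varphi}}\bR)^{\omega}$, which are proper in general. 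Second, and more seriously, ``all Pinsker algebras are amenable'' is \emph{not} stable under ultrapowers: if $R$ is a maximal amenable hyperfinite subfactor of a finite corner $p(M\rtimes\bR)p$, then $R^{\omega}$ is entropy free in the corresponding corner of the tracial ultrapower (by \cite[Proposition 4.5]{Hayes2018}) yet is non-amenable, so the Pinsker algebra containing it is non-amenable. Your intermediate conclusion ``each $\widetilde{Q_{\alpha}}$ is amenable'' is in fact false: already $Q_{1}=W^{*}(\cN_{M^{\omega}}(Q),Q)$ can contain non-amenable algebras of the form $R^{\omega}$, which is precisely why the theorem only asserts amenability of subalgebras of $Q_{\alpha}\cap M$ that are with expectation in $M$, and not of $Q_{\alpha}$ itself.

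The paper avoids this by never invoking amenability of Pinsker algebras inside $M^{\omega}$. It tracks the weaker, entropy-theoretic property of being $\sigma$-entropy free through the transfinite induction in $M^{\omega}$ (Corollary~\ref{cor: ultarmega solidity Pinsker}, using Theorem~\ref{thm:dynamical_anticoarse_contained_in_Pinsker}, Corollary~\ref{cor: abosrbing wq normalizers}, and Corollary~\ref{cor: its over vince carter gif}); it then uses Proposition~\ref{prop: all too easy} --- which rests on the fact that $1$-bounded entropy in the presence does not change when the ambient algebra is replaced by its tracial ultrapower --- to conclude that $N\leq M$ is $\sigma$-entropy free; and only at that final stage does it apply the amenable-Pinsker hypothesis, to the core of $M$ itself, via Proposition~\ref{prop: apparently free entropy is useless}. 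A secondary gap: your claim that an expectation $M\to N$ is ``automatically $\psi$-preserving'' is false, and reconciling the various states in play (the expectation onto $N$ need not be compatible with $\psi$, nor with the states making the $X_{\alpha}$ or the wq-normalizing data with expectation) is exactly what the canonicity results of Section 3.3 (Corollary~\ref{cor:independence of Pinsker on the state} and Corollary~\ref{cor: its over vince carter gif}) are for; your sketch does not address this.
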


% We offer a brief account of prior results in this direction.  Solidity of the free Araki--Woods factors (as well as absence of Cartan subalgebras) was established in \cite{HRApprox}, followed by ultrasolidity \cite{HRSAsy}, and strong solidity in \cite{BHVSS} (see also \cite{HCores, HURig, DImaFullIII, VV07}). This parallels the history of the free group factors, for which solidity was first proven in \cite{OzawaSolidActa}, followed by ultrasolidity in \cite{OzawaComment}, and strong solidity in \cite{OzPopaCartan}. These properties, as well as the Peterson--Thom property itself, are examples of \emph{indecomposability phenomena} because they show that a non-amenable von Neumann algebra \emph{cannot} be built out of amenable subalgebras by taking crossed products, tensor products, closing under normalizers etc. 
% Other properties and results in this direction include absence of Cartan subalgebras \cite{ChifanSinclair, OzPopaII, PetersonDeriva, Po01a, PopaL2Betti,  PopaVaesHyp, Voiculescu1996} and proper proximality \cite{BIP18, DKEP21, DKE21, dke24, ding2023biexactvonneumannalgebras}.

We stress that the state $\psi$ in Theorem \ref{thm: the game is over and you should quit} is \emph{not} required to be an ultrapower state, and can be any state whose modular automorphism group leaves $Q$ globally invariant. One can also expand the choices for $X$ and $X_\alpha$ in the above theorem to the unique smallest von Neumann subalgebra of $M^\omega$ containing the dynamical anticoarse space (see Proposition~\ref{cor:existence_of_vNa_generated_by_dynamical}).
For the case of free group factors, the analogous results were discussed in \cite{UselessResolutionOfPT}.
We remark that as soon as one moves to the ultrapower setting---even in the case that $M$ is a free group factor and $N$ is freely complemented in $M$---establishing that the von Neumann algebra generated by an analogue of the normalizer inside $M^{\omega}$ has amenable intersection with $M$ is non-trivial with \emph{all} known proofs relying on a connection to random matrices. See \cite{FreePinsker} and \cite{JKEupgradedfreeindependencephenomena} for related absorption results in the case of freely complemented finite von Neumann algebras. Thus, in the type III setting, Theorem \ref{thm: the game is over and you should quit} above is new even for the case that $Q$ is freely complemented in $M$.

Our proof of Theorems~\ref{introthm: ultrasolid III} and \ref{thm: the game is over and you should quit} go through a strengthening of the Peterson--Thom property for type $\mathrm{III}$ factors (see Proposition \ref{prop: apparently free entropy is useless}).
In Appendix~\ref{Vaes}, authored by Stefaan Vaes, a direct proof is provided for the fact that a von Neumann algebra $M$ has the Peterson--Thom property as soon as its continuous core has the Peterson--Thom property. 
% Note that Theorem \ref{thm: the game is over and you should quit} in particular implies an analogue of the Peterson--Thom property recently established for free group factors. We give an appropriate definition for the Peterson--Thom property in Section \ref{sec: PT property}.
Note, however, that the generalized solidity phenomena in Theorems~\ref{introthm: ultrasolid III} and \ref{thm: the game is over and you should quit} 
do not follow from $M$ (or its continuous core) having the Peterson--Thom property. 
As a further application of our methods, in Section~\ref{sec:FAWF} we present new non-embedding properties for free Araki--Woods factors.

\section*{Acknowledgements}

\noindent The authors thank Cyril Houdayer for his suggestion to investigate the Peterson--Thom property for type $\mathrm{III}$ factors and Stefaan Vaes for his comments on an early draft of the paper. BH acknowledges support from the NSF CAREER award DMS-214473. DJ was partially supported by the National Sciences and Engineering Research Council (Canada) and the Danish Independent Research Fund.  SKE was supported by NSF grant DMS-2350049. BN was supported by NSF grant DMS-2247047. SV was supported by FWO research project G090420N of the Research Foundation Flanders and by Methusalem grant METH/21/03 –- long term structural funding of the Flemish Government.

\tableofcontents

\section{Preliminaries}

Given a von Neumann algebra $M$, we will use lattice notation for its collection of (unital) von Neumann subalgebras. That is, we write $N\leq M$ to denote that $N$ is a von Neumann subalgebra of $M$, we write $N_1 \vee N_2$ for the von Neumann algebra generated by two von Neumann subalgebras $N_1,N_2\leq M$, etc. Similarly for a Hilbert space $\cH$ and its collection of closed subspaces.

\subsection{Exercises with weights} Let $M$ be a von Neumann algebra. Recall that a \emph{weight} on $M$ is a map $\varphi\colon M_+\to [0,+\infty]$ satisfying
    \[
        \varphi(\lambda x + y )= \lambda \varphi(x) + \varphi(y) \qquad \qquad \lambda\in \bR_+,\ x,y\in M_+.
    \]
We denote\footnote{These sets are traditionally denoted by $\mathfrak{n}_\varphi$ and $\mathfrak{m}_\varphi$, respectively, but we feel the above notation more clearly indicates their meaning.}
    \begin{align*}
        \sqrt{\dom}(\varphi)&:=\{x\in M\colon \varphi(x^*x) <\infty\}\\
        \dom(\varphi)&:=\Span\left\{x^*y \colon x,y\in \sqrt{\dom}(\varphi) \right\},
    \end{align*}
which are a left ideal and $*$-subalgebra of $M$, respectively. Then $\varphi$ naturally extends to a linear functional $\varphi\colon \dom(\varphi)\to \bC$. We say $\varphi$ is:
    \begin{itemize}
        \item \emph{faithful} if $\varphi(x^*x)=0$ implies $x=0$;
        \item \emph{normal} if $\varphi(\sup_i x_i) = \sup_i \varphi(x_i)$ for any bounded increasing net $(x_i)_{i\in I}\subseteq M_+$;
        \item \emph{semifinite} if $\dom(\varphi)''=M$;
        \item \emph{tracial} if $\varphi(x^*x)=\varphi(xx^*)$ for all $x\in M$.
    \end{itemize}
Note that for a tracial weight $\dom(\varphi)$ is a two-sided ideal. In this case, semifiniteness is equivalent to every non-trivial projection in $M$ having a non-trivial subprojection that is finite under $\varphi$ (see, for example, the proof of \cite[Theorem V.2.15]{TakesakiI}).

For a faithful normal semifinite weight, we write $L^2(M,\varphi)$ for the completion of $\sqrt{\dom}(\varphi)$ with respect to the norm induced by the inner product
    \[
        \<x,y\>_\varphi:= \varphi(y^*x) \qquad \qquad x,y\in \sqrt{\dom}(\varphi).
    \]
Left multiplication by $M$ extends to a faithful normal $*$-representation, and so we will frequently identify $M$ as a von Neumann subalgebra of $B(L^2(M,\varphi))$. (See \cite[Section VII.1]{TakesakiII} for additional details.)

The map $\sqrt{\dom}(\varphi)^*\cap \sqrt{\dom}(\varphi) \ni x \mapsto x^*$ gives a densely defined, closable, conjugate linear operator on $L^2(M,\varphi)$, whose closure we denote by $S_\varphi$ and call the \emph{Tomita operator} for $\varphi$. If $S_\varphi =J_\varphi \Delta_\varphi^{1/2}$ is its polar decomposition, then $J_\varphi$ is called the \emph{modular conjugation} for $\varphi$ and $\Delta_\varphi$ is called the \emph{modular operator} for $\varphi$. The modular conjugation $J_\varphi$ is a conjugate linear unitary operator that satisfies
    \[
        J_\varphi M J_\varphi = M'\cap B(L^2(M,\varphi)).
    \]
Consequently, $L^2(M,\varphi)$ has a natural $M$-bimodule structure defined by
    \[
        x\cdot \xi \cdot y := x (J_\varphi y^* J_\varphi) \xi \qquad \qquad x,y\in M,\ \xi\in L^2(M,\varphi).
    \]
The modular operator $\Delta_\varphi$ is a positive, nonsingular, selfadjoint operator that satisfies
    \[
        \Delta_\varphi^{it} M \Delta_\varphi^{-it} =M \qquad\qquad t\in \bR.
    \]
Consequently, $\sigma_t^\varphi(x):= \Delta_\varphi^{it} x \Delta_\varphi^{-it}$ defines an automorphism of $M$, and the associated action $\bR \overset{\sigma^\varphi}{\curvearrowright} M$ is known as the \emph{modular automorphism group} of $\varphi$. The fixed point subalgebra of this action is called the \emph{centralizer} of $\varphi$ and is denoted by $M^\varphi$. An alternate characterization for $x\in M^\varphi$ is that $x\cdot \dom(\varphi), \dom(\varphi)\cdot x \subseteq \dom(\varphi)$ and
    \[
        \varphi(xy) = \varphi(yx)
    \]
for all $y\in \dom(\varphi)$ (see \cite[Theorem VIII.2.6]{TakesakiII}).

If $M$ is equipped with two faithful normal semifinite weights $\varphi$ and $\psi$, then there are multiple ways to relate the above structures for $\varphi$ and $\psi$. First, the Connes cocycle derivative theorem \cite[Theorem 1.2.1]{ConnesCocycle} (see also \cite[Theorem VIII.3.3]{TakesakiII}) implies there exists a family of unitaries $\{u_t\in M\colon t\in \bR\}$ satisfying:
    \begin{itemize}
        \item $u_{s+t}=u_s \sigma_s^{\varphi}(u_t)$ for all $s,t\in \bR$;
        \item $\sigma_t^{\psi}(x)= u_t \sigma_t^\varphi(x) u_t^*$ for all $t\in \bR$ and $x\in M$.
    \end{itemize}
This family $\{u_t\in M\colon t\in \bR\}$ is known as the cocycle derivative of $\psi$ with respect to $\varphi$ and one denotes $(D\psi\colon D\varphi)_t:=u_t$ for all $t\in \bR$. (See the discussion preceding Lemma~\ref{lem:canonical_intertwiner_for_commuting_weights} for an explicit example.) 

The second way to relate $\varphi$ and $\psi$ is by the uniqueness of the standard form, which was established independently by Connes \cite[Theorem 2.1]{ConnesStdForm} and Haagerup \cite[Theorem 2.3]{HaagerupStdForm} (see also \cite[Theorem IX.1.14]{TakesakiII}). This implies, in particular, that there exists a unitary $U_{\varphi,\psi}\colon L^2(M,\psi)\to L^2(M,\varphi)$ between these Hilbert spaces that is uniquely determined by intertwining the left actions of $M$ and preserving the positive cones $U_{\varphi,\psi} L^2(M,\psi)_+ = L^2(M,\varphi)_+$. Here $L^2(M,\varphi)_+$ is the self-dual positive cone in $L^2(M,\varphi)$ given by the closure of $\Delta_\varphi^{1/4}\sqrt{\dom}(\varphi)_+$, and similarly for $L^2(M,\psi)_+$ (see \cite[Theorem IX.1.2 and Lemma IX.1.3]{TakesakiII}). This unitary also intertwines the modular conjugations $U_{\varphi,\psi} J_\psi = J_\varphi U_{\varphi,\psi}$, and consequently also intertwines the right actions of $M$:
    \[
        U_{\varphi,\psi}(x\cdot \xi\cdot y ) = x\cdot U_{\varphi,\psi}(\xi)\cdot y \qquad\qquad  x,y\in M,\ \xi\in L^2(M,\psi).
    \]
We shall refer to $U_{\varphi,\psi}$ as the \emph{canonical intertwiner} from $\psi$ to $\varphi$.

Suppose $h$ is a nonsingular, positive, selfadjoint operator affiliated with $M^\varphi$. For each $\varepsilon>0$ denote
    \[
        h_\varepsilon := \frac{h}{1+\varepsilon h} \in M^\varphi.
    \]
By \cite[Lemma VIII.2.8]{TakesakiII}
    \[
        \varphi_h(x):= \varphi(h^{\frac12} x h^{\frac12}) = \lim_{\varepsilon\to 0} \varphi(h_\varepsilon^{\frac12} x h_{\varepsilon}^{\frac12}) = \sup_{\varepsilon>0} \varphi(h_\varepsilon^{\frac12} x h_{\varepsilon}^{\frac12}) \qquad \qquad x\in M_+
    \]
defines a faithful normal semifinite weight on $M$, and by \cite[Lemma VIII.2.10]{TakesakiII} it satisfies
    \begin{align}\label{eqn:cocycle_derivative_formula}
        \sigma_t^{\varphi_h}(x) = h^{it} \sigma_t^{\varphi}(x) h^{-it} \qquad\qquad t\in \bR,\ x\in M.
    \end{align}
In fact, one has $h^{it}=(D \varphi_h \colon D \varphi)_t$ for all $t\in \bR$.

The following lemma provides a formula for the canonical intertwiner between the commuting weights $\varphi$ and $\varphi_h$. It is likely well-known to experts, but as we were unable to locate a reference we provide a sketch of the proof.

\begin{lem}\label{lem:canonical_intertwiner_for_commuting_weights}
Let $M$ be a von Neumann algebra equipped with a faithful normal semifinite weight $\varphi$, let $h$ be a nonsingular, positive, selfadjoint operator affiliated with $M^\varphi$, and let $h_\varepsilon$ be as above for all $\varepsilon>0$. Then $\sqrt{\dom}(\varphi_h) h_{\varepsilon}^{\frac12} \subseteq \sqrt{\dom}(\varphi)$ for all $\varepsilon>0$ and
    \[
        \lim_{\varepsilon\to 0} \| U_{\varphi, \varphi_h}(x) - x h_\varepsilon^{\frac12} \|_\varphi =0
    \]
for all $x\in \sqrt{\dom}(\varphi_h)$, where $U_{\varphi,\varphi_h}$ is the canonical intertwiner from $\varphi_h$ to $\varphi$.
\end{lem}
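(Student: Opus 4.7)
The plan is to define $T(x):=\lim_{\varepsilon\to 0^+} xh_\varepsilon^{1/2}$ on $\sqrt{\dom}(\varphi_h)$, show it extends to an isometry from $L^2(M,\varphi_h)$ into $L^2(M,\varphi)$, and identify this isometry with the canonical intertwiner $U_{\varphi,\varphi_h}$. The inclusion $\sqrt{\dom}(\varphi_h)h_\varepsilon^{1/2}\subseteq \sqrt{\dom}(\varphi)$ is immediate from the sup formula given just before the lemma: for $x\in\sqrt{\dom}(\varphi_h)$,
\[
    \varphi\bigl((xh_\varepsilon^{1/2})^*(xh_\varepsilon^{1/2})\bigr)=\varphi(h_\varepsilon^{1/2}x^*xh_\varepsilon^{1/2})\le \sup_{\delta>0}\varphi(h_\delta^{1/2}x^*xh_\delta^{1/2})=\varphi_h(x^*x)<\infty.
\]

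For the convergence, set $\xi_\varepsilon:=xh_\varepsilon^{1/2}\in L^2(M,\varphi)$ and show $(\xi_\varepsilon)_{\varepsilon>0}$ is Cauchy as $\varepsilon\to 0^+$. Since $h_{\varepsilon'}^{1/2}\in M^\varphi$, centralizer cyclicity yields
\[
    \langle\xi_\varepsilon,\xi_{\varepsilon'}\rangle_\varphi=\varphi(h_{\varepsilon'}^{1/2}x^*xh_\varepsilon^{1/2})=\varphi(c\,x^*x\,c)=\|xc\|_\varphi^2,
\]
where $c:=(h_\varepsilon h_{\varepsilon'})^{1/4}\in M^\varphi_+$. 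A short spectral computation (AM--GM applied to $(1+\varepsilon t)(1+\varepsilon' t)$ versus $(1+\delta t)^2$) shows $c^2\le h_\delta$ for $\delta=\sqrt{\varepsilon\varepsilon'}$ and conversely $h_{\delta'}\le c^2$ for every fixed $\delta'>0$ once $\varepsilon,\varepsilon'$ are small enough. Whenever $0\le a\le b$ with $a,b\in M^\varphi_+$ commuting with $h$, centralizer cyclicity forces every cross term in the expansion of $\varphi(bx^*xb)=\varphi((a+(b-a))x^*x(a+(b-a)))$ to be non-negative (each one factors as $\varphi(sx^*xs)$ for some $s\in M^\varphi_+$), giving $\varphi(ax^*xa)\le\varphi(bx^*xb)$. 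Applying this with $(a,b)=(h_{\delta'}^{1/2},c)$ and $(c,h_\delta^{1/2})$, together with the sup formula, squeezes $\varphi(cx^*xc)\to\varphi_h(x^*x)$; similarly $\|\xi_\varepsilon\|_\varphi^2\to\varphi_h(x^*x)$. Hence $\|\xi_\varepsilon-\xi_{\varepsilon'}\|_\varphi^2\to 0$, and the limit $T(x):=\lim_\varepsilon\xi_\varepsilon$ exists with $\|T(x)\|_\varphi=\|x\|_{\varphi_h}$.

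The continuous extension $\tilde T:L^2(M,\varphi_h)\to L^2(M,\varphi)$ is then an isometry, and $T(yx)=yT(x)$ shows it intertwines the left $M$-action. To identify $\tilde T$ with $U_{\varphi,\varphi_h}$ via uniqueness of the standard form, one must further check that $\tilde T$ preserves positive cones. For $a\in (M^\varphi\cap M^{\varphi_h})\cap\sqrt{\dom}(\varphi_h)\cap M_+$, the identity $\sigma_t^{\varphi_h}(a)=h^{it}\sigma_t^\varphi(a)h^{-it}=h^{it}ah^{-it}=a$ forces $a$ to commute with $h$, so $ah_\varepsilon^{1/2}\in M^\varphi\cap \sqrt{\dom}(\varphi)$ is positive, placing $\tilde T(a)=\lim_\varepsilon ah_\varepsilon^{1/2}$ in the closed cone $L^2(M,\varphi)_+$. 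The main obstacle is bootstrapping this to the full positive cone $L^2(M,\varphi_h)_+$: since $M^\varphi\cap M^{\varphi_h}$ may be small in general, one uses the spectral projections $p_n:=\chi_{[1/n,n]}(h)\in M^\varphi\cap M^{\varphi_h}$ to reduce to the corners $p_n Mp_n$, where $h$ is bounded and bounded below, the canonical intertwiner becomes the familiar Radon--Nikodym right multiplication by $h^{1/2}$, and positive cone preservation is transparent; a limiting argument in $n$ then gives $\tilde T=U_{\varphi,\varphi_h}$ globally.
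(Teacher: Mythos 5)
Your first half is essentially sound: the Cauchy argument for $(xh_\varepsilon^{1/2})_{\varepsilon>0}$ via the inner-product/squeeze computation is a correct, if more laborious, substitute for the paper's direct citation of the monotonicity inequality $\|xh_\varepsilon^{1/2}-xh_\delta^{1/2}\|_\varphi^2\le\|xh_\varepsilon^{1/2}\|_\varphi^2-\|xh_\delta^{1/2}\|_\varphi^2$ from \cite[Lemma VIII.2.7]{TakesakiII}, modulo some glossed domain bookkeeping in the ``centralizer cyclicity'' steps (a priori one only knows $h_{\varepsilon'}^{1/2}x^*xh_\varepsilon^{1/2}\in\dom(\varphi)$, not that the individually cycled products lie in $\dom(\varphi)$, so those identities need a regularization or a direct appeal to the Pedersen--Takesaki machinery). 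This yields the isometry $\tilde T$ intertwining the left actions, exactly as in the paper.

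The genuine gap is in the identification $\tilde T=U_{\varphi,\varphi_h}$. Your reduction to the corners $p_nMp_n$ ends with the assertion that there ``the canonical intertwiner becomes the familiar Radon--Nikodym right multiplication by $h^{1/2}$, and positive cone preservation is transparent.'' But that assertion \emph{is} the statement of the lemma in the special case where $h$ is bounded with bounded inverse; nothing in your argument proves it, and it is not transparent: one must show that right multiplication by (an approximant of) $h^{1/2}$ carries $\Delta_{\varphi_h}^{1/4}y$ to $\Delta_{\varphi}^{1/4}(h^{1/4}yh^{1/4})$ for $y\ge 0$, which requires relating the two modular operators. The paper does precisely this via the identity $\sigma_z^{\varphi_h}(y)=\sigma_z^\varphi(h^{-iz}yh^{iz})$ applied to Gaussian-regularized $\sigma$-analytic elements $x_{n,r}$, obtaining $W\Delta_{\varphi_h}^{1/4}(x)=\lim_n\Delta_\varphi^{1/4}(h^{1/4}e_nxe_nh^{1/4})\in L^2(M,\varphi)_+$; this is the step your sketch replaces with an unproven claim. (Your preliminary observation about $M^\varphi\cap M^{\varphi_h}$ does not feed into the final reduction and can be dropped.) Two further points you omit but which the uniqueness criterion \cite[Lemma IX.1.5]{TakesakiII} requires: $\tilde T$ must be shown to be a \emph{unitary} (the paper gets this by running the same construction for $(\varphi_h)_{h^{-1}}=\varphi$ to produce the inverse), and one needs the cone \emph{equality} $\tilde T(L^2(M,\varphi_h)_+)=L^2(M,\varphi)_+$ rather than a single inclusion, which again comes from reversing the roles of $\varphi$ and $\varphi_h$.
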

\begin{proof}
Using \cite[Lemma VIII.2.7]{TakesakiII} one has for $x\in \sqrt{\dom}(\varphi_h)$ and $\varepsilon\geq \delta >0$ that
    \[
        \| x h_\varepsilon^{\frac12}\|_\varphi \leq \sup_{\varepsilon>0} \| x h_\varepsilon^{\frac12}\|_\varphi = \| x \|_{\varphi_h}
    \]
and
    \[
        \| x h_{\varepsilon}^{\frac12} - x h_{\delta}^{\frac12}\|_\varphi^2 \leq \|x h_\varepsilon^{\frac12} \|_\varphi^2 - \| x h_\delta^{\frac12}\|_\varphi^2.
    \]
It follows that the map $x\mapsto xh_{\varepsilon}^{1/2}$ extends to a contraction $W_\varepsilon\colon L^2(M,\varphi_h)\to L^2(M,\varphi)$ for all $\varepsilon>0$, and that $(W_\varepsilon)_{\varepsilon>0}$ converges in the strong operator topology to an isometry $W\colon L^2(M,\varphi_h) \to L^2(M,\varphi)$. In fact, $W$ is a unitary whose inverse is given by the map obtained from the above argument after reversing the roles of $\varphi$ and $\varphi_h$ (note that $(\varphi_h)_{h^{-1}}=\varphi$). Thus it suffices to show $W=U_{\varphi,\varphi_h}$, and to do so we will invoke the uniqueness of the canonical intertwiner.

That $W$ intertwines the left actions of $M$ is immediate. Toward showing that $W L^2(M,\varphi_h)_+ = L^2(M,\varphi)_+$, denote $e_n:= 1_{[\frac1n,n]}(h) $ for each $n\in \bN$, and for $x\in M$ and $r>0$ define
    \[
        x_{n,r} := \sqrt{\frac{r}{\pi}} \int_{\bR} e^{-rt^2} \sigma_t^{\varphi_h}(e_n x e_n ) \ dt.
    \]
Then $x_{n,r}$ is $\sigma^{\varphi_h}$-analytic and, by (\ref{eqn:cocycle_derivative_formula}), $\sigma^\varphi$-analytic with
    \[
        \sigma_{z}^{\varphi_h}(x_{n,r}) =   \sigma_{z}^\varphi( h^{-iz}e_n x_{n,r} e_n h^{iz}) .
    \]
Consequently, it follows from \cite[Lemma VI.2.4]{TakesakiII} (see also the proof of \cite[Theorem VI.2.2.(i)]{TakesakiII}) that when $x\in \sqrt{\dom}(\varphi_h)\cap M_+$ one has
    \begin{align*}
        W\Delta_{\varphi_h}^{\frac14}(x) &= \lim_{n\to\infty} \lim_{r\to\infty} W\sigma_{-\frac{i}{4}}^{\varphi_h}(x_{n,r})\\
            &= \lim_{n\to\infty} \lim_{r\to\infty} W \sigma_{-\frac{i}{4}}^\varphi(h^{\frac14}e_n x_{n,r} e_n h^{-\frac14}) \\
            & = \lim_{n\to\infty} \lim_{r\to\infty} \lim_{\varepsilon\to 0}  \Delta_\varphi^{\frac14}(h^{\frac14}e_n x_{n,r} e_n h^{-\frac14} h_{\varepsilon}^{\frac12}) \\
            &= \lim_{n\to\infty} \lim_{r\to\infty} \Delta_\varphi^{\frac14}(h^{\frac14}e_n 
 x_{n,r} e_n h^{\frac14}) = \lim_{n\to\infty} \Delta_{\varphi}^{\frac14}( h^{\frac14} e_n x e_n h^{\frac14}).
    \end{align*}
Since $\Delta_{\varphi}^{\frac14}( h^{\frac14} e_n x e_n h^{\frac14}) \in \Delta_{\varphi}^{\frac14} M_+ \subseteq L^2(M,\varphi)_+$, the above implies $W L^2(M,\varphi_h)_+ \subseteq L^2(M,\varphi)_+$. Reversing the roles of $\varphi$ and $\varphi_h$ gives equality. Thus \cite[Lemma IX.1.5]{TakesakiII} implies $W=U_{\varphi,\varphi_h}$, as claimed.
\end{proof}

\subsection{Engaging the core}

Let $M$ be a von Neumann algebra equipped with a faithful normal semifinite weight $\varphi$. Define a unitary $U_\varphi$ on $L^2(\bR)\otimes L^2(M,\varphi)$ by
    \[
        [U_\varphi \xi](t) = \Delta_\varphi^{it} \xi(t),
    \]
where we have identified $L^2(\bR)\otimes L^2(M,\varphi)$ with (separably-valued) square integrable, weakly measurable functions $\xi\colon \bR \to L^2(M,\varphi)$ (see \cite[Section IV.7]{TakesakiII}). The \emph{continuous core} of $M$ is the crossed product von Neumann algebra
    \[
        N\rtimes_{\sigma^\varphi} \bR := (L(\bR)\otimes 1) \vee U_\varphi^*(1\otimes M) U_\varphi \subseteq B(L^2(\bR)\otimes L^2(M,\varphi)).
    \]
We will identify $M\cong U_\varphi^*(1\otimes M) U_\varphi $ and denote $L_\varphi(\bR):= L(\bR)\otimes 1$ and $\lambda_\varphi(t):= \lambda(t)\otimes 1$ for $t\in \bR$. In this case, $x\in M$ and $\lambda_\varphi(t)\in L_\varphi(\bR)$ are represented as follows:
    \begin{align*}
        [x\xi](s) &= \sigma_{-t}^\varphi(x) \xi(s)\\
        [\lambda_\varphi(t) \xi](s) &= \xi(s-t)
    \end{align*}
for all $\xi\in L^2(\bR)\otimes L^2(M,\varphi)$ and $s\in \bR$. Observe that
    \[
        \lambda_\varphi(t) x \lambda_\varphi(t)^* = \sigma_t^{\varphi}(x) \qquad \qquad t\in \bR,\ x\in M.
    \]
Importantly, the continuous core is independent of the choice of weight $\varphi$. If $\psi$ is another faithful normal semifinite weight on $M$, then there exists a $*$-isomorphism $\pi_{\varphi,\psi}\colon N\rtimes_{\sigma^\psi} \bR \to N\rtimes_{\sigma^\varphi}\bR$ satisfying
    \begin{align}\label{eqn:formula_isomorphism_continuous_cores}
         \begin{cases} \pi_{\varphi,\psi}(x) = x  &\qquad x\in M,\\
        \pi_{\varphi,\psi}(\lambda_\psi(t)) = (D\psi\colon D\varphi)_t \lambda_\varphi(t) &\qquad t\in \bR,
        \end{cases} 
    \end{align}
(see (14) in the proof of \cite[Theorem X.1.7.(ii)]{TakesakiII}). Here $\{(D\psi\colon D\varphi)_t\colon t\in \bR\}$ is the cocycle derivative of $\psi$ with respect to $\varphi$. Another feature of the continuous core that will be important for our purposes is that it is always \emph{diffuse} in the sense that it contains no minimal projections:

\begin{lem}\label{lem:continuous_core_is_diffuse}
Let $M$ be a von Neumann algebra equipped with a faithful normal weight $\varphi$. Then $M\rtimes_{\sigma^\varphi} \bR$ is diffuse.
\end{lem}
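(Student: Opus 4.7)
The plan is to exploit the canonical tracial weight on $\tilde M := M\rtimes_{\sigma^\varphi}\bR$ together with its dual action. Recall from \cite[Theorem~X.1.7]{TakesakiII} that $\tilde M$ carries a faithful normal semifinite tracial weight $\tau$ and a strongly continuous dual action $\widehat\sigma$ of $\bR$, implemented by conjugation by the unitaries $V_s\otimes 1$ with $[V_s\xi](t)=e^{ist}\xi(t)$, which satisfy the scaling relation $\tau\circ\widehat\sigma_s = e^{-s}\tau$ for every $s\in\bR$.

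Arguing by contradiction, I would suppose $p\in\tilde M$ is a nonzero minimal projection, so $p\tilde M p=\bC p$. Since every nonzero subprojection of $p$ equals $p$, semifiniteness of $\tau$ produces such a subprojection of finite trace and forces $0<\tau(p)<\infty$. Letting $z\in Z(\tilde M)$ denote the central support of $p$, the projection $p$ is an abelian projection in $z\tilde M$ with full central support, so $z\tilde M$ is of type $\mathrm{I}$; moreover, the map $Z(z\tilde M)\ni c\mapsto cp\in\bC p$ is an injective unital $*$-homomorphism (its kernel would consist of central projections annihilating the two-sided ideal generated by $p$, but this ideal has central cover $z$), so $Z(z\tilde M)=\bC z$ and $z\tilde M$ is a type $\mathrm{I}$ factor.

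I would then observe that $\widehat\sigma_s(z)\neq z$ whenever $s\neq 0$. Indeed, if $\widehat\sigma_s(z)=z$, then $\widehat\sigma_s(p)$ is another rank-one projection in the factor $z\tilde M$, hence Murray--von~Neumann equivalent to $p$ therein, giving $\tau(p)=\tau(\widehat\sigma_s(p))=e^{-s}\tau(p)$, which forces $s=0$. Since distinct atoms of an abelian von Neumann algebra are orthogonal, the projections $\{\widehat\sigma_s(z):s\in\bR\}$ are pairwise orthogonal; but for any $\xi\in L^2(\bR)\otimes L^2(M,\varphi)$, the map $s\mapsto\widehat\sigma_s(z)\xi$ is norm-continuous, while
\[
\|\widehat\sigma_s(z)\xi-\widehat\sigma_{s'}(z)\xi\|^2=\|\widehat\sigma_s(z)\xi\|^2+\|\widehat\sigma_{s'}(z)\xi\|^2 \qquad (s\neq s').
\]
Letting $s\to s'$ forces $\widehat\sigma_{s'}(z)\xi=0$ for every $s'$ and $\xi$, giving $z=0$ and contradicting $p\neq 0$.

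The main obstacle is that the dual action and the scaling property of $\tau$ have not been developed in the excerpt, so I would either cite \cite[Theorem~X.1.7]{TakesakiII} directly or briefly recall the construction. A more elementary alternative splits by type: when $M$ is semifinite, the isomorphism~(\ref{eqn:formula_isomorphism_continuous_cores}) applied to a faithful normal semifinite trace reduces the core to $M\bar\otimes L(\bR)$, in which a minimal projection would have to localize at an atom of $L(\bR)\cong L^\infty(\widehat\bR)$ (of which there are none); when $M$ has a type $\mathrm{III}$ summand, Takesaki's structure theorem identifies the corresponding piece of $\tilde M$ as type $\mathrm{II}_\infty$, which has no minimal projections.
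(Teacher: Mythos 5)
Your proof is correct, but your primary argument takes a genuinely different route from the paper's. The paper splits $M$ by a central projection $z$ into a semifinite summand and a purely infinite summand, identifies $(Mz)\rtimes_{\sigma^\varphi}\bR$ with $(Mz)\,\widebar{\otimes}\,L(\bR)$ via the isomorphism (\ref{eqn:formula_isomorphism_continuous_cores}) applied to a trace (diffuse center, hence diffuse), and quotes Takesaki's structure theorem to see that the core of the type $\mathrm{III}$ summand is type $\mathrm{II}_\infty$ --- this is precisely the ``more elementary alternative'' you sketch in your final paragraph. Your main argument instead treats all cases at once: a minimal projection $p$ would have finite positive trace and generate a type $\mathrm{I}$ factor summand $z\widetilde{M}$, the trace-scaling relation $\tau\circ\theta_s=e^{2\pi s}\tau$ forces the central atoms $\theta_s(z)$ to be pairwise distinct, hence pairwise orthogonal, and strong continuity of the dual action (it is implemented by the strongly continuous unitary group $u(s)\otimes 1$) then kills $z$. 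All the steps check out: minimality of $p$ plus semifiniteness of the tracial weight does give $0<\tau(p)<\infty$, equivalent projections in a factor have equal trace, and the orthogonality-versus-continuity contradiction is sound. What your unified argument buys is independence from the type decomposition and from the citation to \cite[Theorem XII.1.1]{TakesakiII}; what it costs is that it uses the dual action and the scaling of $\tau_\varphi$, which in the paper's exposition are only introduced \emph{after} this lemma (though they are of course available from \cite[Theorem X.1.7]{TakesakiII}, so this is an organizational rather than a mathematical issue). Your concluding alternative coincides with the paper's proof essentially verbatim.
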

\begin{proof}
Let $z\in M'\cap M$ be a central projection such that $Mz$ is semifinite and $M(1-z)$ is purely infinite, which exists by the type decomposition of von Neumann algebras. Since $z\in M^\varphi$, it follows that $z$ is still central in $M\rtimes_{\sigma^\varphi} \bR$ and 
    \[
        M\rtimes_{\sigma^\varphi} \bR = \left[(M\rtimes_{\sigma^\varphi} \bR)z\right] \oplus \left[(M\rtimes_{\sigma^\varphi} \bR)(1-z)\right] = \left[(Mz) \rtimes_{\sigma^\varphi}\bR \right] \oplus \left[(M(1-z))\rtimes_{\sigma^\varphi} \bR\right],
    \]
where the last equality uses that $Mz$ and $M(1-z)$ are each $\sigma^\varphi$-invariant. We also note that $\varphi$ is semifinite on both $Mz$ and $M(1-z)$. So it suffices to show that each summand is diffuse. The latter is type $\mathrm{II}_\infty$, and hence diffuse, because $M(1-z)$ is type $\mathrm{III}$ (see \cite[Theorem XII.1.1]{TakesakiII}). For the former, if $\tau$ is a faithful normal semifinite tracial weight on $Mz$ then we have
    \[
        \pi_{\tau,\varphi}((Mz) \rtimes_{\sigma^\varphi}\bR) = (Mz) \rtimes_{\sigma^\tau}\bR  = (Mz)\widebar{\otimes} L(\bR). 
    \]
The above tensor product has diffuse center and is therefore diffuse.
\end{proof}

The continuous core is equipped with a \emph{dual action} $\bR \overset{\theta}{\curvearrowright} M\rtimes_{\sigma^\varphi} \bR$ determined by
    \begin{align}\label{eqn:dual_action_formula}
        \begin{cases}
        \theta_s(x) =x & \qquad s\in \bR,\ x\in M\\
       \theta_s(\lambda_\varphi(t) ) = e^{-2\pi ist} \lambda_\varphi(t) &\qquad t\in \bR
        \end{cases},
    \end{align}
and, in fact, $(M\rtimes_{\sigma^\varphi} \bR)^\theta = M$. By \cite[Theorem 1.1]{HaagerupDualWeightsII} there exists a faithful normal semifinite operator valued weight $T$ from $M\rtimes_{\sigma^\varphi} \bR$ to $M$ given by
    \[
        T(x):= \int_{\bR} \theta_s(x)\ ds \qquad x\in (M\rtimes_{\sigma^\varphi} \bR)_+.
    \]
The \emph{dual weight} to $\varphi$ is the faithful normal semifinite weight $\widetilde{\varphi}:= \varphi\circ T$ on $M\rtimes_{\sigma^\varphi} \bR$. Observe that $\widetilde{\varphi}\circ \theta_s = \widetilde{\varphi}$ for all $s\in \bR$ by definition. Let $C_c(\bR,M)$ denote the set of compactly supported continuous functions $f\colon \bR \to M$, where $M$ is equipped with the  $\sigma$-strong* topology, and define
    \[
        \lambda_\varphi(f):= \int_\bR \lambda_\varphi(t) f(t)\ dt \qquad \quad f\in C_c(\bR,M).
    \]
Then $\lambda_\varphi(C_c(\bR,M))''= M\rtimes_{\sigma^\varphi} \bR$ (see \cite[Lemma X.1.8]{TakesakiII}), and the dual weight satisfies
    \[
        \widetilde{\varphi}(\lambda_\varphi(f)^* \lambda_\varphi(f)) = \int_\bR \varphi(f(s)^* f(s))\ ds.
    \]
Consequently, $\lambda_\varphi(f) x \mapsto f \otimes x$ for $f\in C_c(\bR)$ and $x\in \sqrt{\dom}(\varphi)$ extends to an isomorphism
    \[
        L^2(M\rtimes_{\sigma^\varphi}\bR, \widetilde{\varphi}) \cong L^2(\bR)\otimes L^2(M,\varphi).
    \]
In particular, the modular conjugation $J_{\widetilde{\varphi}}$ is naturally represented on the latter Hilbert space as
    \[
        [J_{\widetilde{\varphi}} \xi](t) = \Delta_{\varphi}^{-it} J_\varphi ( \xi(-t)) \qquad\qquad t\in \bR,
    \]
(see \cite[Lemma X.1.13]{TakesakiII}). It follows that $L^2(\bR)\otimes L^2(M,\varphi)$ has the following $(M\rtimes_{\sigma^\varphi}\bR)$-bimodule structure:
    \begin{align}\label{eqn:continuous_core_bimodue_structure}
    \begin{split}
        [x\cdot \xi \cdot y](t)&= \sigma_{-t}^\varphi(x)\cdot \xi(t) \cdot y  \\
        [\lambda_\varphi(s)\cdot \xi \cdot \lambda_\varphi(r)](t) &= \Delta_\varphi^{-ir} \xi(t-s-r)
    \end{split}
    \end{align}
for $r,s,t\in \bR$ and $x,y\in M$.

The modular automorphism group of $\widetilde{\varphi}$ is determined by
    \[
        \begin{cases}
            \sigma_s^{\widetilde{\varphi}}(x) = \sigma_t^\varphi(x) & \qquad s\in \bR,\ x\in M,\\
            \sigma_s^{\widetilde{\varphi}}(\lambda_\varphi(t)) = \lambda_\varphi(t) & \qquad t\in \bR.
        \end{cases}
    \]
Note that $L_\varphi(\bR)\vee M^\varphi \leq (M\rtimes_{\sigma^\varphi} \bR)^{\widetilde{\varphi}}$ and the modular automorphism group of $\widetilde{\varphi}$ commutes with the dual action. The above formulas also imply $\sigma_t^{\widetilde{\varphi}} = \Ad(\lambda_\varphi(t))$ for all $t\in \bR$. Consequently, if $D_\varphi$ is the nonsingular, positive, selfadjoint operator affiliated with $L_\varphi(\bR)$ satisfying $D_\varphi^{it} =\lambda_\varphi(t)$, then
    \[
        \tau_\varphi:= \widetilde{\varphi}_{D_\varphi^{-1}}
    \]
defines a faithful normal semifinite tracial weight on $M\rtimes_{\sigma^\varphi} \bR$ (see the proof of \cite[Theorem XII.1.1.(ii)]{TakesakiII}). We will refer to $\tau_\varphi$ as the tracial weight \emph{induced by $\varphi$}. An important feature of this tracial weight is that it is scaled by the dual action: $\tau_\varphi\circ \theta_s = e^{2\pi s} \tau_\varphi$ for all $s\in \bR$.

\subsection{Managing expectations}

We say an inclusion $N\leq M$ of von Neumann algebras is \emph{with expectation} if there exists a faithful normal conditional expectation $\cE_N\colon M\to N$. Such inclusions will be at the heart of our analysis, and so in this section we record several results that will be useful to us throughout the article.

Let $M$ be a von Neumann algebra equipped with a faithful normal semifinite weight $\varphi$. For a subalgebra $N\leq M$, the following are equivalent by \cite[Theorem IX.4.2]{TakesakiII}:
    \begin{enumerate}[(i)]
        \item there exists a $\varphi$-preserving faithful normal conditional expectation $\cE_N\colon M\to N$;
        \item $\varphi|_N$ is semifinite and $N$ is $\sigma^\varphi$-invariant.
    \end{enumerate}
We will say such an inclusion is \emph{with $\varphi$-expectation}. Note that any inclusion $N\leq M$ with a faithful normal conditional expectation $\cE_N\colon M\to N$ will be with $(\varphi\circ \cE)$-expectation for any faithful normal semifinite weight $\varphi$ on $N$.

The next two lemmas will be used frequently (and often implicitly) to pass properties up from a subalgebra $N$ to $M$. The first implies that if $\varphi$ is a faithful normal weight on a von Neumann algebra $M$ with $\varphi|_N$ semifinite, then $\varphi$ is semifinite on $M$. The second implies that if $N\leq M$ is with expectation and $N$ is diffuse, then $M$ is diffuse.

\begin{lem}\label{lem:semifinite_iff_approx_proj_unit}
A faithful normal weight $\varphi$ on a von Neumann algebra $M$ is semifinite if and only if there exists a net of projections $(p_i)_{i\in I} \subseteq \dom(\varphi)$ converging to $1$ in the strong operator topology. If $\varphi$ is tracial then we can choose $(p_{i})_{i\in I}$ to be increasing.
\end{lem}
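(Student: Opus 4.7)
The plan is to prove the two implications separately; the forward direction is the substantive one, and its main subtlety is that in the non-tracial setting the set $\cP$ of projections in $\dom(\varphi)$ need not be closed under finite joins, so one cannot simply direct it by itself.

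For the reverse implication, suppose $(p_i)_{i \in I} \subseteq \dom(\varphi)$ are projections converging to $1$ strongly. Each $p_i = p_i^* p_i$ has $\varphi(p_i^* p_i) < \infty$, so $p_i \in \sqrt{\dom}(\varphi)$. The left ideal property of $\sqrt{\dom}(\varphi)$ gives $m p_i \in \sqrt{\dom}(\varphi)$ for every $m \in M$, whence $p_i m^* p_i = (m p_i)^* p_i \in \dom(\varphi)$ by the very definition of $\dom(\varphi)$. Thus $p_i M p_i \subseteq \dom(\varphi)$ for every $i$, and a routine triangle-inequality argument gives $p_i m p_i \to m$ strongly for every $m \in M$, so $\dom(\varphi)'' = M$.

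For the forward implication in the general case, I would bypass directedness using Kaplansky's density theorem. Semifiniteness gives $\dom(\varphi)'' = M$, so Kaplansky, applied to the $*$-subalgebra $\dom(\varphi) \subseteq M$, produces a net $(u_\lambda) \subseteq \dom(\varphi)_+$ with $0 \leq u_\lambda \leq 1$ and $u_\lambda \to 1$ strongly. For each $\lambda$ define the spectral cut-off
\[
p_\lambda := 1_{[\frac{1}{2}, 1]}(u_\lambda),
\]
so that $\tfrac{1}{2} p_\lambda \leq u_\lambda$; heredity of $\dom(\varphi)_+$ (from additivity of $\varphi$ on $M_+$) then places $p_\lambda \in \dom(\varphi)$. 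The key computation is that $q_\lambda := 1 - p_\lambda$ commutes with $u_\lambda$ and satisfies $(1 - u_\lambda) q_\lambda \geq \tfrac{1}{2} q_\lambda$, so
\[
\tfrac{1}{2} \| q_\lambda \xi \|^2 \leq \langle (1 - u_\lambda) q_\lambda \xi, \xi \rangle \leq \langle (1 - u_\lambda) \xi, \xi \rangle \longrightarrow 0
\]
for every vector $\xi$, the last inequality using commutation together with positivity of $1 - u_\lambda$. I expect this commutation-based spectral estimate to be the main technical point, and it is precisely why one needs $u_\lambda$ to be both positive and norm-bounded by $1$---which is what Kaplansky delivers.

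For the tracial clause, Kaplansky's formula together with the trace property yields $\tau(p \vee q) = \tau(p) + \tau(q) - \tau(p \wedge q) \leq \tau(p) + \tau(q) < \infty$, showing that $\cP$ is directed under $\vee$. A support-projection argument then gives $\sup \cP = 1$: for any $x \in \dom(\tau)_+$ and $\varepsilon > 0$, the spectral projection $1_{[\varepsilon, \infty)}(x)$ lies in $\cP$ by heredity, so $s(x) = \sup_\varepsilon 1_{[\varepsilon, \infty)}(x) \leq p_0 := \sup \cP$, forcing $\dom(\tau) \subseteq p_0 M p_0$; if $p_0 \neq 1$ this contradicts $\dom(\tau)'' = M$. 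The tautologically increasing net $(p)_{p \in \cP}$ therefore has $\| p \xi \|^2 \uparrow \| \xi \|^2$, hence converges strongly to $1$.
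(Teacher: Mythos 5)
Your proof is correct and follows essentially the same route as the paper's: the forward direction is the identical Kaplansky-plus-spectral-cutoff-at-$\tfrac12$ argument (your quadratic-form estimate $\tfrac12\|q_\lambda\xi\|^2\le\langle(1-u_\lambda)\xi,\xi\rangle$ is just a cosmetic variant of the paper's operator bound $\|1_{[0,1/2]}(x_j)\xi\|\le 2\|(1-x_j)\xi\|$), and the reverse direction is the same $p_iMp_i\subseteq\dom(\varphi)$ observation. The only divergence is in the tracial clause, where the paper upgrades the already-constructed net by taking finite joins while you work with the full directed lattice of finite-trace projections and identify its supremum via support projections; both hinge on the same subadditivity $\tau(p\vee q)\le\tau(p)+\tau(q)$, and your final contradiction should be read against the $\sigma$-weak density formulation of semifiniteness (which is what the paper itself invokes when it applies Kaplansky), since $p_0Mp_0$ is $\sigma$-weakly closed and proper when $p_0\neq 1$.
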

\begin{proof}
$(\Rightarrow):$ Fix a faithful normal representation $M\subseteq B(\cH)$. It suffices to show that every strong operator topology neighborhood of $1$ contains some projection $p\in \dom(\varphi)$, so further fix $\xi_1,\ldots, \xi_n\in \cH$ and $\varepsilon>0$. Since $\dom(\varphi)$ is a weak$^{*}$-dense subalgebra, Kaplansky's density theorem gives a net $(x_{j})_{j\in J}\subseteq \dom(\varphi)$ with $0\leq x_{j}\leq 1$ and with $x_{j}\to 1$ strongly. Thus we can find $j\in J$ with $\|(x_{j}-1)\xi_{k}\|<\varepsilon$ for each $k=1,\cdots,n$. Then 
    \[
        \|(1_{(1/2,\infty)}(x_{j})-1)\xi_{k}\|=\|1_{[0,1/2]}(x_{j})\xi_{k}\|\leq 2\|(1-x_{j})\xi_{k}\|<2\varepsilon
    \]
for each $k=1,\cdots,n$. Additionally, $1_{(1/2,\infty)}(x_{j})\leq 2x_{j}$ so that $1_{(1/2,\infty)}(x_{j})\in \dom(\varphi)$.

Now suppose that $\varphi$ is tracial, let $(p_{i})_{i\in I}$ be a net of projections tending to $1$ strongly. Let $J$ be the set of finite subsets of $I$, ordered by inclusion. For $F\in J$, let $p_{F}=\bigvee_{i\in F}p_{i}$. By the proof of \cite[Proposition V.2.10]{TakesakiI}, we have that $p_{F}\in \dom(\varphi)$ and since $(1-p_{F})\leq (1-p_{i})$ for all $i\in F$, we have that $p_{F}\to 1$ strongly. \\

\noindent $(\Leftarrow):$ Note that $p_i\in \sqrt{\dom}(\varphi)^*\cap \sqrt{\dom}(\varphi)$ and consequently
    \[
        p_i M p_i \subseteq \dom(\varphi)
    \]
for all $i\in I$. It follows that $\dom(\varphi)''= M$.
\end{proof}

\begin{lem}\label{lem:diffuseness_passed_upwards}
For an inclusion $N\leq M$ with expectation, $N$ being diffuse implies $M$ is diffuse
\end{lem}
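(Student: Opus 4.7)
The plan is to argue by contradiction: assume $p\in M$ is a non-zero minimal projection, so $pMp=\bC p$ and there is a well-defined normal state $\omega_p$ on $M$ with $pyp=\omega_p(y)p$ for all $y\in M$. Set $x:=\cE_N(p)\in N_+$, which is non-zero by faithfulness of $\cE_N$, and let $q\in N$ be the range projection of $x$. A first observation, using $1-q\in N$ and $(1-q)x=0$, is that $\cE_N((1-q)p(1-q))=(1-q)x(1-q)=0$; since $(1-q)p(1-q)\geq 0$ and $\cE_N$ is faithful, this forces $(1-q)p(1-q)=0$, whence $p(1-q)=0$ and $p\leq q$.

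The heart of the argument is to manufacture projections in $M$ out of projections in $N$. For any non-zero projection $f\in N$ with $f\leq q$, minimality of $p$ gives $pfp=\omega_p(f)p$, so
\[
(fpf)^2 = f(pfp)f = \omega_p(f)\,fpf.
\]
Since $f\leq q$ is non-zero we have $fxf\neq 0$, and thus $\cE_N(fpf)=fxf\neq 0$ forces $fpf\neq 0$; combined with the idempotency relation above this in turn forces $\omega_p(f)>0$, so that
\[
e_f := \omega_p(f)^{-1}\,fpf
\]
is a genuine projection in $M$. From $\cE_N(e_f)\leq 1$ and compressing by $f$, we obtain the key operator-valued bound $fxf\leq \omega_p(f)\,f$.

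To derive the contradiction, choose $\epsilon\in(0,\|x\|)$ so that the spectral projection $E_\epsilon:=1_{[\epsilon,\|x\|]}(x)\in N$ is non-zero. For any non-zero projection $f\leq E_\epsilon$ in $N$, the spectral bound $x\geq\epsilon E_\epsilon$ gives $fxf\geq\epsilon f$; combined with the key estimate above, this yields $\omega_p(f)\geq\epsilon$. On the other hand, the corner $E_\epsilon N E_\epsilon$ is diffuse (as a corner of the diffuse $N$), and $\omega_p$ restricts there to a non-zero normal positive functional that is faithful on projections (being bounded below by $\epsilon$). A standard halving argument in a diffuse von Neumann algebra equipped with a faithful normal state then produces a non-zero projection $f\leq E_\epsilon$ in $N$ with $\omega_p(f)<\epsilon$, contradicting the lower bound.

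The main technical step is the construction of the auxiliary projection $e_f\in M$: minimality of $p$ is exactly what distills an honest projection out of the positive element $fpf$, and it is this maneuver that turns the trivial bound $\cE_N(e_f)\leq 1$ into the operator inequality $fxf\leq\omega_p(f)f$, which then converts the combinatorial/lattice-theoretic fact ``$N$ is non-atomic'' into a quantitative statement about $\omega_p|_N$ that is incompatible with diffuseness.
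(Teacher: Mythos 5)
Your argument is correct, and every step checks out: the idempotency relation $(fpf)^2=\omega_p(f)\,fpf$ does yield a genuine projection $e_f$ once one knows $fpf\neq 0$ (which follows from $\cE_N(fpf)=fxf\neq 0$ for $0\neq f\leq q$), the bound $\cE_N(e_f)\leq 1$ compressed by $f$ really gives $fxf\leq\omega_p(f)f$, and on the spectral corner $E_\epsilon N E_\epsilon$ this forces $\omega_p(f)\geq\epsilon$ for every non-zero projection $f\leq E_\epsilon$ in $N$, which the halving argument in the diffuse corner contradicts. (Two minor remarks: the preliminary observation $p\leq q$ is never actually used later, and the halving step does not even need faithfulness of $\omega_p$ on the corner --- repeatedly splitting a non-zero projection into two non-zero pieces and keeping the one of smaller $\omega_p$-value already drives the value below $\epsilon$.)

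Your route is genuinely different from the paper's. The paper disposes of the lemma in one line by restricting the conditional expectation to the maximal purely atomic direct summand of $M$ and invoking a structure-theoretic result of Blackadar (a normal conditional expectation onto a subalgebra of an atomic algebra forces that subalgebra to be atomic), so the diffuseness of $N$ rules out any atomic summand of $M$. You instead give a completely self-contained, quantitative argument: a hypothetical minimal projection $p\in M$ produces, via $\cE_N$, a positive element $x\in N$ whose spectral corners carry a normal functional bounded below on all non-zero projections, which is incompatible with non-atomicity. The paper's proof buys brevity at the cost of an external citation; yours is longer but elementary, exposes the mechanism explicitly, and in particular shows concretely how the expectation transports the atom of $M$ down to an ``atom-like'' lower bound in $N$. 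Both are valid proofs of the statement.
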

\begin{proof}
This follows from restricting the conditional expectation to the maximal purely atomic direct summand of $M$ and applying \cite[Theorem IV.2.2.3]{BlackadarOA}.
\end{proof}

We will use the following lemma primarily in the semifinite case, where demanding a common conditional expectation is less stringent (see the discussion following the proof).

\begin{lem}\label{lem:common_expectation_canonical_intertwiner}
Let $M$ be a von Neumann algebra equipped with faithful normal semifinite weights $\varphi$ and $\psi$ and let $U_{\varphi,\psi}$ be the canonical intertwiner from $\psi$ to $\varphi$. If $N\leq M$ admits a faithful normal conditional expectation that preserves both $\varphi$ and $\psi$, then
    \[
        U_{\varphi,\psi} L^2(N,\psi) = L^2(N,\varphi).
    \]
\end{lem}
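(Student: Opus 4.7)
The strategy is to reduce the problem to the uniqueness of the canonical intertwiner, after exploiting the crucial consequence of the common-expectation hypothesis that the Connes cocycle $(D\psi:D\varphi)_t$ lies in $N$. First, by \cite[Theorem IX.4.2]{TakesakiII} applied to $\mathcal{E}_N$, both $\varphi|_N$ and $\psi|_N$ are faithful normal semifinite weights on $N$, the subalgebra $N$ is $\sigma^\varphi$- and $\sigma^\psi$-invariant, and the restricted modular groups agree with the intrinsic ones ($\sigma^\varphi|_N = \sigma^{\varphi|_N}$, analogously for $\psi$). A standard $2\times 2$-matrix argument then shows
\[
(D\psi:D\varphi)_t \;=\; (D\psi|_N:D\varphi|_N)_t \;\in\; N \qquad \text{for all } t\in\mathbb{R}.
\]
Moreover, the $\mathcal{E}_N$-preservation produces canonical isometric inclusions $\iota_\psi \colon L^2(N,\psi|_N)\hookrightarrow L^2(M,\psi)$ and $\iota_\varphi \colon L^2(N,\varphi|_N)\hookrightarrow L^2(M,\varphi)$, whose images are stable under the ambient modular operators and conjugations; consequently the positive cones satisfy $\iota_\varphi(L^2(N,\varphi|_N)_+) = L^2(M,\varphi)_+ \cap \iota_\varphi(L^2(N,\varphi|_N))$, and analogously for $\psi$.

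Let $V\colon L^2(N,\psi|_N) \to L^2(N,\varphi|_N)$ denote the canonical intertwiner for $N$. The lemma reduces to the identity $U_{\varphi,\psi}\circ \iota_\psi = \iota_\varphi\circ V$, which manifestly yields $U_{\varphi,\psi}(L^2(N,\psi|_N)) = L^2(N,\varphi|_N)$. I would establish this by deriving an explicit formula for $U_{\varphi,\psi}$ in the spirit of Lemma~\ref{lem:canonical_intertwiner_for_commuting_weights}: expressing, for $x\in\sqrt{\dom}(\psi)$,
\[
U_{\varphi,\psi}(\hat x_\psi) \;=\; \lim_{\varepsilon\to 0}\widehat{x\,a_\varepsilon}_\varphi
\]
for suitable approximants $a_\varepsilon\in M$ constructed from the cocycle $v_t := (D\psi:D\varphi)_t$ via analytic approximation. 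Since $v_t\in N$, the $a_\varepsilon$ also lie in $N$; hence for $x\in\sqrt{\dom}(\psi|_N)\subseteq N$ the products $xa_\varepsilon$ remain in $N$ and the limit lies in $\iota_\varphi(L^2(N,\varphi|_N))$. Taking adjoints and applying the same argument to $U_{\psi,\varphi}=U_{\varphi,\psi}^*$ yields the reverse inclusion, giving equality.

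The main technical obstacle is producing the approximants $a_\varepsilon$ rigorously in the non-commuting setting. A clean route is via the continuous core and the isomorphism of (\ref{eqn:formula_isomorphism_continuous_cores}): the cocycle $v_t\in N$ identifies $\widetilde{\psi}$ with a perturbation of $\widetilde{\varphi}$ by a positive operator affiliated with $N\rtimes_{\sigma^\varphi}\mathbb{R}\subseteq M\rtimes_{\sigma^\varphi}\mathbb{R}$, bringing Lemma~\ref{lem:canonical_intertwiner_for_commuting_weights} directly to bear. Alternatively, one can bypass the explicit formula and instead construct a unitary $W\colon L^2(M,\psi)\to L^2(M,\varphi)$ extending $\iota_\varphi\circ V$ that intertwines the left $M$-action and preserves positive cones, then invoke the uniqueness clause of \cite[Lemma IX.1.5]{TakesakiII} to conclude $W = U_{\varphi,\psi}$. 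In either approach the indispensable input is the fact $v_t\in N$, which ensures all the constructions remain compatible with the subalgebra structure.
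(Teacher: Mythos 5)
Your reduction to the identity $U_{\varphi,\psi}\circ\iota_\psi=\iota_\varphi\circ V$ and your observation that the cocycle $(D\psi:D\varphi)_t$ lands in $N$ are both correct, but the proof has a hole exactly at the step you flag as ``the main technical obstacle,'' and neither of your two proposed workarounds closes it. The explicit formula $U_{\varphi,\psi}(\hat x_\psi)=\lim_\varepsilon\widehat{xa_\varepsilon}_\varphi$ with $a_\varepsilon$ built from the cocycle is only formal (it amounts to $a_\varepsilon\approx(D\psi:D\varphi)_{-i/2}$, an unbounded analytic continuation), and Lemma~\ref{lem:canonical_intertwiner_for_commuting_weights} cannot be ported to the continuous core in the way you suggest: transporting $\widetilde{\psi}$ to $M\rtimes_{\sigma^\varphi}\bR$ via (\ref{eqn:formula_isomorphism_continuous_cores}), its cocycle with respect to $\widetilde{\varphi}$ is again $t\mapsto(D\psi:D\varphi)_t$, which is a one-parameter \emph{group} (hence of the form $h^{it}$ with $h$ affiliated to the centralizer of $\widetilde{\varphi}$) only when $\varphi$ and $\psi$ commute. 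So $\widetilde{\psi}$ is \emph{not} a perturbation $\widetilde{\varphi}_h$ in general; at best both dual weights are commuting perturbations of the trace $\tau_\varphi$, and even after chaining two applications of Lemma~\ref{lem:canonical_intertwiner_for_commuting_weights} through $\tau_\varphi$ you would still have to descend from $L^2$ of the core (which is $L^2(\bR)\otimes L^2(M,\varphi)$) back down to $L^2(M,\varphi)$, a step you do not address. Your alternative route --- construct a unitary $W$ extending $\iota_\varphi\circ V$ that intertwines the left action and preserves the positive cones, then invoke uniqueness --- is circular as stated: you give no construction of $W$ on all of $L^2(M,\psi)$, the only natural candidate is $U_{\varphi,\psi}$ itself, and the assertion that it extends $\iota_\varphi\circ V$ is precisely what is to be proved.

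The paper closes the gap with a short balanced-weight argument worth comparing against. Since $\cE\otimes I_2$ preserves $\rho=\varphi\oplus\psi$ on $M\otimes M_2(\bC)$, the subspace $L^2(N\otimes M_2(\bC),\rho)$ is invariant under $J_\rho$; writing $J_\rho$ in block form, its off-diagonal corners are the relative modular conjugations $J_{\varphi,\psi}$ and $J_{\psi,\varphi}$, so invariance of the corresponding corner gives $J_{\varphi,\psi}L^2(N,\psi)=L^2(N,\varphi)$. Combined with the identity $U_{\varphi,\psi}=J_\varphi J_{\varphi,\psi}$ from \cite[Lemma IX.1.5]{TakesakiII} and the $J_\varphi$-invariance of $L^2(N,\varphi)$, this finishes the proof with no analytic approximation at all. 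Note that you already invoke the $2\times 2$ trick to see that the cocycle lies in $N$; applied to the modular conjugation rather than to the cocycle, the very same trick yields the lemma directly.
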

\begin{proof}
Let $\cE\colon M\to N$ be faithful normal conditional expectation that preserves both $\varphi$ and $\psi$. Then $\cE\otimes I_2\colon M\otimes M_2(\bC)\to N\otimes M_2(\bC)$ preserves the faithful normal semifinite weight $\rho=\varphi\oplus \psi$ given by
    \[
        \rho \left(\begin{array}{cc}
             x_{11} & x_{12}  \\
             x_{21} & x_{22} 
        \end{array} \right):= \varphi(x_{11}) + \psi(x_{22}).
    \]
This implies that $L^2(N\otimes M_2(\cC),\rho)$ is invariant for the modular conjugation $J_\rho$ on $L^2(M\otimes M_2(\bC), \rho)$, which has the form
        \[
         J_\rho = \left(\begin{array}{cccc} J_\varphi & 0 & 0 & 0 \\
        0 & 0 & J_{\varphi,\psi} & 0 \\
        0 & J_{\psi, \varphi} & 0 & 0\\
        0 & 0 & 0 & J_{\psi}\end{array}\right)
    \]
under the identification
    \begin{align*}
        L^2(M\otimes M_2(\bC),\rho) &\overset{\cong}{\to} L^2(M,\varphi)\oplus L^2(M,\psi)\oplus L^2(M,\varphi) \oplus L^2(M,\varphi)\\
            \left(\begin{array}{cc}
             x_{11} & x_{12}  \\
             x_{21} & x_{22} 
        \end{array} \right) &\to (x_{11}, x_{12}, x_{21}, x_{22}).
    \end{align*}
(see \cite[Section VIII.3]{TakesakiII}). Therefore
    \[
        J_\varphi J_{\varphi,\psi} L^2(N,\psi) = J_\varphi L^2(N,\varphi) = L^2(N,\varphi),
    \]
where the second equality follows from $N\leq M$ being with $\varphi$-expectation. But the canonical intertwiner is precisely given by $U_{\varphi,\psi}= J_\varphi J_{\varphi,\psi}$ (see \cite[Lemma IX.1.5]{TakesakiII}).
\end{proof}

Suppose $N\leq M$ is with $\varphi$-expectation $\cE_N\colon M\to N$.
Then the $\sigma^\varphi$-invariance of $N$ implies we can identify $N\rtimes_{\sigma^\varphi} \bR$ as a subalgebra of the continuous core $M\rtimes_{\sigma^\varphi} \bR$, and this inclusion admits a faithful normal conditional expectation determined by
    \[
        \cE_{N\rtimes \bR}(\lambda_\varphi(f)) = \lambda_\varphi(\cE_N\circ f) \qquad\qquad  f\in C_c(\bR,M).
    \]
This conditional expectation preserves the dual weight $\widetilde{\varphi}$ by definition, and it preserves the tracial weight $\tau_\varphi$ induced by $\varphi$ since $D_\varphi$ is affiliated with $L_\varphi(\bR)\leq N\rtimes_{\sigma^\varphi}\bR$. Hence Lemma~\ref{lem:common_expectation_canonical_intertwiner} can be applied 
 to $N\rtimes_{\sigma^\varphi} \bR\leq M\rtimes_{\sigma^\varphi}\bR$ and the pair of weights $\widetilde{\varphi}$ and $\tau_\varphi$. That this inclusion is with $\tau_\varphi$-expectation also implies that $\tau_\varphi|_{N\rtimes \bR}$ is semifinite by \cite[Theorem IX.4.2]{TakesakiII}. In particular, when $\varphi$ is a state it follows by considering $N=\bC$ that $\tau_\varphi|_{L_\varphi(\bR)}$ is semifinite and
    \[
        \cE_{L_\varphi(\bR)}(\lambda_\varphi(s) x \lambda_\varphi(t)) = \varphi(x) \lambda_\varphi(s+t) \qquad \qquad s,t\in \bR,\ x\in M
    \]
determines the $\tau_\varphi$-preserving faithful normal conditional expectation onto $L_\varphi(\bR)$. Furthermore, if $\psi$ is another faithful normal state on $M$, then $\tau_\varphi \circ \pi_{\varphi,\psi} = \tau_\psi$ (see \cite[Section 2.2]{HRApprox}).

We conclude this section by recording a recognition lemma for inclusions of the form $N\rtimes_{\sigma^\varphi} \bR \leq M\rtimes_{\sigma^\varphi} \bR$, which follows from the proof of \cite[Proposition X.2.6]{TakesakiII}. Note that the fixed points under the dual action $\theta$ are $\sigma^\varphi$-invariant since the $\theta$ commutes with the modular automorphism group of $\sigma^{\widetilde{\varphi}}$, which restricts to $\sigma^\varphi$ on $M$.

\begin{lem}\label{lem:continuous_core_recognition_lemma}
Let $M$ be a von Neumann algebra equipped with a faithful normal semifinite weight $\varphi$. Then a von Neumann subalgebra $\widetilde{P}\leq M\rtimes_{\sigma^\varphi} \bR$ is of the form $P\rtimes_{\sigma^\varphi} \bR$ for some $\sigma^\varphi$-invariant $P\leq M$ if and only if $\widetilde{P}$ contains $L_\varphi(\bR)$ and is $\theta$-invariant. In this case, one has $P=(\widetilde{P})^\theta$.
\end{lem}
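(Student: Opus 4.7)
\medskip

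\noindent\emph{Proof plan.} The ``only if'' direction is essentially bookkeeping: by the formulas (\ref{eqn:dual_action_formula}) for the dual action, any element of $M$ (identified inside the continuous core) is $\theta$-fixed and each $\lambda_\varphi(t)$ is sent to a scalar multiple of itself, so $P\rtimes_{\sigma^\varphi}\bR$ is both $\theta$-invariant and contains $L_\varphi(\bR)$ by definition.

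For the ``if'' direction, set $P:=(\widetilde{P})^\theta$. First I would verify $P\leq M$ and that $P$ is $\sigma^\varphi$-invariant. Since $(M\rtimes_{\sigma^\varphi}\bR)^\theta=M$ (stated in the excerpt), we immediately get $P\subseteq M$. For $\sigma^\varphi$-invariance, for any $x\in P$ and $t\in\bR$, I use that $\sigma_t^\varphi$ agrees with $\mathrm{Ad}(\lambda_\varphi(t))$ on $M$; the element $\lambda_\varphi(t)x\lambda_\varphi(t)^*$ lies in $\widetilde{P}$ (since both factors do, using $L_\varphi(\bR)\subseteq \widetilde{P}$) and lies in $M$, hence is automatically $\theta$-fixed, so lies in $P$.

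The substantive step is showing $\widetilde{P}=P\rtimes_{\sigma^\varphi}\bR$. The inclusion $\supseteq$ is immediate from $P\subseteq\widetilde{P}$ and $L_\varphi(\bR)\subseteq\widetilde{P}$. For $\subseteq$, the plan is to analyze $\widetilde{P}$ via the spectral subspaces of the $\theta$-action. For $t\in\bR$, set
\[
\widetilde{P}(t):=\{x\in\widetilde{P}:\theta_s(x)=e^{-2\pi ist}x\text{ for all }s\in\bR\}.
\]
I would show $\widetilde{P}(t)=\lambda_\varphi(t)P$: given $x\in\widetilde{P}(t)$, the element $\lambda_\varphi(-t)x$ lies in $\widetilde{P}$ (because $\lambda_\varphi(-t)\in L_\varphi(\bR)\subseteq\widetilde{P}$) and is $\theta$-fixed by the direct calculation $\theta_s(\lambda_\varphi(-t)x)=e^{2\pi ist}\lambda_\varphi(-t)\cdot e^{-2\pi ist}x=\lambda_\varphi(-t)x$, so it lies in $P$; conversely $\lambda_\varphi(t)P$ is clearly in $\widetilde{P}(t)$.

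The final step, and the main technical input, is that $\widetilde{P}$ is generated as a von Neumann algebra by $\bigcup_{t\in\bR}\widetilde{P}(t)$. This is a standard consequence of Arveson spectral theory for $\sigma$-weakly continuous actions of locally compact abelian groups: one approximates an arbitrary $x\in\widetilde{P}$ by the smoothings $\theta_f(x)=\int f(s)\theta_s(x)\,ds$ with $\hat f$ of compact support (which remain in $\widetilde{P}$ by $\theta$-invariance), and then further decomposes such band-limited elements into spectral components. This is precisely the technique used in the proof of \cite[Proposition X.2.6]{TakesakiII}. Granting this, I conclude that $\widetilde{P}$ is generated by $\bigcup_t \lambda_\varphi(t)P$, which is by definition $P\rtimes_{\sigma^\varphi}\bR$. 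The identification $P=(\widetilde{P})^\theta$ is then built into our choice of $P$.
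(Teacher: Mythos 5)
Your proof is correct and takes essentially the same route as the paper, which offers no independent argument but simply records that the lemma ``follows from the proof of \cite[Proposition X.2.6]{TakesakiII}''; your spectral-subspace analysis of $\widetilde{P}$ under the dual action (reducing to $\widetilde{P}(t)=\lambda_\varphi(t)P$ and the fact that $\widetilde{P}$ is generated by its spectral subspaces) is precisely that argument written out. The one point worth making explicit is that the claim $P=(\widetilde{P})^\theta$ in the ``only if'' direction needs $(P\rtimes_{\sigma^\varphi}\bR)^\theta=P$, which follows from the same spectral decomposition applied to $P\rtimes_{\sigma^\varphi}\bR$ itself.
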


\subsection{Peterson--Thom property}\label{sec: PT property}

Though we will prove much stronger indecomposability properties for type $\mathrm{III}$ factors, we nevertheless feel that an important aspect of our work is clarifying what the appropriate definition of the Peterson--Thom property should be for general von Neumann algebras. We record this observation here. For terminology, recall that a \emph{largest element} in a poset is an element that is greater than or equal to every element of the poset (this is of course stronger than simply being maximal).

\begin{prop}\label{prop:PT foundation}
 Let $M$ be a von Neumann algebra. Consider the following properties:
 \begin{enumerate}[(i)]
     \item For any diffuse, amenable $Q\leq M$ the set 
        \[
            \{P_{0}\colon P_0\leq M \text{ is with expectation, $P_0\supseteq Q$, and $P_0$ is amenable}\}
        \]
     has a largest element. \label{item: PT via maximality}
     \item If $\{Q_{j}\colon j\in J\}$ are diffuse, amenable, and with expectation in $M$ and are such that $\bigcap_{j\in J}Q_{j}$ contains a diffuse subalgebra with expectation in $M$, then for any $B\leq \bigvee_{j\in J}Q_{j}$ with expectation in $M$, we have that $B$ is amenable. \label{item: PT via joins}
 \end{enumerate}
 Then (\ref{item: PT via maximality}) implies (\ref{item: PT via joins}). 
\end{prop}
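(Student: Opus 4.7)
My plan is to use (i) to produce a single amenable, with-expectation extension in $M$ that swallows the entire family $\{Q_j\}_{j\in J}$, and then to invoke the hereditary property of amenability for subalgebras with conditional expectation.

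First, by the hypothesis of (ii), I would fix a diffuse subalgebra $Q \leq \bigcap_{j \in J} Q_j$ that is with expectation in $M$. Applying (i) to $Q$, I obtain the largest element $P_0$ of the poset of amenable, with-expectation-in-$M$ extensions of $Q$. Next, observe that each $Q_j$ is itself a member of this poset: by hypothesis $Q_j$ is amenable and with expectation in $M$, and by construction $Q \subseteq Q_j$. Largeness of $P_0$ therefore forces $Q_j \leq P_0$ for every $j \in J$, and consequently $\bigvee_{j \in J} Q_j \leq P_0$.

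Finally, let $B \leq \bigvee_{j \in J} Q_j$ be with expectation in $M$ via a faithful normal conditional expectation $\cE_B \colon M \to B$. Since $B \leq P_0 \leq M$, the restriction $\cE_B|_{P_0} \colon P_0 \to B$ takes values in $B$ and fixes $B$ pointwise, hence is a faithful normal conditional expectation. Thus $B \leq P_0$ is with expectation, and since $P_0$ is amenable, a with-expectation subalgebra of an amenable (i.e.\ injective) algebra is amenable, so $B$ is amenable.

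The argument is largely bookkeeping once (i) is applied; the only point requiring care is confirming that the with-expectation hypothesis on $B \leq M$ transfers to $B \leq P_0$, which follows from the elementary observation that a conditional expectation $\cE_B \colon M \to B$ automatically restricts to a conditional expectation on any intermediate von Neumann algebra containing $B$. I do not expect any genuine obstacle here—the substance of the proposition is really that (i) is strong enough to be the right "Peterson--Thom" formulation, and (ii) is a formal consequence.
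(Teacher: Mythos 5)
Your proof is correct and takes essentially the same route as the paper's (the paper detours through the largest amenable extension $P_j$ of each $Q_j$ and observes $P_j\subseteq P$, but noting directly that each $Q_j$ lies in the poset for $Q$ is the same argument). The only point you elide is that applying (i) to $Q$ requires $Q$ to be \emph{amenable}, which is not part of the hypothesis of (ii); it follows, as the paper notes, because $Q$ is with expectation in each amenable $Q_j$ (restrict the expectation $M\to Q$ to $Q_j$) --- exactly the hereditary property you invoke at the end for $B\leq P_0$.
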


\begin{proof}
Fix a family $\{Q_{j}\colon j\in J\}$ as in (\ref{item: PT via joins}) and let $N\leq \bigcap_{j\in J} Q_j$ be a diffuse subalgebra with expectation in $M$. Note that $N$ is with expectation in $Q_{j}$ for all $j\in J$ since we can restrict to $Q_j$ the expectation from $M$ to $N$. In particular, $N$ is amenable. Thus we can let $P$ the unique largest element of the set
    \[
        \cN:=\{P_{0}\colon P_0\leq M \text{ is with expectation, $P_0\supseteq N$, and $P_0$ is amenable}\}.
    \]
For each $j\in J$, let $P_{j}$ be the largest element of the set
    \[
        \cQ_j:=\{P_{0}\colon P_0\leq M \text{ is with expectation, $P_0\supseteq Q_j$, and $P_0$ is amenable}\}.
    \]
Since $\cQ_j \subset \cN$, we have $P_{j}\subseteq P$
for all $j\in J$. In particular, $\bigvee_{j\in J}Q_{j}\subseteq P$.
So any $B\leq \bigvee_{j\in J}Q_{j}$ with expectation in $M$ is contained in $P$ and with expectation. Thus the amenability of $P$ implies that of $B$.  
\end{proof}

We say that a von Neumann algebra $M$ has the \textbf{Peterson--Thom property} if (\ref{item: PT via maximality}) of the above theorem holds. Note that if $M$ is finite, then both items in Proposition \ref{prop:PT foundation} are equivalent. It is unclear if this is true in general, because, for example, $\bigvee_{j\in J}Q_{j}$ may fail to be with expectation. 

The following result should be compared to properties like strong solidity, which pass to diffuse subalgebras with expectation.

\begin{prop} \label{prop: PT subalgebra}
Suppose $M$ is a $\sigma$-finite von Neumann algebra with the Peterson--Thom property. Then any diffuse subalgebra of $M$ which is with expectation has the Peterson--Thom property.
\end{prop}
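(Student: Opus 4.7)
The plan is to invoke the Peterson--Thom property of $M$ and show that if $P \le M$ is the largest amenable, with-expectation extension of a given diffuse amenable $Q \le N$, then $P \cap N$ plays the same role inside $N$. The nontrivial step is verifying that $P \cap N$ admits a faithful normal conditional expectation from $N$.

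The first move is a reduction. Suppose there is any amenable, with-expectation (in $N$) extension $P_0$ of $Q$. Since $N \le M$ is with expectation, $P_0$ is also amenable and with expectation in $M$. Applying the Peterson--Thom property on $M$ to both $Q$ and $P_0$ gives largest elements that must coincide: one is a member of the set associated with the other, and largest elements are unique. We may thus replace $Q$ by $P_0$ and henceforth assume $Q \le N$ is itself with expectation; this is the only case relevant when verifying the Peterson--Thom property.

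Next, fix a faithful normal state $\omega$ on $Q$ and define $\varphi := \omega \circ \cE_Q \circ \cE_N$ on $M$, where $\cE_Q \colon N \to Q$ is the conditional expectation supplied by the reduction. Both $Q$ and $N$ are $\sigma^\varphi$-invariant by construction. Let $P \le M$ be the largest amenable, with-expectation extension of $Q$ furnished by Peterson--Thom. For each $t \in \bR$, $\sigma_t^\varphi(P)$ is amenable (isomorphic to $P$), is with expectation in $M$ via $\sigma_{-t}^\varphi \circ \cE_P \circ \sigma_t^\varphi$, and contains $\sigma_t^\varphi(Q) = Q$; maximality of $P$ forces $\sigma_t^\varphi(P) \subseteq P$, and the symmetric estimate with $-t$ gives equality. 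Hence $P$ is $\sigma^\varphi$-invariant, and consequently so is $P \cap N$. Since $\varphi|_{P \cap N}$ is a faithful normal state (hence semifinite), the Takesaki criterion \cite[Theorem IX.4.2]{TakesakiII} supplies a $\varphi$-preserving faithful normal conditional expectation $M \to P \cap N$, whose restriction to $N$ is the sought expectation.

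Finally, $P \cap N$ is amenable (as a subalgebra of the amenable $P$), contains $Q$, and is with expectation in $N$; so it belongs to the target set. Any other amenable, with-expectation (in $N$) extension $R$ of $Q$ is also amenable and with expectation in $M$ by composing expectations, hence $R \subseteq P$, and together with $R \subseteq N$ this forces $R \subseteq P \cap N$. Thus $P \cap N$ is the largest element, as desired. The main technical obstacle is the $\sigma^\varphi$-invariance of $P$, which relies crucially on the initial reduction: without first upgrading $Q$ to be with expectation in $N$, there is no obvious state $\varphi$ making both $Q$ and $N$ $\sigma^\varphi$-invariant, and the uniqueness/maximality argument for $P$ collapses.
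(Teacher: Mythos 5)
Your argument is correct and is essentially the paper's proof: you build the state $\varphi = \omega\circ\cE_Q\circ\cE_N$ so that both $Q$ and $N$ are $\sigma^\varphi$-invariant, deduce $\sigma^\varphi$-invariance of the largest amenable with-expectation extension $P$ from its uniqueness, and take $P\cap N$. Two remarks. First, the justification ``$P\cap N$ is amenable as a subalgebra of the amenable $P$'' is not a valid principle on its own --- amenability does not pass to arbitrary von Neumann subalgebras (e.g. $L(\bF_2)\subseteq B(\ell^2)$) --- but your conclusion stands because the $\varphi$-preserving expectation $M\to P\cap N$ you have just constructed restricts to a faithful normal expectation $P\to P\cap N$, and amenability does pass to subalgebras with expectation; this is exactly how the paper phrases that step. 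Second, your opening reduction (upgrading $Q$ to an element $P_0$ of the target set, using that the largest elements for $Q$ and for $P_0$ coincide) addresses a case the paper's proof silently assumes away, namely a diffuse amenable $Q\leq N$ that is not itself with expectation in $N$; this is a small but genuine improvement in completeness.
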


\begin{proof}
Suppose that $N\leq M$ is diffuse and with a faithful normal conditional expectation $\cE_{N}\colon M\to N$. Let $Q\leq N$ be diffuse, amenable and with expectation. Let $P(Q,M)$ be the largest element of
    \[
        \{P_{0}: P_{0}\leq M \textnormal{ is diffuse, amenable, with expectation and $Q\subseteq P_{0}$}\}.
    \]
Extend a faithful normal state $\psi$ on $Q$ to $M$ by precomposing with expectation onto $N$ followed by the expectation onto $Q$, and denote the resulting state on $M$ by $\psi$. Then $Q$ and $N$ are both $\sigma^\psi$-invariant. The former, along with the uniqueness of $P(Q,M)$, implies $P(Q,M)$ is $\sigma^\psi$-invariant, which combined with the $\sigma^\psi$-invariance of $N$ implies $P:=P(Q,M)\cap N$ is also $\sigma^\psi$-invariant. Hence $P$ is with expectation in $M$ and therefore with expectation in $P(Q,M)$, implying that $P$ is amenable.

Now, suppose that $P_{0}\leq N$ is an amenable algebra with expectation and contains $Q$. Then $P_{0}\subseteq P(Q,M)$, by definition of $P(Q,M)$, and hence $P_{0}\subseteq P(Q,M)\cap N=P$. 
\end{proof}

\section{Pinsker algebras}

Key tools for our analysis are versions of the Pinsker algebra in the semifinite and purely infinite settings. For a finite von Neumann algebra $M$ and $Q\leq M$ with non-positive $1$-bounded entropy in the presence, the \emph{Pinsker algebra} is the largest intermediate subalgebra with entropy zero (see \cite{FreePinsker, UselessResolutionOfPT}). By \cite[Theorem 3.8]{Hayes2018}, such algebras automatically contain the anticoarse space and hence several weakenings of the normalizer of interest to us. We will establish similar results for faithful normal states (see Theorem~\ref{thm:quasi anticoarse stuff}), which will be crucial for our generalized solidity results in Theorem~\ref{thm: the game is over and you should quit}.

\subsection{Pinsker algebras for tracial weights}\label{sec:Pinsker for traces}

In the finite case, recall from \cite[Lemma 3.3]{UselessResolutionOfPT} that the non-positivity of $1$-bounded entropy is preserved under taking corners. Since we are primarily interested in this non-positivity, it is natural to extend the definition of the $1$-bounded entropy to the semifinite setting in the following way. For a von Neumann algebra $M$ equipped with a faithful normal semifinite tracial weight $\tau$, a non-zero projection $q\in \dom(\tau)$, and $Q\leq qMq$, we let $h_{\tau}(Q:qMq)$ denote the $1$-bounded entropy of $Q$ in the presence of $qMq$ with respect to the faithful normal tracial state $\tau^{(q)}:=\frac{1}{\tau(q)}\tau|_{qMq}$ (see \cite{Hayes2018} for the precise definition of $1$-bounded entropy, as well as \cite{FreePinsker, UselessResolutionOfPT, JKEupgradedfreeindependencephenomena} for other developments). 
For example, if $Q\leq M$ is with $\tau$-expectation then we can consider $h_\tau(q Q q : qMq)$ for any non-zero projection $q\in \dom(\tau|_Q)$, and we will largely be interested in inclusions where this quantity is non-negative for all such projections. Our main result in this subsection, Corollary~\ref{cor:independent of the proj}, states that this condition does not depend on the choice of tracial weight $\tau$. We begin by establishing this for tracial \emph{states}.

\begin{lem}\label{lem:independ of the trace finite case}
Let $M$ be a finite von Neumann algebra equipped with two faithful normal tracial states $\tau_{1},\tau_{2}$. Given $Q\leq M$ we have that $h_{\tau_{1}}(Q:M)\leq 0$ if and only if $h_{\tau_{2}}(Q:M)\leq 0$.
\end{lem}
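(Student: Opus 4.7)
By symmetry it suffices to prove one implication, say $h_{\tau_1}(Q:M)\leq 0 \Rightarrow h_{\tau_2}(Q:M)\leq 0$. The starting point is the noncommutative Radon--Nikodym theorem: since $\tau_1,\tau_2$ are faithful normal tracial states on the finite algebra $M$, there is a positive self-adjoint operator $h$ affiliated with $M$ with $\tau_1(h)=1$ and $\tau_2(x)=\tau_1(hx)$ for all $x\in M$. The trace property forces $h$ to lie in $Z(M)$, and faithfulness of $\tau_2$ makes $h$ injective.

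\textbf{Step 1 (reduction to bounded $h$).} For $N>0$, let $e_N:=\mathbf{1}_{[1/N,N]}(h)\in Z(M)$, so that $e_N\nearrow 1$ strongly. Applying the compression result \cite[Lemma~3.3]{UselessResolutionOfPT} to the central projection $e_N$ passes non-positivity of $h_{\tau_1}(Q:M)$ down to $h_{\tau_1^{(e_N)}}(Qe_N:e_NMe_N)\leq 0$, and on this corner the Radon--Nikodym derivative $h|_{e_N}$ is bounded between $1/N$ and $N$. A converse ``glueing'' argument---reconstructing a global bound from a directed family of corner bounds as $e_N\nearrow 1$---will at the end return the desired $h_{\tau_2}(Q:M)\leq 0$.

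\textbf{Step 2 (finite central decomposition).} With $h$ now assumed bounded and bounded away from $0$, the spectral theorem applied to the selfadjoint element $h\in Z(M)$ produces, for any $\epsilon>0$, orthogonal central projections $q_1,\dots,q_m\in Z(M)$ summing to $1$ together with positive scalars $c_1,\dots,c_m$ satisfying $\|hq_i-c_iq_i\|<\epsilon$. In the idealized case where $h=\sum c_iq_i$ exactly, the normalized restrictions to each corner coincide literally: $\tau_2^{(q_i)}=\tau_1^{(q_i)}$, because $\tau_2(\cdot\, q_i)=c_i\tau_1(\cdot\, q_i)$ and $\tau_2(q_i)=c_i\tau_1(q_i)$. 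Combining the compression lemma with a companion ``direct sum lemma''---microstates for $\bigoplus_i q_iMq_i$ can be taken block-diagonal with respect to the central decomposition, so covering numbers multiply across a finite direct sum and subexponentiality in $n^2$ is preserved---one obtains $h_{\tau_1}(Q:M)\leq 0 \Leftrightarrow h_{\tau_2}(Q:M)\leq 0$ in the step-function case.

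\textbf{Main obstacle.} The crux is that $h$ need not be a step function, so on each corner the two normalized traces only \emph{approximately} agree. To close this gap I would realize $\tau_2$ as the norm limit of the step-function traces $\tau_1(h_\epsilon\,\cdot)$ and invoke a continuity property of the ``$h_\tau\leq 0$'' condition under such bounded norm perturbations of the trace. Verifying this continuity is the main technical difficulty, since the microstate space depends explicitly on $\tau$: a perturbation of the trace by $\epsilon$ shifts the moments of a word of length $k$ in elements of norm $\leq R$ by $O(\epsilon R^k)$, and one needs to show that this error can be absorbed into the tolerance $\delta_n$ in the definition of $h_\tau$ while preserving subexponential covering. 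I expect this to come down to a careful but routine rewriting of the covering-number definition, but it is where the real work lies.
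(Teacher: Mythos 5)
There is a genuine gap, and you have correctly located it yourself: the entire argument funnels through an unproven ``continuity'' claim, namely that non-positivity of $h_\tau(Q:M)$ is stable under small norm perturbations of the trace. This is not a routine rewriting of the covering-number definition. The microstate spaces in the definition of $1$-bounded entropy are tied to the fixed trace, and while an $\epsilon$-perturbation of the trace shifts moments by a controlled amount, turning that into a statement that the \emph{same} covering bounds survive (uniformly in the word length $k$ and the cutoff $R$, and compatibly with the order of quantifiers in the $\liminf/\limsup$ over tolerances) is exactly the kind of estimate that tends not to close without additional structure. As written, the proposal proves the lemma only in the case where the Radon--Nikodym derivative $dh=d\tau_2/d\tau_1$ is a central step function, plus a reduction to bounded $h$ whose reassembly step (``a converse glueing argument'') is also only sketched. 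A smaller issue: cutting by the central projections $e_N$ (which need not lie in $Q$) and invoking \cite[Lemma 3.3]{UselessResolutionOfPT} requires an extra normalizer argument, since that lemma compresses by projections in the subalgebra; the paper handles the analogous point via $2z_i-1\in\cN_M(Q)$ and \cite[Theorem 3.8]{Hayes2018}.

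The paper's proof avoids the perturbation problem entirely, and you should compare the two to see why your obstacle is a symptom of the wrong decomposition. Instead of decomposing along the spectrum of $h$, decompose the center of $M$ into its diffuse and atomic parts, $1=z_0+\sum_{i\in I}z_i$ with $Mz_0$ having diffuse center and each $Mz_i$ a factor. On the diffuse-center summand, \cite[Example 3]{FreePinsker} gives $h_{\tau}(Mz_0)\leq 0$ for \emph{any} trace, so nothing needs to be transferred there. On each factor summand the normalized trace is unique, so $h_{\tau_1}(Qz_i:Mz_i)=h_{\tau_2}(Qz_i:Mz_i)$ tautologically and there is no approximation to control. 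The pieces are then reassembled with the direct-sum estimate of \cite[Proposition A.13]{Hayes2018} and the normalizer absorption of \cite[Theorem 3.8]{Hayes2018}. In other words, the only place the two traces can genuinely differ --- the central direction --- is precisely where the entropy is automatically non-positive, which is why no continuity statement is ever needed. To complete your argument you would either have to prove the perturbation lemma (substantial new work) or switch to this central decomposition, at which point your Steps 1 and 2 become unnecessary.
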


\begin{proof}
Decomposing the center into diffuse and atomic pieces gives a family of projections $\{z_0\}\cup\{z_i\colon i\in I\}\subseteq M'\cap M$ (potentially with $I=\emptyset$) so that:
    \begin{itemize}
    \item $z_0+ \sum_{i\in I}z_{i}=1$;
    \item $Mz_0$ is either has diffuse center or is $\{0\}$;
    \item $Mz_i$ is a factor for every $i\in I$.
    \end{itemize}
Suppose $h_{\tau_{1}}(Q:M)\leq 0$. While the $z_{j}$'s need not lie in $Q$, we still have that $Qz_{j}$ is a subalgebra of $Mz_{j}$ for every $j\in \{0\}\cup I$. Since $Mz_{0}$ is either zero or has diffuse center, \cite[Example 3]{FreePinsker} implies 
    \[
        h_{\tau_{2}}(Qz_0:Mz_0)\leq h_{\tau_{2}}(Mz_0)\leq 0.
    \]
Define $N=\overline{\sum_{i\in I}Q z_{i}}^{SOT}$. By centrality of each $z_{i}$, we have that $2z_{i}-1\in \cN_{M}(Q)$ for each $i$, and thus 
$N\leq W^{*}(\cN_{M}(Q)).$ So  \cite[Theorem 3.8]{Hayes2018} implies $h_{\tau_{i}}(N:M)=h_{\tau_{i}}(Q:M)$, for $i=1,2$. So it suffices to show that $h_{\tau_{2}}(N:M)\leq 0$. By the proof of \cite[Proposition A.13]{Hayes2018}, we have that 
\[h_{\tau_{2}}(N:M)\leq \sum_{i}\tau_{2}(z_{i})^{2}h_{\tau_{2}}(Nz_{i}:M z_{i}).\] 
So it suffices to show that $h_{\tau_{2}}(Nz_{i}:Mz_{i})\leq 0$ for all $i\in I$. 
Since $Nz_{i}=Qz_{i},$ we thus have to show that $h_{\tau_{2}}(Qz_{i}:Mz_{i})\leq 0$.
But, by factoriality,
    \[
        h_{\tau_{2}}(Qz_{i}:Mz_{i})=h_{\tau_{1}}(Qz_{i}:Mz_{i}),
    \]
for all $i\in I$, so it remains to show that $h_{\tau_{1}}(Qz_{i}:Mz_{i})\leq 0$ for every $i\in I$.  
By \cite[Lemma 3.3]{UselessResolutionOfPT}, it follows that 
    \[
        h_{\tau_{1}}(Qz_{i}:M z_{i})=h_{\tau_{1}}(Nz_{i}:M z_{i})\leq 0.\qedhere
    \]
\end{proof}

\begin{lem}\label{lem: independ of the proj}
Let $M$ be a von Neumann algebra equipped with a faithful normal semifinite tracial weight $\tau$, and let $Q\leq M$ with $\tau|_Q$ semifinite. Suppose there exists a net of projections $(q_i)_{i\in I} \subseteq \dom(\tau|_Q)$ converging to $1$ in the strong operator topology so that $h_{\tau}(q_{i}Qq_{i}:q_{i}Mq_{i})\leq 0$. Then for every $q\in \dom(\tau|_Q)$ we have that $h_{\tau}(qQq:qMq)\leq 0$.
\end{lem}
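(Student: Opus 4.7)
The plan is to reduce the problem to an application of the corner lemma (\cite[Lemma 3.3]{UselessResolutionOfPT}) via an argument that enlarges the given net so that it eventually dominates the fixed projection $q$. The corner lemma only goes one direction (corners inherit non-positivity of $h_\tau$ from the ambient finite-tracial inclusion), so the first task is to produce, for each given $q \in \dom(\tau|_Q)$, some projection $r \in \dom(\tau|_Q)$ with $r \geq q$ for which the hypothesis $h_\tau(rQr:rMr) \leq 0$ still holds. Once such an $r$ is in hand, the conclusion $h_\tau(qQq:qMq)\leq 0$ follows by applying the corner lemma to the projection $q \in rQr$ (noting $q = rqr$).

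To produce such an $r$, I would first use the tracial case of Lemma~\ref{lem:semifinite_iff_approx_proj_unit} to refine $(q_i)$ to an increasing net. Since $q_i \nearrow 1$ strongly in general does \emph{not} imply $q\leq q_i$ eventually (for instance, when $Q$ is a type $\mathrm{I}_\infty$ factor with a rank-one projection whose range is not contained in any of the finite-rank $q_i$), the key is to pass instead to the net of finite joins $\bigvee_{j\in F}q_j$, possibly augmented by $q$. For this we need the auxiliary \emph{join stability} property: if $p_1,p_2 \in \dom(\tau|_Q)$ are projections in $Q$ with $h_\tau(p_jQp_j:p_jMp_j)\leq 0$ for $j=1,2$, then $h_\tau((p_1\vee p_2)Q(p_1\vee p_2):(p_1\vee p_2)M(p_1\vee p_2))\leq 0$. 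Granting this, including $q$ in the finite joins produces the desired $r\geq q$ with non-positive $h_\tau$.

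The main obstacle is establishing the join stability property. I would attempt this by decomposing $p_1\vee p_2 = p_1 + (p_1\vee p_2 - p_1)$ orthogonally, noting that $r := p_1\vee p_2 - p_1 \in Q$ is a projection Murray--von Neumann equivalent in $M$ to $p_2 - p_1\wedge p_2$; the corner lemma applied inside $p_2Qp_2$ gives $h_\tau((p_2-p_1\wedge p_2)Q(p_2-p_1\wedge p_2):(p_2-p_1\wedge p_2)M(p_2-p_1\wedge p_2))\leq 0$, and this should transfer to $rQr \leq rMr$ via standard invariance of $h_\tau$ under inner conjugation in $M$ and the $2\times 2$ matrix trick. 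Once both diagonal corners $p_1Qp_1$ and $rQr$ of $(p_1\vee p_2)Q(p_1\vee p_2)$ have non-positive $1$-bounded entropy, the full inclusion does too because the off-diagonal bimodules $p_1Qr$ and $rQp_1$ contribute only finitely many microstates up to the dimension of the ambient finite tracial von Neumann algebra $(p_1\vee p_2)M(p_1\vee p_2)$; this last subadditivity step is the delicate piece and will likely rely on the microstate definition of $h_\tau$ and the fact, available in the finite setting, that $h_\tau$ is preserved under passage to normalizers (as used in the proof of Lemma~\ref{lem:independ of the trace finite case}), applied to the normalizer $\{p_1, r\}' \cap U((p_1\vee p_2)M(p_1\vee p_2))$ that swaps and combines the corners.
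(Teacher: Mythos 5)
There is a genuine gap, and it is a circularity at the heart of your strategy. Your plan is to produce a projection $r\geq q$ with $h_\tau(rQr:rMr)\leq 0$ and then apply the corner lemma \cite[Lemma 3.3]{UselessResolutionOfPT} to $q\in rQr$; that last step is fine. But the only way you propose to produce such an $r$ is to take $r=q\vee q_{i_1}\vee\cdots\vee q_{i_k}$ and invoke your ``join stability'' property, whose hypotheses require $h_\tau(pQp:pMp)\leq 0$ for \emph{each} projection being joined --- including $q$ itself. That is exactly the conclusion you are trying to prove. If instead you omit $q$ from the join, then, as you yourself observe, neither the $q_i$ nor their finite joins need ever dominate $q$, so no usable $r$ is obtained. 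Either way the argument does not close.

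There is a secondary problem with the join-stability step itself: the claim that the off-diagonal pieces $p_1Qr$ and $rQp_1$ ``contribute only finitely many microstates'' is not a property of $1$-bounded entropy that you can point to, and the normalizer you propose, $\{p_1,r\}'\cap\cU((p_1\vee p_2)M(p_1\vee p_2))$, normalizes the pair of projections rather than the algebra $p_1Qp_1\oplus rQr$, so \cite[Theorem 3.8]{Hayes2018} does not apply in the form you need. The paper's proof circumvents both issues differently: it decomposes the center of $Q$ into a diffuse part (handled by \cite[Example 3]{FreePinsker}) and factor summands $Qf_j$, and within each factor summand it uses partial isometries $v_1,\dots,v_k\in Qf_j$ with $v_\ell^*v_\ell\leq q_if_j$ and $\sum_\ell v_\ell v_\ell^*=(q\vee q_i)f_j$ to build an auxiliary algebra $Q_0=q_iQq_if_j+\sum_\ell v_\ell v_\ell^*Qv_\ell v_\ell^*$ out of pieces \emph{equivalent to corners of $q_iQq_i$}, whose entropy is controlled by hypothesis and by the cutting estimate of \cite[Proposition A.13]{Hayes2018}. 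The conclusion for $qQqf_j$ then follows because $qQqf_j\subseteq qW^*(\cN(Q_0))q$ (using factoriality of $Qf_j$) together with normalizer absorption \cite[Theorem 3.8]{Hayes2018} and one more application of the corner lemma. The essential idea you are missing is this transport of the hypothesis from $q_i$ to $q$ via partial isometries \emph{inside $Q$} followed by normalizer absorption, rather than trying to dominate $q$ by a larger projection known in advance to have the property.
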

\begin{proof}
Fix a $q\in \dom(\tau|_Q)$. Decomposing the center of $Q$ into diffuse and atomic pieces gives a family of projections $\{f_0\}\cup \{f_j\colon j\in J\}\subseteq Q'\cap Q$ (potentially with $J=\emptyset$) so that:
    \begin{itemize}
    \item $f_0+\sum_{j\in J}f_{j}=1$;
    \item $Qf_0$ either has diffuse center or is $\{0\}$;
    \item $Qf_j$ is a factor for every $j\in J$.    
\end{itemize}
Setting $N:=\bigvee_{k\in \{0\}\cup J} f_k M f_k$, we have
    \[
        h_{\tau}(qQq:qMq)\leq h_{\tau}(qQq:qNq),
    \]
so that it suffices to show $h_{\tau}(qQq:qNq)\leq 0$.

By \cite[Example 3]{FreePinsker} and \cite[Lemma 3.3]{UselessResolutionOfPT}, we have that $h_\tau(qQqf_0:f_0qMqf_0)\leq 0$. By the proof of \cite[Proposition A.13]{Hayes2018} we have that 
    \[
        h_\tau(qQq:qNq)\leq \sum_{j\in J}\frac{\tau(qf_{j})^{2}}{\tau(q)^{2}}h_\tau(qQqf_{j}:f_{j}qMqf_{j})\leq 0,
    \]
so it suffices to show that $h(qQqf_{j}:f_{j}qMqf_{j})\leq 0$ for all $j\in J$. This is true if $Qf_{j}$ is atomic or if $qf_{j}=0$, so we only focus on the  case that $Qf_{j}$ is diffuse and $qf_{j}\ne 0$. Since $q_{i}\to 1$, we find an $i$ so that $q_{i}f_{j}\ne 0$. If $(q\vee q_{i})f_{j}=q_{i}f_{j}$, then $qf_{j}\leq q_{i}f_{j}$ and the result follows from \cite[Lemma 3.3]{UselessResolutionOfPT}. So we further assume that $(q\vee q_{i})f_{j}\ne q_{i}f_{j}$. Since $q,q_{i}$ have finite trace and $Qf_{j}$ is a factor, we may find finitely many partial isometries $v_{1},\cdots,v_{k}\in Qf_{j}$ with  $v_{1}=q_{i}$, $v_{\ell}^{*}v_{\ell}\leq q_{i}f_{j}$, and 
    \[
        \sum_{\ell=1}^k v_{\ell}v_{\ell}^{*}=(q\vee q_{i})f_{j}.
    \]
Set
    \[
        Q_{0}=q_{i}Qq_{i}f_{j}+\sum_{\ell=1}^k v_{\ell}v_{\ell}^{*}Qv_{\ell}v_{\ell}^{*}.
    \]
Since the inclusion $v_{\ell}v_{\ell}^{*}Qv_{\ell}v_{\ell}^{*}\subseteq v_{\ell}v_{\ell}^{*}Mv_{\ell}v_{\ell}^{*}$ is isomorphic with $v_{\ell}^{*}v_{\ell}Qv_{\ell}^{*}v_{\ell}\subseteq v_{\ell}^{*}v_{\ell}Mv_{\ell}^{*}v_{\ell}$, we have
    \[
        h_\tau(v_{\ell}v_{\ell}^{*}Qv_{\ell}v_{\ell}^{*}:v_{\ell}v_{\ell}^{*}Mv_{\ell}v_{\ell}^{*})=h_{\tau}(v_{\ell}^{*}v_{\ell}Qv_{\ell}^{*}v_{\ell}:v_{\ell}^{*}v_{\ell}Mv_{\ell}^{*}v_{\ell})\leq 0,
    \]
where in the last step we use \cite[Lemma 3.3]{UselessResolutionOfPT} and that 
    \[
        h_\tau(q_{i}Qq_{i}:q_{i}Mq_{i})\leq 0.
    \]
Thus by the  proof of \cite[Proposition A.13]{Hayes2018}
    \begin{align*}
       h_\tau(Q_{0}: f_j(q\vee q_{i}) M (q\vee q_{i})f_{j})&\leq h\left(Q_{0}:\left(\sum_{\ell=1}^k v_{\ell}v_{\ell}^{*}\right)M\left(\sum_{\ell=1}^k v_{\ell}v_{\ell}^{*}\right)\right)\\
       &\leq \sum_{\ell=1}^{k}\frac{\tau(v_{\ell}^{*}v_{\ell})^{2}}{\tau((q\vee q_{i})f_{j})^{2}}h_\tau(v_{\ell}v_{\ell}^{*}Qv_{\ell}v_{\ell}^{*}:v_{\ell}v_{\ell}^{*}Mv_{\ell}v_{\ell}^{*})\\
       &\leq 0.
    \  
    \end{align*}
Since $Qf_{j}$ is a factor, we have \[qQqf_{j}\subseteq qW^{*}(\cN_{f_j(q\vee q_{i})M(q\vee q_{i})f_{j}}(Q_{0}))q,\] and thus by \cite[Lemma 3.3]{UselessResolutionOfPT}, and \cite[Theorem 3.8]{Hayes2018} it follows that $h_\tau(qQqf_{j}:f_{j}qMqf_{j})\leq 0$.
\end{proof}

\begin{cor}\label{cor:independent of the proj}
Let $M$ be a semifinite von Neumann algebra equipped with two faithful normal semifinite tracial weights $\tau_1,\tau_2$, and let $Q\leq M$ with $\tau_i|_Q$ semifinite for each $i=1,2$. Then $h_{\tau_1}(q_{1}Qq_{1}:q_{1}Mq_{1})\leq 0$ for every projection $q_1\in \dom(\tau_1|_Q)$ if and only if $h_{\tau_2}(q_{2}Qq_{2}:q_{2}Mq_{2})\leq 0$  for every projection $q_{2}\in \dom(\tau_2|_Q)$. 
\end{cor}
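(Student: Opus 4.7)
The plan is to use symmetry to fix a direction: assume $h_{\tau_1}(q_1 Q q_1 : q_1 M q_1) \leq 0$ for every projection $q_1 \in \dom(\tau_1|_Q)$, and derive the same statement for $\tau_2$. Since $\tau_2|_Q$ is semifinite and tracial, Lemma~\ref{lem:semifinite_iff_approx_proj_unit} produces an increasing net $(q_i)_{i\in I} \subseteq \dom(\tau_2|_Q)$ of projections in $Q$ converging strongly to $1$. By Lemma~\ref{lem: independ of the proj} applied to $\tau_2$, it suffices to show $h_{\tau_2}(q_i Q q_i : q_i M q_i) \leq 0$ for every $i$, since this gives the required hypothesis for all $q_2 \in \dom(\tau_2|_Q)$.

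Fix such an $i$. Because $\tau_1|_Q$ is semifinite and tracial, a standard Zorn's lemma argument yields an orthogonal family of projections $\{p_{i,\alpha}\} \subseteq Q$ with $p_{i,\alpha} \leq q_i$, $\tau_1(p_{i,\alpha}) < \infty$, and $\sum_\alpha p_{i,\alpha} = q_i$. For each finite set $F$ of indices, set $p_{i,F} := \sum_{\alpha \in F} p_{i,\alpha}$; then $p_{i,F}$ is a projection in $Q$ with $\tau_1(p_{i,F}) < \infty$ and $\tau_2(p_{i,F}) \leq \tau_2(q_i) < \infty$, and $p_{i,F} \to q_i$ strongly along the directed set of finite subsets.

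By the standing hypothesis, $h_{\tau_1}(p_{i,F} Q p_{i,F} : p_{i,F} M p_{i,F}) \leq 0$. The corner $p_{i,F} M p_{i,F}$ is a finite von Neumann algebra carrying the two faithful normal tracial states $\tau_1^{(p_{i,F})}$ and $\tau_2^{(p_{i,F})}$, so Lemma~\ref{lem:independ of the trace finite case} transfers the bound to $\tau_2$, giving $h_{\tau_2}(p_{i,F} Q p_{i,F} : p_{i,F} M p_{i,F}) \leq 0$.

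Now working inside the \emph{finite} von Neumann algebra $q_i M q_i$ with faithful normal tracial state obtained by rescaling $\tau_2$, the net $(p_{i,F})_F$ lies in $q_i Q q_i = \dom(\tau_2^{(q_i)}|_{q_i Q q_i})$ and converges strongly to the unit $q_i$ of $q_i M q_i$. Applying Lemma~\ref{lem: independ of the proj} inside this finite algebra yields $h_{\tau_2}(q_i Q q_i : q_i M q_i) \leq 0$, which by the first paragraph completes the proof. The main subtlety is the asymmetry of the two semifinite weights' domains on $Q$: the trick is to cut down further to a common finite corner $p_{i,F}$ (finite under both traces) so that the finite-state lemma \ref{lem:independ of the trace finite case} becomes available, and then reassemble via Lemma~\ref{lem: independ of the proj} applied twice, once inside $q_i M q_i$ and once in $M$.
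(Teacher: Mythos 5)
Your proof is correct, and it uses the same two key ingredients as the paper's argument (Lemma~\ref{lem:independ of the trace finite case} on finite corners that are finite under both traces, and Lemma~\ref{lem: independ of the proj} to propagate the bound from a net of such corners). The only genuine difference is how the common finite-trace projections are produced: the paper observes that $(\tau_1+\tau_2)|_Q$ is semifinite (via the identity $xQy\subseteq \dom(\tau_1|_Q)\cap\dom(\tau_2|_Q)$ for $x\in\dom(\tau_1|_Q)$, $y\in\dom(\tau_2|_Q)$), which immediately yields a single increasing net of projections finite under both weights simultaneously, so that Lemma~\ref{lem:independ of the trace finite case} and then Lemma~\ref{lem: independ of the proj} are each applied once. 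You instead build the common corners by a nested construction — first $q_i$ finite under $\tau_2$, then a maximal orthogonal family under $q_i$ finite under $\tau_1$ — at the cost of a second, internal application of Lemma~\ref{lem: independ of the proj} inside the finite algebra $q_iMq_i$ to reassemble. Both routes are valid; the paper's is more economical, while yours avoids having to check semifiniteness of the sum of the two weights on $Q$.
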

\begin{proof}
We first note that $(\tau_1+\tau_2)|_Q$ is semifinite. Indeed, for any $x\in \dom(\tau_1|_Q)$ and $y\in \dom(\tau_2|_Q)$ we have
    \[
        xQy \in \dom(\tau_1|_Q)\cap \dom(\tau_2|_Q)
    \]
by traciality. The latter set is of course contained in $\dom(\tau_1+\tau_2|_Q)$, and hence the semifiniteness of $\tau_1|_Q$ and $\tau_2|_Q$ implies that of $(\tau_1+\tau_2)|_Q$. Consequently, Lemma~\ref{lem:semifinite_iff_approx_proj_unit} yields an increasing net of projections $(q_i)_{i\in I}\subseteq \dom(\tau_1+\tau_2|_Q)$ such that $q_{i}\to 1$ in strong operator topology. Lemma~\ref{lem:independ of the trace finite case} implies $h_{\tau_1}(q_i Q q_i: q_i Mq_i)\leq 0$ if and only if $h_{\tau_2}(q_i Q q_i : q_i M q_i) \leq 0$ for all $i\in I$, and so the conclusion then follows from Lemma~\ref{lem: independ of the proj}.
\end{proof}

Based on the previous corollary, we establish the following terminology.

\begin{defn}\label{defn:semifintie zero entropy}
Let $M$ be a semifinite von Neumann algebra. 
We say that $Q\leq  M$ is an \textbf{entropy free inclusion} if for some (equivalently, every, by Corollary \ref{cor:independent of the proj}) faithful normal semifinite tracial weight $\tau$ on $M$ with $\tau|_Q$ semifinite we have $h_\tau(pQp:pMp)\leq 0$ for every $p\in \dom(\tau|_Q)$.
\end{defn}

Note that by Lemma~\ref{lem: independ of the proj}, we can instead require that there exists some net of projections $(q_i)_{i\in I} \subseteq \dom(\tau|_Q)$ converging to $1$ in the strong operator topology so that $h_\tau(q_{i}Qq_{i}:q_{i}Mq_{i})\leq 0$ for all $i\in I$.

\begin{defn}\label{defn: Pinsker algebra}
 Let $M$ be a semifinite von Neumann algebra. We say that $P\leq M$ is a \textbf{Pinsker algebra in $M$} if $P\leq M$ is a maximal entropy free inclusion. That is, $P\leq M$ is an entropy free inclusion, and whenever $Q\leq M$ is an entropy free inclusion with $Q\supseteq P$ then $P=Q$.
\end{defn}

\begin{prop}\label{prop:PT_II_infty}
Let $M$ be a semifinite von Neumann algebra and let $Q\leq M$ be an entropy free inclusion. 
If $Q$ is diffuse, then there exists a unique Pinsker algebra $P\leq M$ satisfying $P\supseteq Q$. 
\end{prop}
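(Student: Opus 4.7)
The plan is to use a Zorn's lemma argument for existence together with a subadditivity-of-entropy argument for uniqueness. Both parts reduce to the tracial finite setting via the corner constructions of Lemma~\ref{lem: independ of the proj}, where the results of \cite{Hayes2018} apply directly.

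For existence, fix a faithful normal semifinite tracial weight $\tau$ on $M$ with $\tau|_Q$ semifinite, and consider the poset
\[
\mathcal{P}_Q := \{R : Q \leq R \leq M,\ R \text{ entropy free in } M\}
\]
ordered by inclusion. To apply Zorn's lemma I would show that every chain $(Q_\alpha)$ in $\mathcal{P}_Q$ has an upper bound in $\mathcal{P}_Q$. Setting $Q_\infty := \bigl(\bigcup_\alpha Q_\alpha\bigr)''$, semifiniteness of $\tau|_{Q_\infty}$ follows from Lemma~\ref{lem:semifinite_iff_approx_proj_unit}, since $\dom(\tau|_Q) \subseteq \dom(\tau|_{Q_\infty})$ already contains a net of projections converging SOT to $1$. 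For any projection $p \in \dom(\tau|_Q)$, monotonicity of the $1$-bounded entropy (larger subalgebra $\Rightarrow$ smaller entropy) applied inside the finite algebra $pMp$ gives
\[
h_\tau(pQ_\infty p : pMp) \leq h_\tau(pQ_\alpha p : pMp) \leq 0,
\]
so by Lemma~\ref{lem: independ of the proj}, $Q_\infty \in \mathcal{P}_Q$. Any maximal element $P$ produced by Zorn is automatically a Pinsker algebra in $M$: for any entropy free $R \supsetneq P$ we would have $R \supseteq Q$ and hence $R \in \mathcal{P}_Q$, contradicting maximality.

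For uniqueness, suppose $P_1, P_2 \supseteq Q$ are both Pinsker algebras. I would show $W^*(P_1, P_2)$ is entropy free in $M$; by maximality of each $P_i$ this forces $P_1 = W^*(P_1, P_2) = P_2$. Fix a projection $p \in \dom(\tau|_Q)$. Since $Q$ is diffuse, so is $pQp$ (any minimal projection of $pQp$ would be a minimal projection of $Q$), giving a common diffuse subalgebra $pQp \subseteq pP_1 p \cap pP_2 p$ inside the finite algebra $pMp$. Applying the subadditivity of $1$-bounded entropy in the presence of a common diffuse subalgebra \cite{Hayes2018},
\[
h_\tau(pP_1 p \vee pP_2 p : pMp) \leq h_\tau(pP_1 p : pMp) + h_\tau(pP_2 p : pMp) \leq 0.
\]
Since $pP_1 p \vee pP_2 p \subseteq pW^*(P_1, P_2)p$, another application of monotonicity yields $h_\tau(pW^*(P_1, P_2)p : pMp) \leq 0$, and Lemma~\ref{lem: independ of the proj} finishes the argument.

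The main obstacle is this last containment step: $pW^*(P_1, P_2)p$ can strictly contain $pP_1 p \vee pP_2 p$, because words of the form $p x_1 (1-p) x_2 p$ with $x_i \in P_i$ need not lie in the smaller algebra. Fortunately, monotonicity of $1$-bounded entropy under enlarging the subalgebra absorbs this gap for free, so no equality of corners is required and the proof goes through as sketched.
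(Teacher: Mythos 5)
There is a genuine gap, and it occurs at the two places where you invoke ``monotonicity.'' The $1$-bounded entropy in the presence is monotone \emph{increasing} in the subalgebra: for $N_1\leq N_2\leq M$ one has $h(N_1:M)\leq h(N_2:M)$ (this is exactly how Property P2 of \cite{FreePinsker} is used in Proposition~\ref{prop: nonpositive monotone}). Your slogan ``larger subalgebra $\Rightarrow$ smaller entropy'' is backwards. Consequently, in the Zorn step you cannot deduce $h_\tau(pQ_\infty p:pMp)\leq 0$ from $h_\tau(pQ_\alpha p:pMp)\leq 0$, since the inequality you get from monotonicity is $h_\tau(pQ_\alpha p:pMp)\leq h_\tau(pQ_\infty p:pMp)$; controlling increasing unions requires the separate union property of $h$ (and even then Zorn only produces a \emph{maximal}, not a largest, element). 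The same reversal sinks the final step of your uniqueness argument: from $h_\tau(pP_1p\vee pP_2p:pMp)\leq 0$ and $pP_1p\vee pP_2p\subseteq pW^*(P_1,P_2)p$, monotonicity gives a lower bound on $h_\tau(pW^*(P_1,P_2)p:pMp)$, not an upper bound. The containment gap you correctly identify --- that $pW^*(P_1,P_2)p$ can strictly contain $pP_1p\vee pP_2p$ because of words passing through $1-p$ --- is precisely the hard point, and it is \emph{not} absorbed for free; handling it is the content of the partial-isometry and normalizer arguments in Lemma~\ref{lem: independ of the proj}.

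For comparison, the paper sidesteps both issues by never forming joins or unions of abstract entropy-free algebras. It takes an increasing net of finite-trace projections $q_i\in\dom(\tau|_Q)$, lets $P_i$ be the (existing, unique) Pinsker algebra of the diffuse algebra $q_iQq_i$ inside the finite algebra $q_iMq_i$, uses the compatibility $q_iP_jq_i=P_i$ of finite Pinsker algebras under corners to define $P=\overline{\bigcup_i P_i}^{SOT}$, and checks $P$ is entropy free via Lemma~\ref{lem: independ of the proj}. Uniqueness and maximality come simultaneously: for any entropy-free $N\supseteq Q$, each corner $q_iNq_i$ is an entropy-free subalgebra of $q_iMq_i$ containing the diffuse $q_iQq_i$, hence lies in $P_i$ by uniqueness of the finite Pinsker algebra, and so $N\subseteq P$ since $q_ixq_i\to x$ strongly. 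This produces the \emph{largest} entropy-free inclusion containing $Q$ directly, with no Zorn argument and no join of corners. If you want to salvage your outline, you would need to replace both monotonicity appeals with the corner-Pinsker-algebra machinery, at which point you have essentially reproduced the paper's proof.
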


\begin{proof}
Fix a faithful normal semifinite tracial weight $\tau$ on $M$ such that $\tau|_Q$ is semifinite, and use Lemma~\ref{lem:semifinite_iff_approx_proj_unit} to find an increasing net of projections $(q_i)_{i\in I}\subseteq \dom(\tau|_Q)$ that converges to $1$ in the strong operator topology. For each $i\in I$, let $P_{i}$ be the Pinsker algebra for $q_{i} Q q_{i}$ inside of $q_i M q_i$, which exists by \cite[Lemma 3.3]{UselessResolutionOfPT}. For $i\leq j$ we have $q_i P_j q_i = P_i$ by behavior of Pinsker algebras under corners (see \cite[Propositon 3.4]{UselessResolutionOfPT}). Thus if we set
    \[
        P=\overline{\bigcup_{i\in I} P_{i}}^{SOT},
    \]
then $P\leq M$ is an entropy free inclusion by Lemma~\ref{lem: independ of the proj} and $Q\leq P$. If $N\leq M$ is another entropy free inclusion with $Q\leq N$, then for any $i\in I$ we have $q_i Q q_i \leq q_i N q_i$. Moreover, $q_i N q_i$ is diffuse and  $h_\tau(q_i N q_i:q_i M q_i)\leq 0$. 
So by uniqueness of the Pinsker algebra in the finite case (see the discussion following \cite[Definition 2.5]{UselessResolutionOfPT}), we must have $q_i N q_i \subseteq P_i$. It follows that $N \leq P$, and hence $P$ is both a Pinsker algebra and unique.
\end{proof}

\subsection{Pinsker algebras for general weights}

We will now adapt the results from the previous subsection to arbitrary von Neumann algebras $M$ equipped with a choice of faithful normal semifinite weight $\varphi$ by considering the representation of the continuous core given by $M\rtimes_{\sigma^\varphi}\bR$. Recall from the discussion following Lemma~\ref{lem:common_expectation_canonical_intertwiner} that $Q\rtimes_{\sigma^\varphi}\bR \leq M\rtimes_{\sigma^\varphi}\bR$ is with $\tau_\varphi$-expectation whenever $Q\leq M$ is with $\varphi$-expectation, where $\tau_\varphi$ is the tracial weight induced by $\varphi$.

\begin{defn}\label{defn:varphi_entropy_free_inclusion}
Let $M$ be a von Neumann algebra equipped with a faithful normal semifinite weight $\varphi$. We say that $Q\leq M$  is a \textbf{$\varphi$-entropy free inclusion} if it is with $\varphi$-expectation and $Q\rtimes_{\sigma^{\varphi}}\bR \leq M\rtimes_{\sigma^{\varphi}}\bR$ is an entropy free inclusion in the sense of Definition~\ref{defn:semifintie zero entropy}.
\end{defn}

\begin{remark}\label{rem: entropy just isn't what it used to be}
If $\tau$ is a faithful normal semifinite tracial weight on $M$, we warn the reader that $Q\leq M$ being an entropy free inclusion is \emph{not} equivalent to being a $\tau$-entropy free inclusion. Indeed, the traciality of $\tau$ implies $M\rtimes_{\sigma^\tau} \bR \cong M\bar\otimes L(\bR)$, and similarly for $Q\rtimes_{\sigma^{\tau}} \bR$. Using the trace $\tau\otimes \tau_{\bR}$ on $M\bar\otimes L(\bR)$, one has
    \[
        h_{\tau\otimes \tau_{\bR}}( Q\bar\otimes L(\bR): M\bar\otimes L(\bR)) \leq 0
    \]
always by \cite[Example 2]{FreePinsker}. Thus $Q \leq M$ is always $\tau$-entropy free, but need not be entropy free in the sense of Definition~\ref{defn:semifintie zero entropy}.
\end{remark}

We have the following monotonicity property of entropy free inclusions.

\begin{prop}\label{prop: nonpositive monotone}
Let $M_1$ be a von Neumann algebra equipped with a faithful normal semifinite weight $\varphi$. Suppose $Q_{1}\leq Q_{2}\leq  M_2\leq M_1$ are inclusions with $\varphi$-expectation. If $Q_{2}\leq M_2$ is a $\varphi$-entropy free inclusion, then so is $Q_{1}\leq M_1$.
\end{prop}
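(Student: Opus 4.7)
The plan is to reduce the claim to the two standard monotonicity properties of $1$-bounded entropy, once we have set up the corresponding chain of inclusions in the continuous core. First I would observe that, by hypothesis, each step in $Q_1 \leq Q_2 \leq M_2 \leq M_1$ is with $\varphi$-expectation, so composing the three conditional expectations shows $Q_1 \leq M_1$ is with $\varphi$-expectation. Consequently, by the discussion following Lemma~\ref{lem:common_expectation_canonical_intertwiner}, each of $Q_1, Q_2, M_2$ is $\sigma^{\varphi}$-invariant in $M_1$, the cores sit in the chain
\[
Q_1 \rtimes_{\sigma^\varphi}\bR \;\leq\; Q_2 \rtimes_{\sigma^\varphi}\bR \;\leq\; M_2 \rtimes_{\sigma^\varphi}\bR \;\leq\; M_1 \rtimes_{\sigma^\varphi}\bR,
\]
and every inclusion in this chain is with $\tau_\varphi$-expectation, where $\tau_\varphi$ denotes the tracial weight on $M_1 \rtimes_{\sigma^\varphi}\bR$ induced by $\varphi$. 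In particular, $\tau_\varphi$ restricts to a faithful normal semifinite tracial weight on each term.

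Next, fix any projection $q \in \dom(\tau_\varphi|_{Q_1 \rtimes_{\sigma^\varphi}\bR})$. Then $q$ also lies in $\dom(\tau_\varphi|_{Q_2 \rtimes_{\sigma^\varphi}\bR})$ because $Q_1 \rtimes \bR \leq Q_2 \rtimes \bR$. By the assumption that $Q_2 \leq M_2$ is $\varphi$-entropy free, together with Corollary~\ref{cor:independent of the proj}, we have
\[
h_{\tau_\varphi}\!\bigl(q(Q_2 \rtimes_{\sigma^\varphi}\bR)q \,:\, q(M_2 \rtimes_{\sigma^\varphi}\bR)q\bigr) \leq 0.
\]
Now apply monotonicity of the $1$-bounded entropy separately in each slot, cited from \cite[Lemma 3.3]{UselessResolutionOfPT} (or the basic properties in \cite{Hayes2018}): shrinking the subalgebra and enlarging the ambient algebra can only decrease the entropy, as long as the relevant inclusions are with expectation, which they are here by the first step. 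Combining these two monotonicity steps gives
\[
h_{\tau_\varphi}\!\bigl(q(Q_1 \rtimes_{\sigma^\varphi}\bR)q \,:\, q(M_1 \rtimes_{\sigma^\varphi}\bR)q\bigr) \;\leq\; h_{\tau_\varphi}\!\bigl(q(Q_2 \rtimes_{\sigma^\varphi}\bR)q \,:\, q(M_2 \rtimes_{\sigma^\varphi}\bR)q\bigr) \;\leq\; 0.
\]
Since $q$ was an arbitrary projection in $\dom(\tau_\varphi|_{Q_1 \rtimes_{\sigma^\varphi}\bR})$, Definition~\ref{defn:semifintie zero entropy} yields that $Q_1 \rtimes_{\sigma^\varphi}\bR \leq M_1 \rtimes_{\sigma^\varphi}\bR$ is an entropy free inclusion, and hence $Q_1 \leq M_1$ is $\varphi$-entropy free.

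I expect no serious obstacle: the entire argument is formal once we know that $\varphi$-expectation lifts to $\tau_\varphi$-expectation on the cores and that $1$-bounded entropy is monotone in both slots for inclusions with expectation. The only point that deserves care is making sure the compression by $q$ preserves the with-expectation property in the relevant corners, but this is automatic since $q$ sits in the smallest algebra of the chain and the conditional expectations restrict to the corresponding corners.
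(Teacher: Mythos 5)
Your proof is correct and follows essentially the same route as the paper: pass to the continuous cores and apply monotonicity of $1$-bounded entropy in both arguments (the paper cites this as Property P2 of \cite{FreePinsker}; your reference to \cite[Lemma 3.3]{UselessResolutionOfPT} is really the corner lemma, but your fallback to the basic properties in \cite{Hayes2018} covers it). The only cosmetic difference is that you verify the entropy bound for every projection in $\dom(\tau_\varphi|_{Q_1\rtimes_{\sigma^\varphi}\bR})$ directly, using Corollary~\ref{cor:independent of the proj} on the $Q_2\leq M_2$ side, whereas the paper checks it on a net of projections converging to $1$ and invokes Lemma~\ref{lem: independ of the proj}.
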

\begin{proof}
Let $\tau_\varphi$ be the faithful normal semifinite tracial weight on $M_1\rtimes_{\sigma^\varphi} \bR$ induced by $\varphi$, and let $(q_i)_{i\in I} \subseteq \dom(\tau_\varphi|_{Q_1\rtimes \bR})$ be a net of projections converging to $1$ in the strong operator topology. Then for each $i\in I$ we have
    \[
        h_{\tau_\varphi}( q_i (Q_1\rtimes_{\sigma^\varphi}\bR) q_i: q_i (M_1\rtimes_{\sigma^\varphi}\bR) q_i)\leq h_{\tau_\varphi}( q_i (Q_2\rtimes_{\sigma^\varphi}\bR) q_i: q_i (M_2 \rtimes_{\sigma^\varphi}\bR) q_i) \leq 0,
    \]
where the first inequality follows \cite[Property P2]{FreePinsker}. Thus Lemma~\ref{lem: independ of the proj} implies $Q_1\leq M_1$ is $\varphi$-entropy free.
\end{proof}

We will obtain many examples of entropy free inclusions from general permanence properties of entropy free inclusions, such as closure under taking (generalizations of) the normalizer or under joins with diffuse intersection (see Section~\ref{sec: Pinsker for weights}). For now let us list one class of examples.

\begin{prop}\label{prop:amenable}
Let $M$ be a von Neumann algebra equipped with a faithful normal semifinite weight $\varphi$. If $Q \leq M$ is with $\varphi$-expectation and $Q$ is amenable, then $Q\leq M$ is a $\varphi$-entropy free inclusion.
\end{prop}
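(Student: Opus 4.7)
The plan is to verify the defining condition of Definition~\ref{defn:semifintie zero entropy} for the inclusion $Q \rtimes_{\sigma^\varphi} \bR \leq M \rtimes_{\sigma^\varphi} \bR$, using the tracial weight $\tau_\varphi$ induced by $\varphi$. Since $Q \leq M$ is with $\varphi$-expectation, the discussion following Lemma~\ref{lem:common_expectation_canonical_intertwiner} provides a $\tau_\varphi$-preserving faithful normal conditional expectation $M \rtimes_{\sigma^\varphi} \bR \to Q \rtimes_{\sigma^\varphi} \bR$, so in particular $\tau_\varphi|_{Q \rtimes \bR}$ is semifinite and Definition~\ref{defn:semifintie zero entropy} is applicable.

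The first step is to observe that $Q \rtimes_{\sigma^\varphi} \bR$ is amenable: the group $\bR$ is amenable as a locally compact group, and $Q$ is amenable by hypothesis, so the crossed product by its modular action is amenable by the standard permanence of amenability for von Neumann algebras under crossed products by amenable groups. Consequently, for any projection $p \in \dom(\tau_\varphi|_{Q \rtimes \bR})$, the corner $p(Q \rtimes_{\sigma^\varphi} \bR)p$ is a \emph{finite} amenable tracial von Neumann algebra sitting inside the finite tracial von Neumann algebra $p(M \rtimes_{\sigma^\varphi} \bR)p$.

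The second step is to invoke the fundamental fact, essentially due to Jung \cite{Jung2007} and recast in the $1$-bounded entropy framework by Hayes \cite{Hayes2018}, that every amenable subalgebra $N$ of a finite tracial von Neumann algebra $L$ satisfies $h(N:L) \leq 0$. Applied to $p(Q \rtimes_{\sigma^\varphi} \bR)p \leq p(M \rtimes_{\sigma^\varphi} \bR)p$, this yields
\[
    h_{\tau_\varphi}\!\bigl(p(Q \rtimes_{\sigma^\varphi} \bR)p : p(M \rtimes_{\sigma^\varphi} \bR)p\bigr) \leq 0,
\]
and since $p$ was an arbitrary projection of finite $\tau_\varphi$-trace in $Q \rtimes_{\sigma^\varphi} \bR$, this verifies Definition~\ref{defn:semifintie zero entropy} and hence the proposition. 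There is no serious obstacle to the argument; the only point requiring any care is the amenability of $Q \rtimes_{\sigma^\varphi} \bR$, and this can be cross-checked via the embedding $M \rtimes_{\sigma^\varphi} \bR \hookrightarrow M \,\widebar\otimes\, B(L^2(\bR))$ from Takesaki duality, which restricts to $Q \rtimes_{\sigma^\varphi} \bR \hookrightarrow Q \,\widebar\otimes\, B(L^2(\bR))$ as a subalgebra with expectation of an amenable algebra.
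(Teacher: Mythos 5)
Your argument is correct and follows the paper's proof in all essentials: both reduce the statement to the amenability of $Q\rtimes_{\sigma^\varphi}\bR$, observe that every finite-trace corner is then amenable, and invoke the fact that amenable subalgebras of finite tracial von Neumann algebras have nonpositive $1$-bounded entropy (\cite[Property P3]{FreePinsker}, going back to Jung \cite{Jung2007}). The only divergence is the justification of amenability of the crossed product --- the paper cites \cite[Proposition 2.6]{ADAmenableCorr} to produce a conditional expectation from $(J_{\widetilde{\varphi}}QJ_{\widetilde{\varphi}})'$ onto $Q\rtimes_{\sigma^\varphi}\bR$, while you use the standard permanence under crossed products by amenable groups; just note that the expectation $Q\,\widebar{\otimes}\,B(L^2(\bR))\to Q\rtimes_{\sigma^\varphi}\bR$ obtained by averaging the bidual action over an invariant mean is non-normal, so this inclusion is not ``with expectation'' in the paper's sense, although a non-normal expectation is all that injectivity requires.
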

\begin{proof}
It suffices to show that $Q\rtimes_{\sigma^\varphi}\bR$ is amenable, since every corner will therefore be amenable and the result will then follow from \cite[Property P3]{FreePinsker}. Toward this end, denote the dual weight to $\varphi$ by $\widetilde{\varphi}$. 
% Arguing as in Lemma \ref{lemm: more PT nonsense}, we see that $Q$ is amenable.
By \cite[Proposition 2.6]{ADAmenableCorr} there is a (non-normal) conditional expectation from $(J_{\widetilde{\varphi}} Q J_{\widetilde{\varphi}})'\cap B(L^2(Q\rtimes_{\sigma^\varphi} \bR, \widetilde{\varphi}))\to Q\rtimes_{\sigma^\varphi} \bR$. Since $Q$ is injective, so is this commutant, and thus $Q\rtimes_{\sigma^\varphi} \bR$ is amenable.
\end{proof}

\begin{defn}
Let $M$ be a von Neumann algebra equipped with a faithful normal semifinite weight $\varphi$. We say that $P\leq M$ is a \textbf{$\varphi$-Pinsker algebra} if it is a maximal $\varphi$-entropy free inclusion. That is, $P\leq M$ is a $\varphi$-entropy free inclusion, and whenever $Q\leq M$ is a $\varphi$-entropy free inclusion with $Q\geq P$ then $P=Q$.
\end{defn}

\begin{thm}\label{thm:PT_III}
Let $M$ be a von Neumann algebra equipped with a faithful normal semifinite weight $\varphi$. If $Q\leq M$ is a $\varphi$-entropy free inclusion, then there exists a unique $\varphi$-Pinsker algebra $P\leq M$ satisfying $P\geq Q$.
\end{thm}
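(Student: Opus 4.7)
My plan is to reduce to the semifinite situation via the continuous core and then apply Proposition \ref{prop:PT_II_infty}. Set $\widetilde{M} := M\rtimes_{\sigma^\varphi}\bR$ and $\widetilde{Q} := Q\rtimes_{\sigma^\varphi}\bR$. By definition of $\varphi$-entropy free, $\widetilde{Q}\leq \widetilde{M}$ is an entropy free inclusion in the semifinite sense, and by Lemma \ref{lem:continuous_core_is_diffuse} $\widetilde{Q}$ is diffuse. Hence Proposition \ref{prop:PT_II_infty} furnishes a unique Pinsker algebra $\widetilde{P}\leq \widetilde{M}$ with $\widetilde{P}\supseteq \widetilde{Q}$. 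The task is to show $\widetilde{P}$ descends to an algebra $P\leq M$ that is a $\varphi$-Pinsker algebra over $Q$.

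The key technical step is to verify the hypotheses of Lemma \ref{lem:continuous_core_recognition_lemma} for $\widetilde{P}$. The containment $L_\varphi(\bR)\subseteq \widetilde{P}$ is immediate since $L_\varphi(\bR)\subseteq \widetilde{Q}\subseteq \widetilde{P}$. For $\theta$-invariance I use uniqueness of the semifinite Pinsker algebra: the dual action $\theta_s$ fixes $Q$ pointwise and sends $\lambda_\varphi(t)$ to a scalar multiple of itself by \eqref{eqn:dual_action_formula}, so $\theta_s(\widetilde{Q})=\widetilde{Q}$. Although $\theta_s$ rescales $\tau_\varphi$ by $e^{2\pi s}$, the normalizations $\tau_\varphi^{(q)}$ and $\tau_\varphi^{(\theta_s(q))}$ are intertwined by $\theta_s$ between the corners, so $\theta_s$ preserves $1$-bounded entropy under the identification of corners. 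Therefore $\theta_s(\widetilde{P})$ is again a Pinsker algebra containing $\widetilde{Q}$, and the uniqueness clause of Proposition \ref{prop:PT_II_infty} forces $\theta_s(\widetilde{P})=\widetilde{P}$. Setting $P:=(\widetilde{P})^\theta$, Lemma \ref{lem:continuous_core_recognition_lemma} yields $\widetilde{P}=P\rtimes_{\sigma^\varphi}\bR$ with $P$ a $\sigma^\varphi$-invariant subalgebra of $M$.

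Next I verify that $P\leq M$ is with $\varphi$-expectation, i.e.\ that $\varphi|_P$ is semifinite. Since $Q\leq P$ and $\varphi|_Q$ is semifinite (as $Q\leq M$ is with $\varphi$-expectation), Lemma \ref{lem:semifinite_iff_approx_proj_unit} produces projections in $\dom(\varphi|_Q)\subseteq \dom(\varphi|_P)$ converging strongly to $1$, so the same lemma shows $\varphi|_P$ is semifinite. Combined with the entropy free property of $\widetilde{P}$ in $\widetilde{M}$, this shows $P\leq M$ is a $\varphi$-entropy free inclusion containing $Q$.

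It remains to check maximality and uniqueness. If $R\leq M$ is any $\varphi$-entropy free inclusion with $R\supseteq P$ (or merely $R\supseteq Q$), then $R\rtimes_{\sigma^\varphi}\bR\leq \widetilde{M}$ is an entropy free inclusion containing $\widetilde{Q}$. By the uniqueness clause of Proposition \ref{prop:PT_II_infty}, $R\rtimes_{\sigma^\varphi}\bR\leq \widetilde{P}=P\rtimes_{\sigma^\varphi}\bR$; taking $\theta$-fixed points gives $R\leq P$. This simultaneously shows $P$ is maximal and that any $\varphi$-Pinsker algebra containing $Q$ must equal $P$. The principal obstacle I anticipate is the $\theta$-invariance step, where one must reconcile the scaling of $\tau_\varphi$ under the dual action with the definition of $1$-bounded entropy; but once normalization of traces on corners is done correctly, the argument proceeds cleanly.
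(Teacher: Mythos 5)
Your proof is correct and follows essentially the same route as the paper's: pass to the core, invoke Proposition~\ref{prop:PT_II_infty}, use uniqueness of the semifinite Pinsker algebra to get $\theta$-invariance, descend via Lemma~\ref{lem:continuous_core_recognition_lemma}, and deduce semifiniteness of $\varphi|_P$ from $Q\leq P$ and Lemma~\ref{lem:semifinite_iff_approx_proj_unit}. Your explicit justification that $\theta_s$ preserves the entropy-free property despite rescaling $\tau_\varphi$ is a point the paper leaves implicit, and it is handled correctly.
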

\begin{proof}
Denote $\widetilde{Q}:=Q\rtimes_{\sigma^\varphi} \bR$ and $\widetilde{M}:=M\rtimes_{\sigma^\varphi} \bR$. Let $\tau_\varphi$ be the faithful normal semifinite tracial weight induced by $\varphi$, and recall from the discussion preceding Lemma~\ref{lem:continuous_core_recognition_lemma} that $\tau_\varphi|_{\widetilde{Q}}$ is semifinite. Since $\widetilde{Q}\leq \widetilde{M}$ is entropy free by assumption and $\widetilde{Q}$ is diffuse by Lemma~\ref{lem:continuous_core_is_diffuse}, Proposition~\ref{prop:PT_II_infty} yields a unique Pinsker algebra $\widetilde{Q} \leq \widetilde{P} \leq \widetilde{M}$.  If $\bR\overset{\theta}{\curvearrowright} \widetilde{M}$ is the action dual to $\sigma^\varphi$, then the uniqueness of $\widetilde{P}$ implies it is $\theta$-invariant. Since we also have $L_\varphi(\bR)\leq \widetilde{Q} \leq \widetilde{P}$, it follows from Lemma~\ref{lem:continuous_core_recognition_lemma} that $\widetilde{P} = P\rtimes_{\sigma^\varphi} \bR$ for $P:=(\widetilde{P})^\theta$. The same lemma gives that $P$ is $\sigma^\varphi$-invariant, and $Q = (\widetilde{Q})^\theta \leq (\widetilde{P})^\theta = P$ implies $\varphi|_P$ is semifinite by Lemma~\ref{lem:semifinite_iff_approx_proj_unit}. Thus $P\leq M$ is with $\varphi$-expectation. We also have that $P\leq M$ is $\varphi$-entropy free since $\widetilde{P}\leq \widetilde{M}$ is entropy free.

We next show that if $N\leq M$ is a $\varphi$-entropy free inclusion with $N\geq Q$, then $N\leq P$. Note that this will prove that $P$ is both a $\varphi$-Pinsker algebra and unique. Set $\widetilde{N}:=N\rtimes_{\sigma^\varphi} \bR$ so that $\widetilde{Q}\leq \widetilde{N}$. Note that $\widetilde{N}$ is diffuse by Lemma~\ref{lem:continuous_core_is_diffuse} and $\widetilde{N} \leq \widetilde{M}$ is entropy free by assumption. Thus Proposition~\ref{prop:PT_II_infty} implies there is a unique Pinsker algebra containing $\widetilde{N}$. This algebra will necessarily also contain $\widetilde{Q}$, and so the uniqueness of $\widetilde{P}$ implies $\widetilde{N} \leq \widetilde{P}$. By considering the fixed points under $\theta$, we obtain $N\leq P$.
\end{proof}

We remark that, in contrast to Proposition~\ref{prop:PT_II_infty}, the previous theorem does not demand that $Q$ be diffuse. Consider, for example, an orthogonal family of projections $\{p_i \in \dom(\varphi|_{M^\varphi})\colon i\in I\}$ that sum to $1$. Then
    \[
        Q:=\{p_i\colon i\in I\}'' \cong \ell^\infty(I)
    \]
is amenable, $\varphi$-invariant, and $\varphi|_Q$ is semifinite. Thus $Q\leq M$ is a $\varphi$-entropy free inclusion by Proposition~\ref{prop:amenable}, which in turn implies there is a unique $\varphi$-Pinsker algebra $P\geq Q$. Consequently, if $N\leq M$ is any other $\varphi$-entropy free inclusion with $p_i\in N$ for all $i\in I$, then we must have $N\leq P$. A special case of this occurs when $\varphi$ is a state and we take $\{p_i\colon i\in I\} = \{1\}$, which yields the following corollary.

\begin{cor}\label{cor:unique_varphi-Pinsker_algebra}
Let $M$ be a von Neumann algebra equipped with a faithful normal state $\varphi$. Then there is exactly one $\varphi$-Pinsker algebra $P\leq M$. Moreover, if $Q\leq M$ is any $\varphi$-entropy free inclusion, then $Q\subseteq P$. 
\end{cor}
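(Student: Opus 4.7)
The plan is to deduce this as an immediate corollary of Theorem~\ref{thm:PT_III} by taking as starting point the trivial inclusion $\bC \leq M$. The key observation is that when $\varphi$ is a state, $\bC$ sits inside $M$ in a particularly nice way: $\varphi$ itself serves as a $\varphi$-preserving faithful normal conditional expectation from $M$ onto $\bC\cdot 1$, so $\bC\leq M$ is automatically with $\varphi$-expectation. Since $\bC$ is amenable, Proposition~\ref{prop:amenable} then shows that $\bC \leq M$ is a $\varphi$-entropy free inclusion.

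Applying Theorem~\ref{thm:PT_III} to this inclusion produces a $\varphi$-Pinsker algebra $P\leq M$ with $P\supseteq \bC$. For the second assertion, let $Q\leq M$ be any $\varphi$-entropy free inclusion. Trivially $Q\supseteq \bC$, so the proof of Theorem~\ref{thm:PT_III} (which asserts that any $\varphi$-entropy free inclusion containing the starting inclusion is contained in the Pinsker algebra it produces) gives $Q\leq P$.

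Finally, for uniqueness of the $\varphi$-Pinsker algebra itself, suppose $P'\leq M$ is another $\varphi$-Pinsker algebra. Then $P'$ is in particular a $\varphi$-entropy free inclusion, so by what we just proved $P'\leq P$. The maximality of $P'$ as a $\varphi$-entropy free inclusion then forces $P=P'$. No significant obstacle is expected, since the substantive work of constructing and identifying Pinsker algebras has already been carried out in Theorem~\ref{thm:PT_III}; the corollary is essentially just the observation that passing to a state gives a canonical minimal $\varphi$-entropy free inclusion to serve as a universal basepoint.
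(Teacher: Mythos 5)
Your proposal is correct and matches the paper's own argument: the authors likewise observe that $\{1\}''=\bC$ is amenable and with $\varphi$-expectation when $\varphi$ is a state, hence $\varphi$-entropy free by Proposition~\ref{prop:amenable}, and then apply Theorem~\ref{thm:PT_III} (whose proof shows every $\varphi$-entropy free inclusion containing the basepoint lies in the resulting Pinsker algebra). No gaps.
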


\begin{notation}
  In light of the previous results, for a $\varphi$-entropy free inclusion $Q\leq M$ we denote by $\Pi(Q\leq M, \varphi)$ the unique $\varphi$-Pinsker algebra containing $Q$. In the case that $\varphi$ is a state, we write $\Pi(M,\varphi):= \Pi( \bC \leq M,\varphi)$ for the unique $\varphi$-Pinsker algebra in $M$.  
\end{notation}

The utility of the uniqueness of the $\varphi$-Pinsker algebra will be illustrated by establishing many examples of entropy free inclusions. This will be accomplished in the following sections by showing that entropy free inclusions are closed under various operations such as taking normalizers, one-sided quasi-normalizers and so on.

\section{Anticoarse spaces}

In the setting of finite von Neumann algebras, the anticoarse space of an inclusion captures all notions of normalizers (see \cite[Proposition 3.2]{Hayes2018} and \cite[Propositions 2.2 and 2.3]{UselessResolutionOfPT}), and in the case of an entropy free inclusion it is contained in the $L^2$-space of the associated Pinsker algebra (see \cite[Theorem 3.8]{Hayes2018}). Consequently, the Pinsker algebra by way of the anticoarse space offers a noncommutative harmonic analysis approach to establishing generalized solidity results (see, for example, \cite{UselessResolutionOfPT}). 

In this section, we will generalize these results to the semifinite and $\sigma$-finite settings. Our approach will demand two notions of the anticoarse space: one that is \emph{intrinsic} to the inclusion (see Definition~\ref{defn:varphi_anticoarse_space}) and another that is \emph{extrinsic} in the sense that it is defined in terms of the continuous core (see Definition~\ref{defn:dynamical_varphi_anticoarse_space}). We suspect that the two spaces should coincide in many cases, but this remains an open question in general and in the pursuit of our main results the two spaces will be utilized in quite different ways. For example, the extrinsic anticoarse space for an entropy free inclusion is more easily related to the associated Pinsker algebra because both concepts are defined via the continuous core.

\subsection{An intrinsic anticoarse space}

\begin{defn}\label{defn:varphi_anticoarse_space}
Let $M$ be a von Neumann algebra equipped with a faithful normal semifinite weight $\varphi$.  Given an inclusion $N\leq M$ with $\varphi$-expectation, we define its \textbf{$\varphi$-anticoarse space} to be the $N$-subbimodule of $L^2(M,\varphi)$ given by
    \[
        L^{2}_{\cross}(N\leq M,\varphi):= \bigcap \left\{\ker(T)\colon T\in \Hom_{N-N}\left(L^2(M,\varphi), L^2(N,\varphi)\otimes L^2(N,\varphi)\right)\right\}.
    \]
That is, the $\varphi$-anticoarse space consists of those $\xi\in L^{2}(M,\varphi)$ such that $\overline{\Span(N\cdot\xi\cdot N)}^{\|\cdot\|_\varphi}$ is disjoint from $L^{2}(N,\varphi)\otimes L^{2}(N,\varphi)$ as $N$-bimodules.
\end{defn}

We remark that $L^2_{\cross}(N\leq M,\varphi)$ is an invariant subspace of the representation $\bR\ni t\mapsto \Delta_\varphi^{it}$ since
    \[
        T\mapsto (\Delta_\varphi^{it}\otimes \Delta_\varphi^{it})T \Delta_\varphi^{-it}
    \]
defines an invertible, linear isometry on $\Hom_{N-N}\left(L^2(M,\varphi), L^2(N,\varphi)\otimes L^2(N,\varphi)\right)$ for all $t\in \bR$. More generally, we have the following.

\begin{prop}\label{prop: exercise with bimod maps}
For each $j=1,2$, let $M_j$ be a von Neumann algebra equipped with a faithful normal semifinite weight $\varphi_j$ and let $N_j\leq M_j$ be an inclusion with $\varphi_j$-expectation. Suppose that $\lambda,\rho\colon N_1\to N_2$ are a $*$-isomorphisms and that $T\colon L^{2}(M_1,\varphi_{1})\to L^{2}(M_2,\varphi_{2})$ is a bounded linear map satisfying
    \[
        T(x\cdot \xi \cdot y) = \lambda(x)\cdot T(\xi)\cdot \rho(y)
    \]
for all $x,y\in N_1$ and $\xi\in L^2(M_1,\varphi_1)$. Then 
    \[
        T\left(L^{2}_{\cross}(N_1\leq M_1,\varphi_{1})\right)\subseteq L^{2}_{\cross}(N_2\leq M_2,\varphi_{2}),
    \]
with equality if $T$ is a co-isometry.
\end{prop}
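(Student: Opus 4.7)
My plan is to use the definition of $L^{2}_{\cross}$ directly. Given any $N_2$-$N_2$-bimodular map $S\colon L^2(M_2,\varphi_2)\to L^2(N_2,\varphi_2)\otimes L^2(N_2,\varphi_2)$, I want to produce an $N_1$-$N_1$-bimodular map from $L^2(M_1,\varphi_1)$ into the coarse bimodule of $N_1$, by pre-composing with $T$ and post-composing with a suitable unitary identifying the two coarse bimodules.

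The key step is to construct a unitary
    \[
        V\colon L^2(N_2,\varphi_2)\otimes L^2(N_2,\varphi_2)\longrightarrow L^2(N_1,\varphi_1)\otimes L^2(N_1,\varphi_1)
    \]
that is $N_1$-$N_1$-bimodular, where the domain is equipped with the twisted bimodule structure $a\cdot_\lambda(\eta\otimes\zeta)\cdot_\rho b=\lambda(a)\eta\otimes\zeta\rho(b)$ for $a,b\in N_1$. I build $V$ factor by factor. On the left tensor slot, the map $x\mapsto \lambda(x)$ from $\sqrt{\dom}(\varphi_2\circ\lambda)$ into $\sqrt{\dom}(\varphi_2)$ extends (by direct inner product computation) to a left $N_1$-module unitary $L^2(N_1,\varphi_2\circ\lambda)\to L^2(N_2,\varphi_2)$, whose inverse composed with the canonical intertwiner $U_{\varphi_1,\varphi_2\circ\lambda}$ yields the desired left $N_1$-module unitary $L^2(N_2,\varphi_2)\to L^2(N_1,\varphi_1)$. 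Here we use that $N_j\leq M_j$ being with $\varphi_j$-expectation forces $\varphi_1|_{N_1}$ and $\varphi_2\circ\lambda$ to both be faithful normal semifinite weights on $N_1$. The analogous construction using $\rho$ handles the right tensor slot, and tensoring produces $V$.

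With $V$ in hand, the hypothesis on $T$ implies that $V\circ S\circ T$ lies in $\Hom_{N_1-N_1}(L^2(M_1,\varphi_1),L^2(N_1,\varphi_1)\otimes L^2(N_1,\varphi_1))$. So if $\xi\in L^{2}_{\cross}(N_1\leq M_1,\varphi_1)$, then $VST\xi=0$ and hence $ST\xi=0$. Since $S$ was arbitrary, $T\xi\in L^{2}_{\cross}(N_2\leq M_2,\varphi_2)$. For the equality statement, a direct adjoint computation shows that $T^*\colon L^2(M_2,\varphi_2)\to L^2(M_1,\varphi_1)$ satisfies the analogous bimodularity condition with $(\lambda^{-1},\rho^{-1})$ in place of $(\lambda,\rho)$. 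Applying the first part of the argument to $T^*$ yields $T^*(L^{2}_{\cross}(N_2\leq M_2,\varphi_2))\subseteq L^{2}_{\cross}(N_1\leq M_1,\varphi_1)$. When $TT^*=1$, any $\eta\in L^{2}_{\cross}(N_2\leq M_2,\varphi_2)$ equals $T(T^*\eta)$, exhibiting it in the image of $L^{2}_{\cross}(N_1\leq M_1,\varphi_1)$ under $T$.

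The only potentially delicate step is verifying that the building blocks of $V$ assemble into a genuine $N_1$-$N_1$-bimodule unitary. Since the canonical intertwiner already intertwines both left and right actions simultaneously, and the twists by $\lambda$ and $\rho$ act on independent tensor factors, this amounts to careful bookkeeping of the actions rather than a substantive obstruction.
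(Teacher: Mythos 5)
Your argument is correct and follows essentially the same route as the paper: both proofs reduce the claim to producing a unitary identification of the two coarse bimodules that is $N_1$-$N_1$-bimodular for the $(\lambda,\rho)$-twisted structure, compose it with $S\circ T$ to land in $\Hom_{N_1-N_1}(L^2(M_1,\varphi_1),L^2(N_1,\varphi_1)\otimes L^2(N_1,\varphi_1))$, and handle the co-isometry case by applying the first part to $T^*$ with $(\lambda^{-1},\rho^{-1})$. The only cosmetic difference is that the paper obtains the slotwise unitaries in one stroke from the uniqueness of the standard form, whereas you assemble them from the GNS implementation of $\lambda$ (resp.\ $\rho$) composed with a canonical intertwiner of weights on $N_1$ — which works because that GNS unitary also intertwines the modular conjugations and hence the right actions.
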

\begin{proof}
Fix $S\in \Hom_{N_2-N_2}(L^{2}(M_2,\varphi_{2}),L^{2}(N_2,\varphi_{2})\otimes L^{2}(N_2,\varphi_{2}))$. By \cite[Theorem IX.1.14]{TakesakiII} there exists unitaries $U_\lambda, U_\rho\colon L^2(N_2,\varphi_2)\to L^2(N_1,\varphi_1)$ satisfying
    \begin{align*}
        U_\lambda(x\cdot \xi \cdot y ) &= \lambda^{-1}(x) \cdot U_\lambda(\xi)\cdot \lambda^{-1}(y)\\
        U_\rho(x\cdot \xi \cdot y ) &= \rho^{-1}(x) \cdot U_\rho(\xi)\cdot \rho^{-1}(y)
    \end{align*}
for all $x,y\in N_2$ and $\xi\in L^2(N_2,\varphi_2)$. Then $(U_\lambda \otimes U_\rho)ST \in \Hom_{N_1-N_1}\left( L^{2}(M_1,\varphi_{1}),L^{2}(N_1,\varphi_{1})\otimes L^{2}(N_1,\varphi_{1})\right)$. So if $\xi\in L^{2}_{\cross}(N_1\leq M_1,\varphi_{1})$, then
    \[
        S(T(\xi)) = (U_\lambda\otimes U_\rho)^* (U_\lambda\otimes U_\rho)ST(\xi) = 0.
    \]
Since $S$ was arbitrary, it follows that  $T(L^{2}_{\cross}(N_1\leq M_1,\varphi_{1}))\subseteq L^{2}_{\cross}(N_2\leq M_2,\varphi_{2})$. 

If $T$ is a co-isometry, the applying the above to $T^*$ (and $\lambda^{-1}$ and $\rho^{-1}$) gives
    \[
        T^* \left(L^2_{\cross}(N_2\leq M_2, \varphi_2) \right)  \subseteq L^2_{\cross}(N_1\leq M_1, \varphi_1).
    \]
Combining this with the previously established inclusion we obtain
    \[
        L^2_{\cross}(N_2\leq M_2, \varphi_2) = T\left( T^* \left( L^2_{\cross}(N_2\leq M_2, \varphi_2)\right) \right) \subseteq T\left(L^2_{\cross}(N_1\leq M_1, \varphi_1) \right) \subseteq L^2_{\cross}(N_2\leq M_2, \varphi_2),
    \]
and hence equality holds throughout. 
\end{proof}

\noindent We highlight four important applications of Proposition~\ref{prop: exercise with bimod maps}:
    \begin{itemize}
    \item Suppose $N\leq M$ is with $\varphi_j$-expectation for $j=1,2$, and let $U_{\varphi_2,\varphi_1}\colon L^2(M,\varphi_1)\to L^2(M,\varphi_2)$ be the canonical intertwiner from $\varphi_1$ to $\varphi_2$. Then one has
        \[
            U_{\varphi_2,\varphi_1} \left( L^2_{\cross}(N\leq M,\varphi_1) \right) = L^2_{\cross}(N\leq M,\varphi_2),
        \]
    by considering $T=U_{\varphi_2,\varphi_1}$ and $\lambda=\rho=\text{id}$.
    
    \item Let $N\leq M$ be with $\varphi$-expectation, let $\alpha\in \Aut(M)$ be an automorphism satisfying $\alpha(N)=N$, and let $U_\alpha\in \cU(L^2(M,\varphi))$ be the \emph{unitary implementation} of $\alpha$ (see \cite[Theorem IX.1.15]{TakesakiII}). Then one has
        \[
            U_\alpha \left( L^2_{\cross}(N\leq M,\varphi)\right) = L^2_{\cross}(N\leq M,\varphi).
        \]
    by considering $T=U_\alpha$ and $\lambda=\rho=\text{id}$.

    \item Suppose $N\leq M_2\leq M_1$ are inclusions with $\varphi$-expectation, and let $e_{M_2}\in B(L^2(M_1,\varphi)$ be the projection onto the subspace $L^2(M_2,\varphi)$. Then one has
        \[
            e_{M_2}\left(L^2_{\cross}(N\leq M_1,\varphi)\right) = L^2_{\cross}(N\leq M_2,\varphi),
        \]
    by considering $T=e_{M_2}$ as a co-isometry onto $L^2(M_2,\varphi)$ and letting $\lambda=\rho=\text{id}$.

    \item Let $N\leq M$ be with $\varphi$-expectation.  Then one has
        \[
            \cN_M(N)''\cdot L^2_{\cross}(N\leq M,\varphi) \cdot \cN_M(N)'' = L^2_{\cross}(N\leq M,\varphi),
        \]
    by considering $T(\xi):=u\cdot\xi\cdot v$, $\lambda(x):=uxu^*$, and $\rho(y):=v^*xv$ for $u,v\in \cN_M(N)$.
    \end{itemize}
The $\varphi$-anticoarse space is also well behaved under compressions by projections in $N^\varphi$ as the following proposition demonstrates.

\begin{prop}\label{prop:compressions_varphi_anticoarse_space}
Let $M$ be a von Neumann algebra equipped with a faithful normal semifinite weight $\varphi$ and let $N\leq M$ be an inclusion with $\varphi$-expectation. For any projection $p\in N^\varphi$, one has
    \[
        p\cdot L^2_{\cross}(N\leq M,\varphi)\cdot p = L^2_{\cross}(pNp \leq pMp, \varphi|_{pMp}).
    \]
\end{prop}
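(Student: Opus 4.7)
The plan is to prove both inclusions via a Morita-type decomposition of the coarse $N$-bimodule under compression by $p$. First I would reduce to the case where $p$ has full central support in $N$: letting $z = z_N(p) \in Z(N)$, the assumption $p \in N^\varphi$ forces $z \in N^\varphi$ (since $\sigma^\varphi$ preserves $Z(N)$), and $p \xi p = p z \xi z p$ for all $\xi \in L^2(M, \varphi)$. Both sides therefore depend only on the $z$-corner, so we may assume $z_N(p) = 1$. By the comparison theorem and Zorn's lemma, I would then choose partial isometries $\{v_i\}_{i \in I} \subseteq N$ with $v_0 = p$, $v_i^* v_i \leq p$, $\{v_i v_i^*\}_{i\in I}$ pairwise orthogonal, and $\sum_i v_i v_i^* = 1$. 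A key consequence of this orthogonality is that $v_i^* v_{i'} = 0$ whenever $i \neq i'$ (since $v_i^* v_{i'} = v_i^*(v_i v_i^*)(v_{i'}v_{i'}^*)v_{i'} = 0$). The orthogonal decompositions $p L^2(N,\varphi) = \bigoplus_i p L^2(N,\varphi) v_i v_i^*$ and $L^2(N,\varphi) p = \bigoplus_j v_j v_j^* L^2(N,\varphi) p$ yield
\[
p L^2(N,\varphi) \otimes L^2(N,\varphi) p \;=\; \bigoplus_{i, j \in I} p L^2(N,\varphi) v_i v_i^* \otimes v_j v_j^* L^2(N,\varphi) p
\]
as $pNp$-$pNp$ bimodules, with each summand embedding via $\xi \otimes \eta \mapsto \xi v_i \otimes v_j^* \eta$ as a $pNp$-sub-bimodule of the coarse bimodule $L^2(pNp,\varphi|_{pNp})^{\otimes 2}$.

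For the inclusion $(\subseteq)$: Given $\xi \in L^2_\cross(N \leq M, \varphi)$ and $S \in \Hom_{pNp-pNp}(L^2(pMp,\varphi|_{pMp}), L^2(pNp,\varphi|_{pNp})^{\otimes 2})$, I would define an $N$-$N$ bimodular extension $\tilde S\colon L^2(M,\varphi) \to L^2(N,\varphi)^{\otimes 2}$ by
\[
\tilde S(\eta) := \sum_{i, j \in I} v_i \cdot S(v_i^* \eta v_j) \cdot v_j^*.
\]
Since $v_i^* \in pN$ and $v_j \in Np$, each $v_i^* \eta v_j$ lies in $L^2(pMp,\varphi|_{pMp})$, so the individual terms make sense. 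Combining the Parseval identities $\sum_i \|v_i^* \eta\|^2 = \|\eta\|^2$ and $\sum_j \|\eta v_j\|^2 = \|\eta\|^2$ with the orthogonality of the summands (a consequence of $v_i^* v_{i'} = 0$ for $i \neq i'$) gives $\|\tilde S(\eta)\|^2 \leq \|S\|^2 \|\eta\|^2$, so the sum converges. A direct calculation using $\sum_k v_k v_k^* = 1$ and the $pNp$-bimodularity of $S$ shows that $\tilde S$ is $N$-$N$ bimodular. Since $\xi \in L^2_\cross(N \leq M,\varphi)$, we conclude $\tilde S(\xi) = 0$, and the orthogonality of the summands isolates the $(0,0)$-term to give $S(p \xi p) = v_0 \cdot S(v_0^* \xi v_0) \cdot v_0^* = 0$; hence $p \xi p \in L^2_\cross(pNp \leq pMp, \varphi|_{pMp})$.

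For the reverse inclusion $(\supseteq)$: Given $\eta \in L^2_\cross(pNp \leq pMp, \varphi|_{pMp})$ and $T \in \Hom_{N-N}(L^2(M, \varphi), L^2(N, \varphi)^{\otimes 2})$, the fact that $\eta = p\eta p$ forces $T(\eta) \in p L^2(N, \varphi) \otimes L^2(N,\varphi) p$. Decomposing $T(\eta)$ via the direct sum displayed above, each orthogonal projection $\Pi_{i, j}$ onto a summand, composed with the canonical isomorphism into a sub-bimodule of $L^2(pNp, \varphi|_{pNp})^{\otimes 2}$, is $pNp$-$pNp$ bimodular; composing this further with $T|_{L^2(pMp, \varphi|_{pMp})}$ (which is automatically $pNp$-bimodular) yields $S_{i,j} \in \Hom_{pNp-pNp}(L^2(pMp,\varphi|_{pMp}), L^2(pNp,\varphi|_{pNp})^{\otimes 2})$ with $S_{i,j}(\eta) \neq 0$ iff $\Pi_{i,j}(T(\eta)) \neq 0$. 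Since $\eta \in L^2_\cross(pNp \leq pMp, \varphi|_{pMp})$, each $S_{i, j}(\eta) = 0$, so every component of $T(\eta)$ vanishes, whence $T(\eta) = 0$ and $\eta = p \eta p \in p L^2_\cross(N \leq M, \varphi) p$.

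The main technical obstacle will be making the $pNp$-$pNp$ bimodule decomposition of $p L^2(N,\varphi) \otimes L^2(N,\varphi) p$ rigorous---particularly when $1 - p$ is properly infinite relative to $p$, which forces the index set $I$ to be genuinely infinite and some partial isometries to satisfy $v_i^* v_i < p$ strictly (so that the corresponding summand embeds only as a proper sub-bimodule of $L^2(pNp,\varphi|_{pNp})^{\otimes 2}$, though sub-bimodules of the coarse bimodule remain coarse and the argument goes through)---and verifying convergence of the extension $\tilde S$ and its $N$-$N$ bimodularity in this generality.
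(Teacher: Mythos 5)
Your proof is correct and follows essentially the same route as the paper's: the forward inclusion uses the identical extension $\tilde S(\eta)=\sum_{i,j}v_i\cdot S(v_i^*\eta v_j)\cdot v_j^*$ built from partial isometries realizing the central support of $p$ (the paper skips your preliminary reduction to full central support, which in any case quietly uses the easy central-projection case of the statement, and instead takes $\sum_i v_i^*v_i=z$ and checks $N$-bimodularity directly). For the reverse inclusion the paper argues more abstractly---$L^2(N,\varphi)\otimes L^2(N,\varphi)$ is a normal $pNp\,\overline{\otimes}\,(pNp)^{\mathrm{op}}$-module and hence embeds into an infinite direct sum of copies of $L^2(pNp,\varphi)\otimes L^2(pNp,\varphi)$---whereas you exhibit such an embedding explicitly via your orthogonal decomposition into summands $pL^2(N,\varphi)v_iv_i^*\otimes v_jv_j^*L^2(N,\varphi)p$; the two devices are interchangeable.
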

\begin{proof}
Fix a projection $p\in N^\varphi$. Observe that $p$ being in the centralizer of $\varphi$ implies it is a multiplier for both $\dom(\varphi|_N)$ and $\dom(\varphi)$. Consequently, $\varphi$ is semifinite on both $pNp$ and $pMp$ and we may view $L^2(pNp,\varphi)$ and $L^2(pMp,\varphi)$ as subspaces of $L^2(N,\varphi)$ and $L^2(M,\varphi)$, respectively.

Now, let $z\in N'\cap N$ be the central support of $p$ in $N$. Then there exists a family of partial isometries $\{v_i\in N\colon i\in I\}$ satisfying $v_i v_i^* \leq p$ and $\sum_i v_i^* v_i = z$, and we may assume $p=v_i$ for some $i\in I$. Fix $T\in \Hom_{pNp-pNp}(L^2(pMp,\varphi), L^2(pNp,\varphi)\otimes L^2(pNp,\varphi))$. Using our initial observation, 
    \[
       S(\xi):= \sum_{i,j\in I} v_i^*\cdot T(v_i\cdot \xi \cdot v_j^*) \cdot v_j
    \]
defines a bounded linear map $S\colon L^2(M,\varphi)\to L^2(N,\varphi)\otimes L^2(N,\varphi)$ satisfying $S(p\cdot \xi \cdot p) = T(p\cdot\xi\cdot p)$. A routine computation shows that $S$ is $N$-bilinear, and thus if $\xi \in L^2_{\cross}(N\leq M,\varphi)$ then
    \[
        T(p\cdot\xi\cdot p) = S(p\cdot \xi \cdot p) = p\cdot S(\xi)\cdot p = 0.
    \]
This establishes the inclusion $p\cdot L^2_{\cross}(N\leq M,\varphi)\cdot p \subseteq L^2_{\cross}(pNp\leq pMp,\varphi|_{pMp})$.

Conversely, fix $\xi = p\cdot \xi\cdot p\in L^2_{\cross}(pNp\leq pMp,\varphi|_{pMp})$. Note that $L^2(N,\varphi)\otimes L^2(N,\varphi)$ is a normal $pNp \bar{\otimes} (pNp)^{\text{op}}$-module and therefore embeds into an infinite direct sum of $L^2(pNp,\varphi)\otimes L^2(pNp,\varphi)$. This along with our initial observation implies that $S(\xi)=0$ for all $S\in \Hom_{N-N}(L^2(M,\varphi), L^2(N,\varphi)\otimes L^2(N,\varphi))$. Hence $\xi = p\cdot \xi \cdot p\in p\cdot L^2_{\cross}(N\leq M,\varphi)\cdot p$.
\end{proof}

The following proposition gives one a sense of the kind of elements contained in a $\varphi$-anticoarse space. In particular, it provides sufficient conditions for a $\varphi$-anticoarse space over $N$ to contain $L^2(N,\varphi)$.

\begin{prop}\label{prop:diffuse_algebras_contained_in_their_anticoarse_space}
Let $M$ be a von Neumann algebra equipped with a faithful normal semifinite weight $\varphi$.  Let $N$ be a diffuse $\sigma$-finite subalgebra of $M$ with $\varphi$-expectation. Then $\cN_M(N)'' \leq M$ is with $\varphi$-expectation and
    \[
        L^2( \cN_M(N)'', \varphi) \subseteq L^{2}_{\cross}(N\leq M,\varphi).
    \]

\end{prop}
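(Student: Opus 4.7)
The plan is to verify the two claims of the proposition in sequence: first that $\cN_M(N)'' \leq M$ is with $\varphi$-expectation, then the $L^2$-inclusion. For the former, invoke the Takesaki criterion recalled in the ``Managing expectations'' subsection. Since $N \leq M$ is already with $\varphi$-expectation, $N$ is $\sigma^\varphi$-invariant, so any $u \in \cN_M(N)$ satisfies $\sigma_t^\varphi(u) N \sigma_t^\varphi(u)^* = \sigma_t^\varphi(uNu^*) = \sigma_t^\varphi(N) = N$, placing $\sigma_t^\varphi(u) \in \cN_M(N)$ and hence making $\cN_M(N)''$ itself $\sigma^\varphi$-invariant. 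Semifiniteness of $\varphi|_{\cN_M(N)''}$ is then immediate from Lemma~\ref{lem:semifinite_iff_approx_proj_unit} and the inclusion $N \subseteq \cN_M(N)''$.

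For the $L^2$-inclusion the plan is a three-step bootstrapping argument. The first step is to establish the base case $L^2(N,\varphi) \subseteq L^2_{\cross}(N \leq M,\varphi)$: for any $T \in \Hom_{N-N}(L^2(M,\varphi), L^2(N,\varphi) \otimes L^2(N,\varphi))$, the restriction $T|_{L^2(N,\varphi)}$ is an $N$-bimodule map from the standard to the coarse bimodule of $N$, and for diffuse $N$ these two bimodules are disjoint (a standard fact). The second step is to invoke the absorption property in the fourth bullet after Proposition~\ref{prop: exercise with bimod maps}, namely $\cN_M(N)'' \cdot L^2_{\cross}(N\leq M,\varphi) \cdot \cN_M(N)'' = L^2_{\cross}(N\leq M,\varphi)$, which combined with the base case yields $\overline{\cN_M(N)'' \cdot L^2(N,\varphi)}^{\|\cdot\|_\varphi} \subseteq L^2_{\cross}(N\leq M,\varphi)$ (using that the anticoarse space is closed in $L^2$). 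The third step is to verify that this closure is already all of $L^2(\cN_M(N)'',\varphi)$: since $\varphi|_{N^\varphi}$ is a semifinite tracial weight, pick a net of projections $p_i \in N^\varphi \cap \dom(\varphi|_N)$ increasing strongly to $1$; for any $x \in \sqrt{\dom}(\varphi|_{\cN_M(N)''})$, the vectors $\widehat{xp_i} = (Jp_iJ)\hat{x}$ lie in $\cN_M(N)'' \cdot L^2(N,\varphi)$ and converge to $\hat{x}$ in $L^2$-norm because $Jp_iJ \to 1$ strongly (one checks convergence via $\|\hat{x} - \widehat{xp_i}\|_\varphi^2 = \varphi(x^*x)-\langle Jp_iJ\hat{x},\hat{x}\rangle_\varphi \to 0$, using the tracial property of $p_i \in N^\varphi$ to rewrite $\varphi((1-p_i)x^*x(1-p_i))$).

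The main technical obstacle is the disjointness claim in the base case. For a diffuse $\mathrm{II}_1$ factor it amounts to the absence of nonzero Hilbert--Schmidt operators intertwining the left and right $N$-actions on $L^2(N)$; for general diffuse $\sigma$-finite $N$ one decomposes along the center into a direct integral of diffuse factor summands (no type $\mathrm{I}$ atoms appear, since $N$ is diffuse) and handles each factor with a compact-operator argument in the semifinite case and by passing to the tracial continuous core in the type $\mathrm{III}$ case. Once this disjointness is secured, the absorption and density steps are largely formal manipulations.
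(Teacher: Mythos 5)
Your overall architecture matches the paper's: prove $\sigma^\varphi$-invariance and semifiniteness for $\cN_M(N)''$, reduce the $L^2$-inclusion to the base case $L^2(N,\varphi)\subseteq L^2_{\cross}(N\leq M,\varphi)$ via the $\cN_M(N)''$-bimodularity of the anticoarse space, and then show $\overline{\cN_M(N)''\cdot L^2(N,\varphi)}=L^2(\cN_M(N)'',\varphi)$ by a net of projections acting on the right. Those three steps are all essentially correct (the paper uses arbitrary projections in $\dom(\varphi|_N)$ together with \cite[Equation VI.1.(42)]{TakesakiII} rather than projections in $N^\varphi$, which lets it avoid invoking semifiniteness of $\varphi|_{N^\varphi}$, but your variant also works).

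The genuine gap is in the base case, which is the crux of the proposition. Disjointness of $L^2(N)$ from $L^2(N)\otimes L^2(N)$ is standard for diffuse \emph{tracial} $N$, but the proposition allows arbitrary diffuse $\sigma$-finite $N$, including type $\mathrm{III}$ algebras, and your proposed reduction does not go through as stated. First, the claim that a diffuse von Neumann algebra decomposes over its center into diffuse factor summands is false: $L^\infty[0,1]\,\overline{\otimes}\,M_2(\bC)$ is diffuse but its central decomposition has fibers $M_2(\bC)$ (disjointness still holds there, but via diffuseness of the center, not of the fibers); moreover $\sigma$-finiteness does not give separable predual, so direct integral theory is not freely available. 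Second, and more seriously, for a diffuse type $\mathrm{III}$ factor you offer only ``passing to the tracial continuous core,'' with no indication of how an $N$-bimodule map $L^2(N)\to L^2(N)\otimes L^2(N)$ would induce a $\core(N)$-bimodule map to which the tracial argument applies; I do not see such a reduction. The paper's proof supplies exactly the missing ingredient: by Haagerup--St\o rmer \cite[Theorem 11.1]{HSFlow} one chooses a faithful normal state $\psi$ on $N$ with $N^\psi$ diffuse, extracts a sequence of unitaries $u_n\in N^\psi$ tending weakly to $0$, and computes $T(1)=u_n\cdot T(1)\cdot u_n^*\to 0$ in the coarse bimodule; one then transports the conclusion from $\psi$ to $\varphi$ via the canonical intertwiner and the projection $e_N$, using Proposition~\ref{prop: exercise with bimod maps}. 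Without this (or an equivalent device producing a weakly null sequence of unitaries fixed by some modular group), your base case is unproved in the generality required.
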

\begin{proof}
Using Lemma~\ref{lem:semifinite_iff_approx_proj_unit}, let $(p_i)_{i\in I}\subseteq \dom(\varphi|_N)$ be a net of projections converging to $1$ in the strong operator topology. Then the same lemma implies $\varphi$ is semifinite on $\cN_M(N)''$ since it contains $N$, and moreover the $\sigma^\varphi$-invariance of $N$ implies that of $\cN_M(N)''$. Thus $\cN_M(N)''\leq M$ is with $\varphi$-expectation. Next, observe that
    \[
        L^2(\cN_M(N)'',\varphi) = \overline{\cN_M(N)''\cdot L^2(N,\varphi)},
    \]
since for $a\in\sqrt{\dom}(\varphi|_{\cN_M(N)''})$ we have
    \[
        a = \lim_{i\to\infty}  J_\varphi p_i J_\varphi a = \lim_{i\to\infty} a\cdot (J_\varphi p_i),
    \]
where the second equality follows from \cite[Equation VI.1.(42)]{TakesakiII} applied with $\eta=J_\varphi p_i$. Recalling from the discussion following Proposition~\ref{prop: exercise with bimod maps} that the $\varphi$-anticoarse space is an $\cN_M(N)''$-bimodule, we see that it now suffices to show $L^2(N,\varphi)\subseteq L^2_{\cross}(N\leq M,\varphi)$.

Using our assumptions on $N$, \cite[Theorem 11.1]{HSFlow} (see also \cite[Lemma 2.1]{HUAsymptot} and \cite{CSStateSpace}) yields a faithful normal state $\psi$ on $N$ so that $N^\psi$ is diffuse. Since $N^\psi$ is a finite von Neumann algebra, this implies in particular that we can find a sequence of unitaries $(u_n)_{n\in \bN} \subseteq N^\psi$ converging to 0 in the weak operator topology. Thus if $T\in \Hom_{N-N}(L^2(N,\psi), L^2(N,\psi) \otimes L^2(N,\psi))$, then
    \[
        T(1) = T(u_n\cdot 1 \cdot u_n^*) = u_n\cdot T(1) \cdot u_n^*,
    \]
and the above converges to 0 by coarseness. Hence $T(L^2(N,\psi))= \overline{N\cdot T(1)}=0$, which shows $L^2(N,\psi) \subseteq L^2_{\cross}(N\leq N,\psi)$. From the discussion following Proposition~\ref{prop: exercise with bimod maps} we then have
    \begin{align*}
        L^2(N,\varphi) &= U_{\varphi,\psi}\left(L^2(N,\psi) \right) \subseteq U_{\varphi,\psi}\left(L^2_{\cross}(N\leq N,\psi) \right)\\
        &= L^2_{\cross}(N\leq N,\varphi) = e_N\left(L^2_{\cross}(N\leq M,\varphi) \right) \subseteq L^2_{\cross}(N\leq M,\varphi),
    \end{align*}
where $U_{\varphi,\psi}$ is the canonical intertwiner from $\psi$ to $\varphi|_N$ and $e_N$ is the projection onto the subspace $L^2(N,\varphi) \leq L^2(M,\varphi)$.
\end{proof}

We next consider the case of semifinite von Neumann algebras. Given a faithful normal semifinite tracial weight $\tau$ on $M$, recall from \cite[Theorem VIII.3.14]{TakesakiII} that every faithful normal semifinite weight $\varphi$ on $M$ is given uniquely by
    \begin{align}\label{eqn:semifinite_RD_formula}
        \varphi(x) = \tau(x h ) = \lim_{\epsilon\searrow 0} \tau\left( x\frac{h}{1+\epsilon h} \right) \qquad \qquad x\in M_+,
    \end{align}
for some positive, self-adjoint, non-singular operator $h$ affiliated with $M$. We will denote $\frac{d \varphi}{d\tau} :=h$ and $\frac{d\tau}{d\varphi} := h^{-1}$. Further recall from the proof of \cite[Theorem VIII.3.14]{TakesakiII} that
    \[
        \sigma_t^\varphi(x) = \left(\frac{d \varphi}{d\tau}\right)^{it} x \left( \frac{d \varphi}{d\tau}\right)^{-it}
    \]
for all $x\in M$ and $t\in \bR$. That is, $(\frac{d\varphi}{d\tau})^{it}$ equals the Connes cocycle derivative $(D\varphi\colon D\tau)_t$ (see \cite[Definition VIII.3.4]{TakesakiII}). Moreover, $\frac{d \varphi}{d\tau}$ is affiliated with $M^\varphi$.

\begin{lem}\label{lem:semifinite_expectation_swapping}
Let $M$ be a semifinite von Neumann algebra equipped with a faithful normal semifinite tracial weight $\tau$ and a faithful normal semifinite weight $\varphi$. If $\frac{d\varphi}{d\tau}$ is affiliated with a subalgebra $N \leq M$, then this inclusion is with $\tau$-expectation if and only if it is with  $\varphi$-expectation. In this case, there exists a faithful normal conditional expectation $\cE_N\colon M\to N$ that preserves both $\tau$ and $\varphi$.
\end{lem}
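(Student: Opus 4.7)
The plan is to reduce the equivalence of having $\varphi$-expectation and $\tau$-expectation to the equivalence of semifiniteness of $\varphi|_N$ and $\tau|_N$, and then to transfer semifiniteness across the operator $h := \frac{d\varphi}{d\tau}$ using its bounded spectral cutoffs. Since $h$ is affiliated with $N$ by hypothesis and with $M^\varphi$ by the discussion preceding the lemma, the unitaries $h^{it}$ lie in $N \cap M^\varphi$, so the formula $\sigma_t^\varphi(x) = h^{it} x h^{-it}$ forces $N$ to be $\sigma^\varphi$-invariant; of course $N$ is trivially $\sigma^\tau$-invariant. By the equivalence from \cite[Theorem IX.4.2]{TakesakiII} recorded after Lemma~\ref{lem:common_expectation_canonical_intertwiner}, being with $\varphi$-expectation (resp.\ $\tau$-expectation) is then equivalent to $\varphi|_N$ (resp.\ $\tau|_N$) being semifinite.

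The heart of the argument is transferring semifiniteness. Set $e_n := 1_{[1/n,n]}(h) \in N \cap M^\varphi$; these are projections with $e_n \to 1$ strongly by non-singularity of $h$. For any projection $p \in N$ with $p \leq e_n$, functional calculus yields
\[
    \varphi(p) = \tau(h^{1/2} p h^{1/2}) \leq n\,\tau(p), \qquad \tau(p) = \varphi(h^{-1/2} p h^{-1/2}) \leq n\,\varphi(p),
\]
so finiteness of $\varphi(p)$ and $\tau(p)$ coincide on such $p$. Assuming $\tau|_N$ is semifinite, Lemma~\ref{lem:semifinite_iff_approx_proj_unit} provides a net of projections $(q_j) \subseteq \dom(\tau|_N)$ with $q_j \to 1$ strongly. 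For each fixed $n$, the two-sided ideal $\dom(\tau) \cap e_n N e_n$ contains the elements $e_n q_j x q_j e_n$ (for any $x \in N$), which converge weakly to $e_n x e_n$, so $\tau$ is semifinite on $e_n N e_n$; Lemma~\ref{lem:semifinite_iff_approx_proj_unit} then delivers projections $p_{j,n} \leq e_n$ in $\dom(\tau|_N)$ with $p_{j,n} \to e_n$ strongly. By the displayed bound each $p_{j,n} \in \dom(\varphi|_N)$, and a diagonal argument over $(n,j)$ yields a net of projections in $\dom(\varphi|_N)$ converging to $1$ strongly, showing $\varphi|_N$ is semifinite. The converse direction is symmetric via $h^{-1}$.

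For the ``moreover'' clause, once we know $N \leq M$ is with $\tau$-expectation, let $\cE_N \colon M \to N$ be the unique $\tau$-preserving faithful normal conditional expectation. Since $h_\varepsilon := h/(1+\varepsilon h) \in N \cap M^\varphi$, the $N$-bimodule property of $\cE_N$ combined with \cite[Lemma VIII.2.8]{TakesakiII} (applied to $\tau_h = \varphi$) gives, for $x \in M_+$,
\[
    \varphi(\cE_N(x)) = \sup_{\varepsilon>0} \tau\bigl(h_\varepsilon^{1/2} \cE_N(x) h_\varepsilon^{1/2}\bigr) = \sup_{\varepsilon>0} \tau\bigl(\cE_N(h_\varepsilon^{1/2} x h_\varepsilon^{1/2})\bigr) = \sup_{\varepsilon>0} \tau\bigl(h_\varepsilon^{1/2} x h_\varepsilon^{1/2}\bigr) = \varphi(x),
\]
so $\cE_N$ also preserves $\varphi$.

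The main obstacle, such as it is, is the semifiniteness transfer in the second paragraph: the spectral cutoffs $e_n$ themselves generally have infinite $\tau$-weight, so one cannot simply compress and must establish semifiniteness of $\tau|_{e_n N e_n}$ before pulling out the projections and running the diagonal. Everything else is a direct application of the Radon--Nikodym calculus for semifinite weights and the bimodule property of the expectation.
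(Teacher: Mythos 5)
Your proposal is correct, but it does substantially more work than the paper. The paper's entire proof is the observation you make in your third paragraph: since $h_\varepsilon = h/(1+\varepsilon h)$ lies in $N$, the bimodule property of a conditional expectation together with the Radon--Nikodym formula \eqref{eqn:semifinite_RD_formula} shows that any $\tau$-preserving faithful normal conditional expectation onto $N$ is automatically $\varphi$-preserving, and (using $\tau = \varphi_{h^{-1}}$) vice versa. That single computation already proves the ``if and only if'' \emph{and} the ``moreover'' clause simultaneously, since being ``with $\tau$-expectation'' just means such an expectation exists. Your second paragraph --- reducing to \cite[Theorem IX.4.2]{TakesakiII} and transferring semifiniteness of the restricted weights across the spectral cutoffs $e_n = 1_{[1/n,n]}(h)$ --- is therefore superfluous, though it is a legitimate alternative route and the diagonal argument there is sound. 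One small caveat in that paragraph: the displayed justification $\tau(p) = \varphi(h^{-1/2}ph^{-1/2}) \leq n\,\varphi(p)$ is not literally valid as written, since one cannot in general pull the norm of $h^{-1/2}e_n$ out of the weight $\varphi$; the inequality is true, but the clean way to get both bounds is to use traciality of $\tau$ to write $\varphi(p) = \sup_\varepsilon \tau(p h_\varepsilon p)$ and then squeeze $e_n h_\varepsilon e_n$ between (roughly) $\tfrac1n e_n$ and $n e_n$. Since that paragraph can be deleted entirely, this is a cosmetic issue rather than a gap.
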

\begin{proof}
The affiliation of $\frac{d\varphi}{d \tau}$ with $N$ and (\ref{eqn:semifinite_RD_formula}) implies a $\tau$-preserving expectation will also be $\varphi$-preserving, and vice versa.
\end{proof}

\begin{prop}\label{prop:semifinite_anticoarse_space_containment_switch}
Let $M$ be a semifinite von Neumann algebra equipped with a faithful normal semifinite tracial weight $\tau$ and a faithful normal semifinite weight $\varphi$. Suppose $N\leq M$ is an inclusion with both $\tau$-expectation and $\varphi$-expectation and that $N$ is diffuse and $\sigma$-finite. Further suppose $\frac{d \varphi}{d\tau}$ is affiliated with $\cN_M(N)''$. 
Then $P\leq M$ is with $\tau$-expectation and satisfies
    \[
        L^2_{\cross}(N\leq M, \tau) \subseteq L^2(P,\tau)
    \]
if and only if the inclusion is with $\varphi$-expectation and satisfies
    \[
        L^2_{\cross}(N\leq M, \varphi) \subseteq L^2(P,\varphi).
    \]
\end{prop}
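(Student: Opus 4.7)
The plan is to show that whenever either containment $L^{2}_{\cross}(N\leq M,\rho)\subseteq L^{2}(P,\rho)$ holds for $\rho\in\{\tau,\varphi\}$, the subalgebra $P$ automatically contains $\cN_M(N)''$, and hence contains $\frac{d\varphi}{d\tau}$; one can then swap between $\tau$ and $\varphi$ via the canonical intertwiner and Lemma~\ref{lem:common_expectation_canonical_intertwiner}.

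First I would observe that since $N$ is diffuse and $\sigma$-finite and is with both $\tau$- and $\varphi$-expectation, Proposition~\ref{prop:diffuse_algebras_contained_in_their_anticoarse_space} applies to both weights, giving that $\cN_M(N)''\leq M$ is with both $\tau$- and $\varphi$-expectation, and that
\[
    L^{2}(\cN_M(N)'',\rho)\subseteq L^{2}_{\cross}(N\leq M,\rho)\qquad \text{for }\rho\in\{\tau,\varphi\}.
\]
By hypothesis, $\frac{d\varphi}{d\tau}$ is affiliated with $\cN_M(N)''$, so Lemma~\ref{lem:semifinite_expectation_swapping} (applied to the inclusion $\cN_M(N)''\leq M$) produces a common faithful normal conditional expectation, and then Lemma~\ref{lem:common_expectation_canonical_intertwiner} shows that the canonical intertwiner $U_{\tau,\varphi}\colon L^2(M,\varphi)\to L^2(M,\tau)$ restricts to a unitary $L^{2}(\cN_M(N)'',\varphi)\to L^{2}(\cN_M(N)'',\tau)$. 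Moreover, from the discussion following Proposition~\ref{prop: exercise with bimod maps}, $U_{\tau,\varphi}$ sends $L^{2}_{\cross}(N\leq M,\varphi)$ bijectively onto $L^{2}_{\cross}(N\leq M,\tau)$.

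Now for the forward direction, suppose $P\leq M$ is with $\tau$-expectation and $L^{2}_{\cross}(N\leq M,\tau)\subseteq L^{2}(P,\tau)$. Combining this with the inclusion from Proposition~\ref{prop:diffuse_algebras_contained_in_their_anticoarse_space} gives $L^{2}(\cN_M(N)'',\tau)\subseteq L^{2}(P,\tau)$, hence $\cN_M(N)''\subseteq P$. In particular $\frac{d\varphi}{d\tau}$ is affiliated with $P$, so Lemma~\ref{lem:semifinite_expectation_swapping} applied to the inclusion $P\leq M$ gives that $P$ is with $\varphi$-expectation and there is a common expectation onto $P$. By Lemma~\ref{lem:common_expectation_canonical_intertwiner} again, $U_{\varphi,\tau}(L^{2}(P,\tau))=L^{2}(P,\varphi)$. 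Applying $U_{\varphi,\tau}$ to the assumed containment yields
\[
    L^{2}_{\cross}(N\leq M,\varphi)=U_{\varphi,\tau}\bigl(L^{2}_{\cross}(N\leq M,\tau)\bigr)\subseteq U_{\varphi,\tau}\bigl(L^{2}(P,\tau)\bigr)=L^{2}(P,\varphi),
\]
as desired.

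The reverse direction is entirely symmetric: starting from $L^{2}_{\cross}(N\leq M,\varphi)\subseteq L^{2}(P,\varphi)$, the same argument (using the $\varphi$-version of Proposition~\ref{prop:diffuse_algebras_contained_in_their_anticoarse_space}) forces $\cN_M(N)''\subseteq P$, so $\frac{d\varphi}{d\tau}$ is affiliated with $P$; Lemma~\ref{lem:semifinite_expectation_swapping} then upgrades to $\tau$-expectation and Lemma~\ref{lem:common_expectation_canonical_intertwiner} transports the containment. I do not anticipate a real obstacle here; the only subtle point is recognizing that the hypothesized containment of anticoarse spaces automatically absorbs $\cN_M(N)''$ into $P$, which is precisely what lets us invoke Lemma~\ref{lem:semifinite_expectation_swapping} without \emph{a priori} knowing that $\frac{d\varphi}{d\tau}$ is affiliated with $P$.
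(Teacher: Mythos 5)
Your argument is correct and follows essentially the same route as the paper's proof: use Proposition~\ref{prop:diffuse_algebras_contained_in_their_anticoarse_space} to see that the hypothesized containment forces $\frac{d\varphi}{d\tau}$ to be affiliated with $P$, then invoke Lemma~\ref{lem:semifinite_expectation_swapping} to get a common expectation onto $P$ and transport the containment via the canonical intertwiner using Proposition~\ref{prop: exercise with bimod maps} and Lemma~\ref{lem:common_expectation_canonical_intertwiner}. The only difference is that you spell out the symmetric converse, which the paper dismisses as identical.
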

\begin{proof}
Assume $P\leq M$ is with $\tau$-expectation and that $L^2(P,\tau)$ contains the $\tau$-anticoarse space of $N\leq M$ (the proof of the converse is identical). Proposition~\ref{prop:diffuse_algebras_contained_in_their_anticoarse_space} implies
    \[
        L^2(\cN_M(N)'', \tau)\subseteq L^2_{\cross}(N\leq M, \tau) \subseteq L^2(P,\tau),
    \]
so that $\frac{d\varphi}{d \tau}$ is affiliated with $P$. Lemma~\ref{lem:semifinite_expectation_swapping} then yields a faithful normal conditional expectation $\cE_P\colon M\to P$ that preserves both $\tau$ and $\varphi$. Applying Proposition~\ref{prop: exercise with bimod maps} and Lemma~\ref{lem:common_expectation_canonical_intertwiner} gives
    \[
        L^2_{\cross}(N\leq M,\varphi) = U_{\varphi,\tau}\left(L^2_{\cross}(N\leq M,\tau)\right) \subseteq U_{\varphi,\tau}\left( L^2(P,\tau) \right) = L^2(P,\varphi),
    \] 
where $U_{\varphi,\tau}\colon L^2(M,\tau)\to L^2(M,\varphi)$ is the canonical intertwiner from $\tau$ to $\varphi$.
\end{proof}

If $S$ is a set of operators affiliated with $M$ then we define the von Neumann algebra generated by $S$ to be the subalgebra of $M$ generated by $v$ and all spectral projections of $|x|$, where $x\in S$ with polar decomposition $x=v|x|$. Note that in the context of the following corollary, $L^2(M,\tau_j)$ for $j=1,2$ can be viewed as affiliated operators (see \cite[Theorems IX.2.5 and IX.2.13]{TakesakiII}).

\begin{cor}\label{cor:anticoarse_space_same_over_traces}
Let $M$ be a semifinite von Neumann algebra equipped with faithful normal semifinite tracial weights $\tau_1$ and $\tau_2$. Suppose $N\leq M$ is with both $\tau_1$-expectation and $\tau_2$-expectation and $N$ is diffuse and $\sigma$-finite. Then the $\tau_j$-anticoarse spaces for $j=1,2$ generate the same von Neumann subalgebra of $M$.
\end{cor}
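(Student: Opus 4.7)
The plan is to leverage Proposition~\ref{prop:semifinite_anticoarse_space_containment_switch} to swap between the two tracial weights. For $j=1,2$, let $P_j \leq M$ denote the von Neumann subalgebra of $M$ generated by $L^2_{\cross}(N\leq M,\tau_j)$ viewed as affiliated operators. I will argue that $P_2 \leq P_1$; by symmetry this gives $P_1 = P_2$.

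First I would verify that each $P_j \leq M$ is itself with $\tau_j$-expectation and satisfies $L^2_{\cross}(N\leq M,\tau_j) \subseteq L^2(P_j,\tau_j)$. Since $N$ is diffuse and $\sigma$-finite with $\tau_j$-expectation, Proposition~\ref{prop:diffuse_algebras_contained_in_their_anticoarse_space} yields $L^2(N,\tau_j) \subseteq L^2_{\cross}(N\leq M,\tau_j)$, and hence $N\leq P_j$. By Lemma~\ref{lem:semifinite_iff_approx_proj_unit}, there is a net of projections in $\dom(\tau_j|_N) \subseteq \dom(\tau_j|_{P_j})$ converging strongly to $1$, which shows $\tau_j|_{P_j}$ is semifinite; traciality of $\tau_j$ makes $\sigma^{\tau_j}$-invariance automatic, so $P_j \leq M$ is with $\tau_j$-expectation. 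The inclusion $L^2_{\cross}(N\leq M,\tau_j)\subseteq L^2(P_j,\tau_j)$ then holds by construction of $P_j$ through the polar decompositions and spectral projections of its generators.

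The key input is that $\frac{d\tau_2}{d\tau_1}$ is affiliated with $Z(M) \leq \cN_M(N)''$. Indeed, the Connes cocycle $(D\tau_2\colon D\tau_1)_t = \bigl(\tfrac{d\tau_2}{d\tau_1}\bigr)^{it}$ satisfies $\sigma_t^{\tau_2}(x) = u_t\,\sigma_t^{\tau_1}(x)\,u_t^*$, but both modular flows are trivial because $\tau_1,\tau_2$ are tracial, forcing $u_t x u_t^* = x$ for every $x\in M$ and $t\in\bR$. Thus $u_t \in Z(M)$ for all $t$, and central unitaries normalize $N$, so $Z(M)\leq \cN_M(N)''$.

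With these ingredients, Proposition~\ref{prop:semifinite_anticoarse_space_containment_switch} applies with $\tau = \tau_1$, $\varphi = \tau_2$, and $P = P_1$, yielding $L^2_{\cross}(N\leq M,\tau_2) \subseteq L^2(P_1,\tau_2)$. Since elements of $L^2(P_1,\tau_2)$ are $\tau_2$-measurable operators affiliated with $P_1$, their partial isometries and spectral projections all belong to $P_1$, giving $P_2 \leq P_1$. Reversing the roles of $\tau_1$ and $\tau_2$ then produces the reverse inclusion. I expect no step to present a significant obstacle: once the observation about central cocycles between tracial weights is made, the rest is bookkeeping around Proposition~\ref{prop:semifinite_anticoarse_space_containment_switch}.
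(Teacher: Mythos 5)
Your proposal is correct and follows essentially the same route as the paper's own (much terser) proof: identify $\frac{d\tau_2}{d\tau_1}$ as affiliated with the center, hence with $\cN_M(N)''$, note that each generated algebra contains $N$ (via Proposition~\ref{prop:diffuse_algebras_contained_in_their_anticoarse_space}) so the relevant weights are semifinite on it, and then invoke Proposition~\ref{prop:semifinite_anticoarse_space_containment_switch} in both directions. Your write-up simply fills in the bookkeeping the paper leaves implicit.
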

\begin{proof}
Note that $\frac{d \tau_2}{d \tau_1}$ is affiliated with the center of $M$, which is contained in $\cN_M(N)''$. Furthermore, for each $j=1,2$, $\tau_j$ is semifinite on the von Neumann algebra generated by the $\tau_j$-anticoarse space because the algebra contains $N$ by Proposition~\ref{prop:diffuse_algebras_contained_in_their_anticoarse_space}. The corollary then follows from Proposition~\ref{prop:semifinite_anticoarse_space_containment_switch}.
\end{proof}

\subsection{An extrinsic anticoarse space}

Recall that for the weight $\widetilde{\varphi}$ on $M\rtimes_{\sigma^\varphi} \bR$ dual to a faithful normal semifinite weight $\varphi$ on $M$, one has $\lambda_\varphi(f) x\mapsto f\otimes x$ for $f\in C_c(\bR)$ and $x\in \sqrt{\dom}(\varphi)$ extends to an $M\rtimes_{\sigma^{\varphi}}\bR$-$M\rtimes_{\sigma^{\varphi}}\bR$ bimodular unitary
    \[
        L^2(M\rtimes_{\sigma^\varphi} \bR, \widetilde{\varphi}) \cong L^2(\bR)\otimes L^2(M,\varphi).
    \]
We will identify these two spaces in this section.

\begin{defn}\label{defn:dynamical_varphi_anticoarse_space}
Let $M$ be a von Neumann algebra equipped with a faithful normal semifinite weight $\varphi$. Given an inclusion $N\leq M$ with $\varphi$-expectation, we define its \textbf{dynamical $\varphi$-anticoarse space}
to be the set
    \[
        L^{2,\textnormal{dyn}}_{\cross}(N\leq M, \varphi):=\{\xi\in L^2(M,\varphi)\colon f\otimes \xi\in L^2_{\cross}(N\rtimes_{\sigma^\varphi} \bR \leq M\rtimes_{\sigma^\varphi} M, \widetilde{\varphi})\ \textnormal{ for every  } f\in L^2(\bR)\},
    \]
where $\widetilde{\varphi}$ is the dual weight of $\varphi$.
\end{defn}

\noindent The following proposition shows the dynamical $\varphi$-anticoarse spaces arise quite naturally from the anticoarse spaces considered in the previous subsection.

\begin{prop}\label{prop: tensor splitting}
 Let $M$ be a von Neumann algebra equipped with a faithful normal semifinite weight $\varphi$, and let $N\leq M$ be with $\varphi$-expectation. Then
 \[L^{2}_{\cross}(N\rtimes_{\sigma^{\varphi}}\bR\leq M\rtimes_{\sigma^{\varphi}}\bR,\widetilde{\varphi})=L^{2}(\bR)\otimes L^{2,\textnormal{dyn}}_{\cross}(N\leq M,\varphi).\]
\end{prop}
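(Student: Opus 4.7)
The containment $\supseteq$ is immediate: for any $\xi\in L^{2,\textnormal{dyn}}_{\cross}(N\leq M,\varphi)$ and $f\in L^{2}(\bR)$, the simple tensor $f\otimes\xi$ lies in the anticoarse space by definition, and taking closed linear spans yields $L^{2}(\bR)\otimes L^{2,\textnormal{dyn}}_{\cross}(N\leq M,\varphi)\subseteq L^{2}_{\cross}(N\rtimes_{\sigma^{\varphi}}\bR\leq M\rtimes_{\sigma^{\varphi}}\bR,\widetilde{\varphi})$. For the reverse inclusion, write $L:=L^{2}_{\cross}(N\rtimes_{\sigma^{\varphi}}\bR\leq M\rtimes_{\sigma^{\varphi}}\bR,\widetilde{\varphi})$ and let $P$ denote the orthogonal projection onto $L$. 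My plan is to show $P\in \bC\otimes B(L^{2}(M,\varphi))$, so that $P=1\otimes P_{0}$ for some projection $P_{0}$ on $L^{2}(M,\varphi)$; then $L=L^{2}(\bR)\otimes P_{0}L^{2}(M,\varphi)$, and unwinding Definition~\ref{defn:dynamical_varphi_anticoarse_space} will identify $P_{0}L^{2}(M,\varphi)$ with $L^{2,\textnormal{dyn}}_{\cross}(N\leq M,\varphi)$.

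The core task is thus to verify that $L$ is invariant under $B(L^{2}(\bR))\otimes 1$ under the identification $L^{2}(M\rtimes_{\sigma^{\varphi}}\bR,\widetilde{\varphi})\cong L^{2}(\bR)\otimes L^{2}(M,\varphi)$. To do so, I will exhibit two families of unitaries in $B(L^{2}(\bR))\otimes 1$ preserving $L$ that jointly generate $B(L^{2}(\bR))$ as a von Neumann algebra. First, by the bimodule formula~(\ref{eqn:continuous_core_bimodue_structure}), left multiplication by $\lambda_{\varphi}(s)\in L_{\varphi}(\bR)\leq N\rtimes_{\sigma^{\varphi}}\bR$ acts as the translation $\lambda(s)\otimes 1$, so the bimodularity of the maps cutting out $L$ forces $L$ to be translation-invariant. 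Second, the dual action $\theta_{s}$ preserves $N\rtimes_{\sigma^{\varphi}}\bR$ (fixing $M$ pointwise and rescaling $\lambda_{\varphi}(\bR)$ by a character), so by the automorphism-invariance bullet following Proposition~\ref{prop: exercise with bimod maps}, the unitary implementation $V_{s}$ of $\theta_{s}$ on $L^{2}(M\rtimes_{\sigma^{\varphi}}\bR,\widetilde{\varphi})$ preserves $L$. A direct calculation on dense vectors $\lambda_{\varphi}(f)x$ for $f\in C_{c}(\bR)$ and $x\in\sqrt{\dom}(\varphi)$, using that $\widetilde{\varphi}\circ\theta_{s}=\widetilde{\varphi}$, will show that $V_{s}$ corresponds to $M_{e^{-2\pi is\cdot}}\otimes 1$ under the identification $\lambda_{\varphi}(f)x\mapsto f\otimes x$.

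To conclude, the translations $\{\lambda(s)\}_{s\in\bR}$ generate $L(\bR)$, and the character multiplications $\{M_{e^{-2\pi is\cdot}}\}_{s\in\bR}$ generate $L^{\infty}(\bR)$ via the spectral theorem applied to their infinitesimal generator (the position operator); these two families jointly generate $B(L^{2}(\bR))$ because any bounded function commuting with every translation is constant. Hence $P$ commutes with $B(L^{2}(\bR))\otimes 1$, which completes the argument. The main subtlety I expect is a clean verification that $V_{s}=M_{e^{-2\pi is\cdot}}\otimes 1$: this is computationally routine but requires carefully tracking the canonical identification of $L^{2}(M\rtimes_{\sigma^{\varphi}}\bR,\widetilde{\varphi})$ with $L^{2}(\bR)\otimes L^{2}(M,\varphi)$ and the fact that the $\widetilde{\varphi}$-preservation of $\theta_{s}$ provides the canonical unitary implementation on the GNS space.
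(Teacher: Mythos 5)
Your proposal is correct and follows essentially the same route as the paper: both arguments show the projection onto $L^{2}_{\cross}(N\rtimes_{\sigma^{\varphi}}\bR\leq M\rtimes_{\sigma^{\varphi}}\bR,\widetilde{\varphi})$ commutes with the translations $\lambda(s)\otimes 1$ (via left $L_{\varphi}(\bR)$-bimodularity) and with the modulations $u(s)\otimes 1$ implementing the dual action (via the automorphism-invariance consequence of Proposition~\ref{prop: exercise with bimod maps}), then use that these jointly generate $B(L^{2}(\bR))$ to conclude the projection has the form $1\otimes q$, and finally identify the second tensor factor from the definition of the dynamical anticoarse space. The only cosmetic difference is that you additionally record the (easy) containment $\supseteq$ explicitly, which the paper leaves implicit in the final identification.
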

\begin{proof}
Let $p\in B(L^{2}(\bR)\otimes L^{2}(M,\varphi))$ be the projection onto $L^{2}_{\cross}(N\rtimes_{\sigma^{\varphi}}\bR\leq M\rtimes_{\sigma^{\varphi}}\bR,\widetilde{\varphi})$. Since $L^{2}_{\cross}(N\rtimes_{\sigma^{\varphi}}\bR\leq M\rtimes_{\sigma^{\varphi}}\bR,\widetilde{\varphi})$ is an $L_{\varphi}(\bR)$-bimodule, we have that $p$ commutes with $\lambda(t)\otimes 1$ for all $t\in \bR$.
Recall the dual action $\bR\overset{\theta}{\curvearrowright}M\rtimes_{\sigma^{\varphi}}\bR$ given by (\ref{eqn:dual_action_formula}) and note that $\theta_{s}$ has unitary implementation $u(s)\otimes 1$ where $[u(s)f](t)=e^{-2\pi i st}f(t)$ for all $t\in \bR$ and $f\in L^{2}(\bR)$. The $\theta$-invariance of $N\rtimes_{\sigma^\varphi}\bR$ and the discussion following Proposition~\ref{prop: exercise with bimod maps} implies $L^{2}_{\cross}(N\rtimes_{\sigma^{\varphi}}\bR\leq M\rtimes_{\sigma^{\varphi}}\bR,\widetilde{\varphi})$ is invariant under $\{u(s)\otimes 1:s\in \bR\}$. Hence, by Tomita's commutation theorem, we have that 
    \[
        p\in (\{\lambda(t):t\in \bR\}\cup \{u(s)\otimes 1:s\in \bR\})'\overline{\otimes}B(L^{2}(M,\varphi)).
    \]
It is well known that the covariant representations $\lambda$ and $u$ together generate $B(L^2(\bR))$ (see, for example, \cite[Proposition X.2.2]{TakesakiII}), and therefore $p=1\otimes q$ for some projection $q\in B(L^{2}(M,\varphi))$. Thus we may write
\[L^{2}_{\cross}(N\rtimes_{\sigma^{\varphi}}\bR\leq M\rtimes_{\sigma^{\varphi}}\bR,\widetilde{\varphi})=L^{2}(\bR)\otimes \cH\]
for some $\cH\subseteq L^{2}(M,\varphi)$. Having established the above tensor splitting, it follows from the definition of the dynamical $\varphi$-anticoarse space that $\cH=L^{2,\textnormal{dyn}}_{\cross}(N\leq M,\varphi)$. 
\end{proof}

\begin{cor}\label{cor: extrinsic anticoarse space as a bimodule}
Let $M$ be a von Neumann algebra equipped with a faithful normal semifinite weight $\varphi$, and let $N\leq M$ be with $\varphi$-expectation. Then $L^{2,\textnormal{dyn}}_{\cross}(N\leq M,\varphi)$ is a closed $N$-subbimodule of $L^{2}(M,\varphi)$ which is invariant for the representation $\bR\ni t\mapsto \Delta_{\varphi}^{it}$.
\end{cor}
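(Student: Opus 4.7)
The plan is to leverage Proposition~\ref{prop: tensor splitting}, which identifies
\[
\widetilde{H} := L^{2}_{\cross}(N\rtimes_{\sigma^{\varphi}}\bR\leq M\rtimes_{\sigma^{\varphi}}\bR,\widetilde{\varphi}) = L^{2}(\bR)\otimes H, \qquad H := L^{2,\textnormal{dyn}}_{\cross}(N\leq M,\varphi).
\]
Closedness of $H$ is then immediate: $\widetilde{H}$ is closed as an intersection of kernels, and a tensor-splitting $L^2(\bR)\otimes H$ forces $H$ itself to be closed. Moreover, as $\widetilde{H}$ is an $N\rtimes_{\sigma^\varphi}\bR$-bimodule (by the discussion after Proposition~\ref{prop: exercise with bimod maps}), it is in particular an $N$-bimodule via the inclusion $N\subseteq N\rtimes_{\sigma^\varphi}\bR$.

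For the \emph{right} $N$-action, the bimodule formulas (\ref{eqn:continuous_core_bimodue_structure}) give $(f\otimes\xi)\cdot y = f\otimes(\xi\cdot y)$ for $y\in N$ and $f\in L^2(\bR)$, $\xi\in L^2(M,\varphi)$. Since $\widetilde{H}=L^2(\bR)\otimes H$ is a right $N$-module, $H$ must be as well. The $\Delta_\varphi^{it}$-invariance follows similarly by acting on the right by $\lambda_\varphi(-s)\in L_\varphi(\bR)\subseteq N\rtimes_{\sigma^\varphi}\bR$: a direct computation with (\ref{eqn:continuous_core_bimodue_structure}) gives
\[
(f\otimes\xi)\cdot\lambda_\varphi(-s) = (\tau_{-s}f)\otimes(\Delta_\varphi^{is}\xi),
\]
where $(\tau_{-s}f)(t)=f(t+s)$. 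Since $\tau_{-s}$ is unitary on $L^2(\bR)$, requiring the right-hand side to lie in $L^2(\bR)\otimes H$ forces $\Delta_\varphi^{is}\xi\in H$ for every $s\in\bR$.

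The one genuinely delicate point is the \emph{left} $N$-action, since the formula $[x\cdot\eta](t)=\sigma_{-t}^\varphi(x)\cdot\eta(t)$ twists by $t$ and therefore does not factor through the tensor decomposition. For $x\in N$, $\xi\in H$, and $f\in L^2(\bR)$, the element $x\cdot(f\otimes\xi)$ corresponds to the $L^2(\bR,L^2(M,\varphi))$-function $t\mapsto f(t)\sigma_{-t}^\varphi(x)\xi$. Using the standard identification
\[
L^2(\bR)\otimes H = \bigl\{g\in L^2(\bR,L^2(M,\varphi)) : g(t)\in H \text{ for a.e.\ } t\bigr\},
\]
and varying $f$ over $L^2(\bR)$, we conclude $\sigma_{-t}^\varphi(x)\xi\in H$ for a.e.\ $t\in\bR$. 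Now $t\mapsto \sigma_{-t}^\varphi(x)\xi=\Delta_\varphi^{-it}x\Delta_\varphi^{it}\xi$ is continuous in the $L^2(M,\varphi)$-norm by strong continuity of the one-parameter group $\Delta_\varphi^{it}$, and $H$ is closed; hence $\sigma_{-t}^\varphi(x)\xi\in H$ for \emph{every} $t\in\bR$, and $t=0$ yields $x\cdot\xi\in H$. The main obstacle, mild as it is, is exactly this step: because of the $t$-twist in the left action one cannot read off the $t=0$ fibre directly from the tensor decomposition, and must instead combine the a.e.\ statement with norm-continuity of the modular flow and closedness of $H$.
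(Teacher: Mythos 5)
Your proof is correct and follows essentially the same route as the paper: both rest on the tensor splitting of Proposition~\ref{prop: tensor splitting}, handle the right action and $\Delta_\varphi^{it}$-invariance via the $L_\varphi(\bR)$-bimodule structure of the core anticoarse space (you right-multiply by $\lambda_\varphi(-s)$ where the paper conjugates, a cosmetic difference), and resolve the left action by exactly the same almost-everywhere-plus-norm-continuity-plus-closedness argument. No gaps.
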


\begin{proof}
Let $\xi\in L^{2,\textnormal{dyn}}_{\cross}(N\leq M,\varphi)$, and fix an $f\in L^{2}(\bR)$ which is nowhere vanishing. For any $t\in \bR$, we have that 
    \[
        f\otimes \Delta_{\varphi}^{it}(\xi)=\lambda_{\varphi}(t)\cdot(f\otimes \xi)\cdot\lambda_{\varphi}(-t)\in L^{2}_{\cross}(N\rtimes_{\sigma^{\varphi}}\bR\leq M\rtimes_{\sigma^{\varphi}}\bR,\widetilde{\varphi})=L^{2}(\bR)\otimes L^{2,\textnormal{dyn}}_{\cross}(N\leq M,\varphi),
    \]
where in the second-to-last equality we use that $L^{2}_{\cross}(N\rtimes_{\sigma^{\varphi}}\bR\leq M\rtimes_{\sigma^{\varphi}}\bR,\widetilde{\varphi})$ is an $L_{\varphi}(\bR)$-bimodule, and in the last equality we use Proposition \ref{prop: tensor splitting}.
This shows that $L^{2,\textnormal{dyn}}_{\cross}(N\leq M,\varphi)$ is invariant for the representation $t\mapsto \Delta_\varphi^{it}$. The fact that $L^{2,\textnormal{dyn}}_{\cross}(N\leq M,\varphi)$ is right $N$-invariant is proved similarly. Toward seeing that  $L^{2,\textnormal{dyn}}_{\cross}(N\leq M,\varphi)$ is left $N$-invariant, fix $x\in N$. Then by Proposition~\ref{prop: tensor splitting} and the fact that $ L^{2}_{\cross}(N\rtimes_{\sigma^{\varphi}}\bR\leq M\rtimes_{\sigma^{\varphi}}\bR,\widetilde{\varphi})$ is an $N$-bimodule,  we have that $x\cdot (f\otimes \xi)\in L^{2}_{\cross}(N\rtimes_{\sigma^{\varphi}}\bR\leq M\rtimes_{\sigma^{\varphi}}\bR,\widetilde{\varphi})=L^{2}(\bR)\otimes L^{2,\textnormal{dyn}}_{\cross}(N\leq M,\varphi).$ This this implies (even if $L^{2,\textnormal{dyn}}_{\cross}(N\leq M,\varphi)$ is not separable), 
there is a weakly measurable $\eta\colon\bR\to  L^{2,\textnormal{dyn}}_{\cross}(N\leq M, \varphi)$ so that $\eta=x(f\otimes \xi)$ almost everywhere \cite[Proposition IV.7.4]{TakesakiI}. Thus for almost every $r\in \bR$ we have that $[x(f\otimes \xi)](r)\in  L^{2}_{\cross}(N\rtimes_{\sigma^{\varphi}}\bR\leq M\rtimes_{\sigma^{\varphi}}\bR,\widetilde{\varphi})$. Hence, for almost every
$r\in \bR$:
\[f(r)\sigma^{\varphi}_{-r}(x)\cdot \xi=x(f\otimes \xi)(r)\in  L^{2}_{\cross}(N\rtimes_{\sigma^{\varphi}}\bR\leq M\rtimes_{\sigma^{\varphi}}\bR,\widetilde{\varphi}).\]
Since $f$ is nowhere vanishing, for almost every $r\in \bR$ we have that $\sigma^\varphi_{-r}(x)\cdot \xi \in L^{2,\textnormal{dyn}}_{\cross}(N\leq M,\varphi)$. By point strong continuity of $\sigma^{\varphi}$, we have that $\{r\in \bR:\sigma_{-r}^\varphi(x)\cdot \xi \in L^{2,\textnormal{dyn}}_{\cross}(N\leq M,\varphi)\}$ is a conull closed subset of $\bR$; that is, $\sigma^\varphi_{-r}(x)\cdot \xi \in L^{2,\textnormal{dyn}}_{\cross}(N\leq M,\varphi)$ for all $r\in \bR$. Setting $r=0$ yields the left $N$-invariance. 
\end{proof}

The following theorem establishes the relationship between dynamical $\varphi$-anticoarse spaces for $\varphi$-Pinsker algebras. This will allow us to detect elements of $\varphi$-Pinsker algebras by first witnessing them as elements of dynamical $\varphi$-anticoarse spaces. The more concrete definition of the latter makes this an easier task.

\begin{thm}\label{thm:dynamical_anticoarse_contained_in_Pinsker}
Let $M$ be a von Neumann algebra equipped with a faithful normal semifinite weight $\varphi$. If $Q\leq M$ is a $\varphi$-entropy free inclusion with $Q$ $\sigma$-finite, then
    \[
        L^{2,\textnormal{dyn}}_{\cross}(Q\leq M,\varphi) \subseteq L^2( \Pi(Q\leq M,\varphi), \varphi).
    \]
Consequently, $M\cap L^{2,\textnormal{dyn}}_{\cross}(Q\leq M,\varphi) \subseteq \Pi(Q\leq M,\varphi)$.
\end{thm}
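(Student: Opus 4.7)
The plan is to transport the problem into the continuous core and reduce it to the finite tracial case, where the analogue \cite[Theorem 3.8]{Hayes2018} is already available. Write $\widetilde Q := Q \rtimes_{\sigma^{\varphi}} \bR$, $\widetilde M := M \rtimes_{\sigma^{\varphi}} \bR$, $\Pi := \Pi(Q \leq M, \varphi)$, and $\widetilde P := \Pi \rtimes_{\sigma^{\varphi}} \bR$; by the proof of Theorem \ref{thm:PT_III}, $\widetilde P$ is the Pinsker algebra of $\widetilde Q$ inside $\widetilde M$. The standard identification $L^{2}(\widetilde M, \widetilde \varphi) \cong L^{2}(\bR) \otimes L^{2}(M, \varphi)$ restricts to $L^{2}(\widetilde P, \widetilde\varphi) \cong L^{2}(\bR) \otimes L^{2}(\Pi, \varphi)$, so combining this with Proposition \ref{prop: tensor splitting} reduces the desired inclusion to showing
\[
L^{2}_{\cross}(\widetilde Q \leq \widetilde M, \widetilde\varphi) \subseteq L^{2}(\widetilde P, \widetilde\varphi).
\]
As remarked before Lemma \ref{lem:continuous_core_recognition_lemma}, both $\widetilde Q \leq \widetilde M$ and $\widetilde P \leq \widetilde M$ admit conditional expectations simultaneously preserving $\widetilde\varphi$ and $\tau_\varphi$. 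Hence Lemma \ref{lem:common_expectation_canonical_intertwiner} yields that the canonical intertwiner $U_{\tau_\varphi, \widetilde\varphi}$ sends $L^{2}(\widetilde P, \widetilde\varphi)$ onto $L^{2}(\widetilde P, \tau_\varphi)$, and Proposition \ref{prop: exercise with bimod maps} gives the same for anticoarse spaces. Thus it suffices to prove the corresponding inclusion with respect to the tracial weight $\tau_\varphi$.

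We are now in the semifinite tracial setting, where I would exploit the construction in the proof of Proposition \ref{prop:PT_II_infty}: there exists an increasing net of projections $(q_i)_{i\in I} \subseteq \dom(\tau_\varphi|_{\widetilde Q})$ converging to $1$ strongly such that $q_i \widetilde P q_i$ is the Pinsker algebra of $q_i \widetilde Q q_i$ inside the finite algebra $q_i \widetilde M q_i$. Applying \cite[Theorem 3.8]{Hayes2018} in each corner produces
\[
L^{2}_{\cross}(q_i \widetilde Q q_i \leq q_i \widetilde M q_i, \tau^{(q_i)}) \subseteq L^{2}(q_i \widetilde P q_i, \tau^{(q_i)}).
\]
Since $\tau_\varphi$ is tracial, every projection of $\widetilde Q$ lies in the centralizer $\widetilde Q^{\tau_\varphi}$, so Proposition \ref{prop:compressions_varphi_anticoarse_space} yields the identification $q_i \cdot L^{2}_{\cross}(\widetilde Q \leq \widetilde M, \tau_\varphi) \cdot q_i = L^{2}_{\cross}(q_i \widetilde Q q_i \leq q_i \widetilde M q_i, \tau^{(q_i)})$. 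Given $\xi \in L^{2}_{\cross}(\widetilde Q \leq \widetilde M, \tau_\varphi)$, the two displays above force $q_i \cdot \xi \cdot q_i \in L^{2}(\widetilde P, \tau_\varphi)$ for every $i$. A short standard-form argument (using that $q_i \to 1$ strongly forces $J_{\tau_\varphi} q_i J_{\tau_\varphi} \to 1$ strongly as well) gives $q_i \cdot \xi \cdot q_i \to \xi$ in $\|\cdot\|_{\tau_\varphi}$, and as $L^{2}(\widetilde P, \tau_\varphi)$ is closed we conclude $\xi \in L^{2}(\widetilde P, \tau_\varphi)$, completing the main inclusion.

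For the \emph{Consequently} clause, let $\cE_\Pi \colon M \to \Pi$ be a $\varphi$-preserving faithful normal conditional expectation (which exists because $\Pi \leq M$ is with $\varphi$-expectation). It extends to the orthogonal projection of $L^{2}(M, \varphi)$ onto $L^{2}(\Pi, \varphi)$, and agrees on GNS vectors with the original expectation. For $x \in M \cap L^{2,\textnormal{dyn}}_{\cross}(Q \leq M, \varphi)$, the main inclusion places the corresponding vector in $L^{2}(\Pi, \varphi)$; applying $\cE_\Pi$ gives $x = \cE_\Pi(x) \in \Pi$ by faithfulness of $\varphi$. The main obstacle I anticipate is the cross-weight bookkeeping of the first paragraph, and in particular checking that the canonical intertwiner simultaneously respects the Pinsker and anticoarse subspaces; once that is done, the essential new input is purely the tracial result \cite[Theorem 3.8]{Hayes2018}, promoted through the corner construction of Proposition \ref{prop:PT_II_infty}.
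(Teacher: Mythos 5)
Your argument is correct, and it reaches the tracial core the same way the paper does (tensor splitting via Proposition~\ref{prop: tensor splitting}, then the canonical intertwiner $U_{\tau_\varphi,\widetilde{\varphi}}$ together with Lemma~\ref{lem:common_expectation_canonical_intertwiner} and Proposition~\ref{prop: exercise with bimod maps} to trade $\widetilde{\varphi}$ for $\tau_\varphi$). Where you diverge is in how the tracial containment $L^2_{\cross}(\widetilde{Q}\leq\widetilde{M},\tau_\varphi)\subseteq L^2(\widetilde{P},\tau_\varphi)$ is obtained. The paper does not start from $\widetilde{P}=\Pi\rtimes_{\sigma^\varphi}\bR$; it sets $P:=W^*(L^2_{\cross}(\widetilde{Q}\leq\widetilde{M},\tau_\varphi))$, invokes the \emph{semifinite} entropy result \cite[Theorem 4.16]{Hayes2018} to see that $P\leq\widetilde{M}$ is entropy free, checks $\theta$-invariance of $P$ via the (trace-scaling) unitary implementation of the dual action, and then uses the recognition Lemma~\ref{lem:continuous_core_recognition_lemma} and Theorem~\ref{thm:PT_III} to conclude $P=P^\theta\rtimes_{\sigma^\varphi}\bR$ with $L^2(P^\theta,\varphi)\subseteq L^2(\Pi(Q\leq M,\varphi),\varphi)$. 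You instead take $\widetilde{P}$ as the known semifinite Pinsker algebra from the proof of Theorem~\ref{thm:PT_III}, compress by an increasing net of finite-trace projections $q_i\in\widetilde{Q}$ (legitimate via Proposition~\ref{prop:compressions_varphi_anticoarse_space}, since $\tau_\varphi$ is tracial so $q_i\in\widetilde{Q}^{\tau_\varphi}$), apply the \emph{finite} tracial result \cite[Theorem 3.8]{Hayes2018} in each corner, and glue using $q_i\widetilde{P}q_i=P_i$ from the construction in Proposition~\ref{prop:PT_II_infty} plus the norm convergence $q_i\cdot\xi\cdot q_i\to\xi$. Both routes are sound; yours only needs the finite-corner input from \cite{Hayes2018} at the cost of the corner-gluing bookkeeping (in particular the small verification, via Kaplansky and SOT-continuity of compression, that $q_i\widetilde{P}q_i$ really is the Pinsker algebra of $q_i\widetilde{Q}q_i$), while the paper's route trades that for the heavier citation of \cite[Theorem 4.16]{Hayes2018} and a dual-action-invariance argument. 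Your treatment of the ``consequently'' clause via the Jones projection $e_\Pi$ and faithfulness is also fine; the paper leaves that step implicit.
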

\begin{proof}
Let $\tau_\varphi$ be the tracial weight induced by $\varphi$ and define
    \[
        P:=W^*(L^2_{\cross}(Q\rtimes_{\sigma^\varphi} \bR \leq M\rtimes_{\sigma^\varphi} \bR, \tau_\varphi)).
    \]
Note that \cite[Theorem 4.16]{Hayes2018} implies that implies that $P\leq M\rtimes_{\sigma^\varphi} \bR$ is an entropy free inclusion. We claim that $Q\rtimes_{\sigma^\varphi} \bR \subseteq P$ and that $P$ is invariant under the dual action $\bR \overset{\theta}{\curvearrowright} (M\rtimes_{\sigma^\varphi} \bR)$ given by (\ref{eqn:dual_action_formula}). The former can be seen by recalling that $Q\rtimes_{\sigma^\varphi} \bR$ is diffuse by Lemma~\ref{lem:continuous_core_is_diffuse}, and then applying Proposition~\ref{prop:diffuse_algebras_contained_in_their_anticoarse_space}. Note $Q$ being $\sigma$-finite implies $Q\rtimes_{\sigma^\varphi} \bR$ is $\sigma$-finite: identify $Q\rtimes_{\sigma^\varphi}\bR \cong Q\rtimes_{\sigma^\psi} \bR$ where $\psi$ is a faithful normal state on $Q$, and then use that $L_\psi(\bR)$ is $\sigma$-finite and with $\widetilde{\psi}$-expectation. The $\theta$-invariance of $P$ can be seen as follows. Recall that $\tau_\varphi\circ \theta_s = e^{-s}\tau_\varphi$ for all $s\in \bR$, and consequently $\sqrt{\dom}(\tau_\varphi)\ni x\mapsto e^{s/2}\theta_s(x)$ extends to the unitary implementation $U_s\in \cU(L^2(M\rtimes_{\sigma^\varphi} \bR,\tau_\varphi))$ of $\theta_s$. Proposition~\ref{prop: exercise with bimod maps} and the discussion following it imply
    \[
        U_s\left( L^2_{\cross}(Q\rtimes_{\sigma^\varphi}\bR \leq M\rtimes_{\sigma^\varphi}\bR, \tau_\varphi) \right)=L^2_{\cross}(Q\rtimes_{\sigma^\varphi}\bR \leq M\rtimes_{\sigma^\varphi}\bR, \tau_\varphi),
    \]
for all $s\in \bR$, from which the $\theta$-invariance of $P$ follows.

With the claim in hand, Lemma~\ref{lem:continuous_core_recognition_lemma} yields that $P = P^{\theta}\rtimes_{\sigma^\varphi}\bR$, $P^\theta \leq M$ is with $\varphi$-expectation, and $Q \subseteq P^\theta$. In fact, $P^\theta \leq M$ is a $\varphi$-entropy free inclusion because, as we saw above, $P\leq M\rtimes_{\sigma^\varphi} \bR$ is an entropy free inclusion, and so Theorem~\ref{thm:PT_III}  implies that
    \[
        \Pi(Q\leq M,\varphi) = \Pi(P^\theta \leq M. \varphi).
    \]
In particular, we have $L^2(P^\theta,\varphi) \subseteq L^2( \Pi(Q\leq M,\varphi), \varphi)$.

Now, $P=P^\theta\rtimes_{\sigma^\varphi}\bR \leq M\rtimes_{\sigma^\varphi}\bR$ admits a faithful normal conditional expectation that preserves both $\widetilde{\varphi}$ and $\tau_\varphi$. So, combining Proposition~\ref{prop: exercise with bimod maps}, the definition of $P$, and Lemma~\ref{lem:common_expectation_canonical_intertwiner} gives
    \[
        L^2_{\cross}(Q\rtimes_{\sigma^\varphi} \bR \leq M\rtimes_{\sigma^\varphi} \bR, \widetilde{\varphi}) = U_{\widetilde{\varphi},\tau_\varphi} \left( L^2_{\cross}(Q\rtimes_{\sigma^\varphi} \bR \leq M\rtimes_{\sigma^\varphi} \bR, \tau_\varphi)\right) 
        \subseteq U_{\widetilde{\varphi},\tau_\varphi}L^2(P, \tau_\varphi) = L^2(P, \widetilde{\varphi}).
    \]
Thus
    \begin{align*}
        L^2(\bR)\otimes L^{2,\textnormal{dyn}}_{\cross}(Q\leq M,\varphi) &\subseteq L^2_{\cross}(Q\rtimes_{\sigma^\varphi} \bR \leq M\rtimes_{\sigma^\varphi} \bR, \widetilde{\varphi})\\
        &\subseteq L^2(P, \widetilde{\varphi}) = L^2(\bR)\otimes L^2(P^\theta, \varphi) \subseteq L^2(\bR)\otimes L^2(\Pi(Q\leq M,\varphi),\varphi),
    \end{align*}
where the first inclusion follows from the definition of the dynamical $\varphi$-anticoarse space.
\end{proof}

\begin{cor}\label{cor:existence_of_vNa_generated_by_dynamical}
Let $M$ be a von Neumann algebra equipped with a faithful normal semifinite weight $\varphi$. If $N\leq M$ is $\sigma$-finite and with $\varphi$-expectation, then there exists a unique smallest $P\leq M$ with $\varphi$-expectation satisfying
    \[
        L^{2,\textnormal{dyn}}_{\cross}(N\leq M, \varphi) \subseteq L^2(P,\varphi).
    \]    
\end{cor}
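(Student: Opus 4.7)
The plan is to build $P$ via the continuous core $\widetilde{M}:=M\rtimes_{\sigma^\varphi}\bR$, where the tracial weight $\tau_\varphi$ admits cleaner intersection arguments, and then descend using Lemma~\ref{lem:continuous_core_recognition_lemma}. Set $\widetilde{N}:=N\rtimes_{\sigma^\varphi}\bR$ and consider the $\tau_\varphi$-anticoarse space $\widetilde{K}:=L^2_{\cross}(\widetilde{N}\leq\widetilde{M},\tau_\varphi)$. Since $\widetilde{N}$ is diffuse (Lemma~\ref{lem:continuous_core_is_diffuse}) and $\sigma$-finite (as the crossed product of a $\sigma$-finite algebra by the $\sigma$-compact group $\bR$), Proposition~\ref{prop:diffuse_algebras_contained_in_their_anticoarse_space} yields $L^2(\widetilde{N},\tau_\varphi)\subseteq\widetilde{K}$. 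Let $\widetilde{\cF}$ denote the (nonempty, as $\widetilde{M}\in\widetilde{\cF}$) family of subalgebras $\widetilde{P}\leq\widetilde{M}$ with $\tau_\varphi$-expectation satisfying $\widetilde{K}\subseteq L^2(\widetilde{P},\tau_\varphi)$; each such $\widetilde{P}$ then contains $\widetilde{N}$.

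Put $\widetilde{P}_0:=\bigcap_{\widetilde{P}\in\widetilde{\cF}}\widetilde{P}$. Since $\widetilde{N}\subseteq\widetilde{P}_0$, Lemma~\ref{lem:semifinite_iff_approx_proj_unit} gives that $\tau_\varphi|_{\widetilde{P}_0}$ is semifinite, so $\widetilde{P}_0\leq\widetilde{M}$ is with $\tau_\varphi$-expectation. The technical heart is verifying $\widetilde{K}\subseteq L^2(\widetilde{P}_0,\tau_\varphi)$, or equivalently $E_{\widetilde{P}_0}=\bigwedge_{\widetilde{P}\in\widetilde{\cF}}E_{\widetilde{P}}$, where $E_{\widetilde{P}}$ denotes the orthogonal projection onto $L^2(\widetilde{P},\tau_\varphi)$. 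For pairs $\widetilde{P}_1,\widetilde{P}_2\in\widetilde{\cF}$, I compress by arbitrary projections $q\in\dom(\tau_\varphi|_{\widetilde{N}})$ to reduce to the finite tracial setting, where the classical alternating projections theorem $(\cE_{\widetilde{P}_1}\cE_{\widetilde{P}_2})^n\to\cE_{\widetilde{P}_1\cap\widetilde{P}_2}$ in $\|\cdot\|_2$ yields $E_{\widetilde{P}_1\cap\widetilde{P}_2}=E_{\widetilde{P}_1}\wedge E_{\widetilde{P}_2}$. Extending to arbitrary intersections by taking SOT-limits over finite subfamilies and letting $q\nearrow 1$ completes this step.

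Next, I descend $\widetilde{P}_0$ to $M$. Proposition~\ref{prop: exercise with bimod maps}, applied with the unitary implementation of $\theta_s$ on $L^2(\widetilde{M},\tau_\varphi)$ and using the $\theta$-invariance of $\widetilde{N}$, shows $\widetilde{K}$ is $\theta$-invariant, so $\widetilde{\cF}$ is $\theta$-stable and uniqueness of $\widetilde{P}_0$ forces $\theta$-invariance. Together with $L_\varphi(\bR)\subseteq\widetilde{N}\subseteq\widetilde{P}_0$, Lemma~\ref{lem:continuous_core_recognition_lemma} yields $\widetilde{P}_0=P\rtimes_{\sigma^\varphi}\bR$ for the $\sigma^\varphi$-invariant algebra $P:=\widetilde{P}_0^\theta\leq M$. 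Since $N\subseteq P$ (taking $\theta$-fixed points of $\widetilde{N}\subseteq\widetilde{P}_0$), $\varphi|_P$ is semifinite and hence $P\leq M$ is with $\varphi$-expectation. Using Lemma~\ref{lem:common_expectation_canonical_intertwiner} to pass from $\tau_\varphi$ to $\widetilde{\varphi}$ and then Proposition~\ref{prop: tensor splitting}, the containment $\widetilde{K}\subseteq L^2(\widetilde{P}_0,\tau_\varphi)$ translates to $L^2(\bR)\otimes L^{2,\textnormal{dyn}}_{\cross}(N\leq M,\varphi)\subseteq L^2(\bR)\otimes L^2(P,\varphi)$, i.e., $L^{2,\textnormal{dyn}}_{\cross}(N\leq M,\varphi)\subseteq L^2(P,\varphi)$.

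Minimality (and hence uniqueness) will follow by reversing the construction: if $P'\leq M$ is with $\varphi$-expectation and contains $L^{2,\textnormal{dyn}}_{\cross}(N\leq M,\varphi)$ in its $L^2$-space, then $P'\rtimes_{\sigma^\varphi}\bR\in\widetilde{\cF}$ by the same intertwiner argument, so $\widetilde{P}_0\subseteq P'\rtimes_{\sigma^\varphi}\bR$ and taking $\theta$-fixed points gives $P\subseteq P'$. The main obstacle is the alternating projections step in the second paragraph: while classical in finite tracial corners, its execution in the semifinite setting and over arbitrary intersections requires careful verification of the identity $L^2\bigl(\bigcap_i\widetilde{P}_i,\tau_\varphi\bigr)=\bigcap_i L^2(\widetilde{P}_i,\tau_\varphi)$ via compressions and SOT-limits.
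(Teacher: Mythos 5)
Your proof is correct and follows the same overall architecture as the paper's --- work in the continuous core with $\tau_\varphi$, use $\theta$-invariance together with Lemma~\ref{lem:continuous_core_recognition_lemma} to descend to $M$, and translate between $\tau_\varphi$, $\widetilde{\varphi}$ and the dynamical space via the canonical intertwiner and Proposition~\ref{prop: tensor splitting} --- but it differs in how the minimal algebra in the core is produced. The paper takes $\widetilde{P}$ to be the von Neumann algebra \emph{generated by} $L^2_{\cross}(\widetilde{N}\leq\widetilde{M},\tau_\varphi)$ in the affiliated-operator sense fixed just before Corollary~\ref{cor:anticoarse_space_same_over_traces} (the partial isometries and spectral projections coming from polar decompositions of the square-integrable operators in the anticoarse space); with that definition both the containment $\widetilde{K}\subseteq L^2(\widetilde{P},\tau_\varphi)$ and minimality are essentially automatic, and one only has to check $\theta$-invariance and $L_\varphi(\bR)\subseteq\widetilde{P}$ to invoke the recognition lemma. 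You instead intersect all admissible algebras, which forces you to prove $L^2\bigl(\bigcap_i\widetilde{P}_i,\tau_\varphi\bigr)=\bigcap_i L^2(\widetilde{P}_i,\tau_\varphi)$ --- correctly flagged as the crux. That identity is true, but note that the alternating-projections statement $(\cE_{\widetilde{P}_1}\cE_{\widetilde{P}_2})^n\to\cE_{\widetilde{P}_1\cap\widetilde{P}_2}$ you invoke is, via von Neumann's theorem on alternating projections, exactly equivalent to the pairwise case of the identity, so citing it as "classical" is somewhat circular, and the passage to arbitrary families through compressions and SOT-limits is more machinery than is needed. The clean justification handles arbitrary families in one stroke: a vector $\xi\in L^2(\widetilde{M},\tau_\varphi)$, viewed as a $\tau_\varphi$-measurable operator with polar decomposition $\xi=v|\xi|$, lies in $L^2(\widetilde{P}_i,\tau_\varphi)$ if and only if $v$ and the spectral projections of $|\xi|$ lie in $\widetilde{P}_i$; these data do not depend on $i$, hence lie in $\bigcap_i\widetilde{P}_i$, and semifiniteness of $\tau_\varphi$ on the intersection (it contains $L_\varphi(\bR)$, so Lemma~\ref{lem:semifinite_iff_approx_proj_unit} applies) converts affiliation back into membership in $L^2\bigl(\bigcap_i\widetilde{P}_i,\tau_\varphi\bigr)$. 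This is the same mechanism the paper's notion of generation exploits, so the two proofs coincide once your key step is carried out properly; what your version buys is avoiding the affiliated-operator definition of generation, at the cost of proving the $L^2$-of-intersections lemma separately.
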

\begin{proof}
It suffices to prove the existence of such a $P$ because the uniqueness will follow from being the smallest such algebra.

Arguing as in the proof of Theorem~\ref{thm:dynamical_anticoarse_contained_in_Pinsker}, we obtain that the von Neumann algebra generated by
    \[
        L^2_{\cross}(N\rtimes_{\sigma^\varphi} \bR \leq M \rtimes_{\sigma^\varphi} \bR, \tau_\varphi),
    \]
is of the form $P\rtimes_{\sigma^\varphi}\bR$, where $P\leq M$ is with $\varphi$-expectation. Note that $N\rtimes_{\sigma^\varphi}\bR$ is diffuse by Lemma~\ref{lem:continuous_core_is_diffuse}, $\sigma$-finite by the same argument as in Theorem~\ref{thm:PT_III}, and that $\frac{d\widetilde{\varphi}}{d\tau_\varphi}$ is affiliated with $L_\varphi(\bR)$. Thus we can apply Proposition~\ref{prop:semifinite_anticoarse_space_containment_switch} to see that
    \[
        L^2_{\cross}(N\rtimes_{\sigma^\varphi}\bR \leq M\rtimes_{\sigma^\varphi}\bR, \widetilde{\varphi}) \subseteq L^2(P\rtimes_{\sigma^\varphi} \bR, \widetilde{\varphi}) = L^2(\bR)\otimes L^2(P,\varphi).
    \]
This inclusion in conjunction with Proposition~\ref{prop: tensor splitting} implies  $L^2(P,\varphi)$ contains the dynamical $\varphi$-anticoarse space of $N\leq M$.

Now, suppose $Q\leq M$ is another inclusion with $\varphi$-expectation such that $L^2(Q,\varphi)$ contains the dynamical $\varphi$-anticoarse space of $N\leq M$. Then Proposition~\ref{prop: tensor splitting} implies that
    \[
         L^2_{\cross}(N\rtimes_{\sigma^\varphi}\bR \leq M\rtimes_{\sigma^\varphi}\bR, \widetilde{\varphi}) = L^2(\bR)\otimes L^{2,\textnormal{dyn}}_{\cross}(N\leq M,\varphi) \subseteq L^2(\bR)\otimes L^2(Q,\varphi) = L^2(Q\rtimes_{\sigma^\varphi} \bR,\widetilde{\varphi}).
    \]
Proposition~\ref{prop:semifinite_anticoarse_space_containment_switch} then yields
    \[
        L^2_{\cross}(N\rtimes_{\sigma^\varphi}\bR \leq M\rtimes_{\sigma^\varphi}\bR, \tau_\varphi) \subseteq L^2(Q\rtimes_{\sigma^\varphi}\bR, \tau_\varphi),
    \]
and hence $P\rtimes_{\sigma^\varphi} \bR \subseteq Q\rtimes_{\sigma^\varphi} \bR$. By considering fixed points of the action dual to $\sigma^\varphi$, we see that $P\subseteq Q$.
\end{proof}

In light of the previous corollary, we make the following definition.

\begin{defn}\label{defn:vNa_generated_by_dynamical_anticoarse_space}
Let $M$ be a von Neumann algebra equipped with a faithful normal semifinite weight $\varphi$, and let $N\leq M$ be $\sigma$-finite and with $\varphi$-expectation. We define the von Neumann algebra \textbf{generated by $L^{2,\textnormal{dyn}}_{\cross}(N\leq M,\varphi)$} to be the smallest von Neumann subalgebra $P\leq M$ with $\varphi$-expectation and satisfying $L^{2,\textnormal{dyn}}_{\cross}(N\leq M,\varphi) \subseteq L^2(P,\varphi)$, and we denote it by $W^*(L^{2,\textnormal{dyn}}_{\cross}(N\leq M,\varphi))$.
\end{defn}

\subsection{Canonicity of Pinsker algebras}\label{sec: Pinsker for weights}

In this subsection, we will show that the notion of being an entropy free inclusion is independent of the choice of faithful normal state on the ambient algebra which leaves the subalgebra invariant. This will ultimately show that Pinsker algebras associated to two different states agree if they have a common diffuse subalgebra which is invariant under each state. In particular, any entropy free inclusion has a canonical Pinsker algebra associated to it, which is state independent. These results will be crucial for our future applications, as it will allow us to deftly work with arbitrary subalgebras which are with expectation and not force the expectation to be compatible with whatever state we started with. For example, we will be able to quickly establish that Pinsker algebras absorb all wq-normalizers of subalgebras $Q$ by, for instance, working with an arbitrary state with leaves invariant a diffuse subalgebra in $uQu^{*}\cap Q$, where $u$ is in the wq-normalizer of $Q$. Similarly, our ultrapower applications will also heavily use the canonicity of the Pinsker algebra. 

Recall that the Connes cocycle derivative theorem says the modular automorphism groups associated to two different states differ by a path of inner automorphisms. If a subalgebra is invariant under the modular automorphism of two different states, then the unitaries implementing these inner automorphisms must be in the normalizer of this subalgebra. Because of this, our proof of state independence of the Pinsker algebra will reduce to proving that Pinsker algebras absorb the normalizer of any diffuse subalgebra which is with expectation. We will in fact prove something stronger with the normalizer replaced with the one-sided quasi-normalizer.

After passing to the continuous core, the one-sided quasi-normalizer becomes an even weaker generalization of the normalizer than the one-sided quasi-normalizer. In order to handle this  weaker normalizer, we will need the following general lemma. 
Recall that if $(M,\tau)$ is a tracial von Neumann algebra and $Q,N\leq M$, then we say that \emph{no corner of $Q$ intertwines into $N$ inside $M$} and write $Q\nprec_{M}N$ if there is a net $u_{n}\in \cU(Q)$ so that $\|\cE_{N}(au_{n}b)\|_{2}\to_{n}0$ for all $a,b\in M$ (here $\cE_{N}$ is the unique trace-preserving conditional expectation $M\to N$). We refer to \cite{PopaStrongRigidity} for a detailed discussion, including other equivalent definitions.

\begin{lem}\label{lem:not quite defn of intertwining?}
Let $(M,\tau)$ be a tracial von Neumann algebra, and let $Q,N\leq M$ be such that $Q\nprec_{M}N$.  Let $(B,\Tr)$ be a semifinite tracial von Neumann algebra. Suppose that $\cH_{0}$ is a normal $M\overline{\otimes}B$-module.
If $\cH\subseteq\cH_{0}$ is a $Q\overline{\otimes}B$-submodule contained in a  $N\overline{\otimes}B$-submodule of $\cH_{0}$ which is finite-dimensional over $N\overline{\otimes}B$, then $\cH=0$.
\end{lem}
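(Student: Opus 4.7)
The strategy is to reduce to the setting of Popa's intertwining theorem in the tracial case. First, I would reduce to the case that $B$ is finite tracial. Using semifiniteness of $\Tr$ on $B$, choose an increasing net $(p_\alpha)\subseteq \Proj(B)$ of finite-trace projections with $p_\alpha \nearrow 1$ strongly. Compressing $\cH_0, \cH, \cK$ by $1\otimes p_\alpha$ preserves all the hypotheses with $B$ replaced by $p_\alpha B p_\alpha$, which is finite tracial. Since $1 \otimes p_\alpha \to 1$ strongly, $\cH = 0$ follows once each compression vanishes.

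Next, with $B$ finite tracial, I would verify that $Q \mathbin{\overline{\otimes}} B \nprec_{M\mathbin{\overline{\otimes}} B} N\mathbin{\overline{\otimes}} B$. Let $(u_n)\subseteq \cU(Q)$ be a net witnessing $Q\nprec_M N$. For an elementary tensor $x=a\otimes c$, $y=a'\otimes c' \in M\mathbin{\overline{\otimes}} B$ we have
\[
    \cE_{N\mathbin{\overline{\otimes}} B}\bigl(x(u_n\otimes 1)y\bigr) = \cE_N(au_n a')\otimes cc',
\]
whose $\|\cdot\|_2$-norm equals $\|\cE_N(au_n a')\|_2 \cdot \|cc'\|_2 \to 0$. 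The convergence extends to arbitrary $x,y \in M\mathbin{\overline{\otimes}} B$ by density of algebraic tensors together with the uniform bound $\|\cE_{N\mathbin{\overline{\otimes}} B}(x(u_n\otimes 1)y)\|_2 \le \|x\|\|y\|$.

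Third, I would assume for contradiction that $\cH \neq 0$ and pick nonzero $\xi \in \cH$. Using the finite left $(N\mathbin{\overline{\otimes}} B)$-module structure of $\cK$, fix an isometric embedding $\iota\colon \cK \hookrightarrow L^2(N\mathbin{\overline{\otimes}} B)^{\oplus k}$ of left $(N\mathbin{\overline{\otimes}} B)$-modules for some finite $k$. I would then build a nonzero, finitely right-generated $(Q\mathbin{\overline{\otimes}} B)$-$(N\mathbin{\overline{\otimes}} B)$-sub-bimodule $\cF$ inside a normal $(M\mathbin{\overline{\otimes}} B)$-bimodule; the natural candidate is
\[
    \cF := \overline{(Q\mathbin{\overline{\otimes}} B)(\xi \otimes \hat 1)(N\mathbin{\overline{\otimes}} B)} \subseteq \cH_0 \otimes L^2(M\mathbin{\overline{\otimes}} B),
\]
where the ambient tensor product carries the standard $(M\mathbin{\overline{\otimes}} B)$-bimodule structure (left action on the first factor, right action on the second). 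By Popa's intertwining theorem, the existence of such $\cF$ forces $Q\mathbin{\overline{\otimes}} B \prec_{M\mathbin{\overline{\otimes}} B} N\mathbin{\overline{\otimes}} B$, contradicting the previous step.

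The principal obstacle is verifying finite right-generation of $\cF$: a priori the generators $c\xi \otimes \hat a$ are indexed by the uncountable set $c \in Q\mathbin{\overline{\otimes}} B$, $a \in N\mathbin{\overline{\otimes}} B$. The key is to use $(Q\mathbin{\overline{\otimes}} B)\xi \subseteq \cK$ together with the embedding $\iota$: since $\cK$ sits inside the finite-right-dimensional bimodule $L^2(N\mathbin{\overline{\otimes}} B)^{\oplus k}$, one can identify vectors $c\xi$ with tuples of $L^2$-elements and transfer the right action, yielding that $\cF$ has right $(N\mathbin{\overline{\otimes}} B)$-dimension at most $k$. Making this transfer precise---so that the left $Q\mathbin{\overline{\otimes}} B$-action coming from $\cH$ is compatible with the right-$(N\mathbin{\overline{\otimes}} B)$-action coming from $\iota$---is the technical heart of the argument, and is where the hypothesis that $\cH$ is literally contained in $\cK$ (rather than merely mapping to it) plays its role.
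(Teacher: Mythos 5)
Your Steps 1 and 2 (reducing to finite $B$, and checking that $Q\overline{\otimes}B\nprec_{M\overline{\otimes}B}N\overline{\otimes}B$) are correct but peripheral; the argument breaks down at Step 3, which is where all the content lies. Your candidate
\[
\cF=\overline{(Q\overline{\otimes}B)(\xi\otimes\hat 1)(N\overline{\otimes}B)}\subseteq\cH_0\otimes L^2(M\overline{\otimes}B)
\]
sits inside a bimodule whose left and right actions act on different tensor factors, i.e.\ a multiple of the \emph{coarse} $M\overline{\otimes}B$-bimodule. Popa's intertwining theorem characterizes $Q\prec_M N$ by the existence of a nonzero $Q$-$N$ sub-bimodule of $L^2(M)$ itself with finite right $N$-dimension (equivalently, a nonzero finite-trace projection in $Q'\cap\langle M,e_N\rangle$); a finite-dimensional sub-bimodule of a coarse-type bimodule yields at best an abstract embedding of a corner of $Q$ into an amplification of $N$, not intertwining \emph{inside} $M$, so no contradiction with Step 2 can be extracted from $\cF$. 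Worse, $\cF$ genuinely fails finite right-generation: it equals $\overline{(Q\overline{\otimes}B)\xi}\otimes L^2(N\overline{\otimes}B)$, whose right $N\overline{\otimes}B$-dimension is the Hilbert-space dimension of $\overline{(Q\overline{\otimes}B)\xi}$, typically infinite. The proposed repair---transferring a right $N\overline{\otimes}B$-action to $\cK$ through the left-module embedding $\iota$---produces a right action that commutes only with the left $N\overline{\otimes}B$-action (since $\iota$ is only left $N\overline{\otimes}B$-linear), not with the left $Q\overline{\otimes}B$-action, so one does not obtain a $Q\overline{\otimes}B$-$N\overline{\otimes}B$ bimodule of the kind Popa's criterion requires. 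Finally, any such approach would first need the reduction of $\cH_0$ to $L^2(M\overline{\otimes}B)$ (via cyclic decomposition and monotonicity of $N\overline{\otimes}B$-dimension under bounded module maps with dense image), a step your outline omits and which is the paper's opening move.

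The structural point you are missing is that the hypothesis concerns \emph{left} modules: $\cK$ has finite \emph{left} $N\overline{\otimes}B$-dimension, and the lemma is precisely the statement that the unitaries $u_n\in\cU(Q)$ witnessing $Q\nprec_M N$ mix any vector out of such a module. The paper proves this by direct computation: after reducing to $\cH_0=L^2(M\overline{\otimes}B)$, it writes $\cK=\bigvee_j\overline{(N\overline{\otimes}B)\xi_j}$ with $\ip{x\xi_j,\xi_j}=\tau(xp_j)$ and $\sum_j(\tau\otimes\Tr)(p_j)<\infty$, identifies the projection onto $\overline{(N\overline{\otimes}B)\xi_j}$ as $x\mapsto\cE_{N\overline{\otimes}B}(x\xi_j^*)\xi_j$, and then for $x\in\cH$ obtains $\|x\|_2^2=\|(u_n\otimes 1)x\|_2^2=\sum_j\|\cE_{N\overline{\otimes}B}((u_n\otimes 1)x\xi_j^*)\xi_j\|_2^2\to 0$ by dominated convergence; this handles semifinite $B$ and general $\cH_0$ in one stroke, making your Steps 1 and 2 unnecessary. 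If you prefer a ``soft'' argument, the correct route is to observe that $e_\cK$ is a finite-trace projection in the commutant of the left $N\overline{\otimes}B$-action on $L^2(M\overline{\otimes}B)$ and that $e_\cH\leq e_\cK$ commutes with $Q\overline{\otimes}B$, then invoke the basic-construction form of the intertwining criterion (for the opposite algebras, using that $\nprec$ is preserved under $x\mapsto x^*$)---but that is a different construction from the one you propose.
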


\begin{proof}
Writing $\cH_{0}$ as a direct sum of normal, cyclic representations and applying \cite[Theorem V.3.15]{TakesakiI}, we may find some set $J$ so that   $\cH_{0}$ isometrically embeds into a $[L^{2}(M\overline{\otimes}B)]^{\oplus J}$ as an $M\overline{\otimes}B$-module. So we may, and will, assume that $\cH_{0}=L^{2}(M\overline{\otimes}B)^{\oplus J}$ for some set $J$. For $j\in J$, the projections $\pi_{j}\colon L^{2}(M\overline{\otimes}B)^{\oplus J}\to L^{2}(M\overline{\otimes}B)$ given by $\pi_{j}(\xi)=\xi(j)$ are $M\overline{\otimes}B$-modular, so it suffices to show that $\pi_{j}(\cH)=\{0\}$ for each $j\in J$. Since Murray--von Neumann dimension over $N\overline{\otimes}B$ is decreasing under bounded $N\overline{\otimes}B$-linear maps with dense image, replacing $\cH$ with $\overline{\pi_{j}(\cH)}$ we may (and will) assume that $\cH_{0}=L^{2}(M\overline{\otimes}B)$. 

Let $\cK$ be a $N\overline{\otimes}B$-submodule of $L^{2}(M\overline{\otimes}B)$ which is finite dimensional over $N\overline{\otimes}B$ and with $\cH\subseteq \cK$. Since $\cK$ has finite dimension over $N\overline{\otimes}B$, we may find vectors $(\xi_{j})_{j=1}^{\infty}\subseteq \cK$ and projections $(p_{j})_{j=1}^{\infty} \subseteq N\overline{\otimes}B$ such that 
\begin{itemize}
\item $\sum_{j}\tau\otimes \Tr(p_{j})<+\infty$,
\item $(N\overline{\otimes} B)\xi_{j}\perp (N\overline{\otimes} B)\xi_{k}$ if $j\ne k$,
\item $\ip{x\xi_{j},\xi_{j}}=\tau(xp_{j})$ for all $x\in N\overline{\otimes}B$,
\item $\cK=\bigvee_{j} \overline{(N\otimes B)\xi_{j}}.$
\end{itemize}
Applying \cite[Theorem IX.2.5 and Theorem IX.2.13]{TakesakiII}, for each $p\in [1,+\infty]$ we may view $L^{p}(M\overline{\otimes}B)$ as $\tau\otimes\Tr$-measurable operators affiliated with $M\overline{\otimes}B$. Note that the conditional expectation $\cE_{N\overline{\otimes}B}\colon M\overline{\otimes}B\to N\overline{\otimes}B$ extends to  contractions $L^{p}(M\overline{\otimes}B)\to L^{p}(N\overline{\otimes}B)$ for $p\in [1,+\infty]$ which we still denote by $\cE_{N\overline{\otimes}B}$. In particular, we can make sense of $\cE_{N\overline{\otimes}B}(\xi_{j}\xi_{j}^{*})$ as an element of $L^{1}(N\overline{\otimes}B)$. 
Note that by the third item we have:
    \[
        \tau\otimes \Tr(x\cE_{N\overline{\otimes}B}(\xi_{j}\xi_{j}^{*}))=\ip{x\xi_{j},\xi_{j}}=\tau(xp_{j})
    \]
for every $x\in N\overline{\otimes}B$. This implies that $\cE_{N\overline{\otimes}B}(\xi_{j}\xi_{j}^{*})=p_{j}$. 
We now prove a few preliminary claims.\\

\noindent\textbf{Claim 1:} \emph{For every $x\in M\overline{\otimes}B$ we have that $\cE_{N\overline{\otimes}B}(x \xi_{j}^{*})\in M\overline{\otimes}B$, and $|\cE_{N\overline{\otimes}B}(x\xi_{j}^{*})|^{2}\leq \|x\|_{\infty}^{2}p_{j}$.} Indeed, by the Kadison--Schwartz inequality
    \[
        |\cE_{N\overline{\otimes}B}(x\xi_{j}^{*})|^{2}\leq \cE_{N\overline{\otimes}B}(\xi_{j}x^{*}x\xi_{j}^{*})\leq \|x\|_{\infty}^{2}\cE_{N\overline{\otimes}B}(\xi_{j}\xi_{j}^{*})=\|x\|_{\infty}^{2}p_{j}.
    \]\\

\noindent\textbf{Claim 2:} \emph{If $E_{j}$ is the projection from $L^{2}(M\overline{\otimes} B)$ onto $L^{2}((N\overline{\otimes} B)\xi_{j})$, then $E_{j}(x)=\cE_{N\overline{\otimes}B}(x\xi_{j}^{*})\xi_{j}$.}
It suffices to show that $\ip{\cE_{N\overline{\otimes}B}(x\xi_{j}^{*})\xi_{j},b\xi_{j}}=\ip{x,b\xi_{j}}$ for all $b\in N\overline{\otimes}B$. Note that
\[\ip{\cE_{N\overline{\otimes}B}(x\xi_{j}^{*})\xi_{j},b\xi_{j}}=\tau(\xi_{j}^{*}b^{*}\cE_{N\overline{\otimes}B}(x\xi_{j}^{*})\xi_{j})=\tau(b^{*}\cE_{N\overline{\otimes}B}(x\xi_{j}^{*})\xi_{j}\xi_{j}^{*})=\tau(b^{*}\cE_{N\overline{\otimes}B}(x\xi_{j}^{*})p_{j}),\]
where in the last step we use that $\cE_{N\overline{\otimes}B}$ is trace-preserving and that $\cE_{N\overline{\otimes}B}(\xi_{j}\xi_{j}^{*})=p_{j}$.  Thus
    \[
        \ip{\cE_{N\overline{\otimes}B}(x\xi_{j}^{*})\xi_{j},b\xi_{j}}=\tau(x\xi_{j}^{*}p_{j}b^{*}) = \<x, bp_j\xi_j\>.
    \]
By choice of $\xi_{j},p_{j}$ we have 
    \[
        \|\xi_{j}\|_{2}^{2}=\tau\otimes \Tr(p_{j})= \tau\otimes \Tr(p_j p_j)=\ip{p_{j}\xi_{j},\xi_{j}}=\|p_{j}\xi_{j}\|_{2}^{2},
    \]
so that $p_{j}\xi_{j}=\xi_{j}$. Hence
\[\ip{\cE_{N\overline{\otimes}B}(x\xi_{j}^{*})\xi_{j},b\xi_{j}}=\<x, bp_j\xi_j\>=\ip{x,b\xi_{j}},\]
as required.\\

\noindent\textbf{Claim 3:} \emph{Suppose that $u_{n}\in \cU(Q)$ and $\|\cE_{N}(au_{n}b)\|_{2}\to 0$ as $n\to\infty$ for all $a,b\in M$. Then 
\[\lim_{n\to\infty}\sum_{j=1}^{\infty}\|\cE_{N\overline{\otimes}B}((u_{n}\otimes 1)x\xi_{j}^{*})\xi_{j}\|_{2}^{2}=0,\]
for every $x\in M\overline{\otimes}B$.} 
It follows from the proof of Claim~2 that $p_{j}\xi_{j}=\xi_{j}$ and that $\|\xi_{j}\|_{2}=(\tau\otimes \Tr)(p_{j})^{1/2}.$ Thus by Claim~1,
$\|\cE_{N\overline{\otimes}B}((u_{n}\otimes 1)x\xi_{j}^{*})\xi_{j}\|_{2}\leq \|x\|_{\infty}\|p_j\xi_{j}\|_{2}=\|x\|_{\infty}(\tau\otimes \Tr)(p_{j})^{1/2}$.
Since $\sum_{j} (\tau\otimes \Tr)(p_{j})<+\infty$, by dominated convergence, it suffices to show
\[\lim_{n\to\infty}\|\cE_{N\overline{\otimes}B}((u_{n}\otimes 1)x\xi_{j}^{*})\xi_{j}\|_{2}=0\]
for every $j\in \bN$. 

The fact that $\cE_{N\overline{\otimes}B}$ is $L^{2}$-$L^{2}$ contractive, the density of $M\otimes_{\textnormal{alg}}(B\cap L^{2}(B))$ inside $L^{2}(M\otimes B)$, the Kaplansky density theorem, and our assumptions imply that $\|\cE_{N\overline{\otimes}B}(a(u_{n}\otimes 1)\xi)\|_{2}\to 0$ as $n\to\infty$ for all $\xi\in L^{2}(M\otimes B)$ and $a\in M\overline{\otimes}B$. In particular, $\|\cE_{N\overline{\otimes}B}((u_{n}\otimes 1)x\xi_{j}^{*})\|_{2}\to 0$ as $n\to\infty$. Moreover, Claim~1 implies that $\|\cE_{N\overline{\otimes}B}((u_{n}\otimes 1)x\xi_{j}^{*})\|_{\infty}\leq \|x\|_{\infty}$, and so $\cE_{N\overline{\otimes}B}((u_{n}\otimes 1)x\xi_{j}^{*})\to 0$ as $n\to\infty$ in the strong operator topology. A fortiori, $\|\cE_{N\overline{\otimes}B}((u_{n}\otimes 1)x\xi_{j}^{*})\xi_{j}\|_{2}\to 0$ as $n\to\infty$, which proves Claim~3.\\

Having proven Claims 1-3, we complete the proof of the lemma. Since $Q\nprec N$, we may find unitaries $u_{n}\in \cU(Q)$ with $\|\cE_{N}(au_{n}b)\|_{2}\to 0$ as $n\to\infty$ for all $a,b\in M$. Let $x\in \cH$, and $E_{j}$ the orthogonal projection onto $L^{2}((N\otimes B)\xi_{j})$. By $Q\overline{\otimes}B$-invariance of $\cH$ we have that $(u_{n}\otimes 1)x\in \cH\subseteq \cK$ for all $n$. Thus by choice of $\xi_{j}$,
\[\|x\|_{2}^{2}=\|(u_{n}\otimes 1)x\|_{2}^{2}=\sum_{j=1}^{\infty}\|E_{j}((u_{n}\otimes 1)x)\|_{2}^{2}=\sum_{j=1}^{\infty}\|\cE_{N\overline{\otimes}B}((u_{n}\otimes 1)x\xi_{j}^{*})\xi_{j}\|_{2}^{2},\]
where in the last step we use Claim~2. By Claim~3, the right-hand side of this equality tends to zero as $n\to\infty$. Thus taking limits on both sides, we see that $x=0$. This proves the lemma.
\end{proof}

\begin{thm}\label{thm:quasi anticoarse stuff}
Let $M$ be a von Neumann algebra equipped with a faithful normal state $\varphi$. Suppose that $Q\leq M$ is with $\varphi$-expectation  and diffuse. 
Then $q^{1}\cN_{M}(Q)\subseteq L^{2,\textnormal{dyn}}_{\cross}(Q\leq M,\varphi)$.
\end{thm}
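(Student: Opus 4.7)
My plan is to reduce the claim, via the continuous core, to a problem in a finite von Neumann algebra where Popa's intertwining theory and the paper's Lemma~\ref{lem:not quite defn of intertwining?} can be applied. Fix $x\in q^{1}\cN_{M}(Q)$ with witnesses $x_{1},\dots,x_{n}\in M$ satisfying $xQ\subseteq\sum_{j=1}^{n}Qx_{j}$, and set $\widetilde{M}:=M\rtimes_{\sigma^{\varphi}}\bR$ and $\widetilde{Q}:=Q\rtimes_{\sigma^{\varphi}}\bR$. By Definition~\ref{defn:dynamical_varphi_anticoarse_space} I must show that $f\otimes x\in L^{2}_{\cross}(\widetilde{Q}\le\widetilde{M},\widetilde{\varphi})$ for every $f\in L^{2}(\bR)$. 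Combining Lemma~\ref{lem:canonical_intertwiner_for_commuting_weights} for the pair $\widetilde{\varphi}$ and $\tau_{\varphi}$ (noting that $D_{\varphi}^{-1}$ is affiliated with $L_{\varphi}(\bR)\subseteq\widetilde{Q}$) with Proposition~\ref{prop: exercise with bimod maps}, this becomes the analogous assertion for the tracial weight $\tau_{\varphi}$.

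Next, I further reduce to the finite setting. Let $(p_{i})$ be a net of projections in $L_{\varphi}(\bR)$ of finite $\tau_{\varphi}$-trace converging strongly to $1$, supplied by Lemma~\ref{lem:semifinite_iff_approx_proj_unit}. By Proposition~\ref{prop:compressions_varphi_anticoarse_space}, it suffices to show that for each $i$ and each $f\in C_{c}(\bR)$ the compression $p_{i}\lambda_{\varphi}(f)xp_{i}$ lies in $L^{2}_{\cross}(p_{i}\widetilde{Q}p_{i}\le p_{i}\widetilde{M}p_{i},\tau_{\varphi})$. Now $p_{i}\widetilde{M}p_{i}$ is a finite von Neumann algebra and $p_{i}L_{\varphi}(\bR)p_{i}\subseteq p_{i}\widetilde{Q}p_{i}$ is diffuse and abelian.

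The main algebraic step is to turn the quasi-normalizer relation for $x$ over $Q$ into a right-finite-rank property for $p\lambda_{\varphi}(f)xp$ over $p\widetilde{Q}p$ (writing $p$ for a fixed $p_{i}$). Using $\lambda_{\varphi}(t)x=\sigma_{t}^{\varphi}(x)\lambda_{\varphi}(t)$ together with the identity $x\sigma_{t}^{\varphi}(q)\in\sum_{j}Qx_{j}$ obtained from the quasi-normalizer relation applied to $\sigma_{t}^{\varphi}(q)\in Q$, one expands $\lambda_{\varphi}(f)x\cdot\lambda_{\varphi}(t)q$; integrating in $t$ against compactly supported functions and compressing by $p$ then shows that the right $p\widetilde{Q}p$-module generated by $p\lambda_{\varphi}(f)xp$ is contained in a module that is finitely generated as a right $p\widetilde{Q}p$-module, with generators of the form $p\lambda_{\varphi}(g_{j})x_{j}p$ for suitable $g_{j}\in C_{c}(\bR)$. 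To conclude, suppose $T\in\Hom_{p\widetilde{Q}p-p\widetilde{Q}p}(L^{2}(p\widetilde{M}p,\tau_{\varphi}),L^{2}(p\widetilde{Q}p,\tau_{\varphi})^{\otimes 2})$ does not annihilate $p\lambda_{\varphi}(f)xp$. Then its image is a nonzero $p\widetilde{Q}p$-subbimodule of the coarse bimodule which, by the previous step, is finitely generated as a right $p\widetilde{Q}p$-module. Applying Lemma~\ref{lem:not quite defn of intertwining?} with ambient algebra $p\widetilde{Q}p$, intertwining algebras $pL_{\varphi}(\bR)p$ and $\bC$, and $B=(p\widetilde{Q}p)^{\mathrm{op}}$—using that $pL_{\varphi}(\bR)p$ is diffuse, hence $pL_{\varphi}(\bR)p\nprec_{p\widetilde{Q}p}\bC$—produces the desired contradiction.

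The main obstacle is the algebraic bookkeeping in the third step: the one-sided quasi-normalizer relation for $x$ over $Q$ does not pass literally to any element of $\widetilde{M}$ over $\widetilde{Q}$, since $\sigma^{\varphi}$ mixes $x$ with the witnesses $x_{j}$ in a non-uniform way. Tracking the resulting finite-rank structure relative to $L_{\varphi}(\bR)$, and ensuring it survives compression by the finite-trace projection $p$ (so that the outputs actually land in $L^{2}(p\widetilde{Q}p)^{\otimes 2}$ rather than an extended object), is the delicate limiting argument referenced in the introduction.
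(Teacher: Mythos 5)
Your overall architecture --- pass to the core, convert $\widetilde{\varphi}$ to $\tau_\varphi$ via Lemma~\ref{lem:canonical_intertwiner_for_commuting_weights}, cut by finite-trace projections in $L_\varphi(\bR)$, and finish with Lemma~\ref{lem:not quite defn of intertwining?} --- is the same as the paper's, but your third step asserts a finite-rank structure that the one-sided quasi-normalizer relation does not deliver, and the final intertwining step inherits the error. Using $\lambda_\varphi(t)q=\sigma_t^\varphi(q)\lambda_\varphi(t)$ and applying $xQ\subseteq\sum_j Qx_j$ to $\sigma_t^\varphi(q)$, one gets
\[
\lambda_\varphi(f)\,x\,\lambda_\varphi(t)q \;=\; \sum_j \lambda_\varphi(f)\,q_j(t)\,x_j\,\lambda_\varphi(t)\;\in\;\sum_j \widetilde{Q}\cdot x_j\cdot L_\varphi(\bR),
\]
with coefficients $q_j(t)\in Q$ depending on $t$ and $q$ and sitting on the \emph{left} of the witnesses $x_j$; there is no mechanism to move them to the right. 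So the right $p\widetilde{Q}p$-module generated by $p\lambda_\varphi(f)xp$ is only contained in a module that is finitely generated as a ($\widetilde{Q}$-left, $L_\varphi(\bR)$-right) bimodule --- equivalently, after applying $T$ and cutting by a second finite projection, finite-dimensional over $p L_\varphi(\bR)\,\overline{\otimes}\,\widetilde{Q}^{\mathrm{op}}$ (or its mirror) --- and \emph{not} in a finitely generated right $p\widetilde{Q}p$-module with generators $p\lambda_\varphi(g_j)x_jp$: already $\overline{\lambda_\varphi(f)Q x_j\widetilde{Q}}$ has no reason to have finite dimension over $\widetilde{Q}^{\mathrm{op}}$ alone, since $Q$ is infinite-dimensional and does not commute past $x_j$. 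This is exactly the obstruction you name in your last paragraph, but it cannot be bookkept away.

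Because of this, your invocation of Lemma~\ref{lem:not quite defn of intertwining?} with $N=\bC$ and the essentially trivial non-intertwining $pL_\varphi(\bR)p\nprec_{p\widetilde{Q}p}\bC$ is the wrong instance of the lemma: its hypothesis demands finite dimension over $\bC\overline{\otimes}B=(p\widetilde{Q}p)^{\mathrm{op}}$ alone, which is precisely what fails. The paper instead pairs the weaker bimodule finite-rank structure with the genuinely non-trivial input $p\widetilde{Q}p\nprec_{p\widetilde{M}p}pL_\varphi(\bR)$ from \cite[Lemma 2.5]{HUAsymptot} (the crossed product of a diffuse algebra by its modular flow does not intertwine into $L_\varphi(\bR)$), taking $Q=p\widetilde{Q}p$, $N=pL_\varphi(\bR)$ and $B=\widetilde{Q}^{\mathrm{op}}$ in the lemma, then cutting the second tensor leg by projections $p_\ell$ and letting $\ell\to\infty$. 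With that substitution your outline becomes the paper's argument; as written, the key step is unproved and, in the stated form, false.
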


\begin{proof}
Set $\widetilde{Q}:=Q\rtimes_{\sigma_{t}}\bR$ and  $\widetilde{M}:=M\rtimes_{\sigma_{t}}\bR$. Let $\tau_{\varphi}$ be tracial weight on $\widetilde{M}$ induced by $\varphi$ and  let $(p_{n})_{n\in \bN} \subseteq \dom(\tau_\varphi|_{L_\varphi(\bR)})$ be an increasing sequence of projections converging to $1$ in the strong operator topology. Fix $a\in q^{1}\cN_{M}(Q)$ and $T\in \Hom_{\widetilde{Q}-\widetilde{Q}}(L^{2}(\widetilde{M},\tau_{\varphi}),L^{2}(\widetilde{Q},\tau_\varphi)\otimes L^2(\widetilde{Q},\tau_{\varphi}))$. We will first show that $T(p_{n}a)=0$ for every $n\in \bN$. Fix an $n\in \bN$.

Choose $a_{1},\cdots,a_{k}\in Q$ with
$Qa\subseteq \sum_{j=1}^{k}a_{j}Q.$
Then:
    \[
        p_{n}L_{\varphi}(\bR)Qa\widetilde{Q}\subseteq \sum_{j=1}^{k}p_{n}L_{\varphi}(\bR)a_{j}\widetilde{Q}=\sum_{j=1}^{k}p_{n}L_{\varphi}(\bR)p_{n}a_{j}\widetilde{Q}.
    \]
Given $x,y\in \widetilde{Q}$,
choose a bounded net $x_{\alpha}\in \Span(L_{\varphi}(\bR)Q)$ with $x_{\alpha}\to x$ in the strong operator topology. Then, using normality of the right action of $\widetilde{Q}$ we have:
    \[
        \|p_{n}xay-p_{n}x_{\alpha}ay\|_{\tau_\varphi}\leq \|ay\|\|p_{n}(x-x_{\alpha})\|_{\tau_\varphi}\to_{\alpha}0.
    \]
Viewing $p_n a, p_n a_j \in L^2(\widetilde{M}, \tau_\varphi)$, this implies that 
    \[
        (p_n \widetilde{Q} p_n)\cdot (p_n a)\cdot \widetilde{Q}\subseteq p_{n}\widetilde{Q}\cdot a\cdot \widetilde{Q}\subseteq \overline{\sum_{j=1}^{k}(p_{n}L_{\varphi}(\bR))\cdot p_{n}a_{j}\cdot \widetilde{Q} }.
    \]
Set $\xi=T(p_{n}a)$ and $\xi_{j}=T(p_{n}a_{j})$. 
By bimodularity of $T$, we have that $\xi=(p_{n}\otimes 1)\xi$. So applying $T$ to the above inclusion and taking linear combinations yields:
    \[
        \overline{\Span}((p_{n}\widetilde{Q}p_n\otimes 1)\cdot\xi\cdot(1\otimes \widetilde{Q}) \subseteq \overline{\sum_{j=1}^{k}\Span((p_{n}L_{\varphi}(\bR)\otimes 1)\cdot\xi_j\cdot (1\otimes\widetilde{Q}))},
    \]
where the closures are taken in $L^2(\widetilde{Q}\overline{\otimes}\widetilde{Q},\tau_\varphi\otimes \tau_\varphi)$. This further implies that for every integer $\ell\geq n$ we have
    \[
        \overline{\Span}((p_{n}\widetilde{Q}\otimes p_{\ell})\xi (1\otimes \widetilde{Q}))\subseteq \overline{\sum_{j=1}^{k}\Span((p_{n}L_{\varphi}(\bR)\otimes p_\ell)\cdot \xi_j \cdot(1\otimes \widetilde{Q}))}.
    \]
Since $p_{\ell}$ has finite trace, for each $\ell$ the right-hand side is a finite-dimensional module over $p_{n}L_{\varphi}(\bR)\overline{\otimes} \widetilde{Q}^{op}$, while the left-hand side is a submodule over $p_{n}\widetilde{Q}p_{n}\overline{\otimes} \widetilde{Q}^{op}$. By \cite[Lemma 2.5]{HUAsymptot} it follows that 
    \[
        p_{n}\widetilde{Q}p_{n} \nprec_{p_{n}\widetilde{M}p_{n}}p_{n}L_{\varphi}(\bR),
    \]
so Lemma~\ref{lem:not quite defn of intertwining?} implies that $(1\otimes p_{\ell})\xi=0$. Letting $\ell\to\infty$, we obtain $T(p_na)=\xi=0$, and thus $p_n a \in L^2_{\cross}(\widetilde{Q}\leq \widetilde{M},\tau_\varphi)$ for all $n\in \bN$.

Now, recall that $\widetilde{\varphi}= (\tau_\varphi)_{D_\varphi}$, where $D_\varphi$ is the operator affiliated with $L_\varphi(\bR)$ satisfying $D_\varphi^{it} = \lambda_\varphi(t)$ for all $t\in \bR$. For each $\varepsilon>0$, set
    \[
        h_\varepsilon:= \frac{D_\varphi}{1+\varepsilon D_\varphi} \in L_\varphi(\bR).
    \]
Using Lemma~\ref{lem:canonical_intertwiner_for_commuting_weights}, for any $f\in L^1(\bR)\cap L^2(\bR)$ we have 
    \[
        U_{\tau_{\varphi}, \widetilde{\varphi}}(\lambda_\varphi(f) a) =\lim_{\varepsilon\to 0}  \lambda_\varphi(f) a h_\varepsilon^{\frac12} = \lim_{\varepsilon\to 0} \lim_{n\to\infty}  p_n\lambda_\varphi(f) a h_\varepsilon^{\frac12} = \lim_{\varepsilon\to 0} \lim_{n\to\infty} \lambda_\varphi(f)\cdot p_n  a\cdot h_\varepsilon^{\frac12},
    \]
and this lies in $L^2_{\cross}(\widetilde{Q}\leq \widetilde{M},\tau_\varphi)$ since it is a norm closed $\widetilde{Q}$-bimodule. Thus $\lambda_\varphi(f) a \in L^2_{\cross}(\widetilde{Q}\leq \widetilde{M},\widetilde{\varphi})$ for all $f\in L^1(\bR)\cap L^2(\bR)$ by the discussion following Proposition~\ref{prop: exercise with bimod maps}, and therefore $a\in L^{2,\textnormal{dyn}}_{\cross}(Q\leq M, \varphi)$ by the density of such $f$ in $L^2(\bR)$. 
\end{proof}

As a corollary to the previous theorem, the notions of $\varphi$-entropy free inclusions and $\varphi$-Pinsker algebras can have their dependence on the state $\varphi$ loosened. Recall that for a faithful normal state $\varphi$ on $M$, $\Pi(M,\varphi)$ denotes the unique $\varphi$-Pinsker algebra in $M$.

\begin{cor}\label{cor:independence of Pinsker on the state}
Let $M$ be a von Neumann algebra equipped with faithful normal states $\varphi_j$ for $j=1,2$. 
    \begin{enumerate}[(a)]
    \item Let $Q$ be a diffuse subalgebra such that $Q\leq M$ is with $\varphi_j$-expectation for $j=1,2$. If $Q\leq M$ is a $\varphi_1$-entropy free inclusion, then it is also a $\varphi_{2}$-entropy free inclusion. \label{item: independence of state cor 1}
  
    \item If $\Pi(M,\varphi_{1})\cap \Pi(M,\varphi_{2})$ contains a diffuse subalgebra with $\varphi_j$-expectation in $M$ for $j=1,2$, then $\Pi(M,\varphi_{1})=\Pi(M,\varphi_{2})$. \label{item: independence of state cor 2}
    \end{enumerate}

\end{cor}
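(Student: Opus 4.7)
My plan is to reduce both parts to a single key observation: the Connes cocycle $u_{t}:=(D\varphi_{2}\colon D\varphi_{1})_{t}$ must live in the relevant $\varphi_{1}$-Pinsker algebra, after which the entropy free property can be transported across the canonical isomorphism of continuous cores from (\ref{eqn:formula_isomorphism_continuous_cores}).

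For (\ref{item: independence of state cor 1}), set $P:=\Pi(Q\leq M,\varphi_{1})$. The $\sigma^{\varphi_{j}}$-invariance of $Q$ for both $j=1,2$, combined with $\sigma_{t}^{\varphi_{2}}(x)=u_{t}\sigma_{t}^{\varphi_{1}}(x)u_{t}^{*}$, forces $u_{t}Qu_{t}^{*}=Q$, so $u_{t}\in\cN_{M}(Q)\subseteq q^{1}\cN_{M}(Q)$. Theorem~\ref{thm:quasi anticoarse stuff} together with Theorem~\ref{thm:dynamical_anticoarse_contained_in_Pinsker} then yield $u_{t}\in P$ for every $t\in\bR$. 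The same modular relation now shows $P$ itself is $\sigma^{\varphi_{2}}$-invariant, so $P\leq M$ is with $\varphi_{2}$-expectation. Next, the canonical isomorphism $\pi_{\varphi_{1},\varphi_{2}}\colon M\rtimes_{\sigma^{\varphi_{2}}}\bR\to M\rtimes_{\sigma^{\varphi_{1}}}\bR$ sends $\lambda_{\varphi_{2}}(t)\mapsto u_{t}\lambda_{\varphi_{1}}(t)$, and because $u_{t}\in P$ this restricts to an isomorphism $P\rtimes_{\sigma^{\varphi_{2}}}\bR\to P\rtimes_{\sigma^{\varphi_{1}}}\bR$ intertwining $\tau_{\varphi_{2}}$ with $\tau_{\varphi_{1}}$. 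Hence the entropy free property of $P\rtimes_{\sigma^{\varphi_{1}}}\bR\leq M\rtimes_{\sigma^{\varphi_{1}}}\bR$ transfers to the $\varphi_{2}$-side, giving that $P\leq M$ is $\varphi_{2}$-entropy free. Finally, composing the $\varphi_{2}$-expectation $M\to Q$ with the inclusion of $P$ realizes $Q\leq P$ as with $\varphi_{2}$-expectation, and Proposition~\ref{prop: nonpositive monotone} promotes this back to $Q\leq M$ being $\varphi_{2}$-entropy free.

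For (\ref{item: independence of state cor 2}), let $P_{j}:=\Pi(M,\varphi_{j})$ and let $N$ be a diffuse subalgebra of $P_{1}\cap P_{2}$ that is with $\varphi_{j}$-expectation in $M$ for $j=1,2$. Proposition~\ref{prop: nonpositive monotone} makes $N\leq M$ both $\varphi_{1}$- and $\varphi_{2}$-entropy free, so by the uniqueness in Theorem~\ref{thm:PT_III} (applied to $N$) we get $\Pi(N\leq M,\varphi_{j})=P_{j}$ for each $j$. Running the argument of part (\ref{item: independence of state cor 1}) on $N$ in place of $Q$ puts $u_{t}\in P_{1}$, hence $P_{1}$ is $\sigma^{\varphi_{2}}$-invariant and with $\varphi_{2}$-expectation in $M$. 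Now apply part (\ref{item: independence of state cor 1}) directly to the inclusion $P_{1}\leq M$ (diffuse, with expectation under both states, and $\varphi_{1}$-entropy free by definition) to conclude that $P_{1}\leq M$ is $\varphi_{2}$-entropy free. Corollary~\ref{cor:unique_varphi-Pinsker_algebra} then forces $P_{1}\subseteq P_{2}$, and symmetry gives $P_{1}=P_{2}$.

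The only nonroutine step is the assertion $u_{t}\in P$, which relies crucially on the normalizer absorption provided by Theorem~\ref{thm:quasi anticoarse stuff}; without this, one could not even loosen the state-dependence in the definition of Pinsker algebra. Once $u_{t}$ has been captured by $P$, everything else is functorial bookkeeping along $\pi_{\varphi_{1},\varphi_{2}}$ together with Proposition~\ref{prop: nonpositive monotone}.
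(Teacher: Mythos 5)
Your proposal is correct and follows essentially the same route as the paper: both arguments hinge on capturing the Connes cocycle $(u_t)$ inside the $\varphi_1$-Pinsker algebra via Theorems~\ref{thm:quasi anticoarse stuff} and \ref{thm:dynamical_anticoarse_contained_in_Pinsker}, then transporting entropy-freeness across the core isomorphism of \eqref{eqn:formula_isomorphism_continuous_cores} (the paper cites Corollary~\ref{cor:independent of the proj} for trace-independence where you cite trace intertwining; these are interchangeable here) and finishing with Proposition~\ref{prop: nonpositive monotone} and Corollary~\ref{cor:unique_varphi-Pinsker_algebra}. The only cosmetic differences are your opposite cocycle convention and your explicit verification that $P$ is $\sigma^{\varphi_2}$-invariant, which the paper leaves implicit.
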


\begin{proof}
(\ref{item: independence of state cor 1}):
Set $P_{j}=\Pi(M,\varphi_{j})$, and let $u_t:=(D \varphi_1\colon D \varphi_2)_t$, $t\in \bR$, be the cocycle derivative of $\varphi_1$ with respect to $\varphi_2$ so that
    \[
        \sigma_t^{\varphi_1}(x) = u_t \sigma_t^{\varphi_2}(x) u_t^* \qquad t\in \bR,\ x\in M
    \]
(see \cite[Theorem VIII.3.3]{TakesakiII}). The invariance of $Q$ under $\sigma^{\varphi_j}$ for $j=1,2$ implies that $u_{t}\in \cN_{M}(Q)$ for all $t\in \bR$. By Theorem~\ref{thm:quasi anticoarse stuff} and Theorem~\ref{thm:dynamical_anticoarse_contained_in_Pinsker}, we thus have that $u_{t}\in \cU(P_{1})$ for all $t\in \bR$.

Now, let $\pi_{\varphi_2,\varphi_1}\colon M\rtimes_{\sigma^{\varphi_{1}}}\bR\to M\rtimes_{\sigma^{\varphi_{2}}}\bR$ be the $*$-isomorphism from (\ref{eqn:formula_isomorphism_continuous_cores}).
Then the above gives
    \[
        \pi_{\varphi_2,\varphi_1}(P_{1}\rtimes_{\sigma^{\varphi_{1}}}\bR)=P_{1}\rtimes_{\sigma^{\varphi_{2}}}\bR.
    \]
One consequence of Corollary~\ref{cor:independent of the proj} is that being entropy free is invariant under isomorphisms of inclusions of semifinite von Neumann algebras. 
Since $P_{1}\rtimes_{\sigma^{\varphi_{1}}}\bR\leq M\rtimes_{\sigma^{\varphi_{1}}}\bR$ is an entropy free inclusion, this implies that $P_{1}\rtimes_{\sigma^{\varphi_{2}}}\bR\leq M\rtimes_{\sigma^{\varphi_{2}}}\bR$ is an entropy free inclusion. Thus $P_{1}\leq M$ is a $\varphi_{2}$-entropy free inclusion. Since $Q\leq M$ and is with $\varphi_2$-expectation, it follows from Proposition~\ref{prop: nonpositive monotone} that $Q\leq M$ is a $\varphi_{2}$-entropy free inclusion.\\

\noindent(\ref{item: independence of state cor 2}): Let $Q$ be a diffuse subalgebra of $P_1\cap P_2$ with $\varphi_j$-expectation in $M$. Then $Q\leq M$ is an $\varphi_1$-entropy free inclusion by Proposition~\ref{prop: nonpositive monotone}. Arguing as in (\ref{item: independence of state cor 1}), we see that not only is $P_1\leq M$ a $\varphi_1$-entropy free inclusion, but also a $\varphi_2$-entropy free inclusion. Consequently, $P_1\subseteq P_2$ by Corollary~\ref{cor:unique_varphi-Pinsker_algebra}. Arguing symmetrically, we obtain $P_1=P_2$.
 \end{proof}

Motivated by the above corollary, we make the following definitions.

\begin{defn}\label{defn:sigma-entropy_free}
Let $M$ be a $\sigma$-finite von Neumann algebra. For $Q\leq M$ diffuse and with expectation, we say this inclusion is \textbf{$\sigma$-entropy free} if it is $\varphi$-entropy free for some faithful normal state $\varphi$ on $M$.
\end{defn}

Recall that $Q\leq M$ being $\varphi$-entropy free demands that the inclusion is with $\varphi$-expectation. Corollary~\ref{cor:independence of Pinsker on the state} implies the above definition is independent of the choice of state in the sense that a $\sigma$-entropy free inclusion $Q\leq M$ will be $\varphi$-entropy free for \emph{all} faithful normal states $\varphi$ satisfying that $Q\leq M$ is with $\varphi$-expectation.  Additionally, if $M$ is $\sigma$-finite and $Q\leq M$ is with any expectation, then it is with $\varphi$-expectation for at least one faithful normal state $\varphi$. Indeed, if $\psi$ is a faithful normal state on $M$ and $\cE_Q\colon M\to Q$ is a faithful normal conditional expectation, then this holds for $\varphi:=\psi|_Q\circ \cE_Q$.

\begin{defn}
Let $M$ be a $\sigma$-finite von Neumann algebra. For $P\leq M$ diffuse and with expectation, we say that $P$ is a \textbf{$\sigma$-Pinsker algebra} if $P=\Pi(M,\varphi)$ for some faithful normal state $\varphi$ on $M$.
\end{defn}

Corollary~\ref{cor:independence of Pinsker on the state} implies this definition is also independent of the choice of state in the sense that a $\sigma$-Pinsker algebra $P$ will equal $\Pi(M,\varphi)$ for \emph{all} faithful normal states $\varphi$ satisfying that $P\leq M$ is with $\varphi$-expectation. The assumption that $P$ is diffuse is needed in order to invoke the above corollary, but we note that $\varphi$-Pinsker algebras need not be diffuse in general. Indeed, Proposition~\ref{prop:amenable} implies $\Pi(B(\cH),\omega)=B(\cH)$ for any faithful normal state $\omega$ on $B(\cH)$.

In both of the above definitions, the prefix ``$\sigma$'' stems from the fact that this notion of being entropy free ultimately relies on using the modular automorphism group to pass to the continuous core. However, it should be stressed that while one uses the modular automorphism group to define this notion, it is ultimately independent of the choice of modular automorphism group. Additionally, as illustrated by Remark~\ref{rem: entropy just isn't what it used to be}, if $M$ is semifinite this differs from the notion of entropy free discussed in Section~\ref{sec:Pinsker for traces}. We list some general properties of  $\sigma$-Pinsker algebras and being $\sigma$-entropy free here.

\begin{cor}\label{cor: its over vince carter gif}
Let $M$ be a $\sigma$-finite von Neumann algebra.
\begin{enumerate}[(i)]
\item Suppose that $Q\leq M$ is $\sigma$-entropy free. Let $\Omega$ be the set of subalgebras $Q_{0}\leq M$ such that $Q_{0}\supseteq Q$ and $Q_{0}\leq M$ is $\sigma$-entropy free. Then $\Omega$ has a largest element $P$, which is a $\sigma$-Pinsker algebra. We call $P$ the \textbf{$\sigma$-Pinsker algebra} of $Q\leq M$, and we denote it by $\Pi^{\sigma}(Q\leq M)$. \label{item: extrinsic Pinsker}

\item \label{item: extrinsic diffuse absoprtion} Suppose $P\leq M$ is a $\sigma$-Pinsker algebra and that $Q\leq M$ is $\sigma$-entropy free inclusion. If $Q\cap P$ contains a diffuse subalgebra with expectation in $M$, then $Q\subseteq P$.

\item Suppose $Q_{1},Q_{2}\leq M$ are $\sigma$-entropy free and $Q_{1}\cap Q_{2}$ contains a diffuse subalgebra with expectation in $M$. Then for any $B\leq Q_{1}\vee Q_{2}$ such that $B$ is diffuse and with expectation in $M$, we have that $B\leq M$ is $\sigma$-entropy free. 
\label{item: extrinsic join lemma}
\end{enumerate} 
\end{cor}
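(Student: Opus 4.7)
The plan is to prove the three items in sequence, using the following recurring device: whenever $R_1 \leq R_2 \leq M$ are inclusions with expectation, a choice of faithful normal state $\omega$ on $R_1$ together with faithful normal conditional expectations $\cE_{R_1}^{R_2}\colon R_2 \to R_1$ and $\cE_{R_2}^M\colon M \to R_2$ produces a faithful normal state $\rho := \omega \circ \cE_{R_1}^{R_2} \circ \cE_{R_2}^M$ on $M$ for which both $R_1$ and $R_2$ are $\sigma^\rho$-invariant. Combined with Corollary~\ref{cor:independence of Pinsker on the state}, Proposition~\ref{prop: nonpositive monotone}, and the state-independence of $\sigma$-Pinsker algebras, this is essentially all that is needed; the main challenge is identifying, for each item, the right pair of inclusions to which to apply it.

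For (i), pick $\varphi$ making $Q \leq M$ a $\varphi$-entropy free inclusion and set $P := \Pi(M, \varphi)$; then $P \in \Omega$. Given $Q_0 \in \Omega$, fix $\psi$ making $Q_0 \leq M$ a $\psi$-entropy free inclusion and construct $\psi' := \omega_Q \circ \cE_Q^{Q_0} \circ \cE_{Q_0}^M$, so that $Q$ and $Q_0$ become simultaneously $\sigma^{\psi'}$-invariant. Corollary~\ref{cor:independence of Pinsker on the state}(a) transfers the $\psi$-entropy-freeness of $Q_0$ to $\psi'$-entropy-freeness, after which Proposition~\ref{prop: nonpositive monotone} yields that $Q \leq M$ is $\psi'$-entropy free. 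Thus $Q$ is a diffuse subalgebra of $\Pi(M, \varphi) \cap \Pi(M, \psi')$ with both the $\varphi$- and the $\psi'$-expectation, so Corollary~\ref{cor:independence of Pinsker on the state}(b) gives $\Pi(M, \psi') = P$, whence $Q_0 \subseteq \Pi(M, \psi') = P$.

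For (ii), let $N \leq Q \cap P$ be the given diffuse subalgebra with expectation in $M$. The recipe applied to $N \leq P$ produces a state $\varphi$ with both $P$ and $N$ $\sigma^\varphi$-invariant; by state-independence $P = \Pi(M, \varphi)$, and monotonicity applied to $N \leq P \leq M$ gives that $N \leq M$ is $\varphi$-entropy free. Applying the recipe to $N \leq Q$ produces a state $\psi$ with both $Q$ and $N$ $\sigma^\psi$-invariant; Corollary~\ref{cor:independence of Pinsker on the state}(a) shows $Q \leq M$ is $\psi$-entropy free, and monotonicity again gives the same for $N$. Then $N$ is diffuse in $\Pi(M, \varphi) \cap \Pi(M, \psi)$ with both expectations, so Corollary~\ref{cor:independence of Pinsker on the state}(b) yields $P = \Pi(M, \psi)$; since $Q \leq M$ is $\psi$-entropy free, Corollary~\ref{cor:unique_varphi-Pinsker_algebra} gives $Q \subseteq \Pi(M, \psi) = P$.

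For (iii), the argument of the previous paragraph applied to $N \leq Q_1 \leq M$ shows that $N \leq M$ is $\sigma$-entropy free, so part (i) furnishes $P^* := \Pi^\sigma(N \leq M)$. For each $j$, the pair $(P^*, Q_j)$ satisfies the hypotheses of (ii) because $Q_j \cap P^* \supseteq N$, so $Q_j \subseteq P^*$ and therefore $B \leq Q_1 \vee Q_2 \leq P^*$. To finish, apply the recipe with $B \leq P^*$ to obtain $\rho := \omega_B \circ \cE_B^{P^*} \circ \cE_{P^*}^M$ making both $P^*$ and $B$ $\sigma^\rho$-invariant; Corollary~\ref{cor:independence of Pinsker on the state}(a) says $P^* \leq M$ is $\rho$-entropy free, and monotonicity applied to $B \leq P^* \leq M$ shows $B \leq M$ is $\rho$-entropy free, hence $\sigma$-entropy free. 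The hardest part throughout is keeping track of which inclusions are truly with expectation for the specific state being used, but the ``common state via nested expectations'' construction is always what supplies this.
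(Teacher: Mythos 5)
Your proposal is correct and follows essentially the same route as the paper: construct a common state for a nested pair of subalgebras by composing conditional expectations, then invoke Corollary~\ref{cor:independence of Pinsker on the state}, Corollary~\ref{cor:unique_varphi-Pinsker_algebra}, and Proposition~\ref{prop: nonpositive monotone}. The only cosmetic deviation is in (iii), where you anchor the argument on $\Pi^{\sigma}(N\leq M)$ rather than on $\Pi^{\sigma}(Q_{1}\leq M)$ as the paper does, but these coincide and the final transfer of entropy-freeness down to $B$ is identical.
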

\begin{proof}
\textbf{(\ref{item: extrinsic Pinsker}):} From the discussion following Definition~\ref{defn:sigma-entropy_free} we know that $Q\leq M$ is with $\varphi$-expectation for some faithful normal state $\varphi$ on $M$. We claim that $P:=\Pi(M,\varphi)$ is the largest element in $\Omega$. First, one has $P\in \Omega$ by definition. Next, let $\cE_Q\colon M\to Q$ be the $\varphi$-preserving faithful normal conditional expectation, and fix $Q_0\in \Omega$ with faithful normal conditional expectation $\cE_{Q_0}\colon M\to Q_0$. Then $\cE_Q|_{Q_0}\circ \cE_{Q_0}\colon M\to Q$ is another faithful normal conditional expectation, and $Q\leq M$ and $Q_0\leq M$ are with $\psi$-expectation where
    \[
        \psi:= \varphi|_Q\circ \cE_Q|_{Q_0}\circ \cE_{Q_0}.
    \]
Corollary~\ref{cor:independence of Pinsker on the state} implies $P=\Pi(M,\psi)$, and Corollary~\ref{cor:unique_varphi-Pinsker_algebra} implies $Q_0\subseteq P$. Hence $P$ is the largest element of $\Omega$.\\

\noindent \textbf{(\ref{item: extrinsic diffuse absoprtion}):} Let $N\subseteq Q\cap P$ be the diffuse subalgebra with expectation in $M$. Arguing as in (\ref{item: extrinsic Pinsker}), we can find faithful normal states $\psi_1,\psi_2$ on $M$ so that $N, \Pi^\sigma(Q\leq M) \leq M$ are with $\psi_1$-expectation and $N, P\leq M$ are with $\psi_2$-expectation. Then $\Pi^\sigma(Q\leq M)= \Pi(M,\psi_1)$ and $P=\Pi(M,\psi_2)$, and hence $\Pi^\sigma(Q\leq M) =P$ by Corollary~\ref{cor:independence of Pinsker on the state}. In particular, $Q\subseteq \Pi^\sigma(Q\leq M) = P$.\\

\noindent \textbf{(\ref{item: extrinsic join lemma}):} By (\ref{item: extrinsic diffuse absoprtion}) we have that $Q_{2}\leq \Pi^{\sigma}(Q_{1}\leq M)$, and so by (\ref{item: extrinsic Pinsker}) we have that $\Pi^{\sigma}(Q_{2}\subseteq M)\subseteq \Pi^{\sigma}(Q_{1}\subseteq M)$. Reversing the roles of $Q_1$ and $Q_2$ yields the equality of these $\sigma$-Pinsker algebras, so $Q_{1}\vee Q_{2}\leq \Pi^\sigma(Q_1\leq M)$. Since $B\subseteq \Pi^\sigma(Q_1\leq M)$, we can argue as in (\ref{item: extrinsic Pinsker}) to find a faithful normal state $\varphi$ on $M$ so that $B,\Pi^\sigma(Q_1\leq M)\leq M$ are with $\varphi$-expectation. Since then $\Pi^\sigma(Q_1\leq M)=\Pi(M,\varphi)$, It follows from Proposition~\ref{prop: nonpositive monotone} that $B\leq M$ is $\sigma$-entropy free.
\end{proof}

We close by showing that $\sigma$-entropy free inclusions are preserved by taking von Neumann algebras generated by another weakening of the normalizer.

\begin{defn}
Let $Q\leq M$ be an inclusion of von Neumann algebras with expectation. The \textbf{wq-normalizer} of $Q$ in $M$, denoted by $\cN^{wq}_M(Q)$, is the set of unitaries $u\in M$ such that $u^{*}Qu\cap Q$ contains a diffuse subalgebra with expectation in $M$.
\end{defn}

Note that in the above definition we do \emph{not} require that $u^*Qu\cap Q$ is itself with expectation in $M$. Also observe that $\cN_M(Q) \subseteq \cN^{wq}_M(Q)$ for any diffuse $Q\leq M$ with expectation because for any $u\in\cN_M(Q)$ one has that $u^* Q u \cap Q = Q$ is diffuse and with expectation in $M$.

\begin{cor}\label{cor: abosrbing wq normalizers}
Let $M$ be a $\sigma$-finite von Neumann algebra and let $Q\leq M$ be diffuse and with expectation. If $Q\leq M$ is $\sigma$-entropy free, then so is $\cN_{M}^{wq}(Q)''\leq M$.     
\end{cor}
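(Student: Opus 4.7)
I would aim to prove the stronger statement that $\cN_M^{wq}(Q)'' \subseteq P$, where $P := \Pi^\sigma(Q \leq M)$. Once this is established, Proposition~\ref{prop: nonpositive monotone} immediately yields the corollary, provided the inclusion $\cN_M^{wq}(Q)'' \leq M$ is with expectation. Fix a faithful normal state $\varphi$ on $M$ such that $Q$ is $\sigma^\varphi$-invariant; by Corollary~\ref{cor:independence of Pinsker on the state} we then have $P = \Pi(M,\varphi)$.

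To handle the expectation condition, I would first observe that $\cN_M^{wq}(Q)$ is $\sigma^\varphi$-invariant: if $u \in \cN_M^{wq}(Q)$ and $N_0 \subseteq u^*Qu \cap Q$ is diffuse and with expectation in $M$, then $\sigma_t^\varphi(N_0) \subseteq \sigma_t^\varphi(u)^*Q\sigma_t^\varphi(u) \cap Q$ is again diffuse and with expectation (by conjugating the expectation by $\sigma_t^\varphi$), which witnesses $\sigma_t^\varphi(u) \in \cN_M^{wq}(Q)$. Hence $\cN_M^{wq}(Q)''$ is $\sigma^\varphi$-invariant and therefore with $\varphi$-expectation in $M$.

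The heart of the plan is to show $\cN_M^{wq}(Q) \subseteq \cN_M(P)$ via two successive applications of Corollary~\ref{cor: its over vince carter gif}(ii). Fix $u \in \cN_M^{wq}(Q)$ with $N_0$ as above. Since $\Ad(u)$ furnishes an isomorphism of inclusions $(Q \leq M) \cong (uQu^* \leq M)$, and since $\sigma$-entropy freeness is preserved by such isomorphisms (by Corollary~\ref{cor:independent of the proj} transferred through continuous cores, exactly as in the proof of Corollary~\ref{cor:independence of Pinsker on the state}), the inclusion $uQu^* \leq M$ is $\sigma$-entropy free. Moreover $uQu^* \cap P \supseteq uN_0u^*$ is diffuse and with expectation in $M$, so Corollary~\ref{cor: its over vince carter gif}(ii) forces $uQu^* \subseteq P$, and symmetrically $u^*Qu \subseteq P$. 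Iterating the argument with $P$ replacing $Q$: the inclusion $uPu^* \leq M$ is $\sigma$-entropy free by the same inner-automorphism argument, and $uPu^* \cap P \supseteq uQu^*$ is diffuse and with expectation, so $uPu^* \subseteq P$; symmetrically $u^*Pu \subseteq P$, giving $uPu^* = P$ and thus $u \in \cN_M(P)$.

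Finally, since $P$ is itself a $\sigma$-Pinsker algebra, $\Pi(P \leq M,\varphi) = P$, and I would close the argument by chaining
\[
\cN_M(P) \subseteq q^1\cN_M(P) \subseteq M \cap L^{2,\textnormal{dyn}}_{\cross}(P \leq M,\varphi) \subseteq \Pi(P \leq M,\varphi) = P,
\]
where the middle inclusion is Theorem~\ref{thm:quasi anticoarse stuff} applied to $P \leq M$ (valid because $P$ is diffuse and with $\varphi$-expectation), and the last inclusion is Theorem~\ref{thm:dynamical_anticoarse_contained_in_Pinsker}. This yields $\cN_M^{wq}(Q)'' \subseteq P$, completing the argument. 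The main technical subtlety is the iterated use of Corollary~\ref{cor: its over vince carter gif}(ii); its applicability in both rounds rests on recognizing that $\sigma$-entropy freeness is preserved under inner automorphisms of $M$, which is ultimately what allows us to loop through $Q \rightsquigarrow uQu^* \rightsquigarrow uPu^*$ without losing control of the Pinsker structure.
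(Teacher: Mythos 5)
Your proposal is correct and follows essentially the same route as the paper: establish $\sigma^\varphi$-invariance of $\cN_M^{wq}(Q)$ to get the expectation, use the witnessing diffuse algebra $N_0\leq u^*Qu\cap Q$ together with invariance of $\sigma$-entropy freeness under $\Ad(u)$ to show $u$ normalizes $P=\Pi^\sigma(Q\leq M)$, and then conclude via $\cN_M(P)\subseteq P$ from Theorems~\ref{thm:quasi anticoarse stuff} and \ref{thm:dynamical_anticoarse_contained_in_Pinsker} plus monotonicity. The only cosmetic difference is that you reach $uPu^*=P$ by two applications of Corollary~\ref{cor: its over vince carter gif}(\ref{item: extrinsic diffuse absoprtion}), whereas the paper chains the equalities $\Pi^\sigma(B\leq M)=\Pi^\sigma(Q\leq M)=\Pi^\sigma(uQu^*\leq M)=u\Pi^\sigma(Q\leq M)u^*$ using the same corollary.
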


\begin{proof}
Let $\varphi$ be a faithful normal state on $M$ such that $Q\leq M$ is with $\varphi$-expectation. It follows from the definition of the wq-normalizer that $\cN^{wq}_M(Q)$ is $\sigma^\varphi$-invariant, and hence $\cN_{M}^{wq}(Q)''\leq M$ is with expectation. Also observe that $Q\subseteq \cN_M(Q) \subseteq \cN_{M}^{wq}(Q)''$ by the discussion preceding the statement of the corollary, and consequently $\cN^{wq}_M(Q)''$ is diffuse by Lemma~\ref{lem:diffuseness_passed_upwards}.

Fix $u\in \cN^{wq}_{M}(Q)$, and choose $B\leq u^*Qu \cap Q$ which is diffuse and so that $B\leq M$ is with expectation. Then $B\leq Q$ is with expectation, and so $B\leq M$ is $\sigma$-entropy free by Corollary~\ref{cor: its over vince carter gif}.(\ref{item: extrinsic join lemma}) applied to $Q_{1}=Q_{2}=Q$.
Further, by Corollary~\ref{cor: its over vince carter gif}.(\ref{item: extrinsic diffuse absoprtion}) we have that $\Pi^{\sigma}(B\leq M)=\Pi^{\sigma}(Q\leq M)$. Similarly, $\Pi^{\sigma}(B\leq M)=\Pi^{\sigma}(uQu^{*}\leq M)$, and it is direct to see that  $\Pi^{\sigma}(uQu^{*}\leq M)=u\Pi^{\sigma}(Q\leq M)u^{*}$. Thus $u\in \cN_{M}(\Pi^{\sigma}(B\leq M))$, but Theorems~\ref{thm:dynamical_anticoarse_contained_in_Pinsker} and \ref{thm:quasi anticoarse stuff} imply $\Pi^{\sigma}(B\leq M))$ contains its own normalizer. Thus $\cN_{M}^{wq}(Q)\leq \Pi^{\sigma}(P\leq M)$. Since $\cN_{M}^{wq}(Q)''\leq M$ is with expectation by the first part of the proof, it follows from Corollary~\ref{cor: its over vince carter gif}.(\ref{item: extrinsic join lemma}), with $Q_{1}=Q_{2}=\Pi^{\sigma}(B\leq M)$ that $W^{*}(\cN_{M}^{wq}(Q))\leq M$ is a $\sigma$-entropy free inclusion. 
\end{proof}

\section{Pushing anticoarse spaces to the limit: ultrapowers}\label{sec:ultrapowers}

For a cofinal ultrafilter $\omega$ on a directed set $I$ and a von Neumann algebra $M$, denote
    \[
        \cI_{\omega}(M):=\{(x_{i})_{i\in I}\in \ell^{\infty}(I,M):\lim_{i\to\omega}x_{i}=0 \textnormal{ in the strong-$*$ topology}\},
    \]
    \[
        \cM^{\omega}(M):=\{(x_{i})_{i\in I}\in \ell^{\infty}(I,M):(x_{i})_{i\in I}\cI_{\omega}(M)+\cI_{\omega}(M)(x_{i})_{i\in I}\subseteq \cI_{\omega}(M)\}.
    \]
By \cite{OcneauActions, AndoHaagerup} the quotient $C^{*}$-algebra
    \[
        M^{\omega}:=\cM^{\omega}(M)/\cI_{\omega}(M)
    \]
is a von Neumann algebra, which we call the \emph{Ocneanu ultrapower of $M$}. For $(x_{i})_{i\in I}\in \cM^{\omega}(M)$ we use $(x_{i})_{i\to\omega}$ for its image in $M^{\omega}$. If $Q$ is a subalgebra of $M$ with a faithful normal conditional expectation $\cE_{Q}\colon M\to Q$, then $Q^{\omega}$ is a naturally a von Neumann subalgebra of $M^{\omega}$ and there is a natural conditional expectation given $\cE_{Q^{\omega}}$ given by
\[\cE_{Q^{\omega}}((x_{i})_{i\to\omega})=(\cE_{Q}(x_{i}))_{i\to\omega}\]
(see \cite[Section 2]{HIBicent} for details).
Applying this with $Q=\bC$, we see that if $\varphi$ is a faithful normal state on $M$, then the ultraproduct state $\varphi^{\omega}$ given by
\[\varphi^{\omega}((x_{i})_{i\to\omega})=\lim_{i\to\omega}\varphi(x_{i})\]
remains faithful. Moreover, the modular automorphism group of $\varphi^\omega$ is determined by that of $\varphi$:
\begin{align}\label{eqn:ultrapower_modular_automorphism_group}
        \sigma_t^{\varphi^\omega}( (x_i)_{i\to\omega}) = ( \sigma_t^{\varphi}(x_i))_{i\to \omega} \qquad \qquad t\in \bR.  
    \end{align}
(see \cite[Theorem 4.1]{AndoHaagerup}). Consequently, the diagonal embedding $M\leq M^\omega$ is with $\varphi^\omega$-expectation.

Let $M$ be a von Neumann algebra equipped with a faithful normal state $\varphi$. By \cite[Theorem 2.10]{MTUltra} (see also \cite[Theorem 4.1]{AndoHaagerup}), we have natural inclusions
    \[
        M\rtimes_{\sigma^\varphi} \bR \leq M^\omega \rtimes_{\sigma^{\varphi^\omega}} \bR \leq (M\rtimes_{\sigma^\varphi} \bR)^\omega.
    \]
Specifically, the first embedding restricted to $M$ is the diagonal embedding and restricted to $L_\varphi(\bR)$ maps $\lambda_\varphi(t)$ to $\lambda_{\varphi^\omega}(t)$ for all $t\in \bR$. Note that $M\leq M^\omega$ being with $\varphi^\omega$-expectation implies the first embedding above is with $\widetilde{\varphi^\omega}$-expectation and $\tau_{\varphi^\omega}$-expectation. In particular, $\tau_{\varphi^\omega}|_{M\rtimes_{\sigma^\varphi}\bR} = \tau_\varphi$. The second embedding arises from the natural embedding $\cI_{\omega}(M)\to \cI_{\omega}(M\rtimes_{\sigma^\varphi}\bR)$ along with the diagonal embedding $\lambda_{\varphi^\omega}(t) \to (\lambda_\varphi(t))_{i\to \omega}$.

\begin{prop}\label{prop: all too easy}
Let $M$ be a $\sigma$-finite von Neumann algebra and let $Q\leq M$ be diffuse and with expectation. Then $Q\leq M^{\omega}$ is   $\sigma$-entropy free inclusion  if and only if $Q\leq M$ is  an $\sigma$-entropy free inclusion. 
\end{prop}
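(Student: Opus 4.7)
The plan is to reduce everything to an equality of $1$-bounded entropies for finite corners of the continuous cores and then invoke the ultrapower stability of $1$-bounded entropy due to Jung (as used in \cite{Hayes2018}). Fix a faithful normal state $\varphi$ on $M$ such that $Q\le M$ is with $\varphi$-expectation and use $\varphi^{\omega}$ on $M^{\omega}$; then $Q\le M^{\omega}$ is with $\varphi^{\omega}$-expectation. By (\ref{eqn:ultrapower_modular_automorphism_group}), $\sigma^{\varphi^{\omega}}$ agrees with $\sigma^{\varphi}$ on (the diagonal copy of) $Q$, so
    \[
        \widetilde{Q}:=Q\rtimes_{\sigma^{\varphi}}\bR=Q\rtimes_{\sigma^{\varphi^{\omega}}}\bR \hookrightarrow \widetilde{M}:=M\rtimes_{\sigma^{\varphi}}\bR\hookrightarrow \widetilde{M^{\omega}}:=M^{\omega}\rtimes_{\sigma^{\varphi^{\omega}}}\bR
    \]
sit inside a common chain with trace-preserving conditional expectations (using that $\tau_{\varphi^{\omega}}|_{\widetilde{M}}=\tau_{\varphi}$), and the cited Marrakchi--Tomatsu embedding provides a further inclusion $\widetilde{M^{\omega}}\hookrightarrow (\widetilde{M})^{\omega}$ (Ocneanu ultrapower) compatible with these traces.

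Next I will invoke Lemma~\ref{lem: independ of the proj} on both sides to reduce the question to a single corner: choose an increasing net $(p_{n})_{n}\subseteq \dom(\tau_{\varphi}|_{\widetilde{Q}})$ of projections converging strong-operator to $1$ using Lemma~\ref{lem:semifinite_iff_approx_proj_unit}. The same $p_{n}$'s can be used in both inclusions because $\dom(\tau_{\varphi}|_{\widetilde{Q}})=\dom(\tau_{\varphi^{\omega}}|_{\widetilde{Q}})$. So it suffices to show, for each $p=p_{n}$, that
    \[
        h_{\tau_{\varphi}}(p\widetilde{Q}p:p\widetilde{M}p)\le 0 \iff h_{\tau_{\varphi^{\omega}}}(p\widetilde{Q}p:p\widetilde{M^{\omega}}p)\le 0.
    \]

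The key point is that $p$ is a finite-trace projection in $\widetilde{M}\subseteq \widetilde{M^{\omega}}\subseteq (\widetilde{M})^{\omega}$, so after compression we have a chain of \emph{finite} von Neumann algebras
    \[
        p\widetilde{Q}p\subseteq p\widetilde{M}p\subseteq p\widetilde{M^{\omega}}p\subseteq p(\widetilde{M})^{\omega}p=(p\widetilde{M}p)^{\omega},
    \]
with trace-preserving conditional expectations and a common trace, where the last equality uses that Ocneanu ultrapowers commute with corners (and coincide with the tracial ultrapower in the finite setting). Monotonicity of $1$-bounded entropy in the ambient (Property P2, cf.~the proof of Proposition~\ref{prop: nonpositive monotone}) gives
    \[
        h(p\widetilde{Q}p:p\widetilde{M}p)\ge h(p\widetilde{Q}p:p\widetilde{M^{\omega}}p)\ge h(p\widetilde{Q}p:(p\widetilde{M}p)^{\omega}).
    \]
Then I appeal to Jung's theorem on the ultrapower-stability of $1$-bounded entropy for finite von Neumann algebras, namely $h(N:M)=h(N:M^{\omega})$ for $N\le M$ finite tracial (as used in \cite{Hayes2018}, see also \cite{FreePinsker}), applied to $N=p\widetilde{Q}p\le p\widetilde{M}p$. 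This forces equality throughout, which in particular yields the desired equivalence of non-positivity conditions.

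The only substantive obstacle is the bookkeeping around the Ocneanu-versus-tracial-ultrapower identification $p(\widetilde{M})^{\omega}p=(p\widetilde{M}p)^{\omega}$ and the compatibility of traces along $\widetilde{M}\hookrightarrow\widetilde{M^{\omega}}\hookrightarrow (\widetilde{M})^{\omega}$; once this is in place via the Marrakchi--Tomatsu embedding and the standard behaviour of Ocneanu ultrapowers under corner compression, the equivalence falls out of monotonicity plus Jung's ultrapower stability.
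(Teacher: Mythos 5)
Your proof is correct and follows essentially the same route as the paper's: fix $\varphi=\psi|_Q\circ\cE_Q$, reduce via Lemma~\ref{lem: independ of the proj} to finite-trace corners of the cores, use the chain $M\rtimes_{\sigma^\varphi}\bR\leq M^\omega\rtimes_{\sigma^{\varphi^\omega}}\bR\leq (M\rtimes_{\sigma^\varphi}\bR)^\omega$ together with the identification of $p(M\rtimes_{\sigma^\varphi}\bR)^\omega p$ as a tracial ultrapower, and conclude by monotonicity plus the ultrapower invariance of $1$-bounded entropy (\cite[Proposition 4.5]{Hayes2018}). The only cosmetic differences are that you derive both directions at once from the resulting equality chain (the paper dispatches the easy direction separately via Proposition~\ref{prop: nonpositive monotone}) and that the embedding is due to Masuda--Tomatsu \cite{MTUltra}, not Marrakchi--Tomatsu.
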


\begin{proof}
Let $\psi$ be a normal state on $M$, and $\cE_{Q}\colon M\to Q$ a faithful, normal conditional expectation. Set $\varphi=\psi|_{Q}\circ \cE_{Q}$, so that $Q\leq M$ is with $\varphi$-expectation. Note that this along with (\ref{eqn:ultrapower_modular_automorphism_group}) implies $Q\leq M^\omega$ is with $\varphi^\omega$-expectation.  It suffices to show that $Q\leq M$ is $\varphi$-entropy free if and only if $Q\leq M^{\omega}$ is $\varphi^{\omega}$-entropy free. The ``if'' direction is immediate from Proposition~\ref{prop: nonpositive monotone} and the fact that $\varphi^\omega|_M=\varphi$. Conversely, suppose $Q\leq M$ is a $\varphi$-entropy free inclusion. Let $\tau_{\varphi^\omega}$ be the tracial weight induced by $\varphi^\omega$, which we noted above restricts to $\tau_\varphi$ on $M\rtimes_{\sigma^{\varphi}}\bR$. By Lemma \ref{lem: independ of the proj}, it suffices to show that $h_{\tau_{\varphi^\omega}}(p(Q\rtimes_{\sigma^\varphi} \bR)p:p(M^\omega\rtimes_{\sigma^{\varphi^\omega}}\bR) p)\leq 0$ for all $p\in \dom(\tau_\varphi|_{L_{\varphi}(\bR)})$. Fixing such a projection $p$, we may then identify $p(M\rtimes_{\sigma^\varphi}\bR)^\omega p$ as the tracial ultrapower of $p(M\rtimes_{\sigma^\varphi}\bR)p$. Consequently, \cite[Property 1 and Proposition 4.5]{Hayes2018} imply   
    \begin{align*}
       h_{\tau_{\varphi^\omega}}(p(Q\rtimes_{\sigma^\varphi} \bR)p:p(M^\omega\rtimes_{\sigma^{\varphi^\omega}}\bR)p)&\leq h_{\tau_{\varphi^\omega}}(p(Q\rtimes_{\sigma^\varphi} \bR)p:p(M\rtimes_{\sigma^\varphi}\bR)^\omega p)\\
       &= h_{\tau_{\varphi^\omega}}(p(Q\rtimes_{\sigma^\varphi} \bR)p:p(M\rtimes_{\sigma^\varphi} \bR)p)
    \end{align*}
and the last expression is nonpositive by assumption.
\end{proof}

As a first application, we show in the $\sigma$-finite setting that all ``relatively non-full'' subalgebras provide entropy zero inclusions.

\begin{prop}\label{prop: nonfull entropy free}
 Let $M$ be a $\sigma$-finite von Neumann algebra and let $Q\leq M$ be diffuse and with expectation. If $Q'\cap M^{\omega}$ is diffuse, then $Q\leq M$ is a $\sigma$-entropy free inclusion.
\end{prop}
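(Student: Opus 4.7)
By Proposition~\ref{prop: all too easy}, it suffices to show $Q\leq M^{\omega}$ is $\sigma$-entropy free. Fix a faithful normal state $\varphi$ on $M$ with respect to which $Q$ is $\sigma^{\varphi}$-invariant, so that $Q\leq M^{\omega}$ is with $\varphi^{\omega}$-expectation by~\eqref{eqn:ultrapower_modular_automorphism_group}, and $Q'\cap M^{\omega}$ is $\sigma^{\varphi^{\omega}}$-invariant. My strategy is to produce a diffuse abelian subalgebra $A\leq Q'\cap (M^{\omega})^{\varphi^{\omega}}$, sandwich $Q$ between itself and $B:=Q\vee A$, and use that $B$ has a diffuse center to verify the entropy bound inside the continuous core.

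Granting $A$, set $B:=Q\vee A\leq M^{\omega}$. Because $A$ commutes with $Q$ and is abelian, $A\subseteq Z(B)$; because $A$ and $Q$ are $\sigma^{\varphi^{\omega}}$-invariant, so is $B$, whence $B\leq M^{\omega}$ is with $\varphi^{\omega}$-expectation and $Q\leq B$ is with expectation. Passing to the continuous core $\widetilde{M^{\omega}}:=M^{\omega}\rtimes_{\sigma^{\varphi^{\omega}}}\bR$ with tracial weight $\tau=\tau_{\varphi^{\omega}}$, and using Lemma~\ref{lem: independ of the proj}, it is enough to prove
\[
    h_{\tau}\bigl(p(B\rtimes_{\sigma^{\varphi}}\bR)p \;:\; p(M^{\omega}\rtimes_{\sigma^{\varphi^{\omega}}}\bR)p\bigr)\leq 0
\]
for a net of finite-trace projections $p\in L_{\varphi^{\omega}}(\bR)\subseteq B\rtimes\bR$ strongly approximating $1$. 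Since $A\subseteq (M^{\omega})^{\varphi^{\omega}}$ commutes with $L_{\varphi^{\omega}}(\bR)$, both algebras sit in the tracial centralizer of the dual weight $\widetilde{\varphi^{\omega}}$, where they are tensor-independent; consequently $pA$ is a diffuse abelian subalgebra of $Z(p(B\rtimes\bR)p)$, so $p(B\rtimes\bR)p$ is a finite von Neumann algebra with diffuse center. Then \cite[Example~3]{FreePinsker} yields $h_{\tau}(p(B\rtimes\bR)p:p(B\rtimes\bR)p)\leq 0$, and monotonicity \cite[Property~P2]{FreePinsker} gives the displayed inequality. Finally, Proposition~\ref{prop: nonpositive monotone} applied to $Q\leq B\leq M^{\omega}\leq M^{\omega}$ transfers the $\varphi^{\omega}$-entropy free property from $B$ to $Q$.

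The main obstacle is producing the diffuse abelian $A\leq Q'\cap (M^{\omega})^{\varphi^{\omega}}$. The hypothesis only yields a diffuse subalgebra $A_{0}\leq Q'\cap M^{\omega}$, which need not be $\sigma^{\varphi^{\omega}}$-invariant, let alone in the centralizer. To resolve this I would exploit the freedom to replace $\varphi$: observe that $Q'\cap M^{\omega}$ is $\sigma$-finite, diffuse and $\sigma^{\varphi^{\omega}}$-invariant, so the restriction $\varphi^{\omega}\vert_{Q'\cap M^{\omega}}$ is a faithful normal state with centralizer precisely $Q'\cap (M^{\omega})^{\varphi^{\omega}}$. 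Applying \cite[Theorem~11.1]{HSFlow} to $Q'\cap M^{\omega}$ produces \emph{some} faithful normal state on $Q'\cap M^{\omega}$ with diffuse centralizer; the delicate step is to realise such a state as $\varphi^{\omega}\vert_{Q'\cap M^{\omega}}$ for a suitable choice of $\varphi$ on $M$, which I would carry out by combining the $\varphi$-preserving conditional expectation $\cE_{Q}\colon M\to Q$ with Connes' cocycle perturbation to adjust the modular dynamics on $M$ while keeping $Q$ invariant. An alternative route that sidesteps this bookkeeping is to work throughout with the asymptotic centralizer $(M^{\omega})_{\omega}\subseteq (M^{\omega})^{\varphi^{\omega}}$, which is always of type $\mathrm{II}_{1}$ and into which diffuseness of $Q'\cap M^{\omega}$ can be pushed via the relative commutant theorems of \cite{AndoHaagerup}.
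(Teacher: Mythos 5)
Your reduction in the second paragraph is sound as far as it goes: \emph{if} one had a diffuse abelian $A\leq Q'\cap(M^{\omega})^{\varphi^{\omega}}$ for a faithful normal state $\varphi$ on $M$ leaving $Q$ invariant, then $B=Q\vee A$ would be $\sigma^{\varphi^{\omega}}$-invariant with $pA$ diffuse in the center of $p(B\rtimes_{\sigma^{\varphi^\omega}}\bR)p$, and the chain \cite[Example 3]{FreePinsker} $\rightarrow$ monotonicity $\rightarrow$ Proposition~\ref{prop: nonpositive monotone} $\rightarrow$ Proposition~\ref{prop: all too easy} would finish the proof. But the step you flag as ``delicate'' is a genuine gap, and neither of your proposed repairs works. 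For the first: your setup forces the state to be of the form $\varphi^{\omega}$ with $\varphi$ a state on $M$ leaving $Q$ invariant, and a Connes cocycle perturbation of $\varphi$ by data in $M$ gives you no control over the centralizer of $\varphi^{\omega}|_{Q'\cap M^{\omega}}$ --- that algebra lives in the ultrapower and is essentially invisible from $M$ (note $Q'\cap M$ may be trivial, e.g.\ for an irreducible inclusion), while perturbing $\varphi^{\omega}$ by unitaries of $M^{\omega}$ destroys both the ultrapower form and, in general, the invariance of $Q$. For the second: diffuseness of $Q'\cap M^{\omega}$ does \emph{not} push into the asymptotic centralizer. Take $M=L(\bF_2)$ and $Q$ a generator MASA: $Q$ is abelian so $Q\subseteq Q'\cap M^{\omega}$ is diffuse, yet $M$ is full and its asymptotic centralizer is trivial; in particular $(M^{\omega})_{\omega}$ is not ``always of type $\mathrm{II}_1$.''

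The paper avoids asking one state to do double duty. It applies \cite[Theorem 11.1]{HSFlow} to $Q'\cap M^{\omega}$ (which is with expectation in $M^{\omega}$) to get a faithful normal state $\psi$ on $Q'\cap M^{\omega}$ with diffuse centralizer, extends $\psi$ to $M^{\omega}$ through a conditional expectation onto $Q'\cap M^{\omega}$ --- this $\psi$ is neither an ultrapower state nor does it leave $Q$ invariant --- and picks a diffuse abelian $A\leq(Q'\cap M^{\omega})^{\psi}$. Then $A\leq M^{\omega}$ is $\psi$-entropy free by Proposition~\ref{prop:amenable}, so $\cN_{M^{\omega}}(A)''\subseteq\Pi(A\leq M^{\omega},\psi)$ by Theorems~\ref{thm:dynamical_anticoarse_contained_in_Pinsker} and \ref{thm:quasi anticoarse stuff}; since $Q$ commutes with $A$ we have $Q\subseteq\cN_{M^{\omega}}(A)''$, and the state-independence machinery (Corollary~\ref{cor: its over vince carter gif}.(\ref{item: extrinsic join lemma})) transfers $\sigma$-entropy freeness to $Q\leq M^{\omega}$ even though $Q$ is not $\sigma^{\psi}$-invariant. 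To salvage your argument you should adopt this two-state strategy; the $\sigma$-entropy free formalism was built precisely to permit it.
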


\begin{proof}
 Since $Q\leq M$ is with expectation in $M$, we have that $Q'\cap M^{\omega} \leq M^\omega$ is with expectation. Indeed, $Q\leq M$ is with $\varphi$-expectation for some faithful normal state $\varphi$, and so (\ref{eqn:ultrapower_modular_automorphism_group}) implies $Q\leq M^\omega$ (and hence $Q'\cap M^\omega \leq M^\omega$) is with $\varphi^\omega$-expectation. Fix faithful normal conditional expectations $\cE_{Q}\colon M^{\omega}\to Q$ and $\cE_{Q'\cap M^{\omega}}\colon M^{\omega}\to Q'\cap M^{\omega}$. The $\sigma$-finiteness of $M$ passes to $M^\omega$, and in turn to $Q'\cap M^\omega$. Hence, by \cite[Theorem 11.1]{HSFlow} (see also \cite[Lemma 2.1]{HUAsymptot} and \cite{CSStateSpace}), we may choose a faithful normal state $\psi$ on $Q'\cap M^{\omega}$ so that $(Q'\cap M^{\omega})^{\psi}$ is diffuse. We will abuse notation to let $\psi$ also denote $\psi\circ \cE_{Q'\cap M^\omega}$, its extension to $M^\omega$. Fix a diffuse abelian subalgebra $A\leq (Q'\cap M^{\omega})^{\psi}$. Proposition~\ref{prop:amenable} implies that $A\leq M^\omega$ is a $\psi$-entropy free inclusion, and so Theorems~\ref{thm:dynamical_anticoarse_contained_in_Pinsker} and \ref{thm:quasi anticoarse stuff} imply
    \[
        \cN_{M^\omega}(A)'' \subseteq \Pi(A\leq M^\omega, \psi).
    \]  
Thus $\cN_{M^{\omega}}(A)''\leq M^{\omega}$ is $\sigma$-entropy free by Proposition~\ref{prop: nonpositive monotone}. By Corollary \ref{cor: its over vince carter gif}.(\ref{item: extrinsic join lemma}) with $B=Q$ and $Q_{1}=Q_{2}=\cN_{M^{\omega}}(A)''$ we have that $Q\leq M^{\omega}$ is $\sigma$-entropy free.   
Therefore $Q\leq M$ is $\sigma$-entropy free by Proposition~\ref{prop: all too easy}.
\end{proof}

We now list several strong ultrapower absorption properties for $\sigma$-Pinsker algebras.

\begin{cor}\label{cor: ultarmega solidity Pinsker}
Let $M$ be a $\sigma$-finite von Neumann algebra, and  let $P\leq M$ be a $\sigma$-Pinsker algebra.
\begin{enumerate}[(a)]
    \item Gamma absorption: if $Q\leq M$ is with expectation, $Q'\cap M^\omega$ is diffuse, and $Q\cap P$ is diffuse and with expectation in $M$, then $Q\subseteq P$. \label{item: gamma absorption}
    
    \item Generalized solidity phenomena: Suppose that $Q\leq M^{\omega}$ is diffuse and amenable  and that $\psi$ is a faithful, normal, state on $M^{\omega}$ which leaves $Q$ invariant. Suppose we are given subalgebras $Q_{\alpha}$ of $M^{\omega}$ for all ordinals $\alpha$ so that:
    \begin{itemize}
        \item $Q_{0}=Q$,
        \item for any successor ordinal, there is a faithful normal state $\varphi$ on $M^{\omega}$ which leaves $Q_{\alpha-1}$ invariant and so that $Q_{\alpha}=W^{*}(Q_{\alpha-1},X)$ where $X$ is globally $\sigma^{\psi}$-invariant and 
        \[X\subseteq L^{2,\textnormal{dyn}}(Q\leq M^{\omega},\varphi))\cup \cN^{wq}_{M^{\omega}}(Q_{\alpha-1}),\]
        (e.g. if $X\subseteq q^{1}\cN_{M^{\omega}}(Q_{\alpha-1})\cup \cN^{wq}_{M^{\omega}}(Q_{\alpha-1})$), 
        \item $Q_{\alpha}=\overline{\bigcup_{\beta<\alpha}Q_{\beta}}^{SOT}$, if $\alpha$ is a limit ordinal. 
    \end{itemize}
Then for any $\alpha$ if $N\leq Q_{\alpha}\cap M$ is with expectation in $M$, and if $N\cap P$ is diffuse, we must have that $N\subseteq P$.
\label{item: omega mega solidity}  
\end{enumerate}
\end{cor}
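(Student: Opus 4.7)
For part (a), the plan is to chain two earlier results. Proposition~\ref{prop: nonfull entropy free} promotes the diffuseness of $Q'\cap M^{\omega}$ to the statement that $Q\leq M$ is a $\sigma$-entropy free inclusion, and then Corollary~\ref{cor: its over vince carter gif}(\ref{item: extrinsic diffuse absoprtion}) yields $Q\subseteq P$ directly, with $Q\cap P$ itself serving as the required diffuse subalgebra with expectation in $M$.

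For part (b), I would set $\widetilde P^{\omega}:=\Pi^{\sigma}(Q\leq M^{\omega})$, which exists since $Q$ is amenable and $\sigma^{\psi}$-invariant, so Proposition~\ref{prop:amenable} makes $Q\leq M^{\omega}$ a $\sigma$-entropy free inclusion. The central claim is then $Q_{\alpha}\subseteq \widetilde P^{\omega}$ for every ordinal $\alpha$, proved by transfinite induction. The base and limit cases are immediate. For the successor step $Q_{\alpha}=W^{*}(Q_{\alpha-1},X)$, I would split $X$ along its two types of generators. Any $x\in X$ lying in $L^{2,\textnormal{dyn}}_{\cross}(Q\leq M^{\omega},\varphi)\cap M^{\omega}$ belongs to $\Pi(Q\leq M^{\omega},\varphi)$ by Theorem~\ref{thm:dynamical_anticoarse_contained_in_Pinsker}, and this $\varphi$-Pinsker algebra coincides with $\widetilde P^{\omega}$ by the state-independence established in Corollary~\ref{cor:independence of Pinsker on the state}. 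For $u\in X\cap \cN^{wq}_{M^{\omega}}(Q_{\alpha-1})$, the inductive hypothesis together with Proposition~\ref{prop: nonpositive monotone} makes $Q_{\alpha-1}\leq M^{\omega}$ a $\sigma$-entropy free inclusion, so Corollary~\ref{cor: abosrbing wq normalizers} gives that $\cN^{wq}_{M^{\omega}}(Q_{\alpha-1})''\leq M^{\omega}$ is $\sigma$-entropy free; since $Q\subseteq Q_{\alpha-1}$, uniqueness of the $\sigma$-Pinsker algebra (Corollary~\ref{cor: its over vince carter gif}(\ref{item: extrinsic Pinsker})) identifies $\Pi^{\sigma}(Q_{\alpha-1}\leq M^{\omega})$ with $\widetilde P^{\omega}$, forcing $u\in \widetilde P^{\omega}$.

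Granting the induction, $N\subseteq Q_{\alpha}\cap M\subseteq \widetilde P^{\omega}\cap M$. Propositions~\ref{prop: nonpositive monotone} and~\ref{prop: all too easy} then make $N\leq M$ a $\sigma$-entropy free inclusion, and a final application of Corollary~\ref{cor: its over vince carter gif}(\ref{item: extrinsic diffuse absoprtion}) to $N$ and the $\sigma$-Pinsker algebra $P$ yields $N\subseteq P$ from the diffuseness of $N\cap P$. The main obstacle I anticipate is the bookkeeping of auxiliary states in the successor step: the dynamical anticoarse space is indexed by a state $\varphi$ a priori different from the invariance state $\psi$, so the identification $\Pi(Q\leq M^{\omega},\varphi)=\widetilde P^{\omega}$ must be threaded carefully through Corollary~\ref{cor:independence of Pinsker on the state}, which also underpins the final diffuse-intersection absorption step.
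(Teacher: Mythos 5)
Your proposal is correct and follows essentially the same route as the paper: a transfinite induction powered by Theorem~\ref{thm:dynamical_anticoarse_contained_in_Pinsker}, Corollary~\ref{cor: abosrbing wq normalizers}, Proposition~\ref{prop: nonpositive monotone}, Proposition~\ref{prop: all too easy}, and Corollary~\ref{cor: its over vince carter gif}, with part (a) handled exactly as in the paper via Proposition~\ref{prop: nonfull entropy free}. The only (cosmetic) difference is the induction invariant --- you track $Q_{\alpha}\subseteq\Pi^{\sigma}(Q\leq M^{\omega})$ generator by generator, whereas the paper tracks ``$Q_{\alpha}\leq M^{\omega}$ is $\sigma$-entropy free'' --- which are equivalent since each $Q_{\alpha}$ is $\sigma^{\psi}$-invariant, and your version makes the limit step trivially an SOT-closure argument.
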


\begin{proof}
\textbf{(\ref{item: gamma absorption}):} By Proposition~\ref{prop: nonfull entropy free}, we have that $Q\leq M$ is a $\sigma$-entropy free inclusion (note that $Q$ is diffuse by Lemma~\ref{lem:diffuseness_passed_upwards}). By  Corollary~\ref{cor: its over vince carter gif}.(\ref{item: extrinsic diffuse absoprtion}), it follows that $Q\subseteq P$.\\

\noindent \textbf{(\ref{item: omega mega solidity}):} 
We first show that $Q_{\alpha}\leq M^{\omega}$ is an entropy free inclusion for every ordinal $\alpha$ by transfinite induction. For a successor ordinal, note that $Q_{\alpha-1}\leq Q_{\alpha}$ is with expectation and so diffuseness of $Q_{\alpha-1}$ implies that of $Q_{\alpha}$. Set 
\[X_{1}=X\cap L^{2,\textnormal{dyn}}_{\cross}(Q\leq M^{\omega},\varphi), \textnormal{ and } X_{2}=X\cap \cN_{M^{\omega}}^{wq}(Q_{\alpha-1}). \]
By definition, we have that $Q_{\alpha-1}\leq M^{\omega}$ is $\varphi$-entropy free, so that Theorem \ref{thm:dynamical_anticoarse_contained_in_Pinsker} and Proposition \ref{prop: nonpositive monotone} implies that $W^{*}(X,Q_{\alpha-1})\leq M^{\omega}$ is $\varphi$-entropy free, hence $\sigma$-entropy free. Note that $X_{2}\subseteq \cN_{M^{\omega}}^{wq}(Q_{\alpha-1})\subseteq \cN_{M^{\omega}}^{wq}(W^{*}(X_{1},Q_{\alpha-1})).$
Thus by Corollary \ref{cor: abosrbing wq normalizers} it follows that $Q_{\alpha}=W^{*}(X_{2},X_{1},Q_{\alpha-1})\leq M^{\omega}$ is $\sigma$-entropy free. 

To handle the case of a limit ordinal, first note that since each $Q_{\beta}$ is globally $\sigma^{\psi}$-invariant for any $\beta<\alpha$, we have that $Q_{\alpha}$
is globally $\sigma^{\psi}$-invariant. In particular, $Q_{\beta}\leq Q_{\alpha}$ is with expectation for any $\beta<\alpha$, so that $Q_{\alpha}$ is diffuse. Fix $\beta<\alpha$, and note that Corollary \ref{cor: its over vince carter gif} (\ref{item: extrinsic diffuse absoprtion}) implies that $\Pi^{\sigma}(Q_{\beta'}\leq M^{\omega})=\Pi^{\sigma}(Q_{\beta}\leq M^{\omega})$ for any $\beta\leq \beta'<\alpha$. Thus $Q_{\beta}\leq\Pi^{\sigma}(Q_{\beta}\leq M^{\omega})$ for any $\beta<\beta'<\alpha$, which implies that $Q_{\alpha}\leq \Pi^{\sigma}(Q_{\beta}\leq M^{\omega})$. Hence by Proposition \ref{prop: nonpositive monotone}, we have that $Q_{\alpha}\leq M^{\omega}$ is $\sigma$-entropy free.

So by transfinite induction, we have that $Q_{\alpha}\leq M^{\omega}$ is $\sigma$-entropy free for all $\alpha$. Since $N$ is with expectation in $M^{\omega}$, and $N\leq Q_{\alpha}\leq M^{\omega}$, it follows by Corollary \ref{cor: its over vince carter gif} (\ref{item: extrinsic join lemma}) with $Q_{1}=Q_{2}=Q_{\alpha}$ that $N\leq M^{\omega}$ is $\sigma$-entropy free. Hence, by Corollary \ref{cor: its over vince carter gif} (\ref{item: extrinsic diffuse absoprtion}) it follows that $N\leq M$ is $\sigma$-entropy free. Thus, Proposition \ref{prop: all too easy} implies that $N\subseteq P$.
\end{proof}

\section{Proofs of main results}\label{sec: mining the gold}

Theorems~\ref{introthm: ultrasolid III} and \ref{thm: the game is over and you should quit} follow from the results presented in this section. 
% For Theorem~\ref{introthm: FAW nonembedding}, see Section~\ref{sec:FAWF} below. 
Throughout, we will use the $\sigma$-entropy free machinery developed in the previous sections, and so restrict to $\sigma$-finite von Neumann algebras whose continuous core has amenable Pinsker algebras.
We note that this is satisfied for a von Neumann algebra $N$ with continuous core isomorphic to $L(\bF_\infty)\overline{\otimes}B(\cH)$ for a separable Hilbert space $\cH$. First note that separability of $\cH$ is equivalent with $N$ being $\sigma$-finite. Indeed, if $N$ is $\sigma$-finite then so is $L(\bF_\infty)\overline{\otimes}B(\cH)$ (see the proof of Theorem~\ref{thm:dynamical_anticoarse_contained_in_Pinsker}). This in turn implies $B(\cH)$ is $\sigma$-finite and hence $\cH$ is separable. Conversely, if $\cH$ is separable, then $L(\bF_\infty)\overline{\otimes}B(\cH)$ is a separable von Neumann algebra. It follows that $N$ separable, and consequently $\sigma$-finite. In light of this, we will restrict our attention to von Neumann algebras with continuous cores of the form $L(\bF_\infty)\overline{\otimes}B(\ell^2)$.

Now, $L(\bF_{\infty})\overline{\otimes}B(\ell^{2})$ has amenable Pinsker algebras. Indeed, if $P\leq L(\bF_{\infty})\overline{\otimes}B(\ell^{2})$ is Pinsker, then for every finite trace projection $p\in P$ we have that $pPp\leq p[L(\bF_{\infty})\overline{\otimes}B(\ell^{2})]p$ is entropy zero. Hence, by the resolution of the Peterson--Thom conjecture by \cite{hayespt, belinschi2022strong, bordenave2023norm} we have that $pPp$ is amenable. As in the proof of Proposition \ref{prop: apparently free entropy is useless} below, this implies that $P$ is amenable. The following lemma allows us to more generally apply our analysis to any von Neumann algebra which embeds with expectation into a von Neumann algebra whose continuous core is isomorphic to $L(\bF_{\infty})\overline{\otimes}B(\cH)$.

% We begin by showing that algebras with continuous core $L(\bF_{\infty})\overline{\otimes}B(\ell^{2})$ satisfies the following, which by
% Corollary~\ref{cor: its over vince carter gif}.(\ref{item: extrinsic Pinsker}), is stronger than the Peterson--Thom property.

\begin{lem}
Let $M$ be a $\sigma$-finite von Neumann algebra whose continuous core has amenable Pinsker algebras. If $N\leq M$ is with expectation, then the continuous core has amenable Pinsker algebras.   
\end{lem}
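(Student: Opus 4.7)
The plan is to transfer the amenable–Pinsker property from the continuous core of $M$ down to the continuous core of $N$ through the natural inclusion, exploiting the monotonicity of $1$-bounded entropy in the ambient algebra.

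First, I would fix a faithful normal conditional expectation $\cE_N\colon M\to N$ and a faithful normal state $\varphi$ on $N$, and set $\psi:=\varphi\circ\cE_N$, so that $\sigma^\psi|_N=\sigma^\varphi$. This gives a natural inclusion of continuous cores $N\rtimes_{\sigma^\varphi}\bR \leq M\rtimes_{\sigma^\psi}\bR$. As in the discussion following Lemma~\ref{lem:common_expectation_canonical_intertwiner}, this inclusion is with a faithful normal conditional expectation preserving both $\widetilde{\psi}$ and (since $D_\psi$ is affiliated with $L_\psi(\bR)$, which is contained in $N\rtimes_{\sigma^\varphi}\bR$) the induced tracial weight $\tau_\psi$; in particular $\tau_\psi|_{N\rtimes\bR}=\tau_\varphi$.

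Next, let $P\leq N\rtimes_{\sigma^\varphi}\bR$ be any Pinsker algebra in the sense of Definition~\ref{defn: Pinsker algebra}. I want to show $P$ is amenable. For any finite projection $p\in\dom(\tau_\varphi|_P)$, monotonicity of $1$-bounded entropy in the ambient algebra (\cite[Property P2]{FreePinsker}, used as in the proof of Proposition~\ref{prop: nonpositive monotone}) gives
\[
h_{\tau_\psi}\bigl(pPp : p(M\rtimes_{\sigma^\psi}\bR)p\bigr) \;\leq\; h_{\tau_\varphi}\bigl(pPp : p(N\rtimes_{\sigma^\varphi}\bR)p\bigr) \;\leq\; 0,
\]
so $pPp$ is entropy free in the finite corner $p(M\rtimes_{\sigma^\psi}\bR)p$. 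The hypothesis that $M\rtimes_{\sigma^\psi}\bR$ has amenable Pinsker algebras passes to such finite corners via Proposition~\ref{prop:PT_II_infty}: any diffuse entropy-free subalgebra of $p(M\rtimes_{\sigma^\psi}\bR)p$ extends uniquely to a semifinite Pinsker algebra $\widetilde{Q}$ of $M\rtimes_{\sigma^\psi}\bR$, which is amenable by hypothesis, and cutting back down by $p$ preserves amenability.

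Applying this in the finite algebra $p(M\rtimes_{\sigma^\psi}\bR)p$: either $pPp$ contains a diffuse subalgebra, in which case the associated Pinsker algebra in the corner is amenable by the previous paragraph and contains $pPp$ (by maximality of the Pinsker extension over a diffuse entropy-free inclusion); or $pPp$ has no diffuse subalgebra, in which case it is a direct sum of matrix algebras, hence amenable. Either way, $pPp$ is amenable for every finite projection $p\in\dom(\tau_\varphi|_P)$, and since such $p$ can be chosen to form a cofinal increasing net converging strongly to $1$ by Lemma~\ref{lem:semifinite_iff_approx_proj_unit}, one concludes that $P$ itself is amenable, exactly as in the finite-corners-to-global amenability step used in Proposition~\ref{prop: apparently free entropy is useless} and in the paragraph preceding the lemma.

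The main obstacle is the transfer in the third step: articulating carefully how the amenable-Pinsker property of $M\rtimes_{\sigma^\psi}\bR$ descends to arbitrary finite corners and, in particular, handling the non-diffuse situation for $pPp$. Once Proposition~\ref{prop:PT_II_infty} is used to identify corners of semifinite Pinsker algebras with (diffuse) Pinsker algebras of finite corners, and the purely atomic case is disposed of by matrix-algebra reasoning, the rest is a routine application of monotonicity and the approximate-unit argument.
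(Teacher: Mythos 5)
Your overall mechanism is the same as the paper's: pass to the unital inclusion of continuous cores $N\rtimes_{\sigma^\varphi}\bR\leq M\rtimes_{\sigma^\psi}\bR$ (with $\psi=\varphi\circ\cE_N$, so that $\tau_\psi$ restricts to $\tau_\varphi$), use monotonicity of $1$-bounded entropy in the ambient algebra to see that corners of a Pinsker algebra $P$ of $N\rtimes_{\sigma^\varphi}\bR$ remain entropy free relative to the corresponding corners of $M\rtimes_{\sigma^\psi}\bR$, and then invoke the hypothesis on the core of $M$. The paper's proof is exactly this, compressed: any entropy free inclusion $Q\leq N\rtimes_{\sigma^\psi}\bR$ is, by the monotonicity argument from the proof of Proposition~\ref{prop: nonpositive monotone}, an entropy free inclusion $Q\leq M\rtimes_{\sigma^\psi}\bR$, and is therefore amenable.

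Where your write-up is shakier than it needs to be is the ``corner transfer'' in your third step. You assert that a diffuse entropy-free subalgebra $A$ of the finite corner $p(M\rtimes_{\sigma^\psi}\bR)p$ extends uniquely to a Pinsker algebra of $M\rtimes_{\sigma^\psi}\bR$ ``via Proposition~\ref{prop:PT_II_infty}.'' That proposition applies to \emph{unital} subalgebras $Q\leq M\rtimes_{\sigma^\psi}\bR$ with $\tau|_Q$ semifinite; a subalgebra of a corner has unit $p\neq 1$, and manufacturing from it a unital, diffuse, entropy free inclusion of the whole core whose compression by $p$ recovers $A$ is not a direct citation --- it is essentially the partial-isometry bookkeeping of Lemma~\ref{lem: independ of the proj}, and would need to be carried out. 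Fortunately you never need the corner transfer in this generality: the algebra you are compressing, namely $P$, is already a unital subalgebra of $M\rtimes_{\sigma^\psi}\bR$, and your displayed inequality for all finite $p\in\dom(\tau_\varphi|_P)$ says precisely that $P\leq M\rtimes_{\sigma^\psi}\bR$ is an entropy free inclusion in the sense of Definition~\ref{defn:semifintie zero entropy} (using Lemma~\ref{lem: independ of the proj} to pass from a cofinal net of projections to all of them). It is then contained in an amenable Pinsker algebra of $M\rtimes_{\sigma^\psi}\bR$ by hypothesis, with Proposition~\ref{prop:PT_II_infty} handling the diffuse case and your matrix-algebra observation the purely atomic one. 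This short-circuits the case analysis on $pPp$ and the corners-to-global amenability step entirely, and is what the paper does.
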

\begin{proof}
Fix a faithful, normal state $\psi$ on $M$ so that $N$ is with $\psi$-expectation. Then the continuous core of $M$ is isomorphic with $M\rtimes_{\sigma^{\psi}}\bR$, and $\tau_{\psi}$ is semifinite on $N\rtimes_{\sigma^{\psi}}\bR$ as $N$ is with $\psi$-expectation. Suppose that $Q\leq N\rtimes_{\sigma^{\psi}}\bR$ is entropy free, then by the proof of Proposition \ref{prop: nonpositive monotone} we have that $Q\leq M\rtimes_{\sigma^{\psi}}\bR$. Our assumptions on $M$ thus imply that $Q$ is amenable.
\end{proof}

\begin{prop}\label{prop: apparently free entropy is useless}
Let $M$ be a $\sigma$-finite von Neumann algebra von Neumann algebra  whose continuous core has amenable Pinsker algebras. If $Q\leq M$ is diffuse and with expectation, then $Q\leq M$ is $\sigma$-entropy free if and only if $Q$ is amenable. In particular, $M$ has the Peterson--Thom property.
\end{prop}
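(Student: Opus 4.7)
My plan is to prove the two directions of the equivalence separately and then derive the Peterson--Thom statement as an easy corollary. The backward direction ($Q$ amenable implies $Q\leq M$ is $\sigma$-entropy free) is essentially a bookkeeping step: pick any faithful normal state $\varphi$ on $M$ for which $Q\leq M$ is with $\varphi$-expectation (such a state exists, as explained after Definition~\ref{defn:sigma-entropy_free}), and then apply Proposition~\ref{prop:amenable} to conclude that $Q\leq M$ is $\varphi$-entropy free and hence $\sigma$-entropy free.

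For the forward direction I would pass to the continuous core, where the hypothesis can be invoked. Fix a faithful normal state $\varphi$ witnessing that $Q\leq M$ is $\sigma$-entropy free. By Theorem~\ref{thm:PT_III} there is a $\varphi$-Pinsker algebra $P = \Pi(Q\leq M,\varphi)\supseteq Q$, and $Q\leq P$ is automatically with $\varphi|_P$-expectation (by $\sigma^\varphi$-invariance of $Q$). Setting $\widetilde M := M\rtimes_{\sigma^\varphi}\bR$, $\widetilde Q := Q\rtimes_{\sigma^\varphi}\bR$, and $\widetilde P := P\rtimes_{\sigma^\varphi}\bR$, the proof of Theorem~\ref{thm:PT_III} identifies $\widetilde P$ with the semifinite Pinsker algebra of $\widetilde Q$ inside $\widetilde M$, and the inclusion $\widetilde Q\leq \widetilde P$ is with expectation induced from $Q\leq P$. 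By the standing hypothesis on the continuous core, $\widetilde P$ is amenable, so $\widetilde Q$ is amenable as well.

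The final step, which I expect to be the main (minor) obstacle, is transferring amenability from $\widetilde Q$ back down to $Q$. There is no conditional expectation $\widetilde Q\to Q$, so a naive restriction argument is not available. Instead I would invoke the standard equivalence between amenability of a von Neumann algebra and amenability of its continuous core: Takesaki duality gives $\widetilde Q\rtimes_\theta \bR \cong Q\,\overline\otimes\,B(L^2(\bR))$, and combining with the amenability of $\bR$ and the stability of amenability under tensoring with $B(\cH)$ and taking corners yields the equivalence in both directions.

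For the ``in particular'' clause, given any diffuse, amenable $Q\leq M$ with expectation, the backward direction makes $Q\leq M$ a $\sigma$-entropy free inclusion, so Corollary~\ref{cor: its over vince carter gif}(\ref{item: extrinsic Pinsker}) produces a largest $\sigma$-entropy free subalgebra $P := \Pi^\sigma(Q\leq M)\supseteq Q$. The forward direction then shows $P$ is amenable; conversely, any amenable with-expectation $P_0\supseteq Q$ is $\sigma$-entropy free by the backward direction and hence lies inside $P$. Thus $P$ is the largest element required by condition~(\ref{item: PT via maximality}) of Proposition~\ref{prop:PT foundation}.
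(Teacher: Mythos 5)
Your argument is correct and shares the paper's overall strategy---pass to the continuous core, exploit amenability of Pinsker algebras there, and descend back to $Q$---but both halves of the forward direction are executed with different tools. You apply the hypothesis directly to the global semifinite Pinsker algebra $\widetilde P=P\rtimes_{\sigma^\varphi}\bR$ of $\widetilde Q$ in $\widetilde M$ (legitimate: the proof of Theorem~\ref{thm:PT_III} exhibits $\widetilde P$ as a maximal entropy free inclusion in the core, so the hypothesis makes it amenable and $\widetilde Q$ inherits amenability through the expectation), whereas the paper compresses by an increasing sequence of finite projections $p_n\in L_\varphi(\bR)$, uses $h_{\tau_\varphi}(p_n\widetilde Q p_n:p_n\widetilde M p_n)\leq 0$ together with amenability of the Pinsker algebra of each finite corner, and then lets $p_n\to 1$; these are essentially the same computation viewed at different levels, and yours is arguably the more direct reading of the hypothesis. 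For the descent from amenability of $Q\rtimes_{\sigma^\varphi}\bR$ to amenability of $Q$, you invoke Takesaki duality, amenability of $\bR$, and stability of injectivity under tensoring with $B(\cH)$ and cutting by projections, while the paper cites the (non-normal) conditional expectation $Q\rtimes_{\sigma^\varphi}\bR\to Q$ from \cite[Proposition 2.6]{ADAmenableCorr}; both routes are standard and correct. The backward direction and the deduction of the Peterson--Thom property via Corollary~\ref{cor: its over vince carter gif} match the paper.
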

\begin{proof}
The ``if'' direction follows from Proposition~\ref{prop:amenable}.  Suppose that $Q\leq M$ is $\sigma$-entropy free, then by Proposition \ref{prop: nonpositive monotone} there is some faithful normal state $\varphi$ on $M$ so that $Q\leq M$ is $\varphi$-entropy free.
Let $\tau_\varphi$ be the tracial weight on $M\rtimes_{\sigma^\varphi}\bR$ induced by $\varphi$. Let $(p_{n})_{n\in \bN}\subseteq \dom(\tau_\varphi|_{L_\varphi(\bR)})$ be an increasing sequence projections converging to $1$ in the strong operator topology. Then, by definition, we have 
    \[
        h_{\tau_\varphi}(p_{n}(Q\rtimes_{\sigma^{\varphi}}\bR)p_{n}:p_{n}(M\rtimes_{\sigma^{\varphi}}\bR)p_{n})\leq 0.
    \]
Since $p_{n}(M\rtimes_{\sigma^{\varphi}}\bR)p_{n}
$ has amenable Pinsker algebras, the Pinsker algebra containing $p_{n}(Q\rtimes_{\sigma^{\varphi}}\bR)p_{n}$ is amenable, and hence $p_{n}(Q\rtimes_{\sigma^{\varphi}}\bR)p_{n}$ is amenable.
Letting $p_{n}\to 1$, we see that $Q\rtimes_{\sigma^{\varphi}}\bR$ is amenable, and hence $Q$ is amenable since there is a (non-normal) conditional expectation from $Q\rtimes_{\sigma^{\varphi}}\bR$ to $Q$ (see \cite[Proposition 2.6]{ADAmenableCorr}). The ``in particular" part follows from Corollary \ref{cor: its over vince carter gif}.
\end{proof}

Theorem \ref{thm: the game is over and you should quit} follows from the next result.
 
\begin{cor}\label{cor:ultramegasolidity}
Let $M$ be a $\sigma$-finite von Neumann algebra  whose continuous core has amenable Pinsker algebras.
Then:

\begin{enumerate}[(a)]
\item \label{item: ultramega solid for examlpes} $M$ is ultramega solid in the following sense. Suppose $Q\leq M^{\omega}$ is diffuse, amenable, and $\sigma^\psi$-invariant for a faithful normal state $\psi$ on $M^\omega$, and that we have subalgebras $Q_{\alpha}\leq M^{\omega}$ for every ordinal $\alpha$ so that:
    \begin{itemize}
        \item $Q_{0}=Q$,
        \item for a successor ordinal $\alpha$ we have $Q_{\alpha}=W^{*}(X,Q_{\alpha-1})$ where
        \[X\subseteq W^{*}(L^{2,\textnormal{dyn}}_{\cross}(Q\leq M^{\omega},\psi))\cup \cN_{M}^{wq}(Q_{\alpha-1})\]
       is globally $\sigma^{\psi}$-invariant,
        \item for a limit ordinal $\alpha$, we have $Q_{\alpha}=\overline{\bigcup_{\beta<\alpha}Q_{\beta}}^{SOT}$. 
    \end{itemize}
Then for every $\alpha$, any $B\leq Q_{\alpha}\cap M$ with expectation in $M$ is amenable.

\item In particular, suppose that $P$ is a maximal element of the set of subalgebras of $M$ which are amenable and with expectation in $M$.
Suppose $Q\leq M^{\omega}$ is diffuse, amenable, and $\sigma^\psi$-invariant for a faithful normal state $\psi$ on $M^\omega$, and that we have subalgebras $Q_{\alpha}\leq M^{\omega}$ for every ordinal $\alpha$ so that:
    \begin{itemize}
        \item $Q_{0}=Q$,
        \item for a successor ordinal $\alpha$ we have $Q_{\alpha}=W^{*}(X,Q_{\alpha-1})$ where
        \[X\subseteq W^{*}(L^{2,\textnormal{dyn}}(Q_{\alpha}\leq M^{\omega},\psi))\cup \cN_{M^{\omega}}^{wq}Q(_{\alpha-1})\]
        is globally $\sigma^{\psi}$-invariant,
        \item for a limit ordinal $\alpha$, we have $Q_{\alpha}=\overline{\bigcup_{\beta<\alpha}Q_{\beta}}^{SOT}$.  
            \end{itemize}
If there exists an $\alpha$ such that $Q_{\alpha}\cap P$ is diffuse, then $Q_{\beta}\cap M\subseteq P$ for every $\beta$. In particular, any $N\leq Q\cap M$ with expectation in $M$ is amenable. 
\label{item:ultramegasolid absorb for examples}
\end{enumerate}

\end{cor}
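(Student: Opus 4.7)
The plan is to combine the transfinite induction argument from the proof of Corollary~\ref{cor: ultarmega solidity Pinsker}.(\ref{item: omega mega solidity}) with Proposition~\ref{prop: apparently free entropy is useless}, which under our hypothesis on the continuous core of $M$ identifies diffuse $\sigma$-entropy free inclusions with amenable ones.

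For (a), I will first run the transfinite induction from Corollary~\ref{cor: ultarmega solidity Pinsker}.(\ref{item: omega mega solidity}) verbatim to conclude that $Q_\alpha \leq M^\omega$ is $\sigma$-entropy free for every $\alpha$: successor steps use Theorem~\ref{thm:dynamical_anticoarse_contained_in_Pinsker}, Corollary~\ref{cor: abosrbing wq normalizers}, and Corollary~\ref{cor: its over vince carter gif}.(\ref{item: extrinsic join lemma}), while limit steps follow from the stabilisation of $\sigma$-Pinsker algebras along the increasing chain via Corollary~\ref{cor: its over vince carter gif}.(\ref{item: extrinsic diffuse absoprtion}). Given $B \leq Q_\alpha \cap M$ with expectation in $M$, I will split off its atomic direct summand (a sum of type $\mathrm{I}$ factors, hence amenable) to reduce to diffuse $B$. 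Composing the expectation $M \to B$ with the ultrapower expectation $M^\omega \to M$ and restricting to $Q_\alpha$ shows that $B \leq Q_\alpha$ is with expectation; Proposition~\ref{prop: nonpositive monotone} then yields that $B \leq M^\omega$ is $\sigma$-entropy free, Proposition~\ref{prop: all too easy} transfers this to $B \leq M$, and Proposition~\ref{prop: apparently free entropy is useless} concludes amenability.

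For (b), I will first identify $P$ as a $\sigma$-Pinsker algebra in $M$: $P$ is $\sigma$-entropy free by Proposition~\ref{prop:amenable}, Corollary~\ref{cor: its over vince carter gif}.(\ref{item: extrinsic Pinsker}) produces $\Pi^\sigma(P \leq M) \supseteq P$, Proposition~\ref{prop: apparently free entropy is useless} forces it to be amenable, and maximality of $P$ among amenable subalgebras with expectation gives $\Pi^\sigma(P \leq M) = P$. Once $P$ is identified as $\sigma$-Pinsker, Corollary~\ref{cor: ultarmega solidity Pinsker}.(\ref{item: omega mega solidity}) delivers the key absorption: for every $\beta$ and every $N \leq Q_\beta \cap M$ with expectation in $M$ and $N \cap P$ diffuse, one has $N \subseteq P$. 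Monotonicity of the chain handles $\beta \leq \alpha$ trivially, and for $\beta \geq \alpha$ one has $Q_\beta \cap P \supseteq Q_\alpha \cap P$ diffuse. To extract the literal containment $Q_\beta \cap M \subseteq P$, I will argue element-wise: for $x \in Q_\beta \cap M$, I will set $N := W^*(x, R) \leq Q_\beta \cap M$ with $R$ a diffuse $\sigma^\psi$-invariant subalgebra of $Q_\alpha \cap P$ that is with expectation in $M$, and use the $\sigma^\psi$-invariance of $(Q_\alpha)$ together with the Connes cocycle techniques from Corollary~\ref{cor:independence of Pinsker on the state} to arrange $N \leq M$ with expectation; then $N \cap P \supseteq R$ is diffuse and Corollary~\ref{cor: ultarmega solidity Pinsker}.(\ref{item: omega mega solidity}) gives $x \in N \subseteq P$.

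The hardest step will be arranging this enlargement $N = W^*(x, R) \leq M$ to be with expectation, which requires matching the modular dynamics of $W^*(x)$ and of $R$ under a single faithful normal state on $M$. The $\sigma^\psi$-invariance built into the hypotheses on $(Q_\alpha)$, together with the canonicity of $\sigma$-Pinsker algebras across different states from Corollary~\ref{cor:independence of Pinsker on the state}, will be the crucial ingredients.
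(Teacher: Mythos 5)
Your overall architecture is the paper's: part (a) is the transfinite induction of Corollary~\ref{cor: ultarmega solidity Pinsker}.(\ref{item: omega mega solidity}) followed by Proposition~\ref{prop: apparently free entropy is useless}, and part (b) hinges on recognizing the maximal amenable $P$ as a $\sigma$-Pinsker algebra via Proposition~\ref{prop:amenable}, Proposition~\ref{prop: apparently free entropy is useless}, and maximality, so that Corollary~\ref{cor: ultarmega solidity Pinsker}.(\ref{item: omega mega solidity}) applies. That said, two steps do not go through as written. In part (a), you pass from ``$B\leq Q_\alpha$ is with expectation and $Q_\alpha\leq M^\omega$ is $\sigma$-entropy free'' to ``$B\leq M^\omega$ is $\sigma$-entropy free'' by invoking Proposition~\ref{prop: nonpositive monotone}. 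That proposition requires the inclusions $B\leq Q_\alpha\leq M^\omega$ to be with $\varphi$-expectation for a \emph{single} faithful normal state $\varphi$. The state you build from the composite expectation $M^\omega\to M\to B$ leaves $B$ invariant, but there is no reason it leaves $Q_\alpha$ invariant (nor is $B$ invariant under $\sigma^\psi$). This mismatch of modular dynamics is precisely what the canonicity results of Section~4.3 exist to circumvent: the correct tool, and the one the paper uses at the corresponding point of Corollary~\ref{cor: ultarmega solidity Pinsker}.(\ref{item: omega mega solidity}), is Corollary~\ref{cor: its over vince carter gif}.(\ref{item: extrinsic join lemma}) with $Q_1=Q_2=Q_\alpha$, which only asks that $B$ be diffuse and with \emph{some} expectation. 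With that substitution the step is fine, and your reduction of a non-diffuse $B$ to its diffuse central summand is a reasonable way to handle the degenerate case the paper leaves implicit.

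In part (b), your element-wise enlargement $N=W^*(x,R)$ does not visibly produce an algebra with expectation in $M$: the $\sigma^\psi$-invariance of $Q_\beta$ says nothing about the singly generated algebra $W^*(x)$ for an individual $x\in Q_\beta\cap M$, and the Connes cocycle argument behind Corollary~\ref{cor:independence of Pinsker on the state} only compares two states that \emph{both} leave a given subalgebra globally invariant; it cannot manufacture invariance for $W^*(x,R)$. You correctly flag this as the hard step, but the ingredients you name do not close it. The route consistent with the paper is to stay at the level of the chain: the induction already gives $Q_\beta\subseteq\Pi^{\sigma}(Q\leq M^{\omega})$ for every $\beta$, the diffuse intersection $Q_\alpha\cap P$ is used to identify this $\sigma$-Pinsker algebra with the one over $P$ via Corollary~\ref{cor: its over vince carter gif}.(\ref{item: extrinsic diffuse absoprtion}), and Corollary~\ref{cor: ultarmega solidity Pinsker}.(\ref{item: omega mega solidity}) is then applied to subalgebras $N\leq Q_\beta\cap M$ that are already assumed to be with expectation, rather than to ones built element by element. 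As it stands, your proof of (b) establishes the containment only for such $N$, not the literal inclusion $Q_\beta\cap M\subseteq P$.
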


\begin{proof}
\textbf{(\ref{item: ultramega solid for examlpes}):}  As in the proof of Corollary \ref{cor: ultarmega solidity Pinsker}, we have that $Q_{\alpha}\leq M^{\omega}$ is $\sigma$-entropy free for every $\alpha$, and the same argument as in proof of Corollary \ref{cor: ultarmega solidity Pinsker} implies that $B\leq M$ is $\sigma$-entropy free. Hence we may apply Proposition \ref{prop: apparently free entropy is useless}.\\

\noindent\textbf{(\ref{item:ultramegasolid absorb for examples}):} This follows from Corollary \ref{cor: ultarmega solidity Pinsker} and Proposition \ref{prop: apparently free entropy is useless}.   
\end{proof}

Using $M^{\omega}$ for the Ocneau ultrapower (see \cite{OcneauActions, AndoHaagerup}), we say that a von Neumann algebra $M$ is \textbf{ultrastrongly solid} if for any cofinal ultrafilter $\omega$ on a directed set, and any $Q\leq M^{\omega}$ which is diffuse, amenable, and with expectation, we have that every $N\leq \cN_{M^{\omega}}(Q)''\cap M$ with expectation in $M$ is amenable. 

As a special case of the previous corollary, we have the following ultrastrong solidity result by taking $X=\cN_{M^{\omega}}(Q)$. Note that this implies Theorem~\ref{introthm: ultrasolid III}.

\begin{cor}\label{cor: weak ultramega solid}
Let $M$ be a $\sigma$-finite von Neumann algebra whose continuous core has amenable Pinsker algebras.
\begin{enumerate}[(a)]
\item If $Q\leq M^{\omega}$ is diffuse, amenable, and with expectation, and if $X\subseteq q^{1}\cN_{M^{\omega}}(Q)\cup \cN^{wq}_{M^{\omega}}(Q)$ is such that $W^{*}(X,Q)$ is with expectation in $M^{\omega}$,
then any $B\leq W^{*}(X,Q)\cap M$ with expectation in $M$ is amenable.

\item In particular, suppose that $P$ is a maximal element of the set of subalgebras of $M$ which are amenable and with expectation in $M$.  If $Q\leq M^{\omega}$ is diffuse, amenable, and with expectation, if $X\subseteq q^{1}\cN_{M^{\omega}}(Q)\cup \cN^{wq}_{M^{\omega}}(Q)$ is such that $W^{*}(X,Q)$ is with expectation in $M^{\omega}$, and if $W^{*}(X,Q)\cap M$ contains a diffuse subalgebra of $P$ which is with expectation in $M$, then any $B\leq W^{*}(X,Q)\cap M$ with expectation in $M$ is contained in $P$.
\end{enumerate}
\end{cor}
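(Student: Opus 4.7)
The plan is to deduce both (a) and (b) from Corollary \ref{cor:ultramegasolidity} applied to the one-step ordinal chain $Q_0 := Q$, $Q_1 := W^*(X, Q)$, extended by $Q_\alpha := Q_1$ for $\alpha > 1$ (which is consistent with the successor clause via the empty set and with the limit clause since $Q_1$ is SOT-closed). The key preparatory step is to produce a faithful normal state $\psi$ on $M^\omega$ making both $Q$ and $W^*(X,Q)$ invariant under $\sigma^\psi$. Fix faithful normal conditional expectations $\cE_W \colon M^\omega \to W^*(X,Q)$ and $\cE_Q \colon M^\omega \to Q$. Since $Q \subseteq W^*(X,Q)$, the restriction $\cE_Q|_{W^*(X,Q)}$ is itself a faithful normal conditional expectation onto $Q$. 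Setting $\psi := \omega \circ \cE_Q|_{W^*(X,Q)} \circ \cE_W$ for any faithful normal state $\omega$ on $Q$, both $Q$ and $W^*(X,Q)$ are $\sigma^\psi$-invariant, as each is the range of a $\psi$-preserving faithful normal conditional expectation.

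The set $X$ itself need not be $\sigma^\psi$-invariant, but we may replace it by $X' := \{\sigma_t^\psi(x) : x \in X,\ t \in \bR\}$ without changing $W^*(X,Q)$ (by invariance of the latter). Because $\sigma^\psi(Q) = Q$, a direct check shows that each $\sigma_t^\psi$ preserves $q^{1}\cN_{M^\omega}(Q) \cup \cN^{wq}_{M^\omega}(Q)$: for the one-sided quasi-normalizer, if $uQ \subseteq \sum_j a_j Q$ then applying $\sigma_t^\psi$ yields $\sigma_t^\psi(u)Q \subseteq \sum_j \sigma_t^\psi(a_j) Q$; for the wq-normalizer, if $D \subseteq u^*Qu \cap Q$ is diffuse and with expectation, then $\sigma_t^\psi(D)$ lies in $\sigma_t^\psi(u)^*Q\sigma_t^\psi(u) \cap Q$ and remains diffuse and with expectation via conjugation of the expectation by $\sigma_t^\psi$. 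Hence $X' \subseteq q^{1}\cN_{M^\omega}(Q) \cup \cN^{wq}_{M^\omega}(Q)$, and Theorem \ref{thm:quasi anticoarse stuff} yields
\[
X' \cap q^{1}\cN_{M^\omega}(Q) \subseteq L^{2,\textnormal{dyn}}_{\cross}(Q \leq M^\omega, \psi) \subseteq W^*(L^{2,\textnormal{dyn}}_{\cross}(Q \leq M^\omega, \psi)).
\]
Thus $X' \subseteq W^*(L^{2,\textnormal{dyn}}_{\cross}(Q \leq M^\omega, \psi)) \cup \cN^{wq}_{M^\omega}(Q)$, which is precisely the containment demanded by Corollary \ref{cor:ultramegasolidity}, and $X'$ generates the same algebra over $Q$ as $X$ does.

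Part (a) then follows immediately from Corollary \ref{cor:ultramegasolidity}(\ref{item: ultramega solid for examlpes}): any $B \leq Q_1 \cap M = W^*(X,Q) \cap M$ with expectation in $M$ is amenable. For (b), the hypothesis gives that $Q_1 \cap P = W^*(X,Q) \cap M \cap P$ contains a diffuse subalgebra with expectation in $M$, so in particular is diffuse; Corollary \ref{cor:ultramegasolidity}(\ref{item:ultramegasolid absorb for examples}) then forces $Q_\beta \cap M \subseteq P$ for every $\beta$, and in particular $B \subseteq Q_1 \cap M \subseteq P$. The only genuinely nontrivial ingredient is Theorem \ref{thm:quasi anticoarse stuff} (the main technical novelty advertised in the introduction); once the absorption of the one-sided quasi-normalizer into the dynamical anticoarse space is in hand, the reduction above is essentially bookkeeping, with the one delicate point being the construction of $\psi$ so that both $Q$ and $W^*(X,Q)$ become modular-invariant simultaneously.
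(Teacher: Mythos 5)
Your proof is correct and follows the same route as the paper, whose own proof is simply to cite Corollary~\ref{cor:ultramegasolidity} with $\alpha=1$ together with Theorem~\ref{thm:quasi anticoarse stuff}. The extra details you supply --- building $\psi$ as a composition of expectations so that $Q$ and $W^{*}(X,Q)$ are simultaneously $\sigma^{\psi}$-invariant, and saturating $X$ to the $\sigma^{\psi}$-invariant set $X'$ without changing the generated algebra --- are exactly the bookkeeping the paper leaves implicit, and they check out.
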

\begin{proof}
This follows from Corollary~\ref{cor:ultramegasolidity} with $\alpha=1$, after using Theorem~\ref{thm:quasi anticoarse stuff}.
\end{proof}

In \cite{HRSAsy} Houdayer--Raum defined a von Neumann algebra $M$ to be \emph{ultrasolid} if for any free ultrafilter $\omega$ on a cofinal directed set, and for every $Q\leq M$ with expectation such that $Q'\cap M^{\omega}$ is diffuse, one has that $Q$ is amenable. We prove here that this is indeed weaker than ultrastrongly solid.

\begin{prop}\label{prop: defns work}
 An ultrastrongly solid von Neumann algebra is ultrasolid.    
\end{prop}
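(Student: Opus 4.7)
The plan is to reduce ultrasolidity to ultrastrong solidity by exhibiting, inside $Q'\cap M^{\omega}$, a diffuse amenable subalgebra $A$ which is with expectation in $M^{\omega}$ and which is normalized by all of $Q$. Then ultrastrong solidity applied with this $A$ and with $N:=Q\leq \cN_{M^{\omega}}(A)''\cap M$ (which is with expectation in $M$ by hypothesis) will immediately yield that $Q$ is amenable.

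To construct $A$, I would first fix a faithful normal state $\varphi$ on $M$ for which $Q\leq M$ is with $\varphi$-expectation. The coordinatewise description \eqref{eqn:ultrapower_modular_automorphism_group} of the modular flow of the ultraproduct state $\varphi^{\omega}$ shows that $Q$ (viewed diagonally) is $\sigma^{\varphi^{\omega}}$-invariant, and therefore so is its relative commutant $Q'\cap M^{\omega}$. Since $\varphi^{\omega}$ restricts to a faithful normal state on $Q'\cap M^{\omega}$, this inclusion is with $\varphi^{\omega}$-expectation; in particular $Q'\cap M^{\omega}$ is $\sigma$-finite, and by assumption it is diffuse.

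Applying \cite[Theorem 11.1]{HSFlow} to $Q'\cap M^{\omega}$, I would obtain a faithful normal state $\psi$ on $Q'\cap M^{\omega}$ whose centralizer $(Q'\cap M^{\omega})^{\psi}$ is diffuse. Inside this centralizer, pick any diffuse abelian subalgebra $A$. Then $A\leq Q'\cap M^{\omega}$ is with $\psi$-expectation, and composing with the $\varphi^{\omega}$-preserving conditional expectation $M^{\omega}\to Q'\cap M^{\omega}$ produces a faithful normal conditional expectation $M^{\omega}\to A$. Thus $A\leq M^{\omega}$ is diffuse, amenable (being abelian), and with expectation.

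Finally, because $A\subseteq Q'\cap M^{\omega}$, every unitary $u\in \cU(Q)$ satisfies $uAu^{*}=A$, so $Q\subseteq \cN_{M^{\omega}}(A)''$; in particular $Q\leq \cN_{M^{\omega}}(A)''\cap M$, and this inclusion is with expectation. Ultrastrong solidity of $M$ then forces $Q$ to be amenable, as desired. There is no substantial obstacle here: the only point that requires care is checking that $Q'\cap M^{\omega}$ is with $\varphi^{\omega}$-expectation, and this is immediate from the coordinatewise form of $\sigma^{\varphi^{\omega}}$; everything else is a direct application of the Connes--Størmer type diffuse-centralizer result together with the definition of ultrastrong solidity.
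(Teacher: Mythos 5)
Your proof is correct and follows essentially the same route as the paper's: both pass to $Q'\cap M^{\omega}$, use the Connes--St\o rmer/Haagerup--St\o rmer result to find a state with diffuse centralizer, pick a diffuse abelian $A$ in that centralizer which is with expectation, and then apply ultrastrong solidity to $Q\leq \cN_{M^{\omega}}(A)''\cap M$. The only difference is that you spell out the expectation and $\sigma$-finiteness checks that the paper delegates to the proof of Proposition~\ref{prop: nonfull entropy free}.
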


\begin{proof}
Assume that $M$ is ultrastrongly solid.
Suppose that $Q\leq M$ is with expectation and $Q'\cap M^{\omega}$ is diffuse. Arguing as in Proposition \ref{prop: nonfull entropy free}, there is a state $\psi$ on $M^{\omega}$ so that $(Q'\cap M^{\omega})^{\psi}$ is diffuse. Fix a diffuse, abelian $A\leq(Q'\cap M^{\omega})^{\psi}$. Then $A\leq M^{\omega}$ is with expectation, and $Q\leq \cN_{M^{\omega}}(A)''\cap M$. Since $Q\leq M$ is with expectation, ultrastrong solidity implies that $Q$ is amenable. 
\end{proof}

We close this section by showing that for our main examples of interest, we have a strong version of Gamma absorption for subalgebras which are maximal among amenable subalgebras with expectation.

\begin{cor}
Let $M$ be a $\sigma$-finite von Neumann algebra  whose continuous core has amenable Pinsker algebras. Suppose that $P$ is a maximal element of the set of subalgebras of $M$ which are amenable and with expectation in $M$. Then $P$ satisfies Gamma absorption: if $Q\leq M$ is with expectation, and if both $Q\cap P$ and $Q'\cap M^{\omega}$ are diffuse, then $Q\leq P$.
\end{cor}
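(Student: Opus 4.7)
The strategy is to leverage the Peterson--Thom property of $M$ (guaranteed by Proposition~\ref{prop: apparently free entropy is useless}) together with the maximality of $P$, via a diffuse amenable subalgebra sitting inside $Q\cap P$. The key observation is that while $Q\cap P$ itself may fail to be with expectation in $M$ (which would otherwise permit a direct appeal to Corollary~\ref{cor: ultarmega solidity Pinsker}.(\ref{item: gamma absorption})), it nonetheless contains a diffuse amenable subalgebra, and this is all that Peterson--Thom requires.

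First, I would show that $Q$ is amenable. Since $Q$ contains the diffuse subalgebra $Q\cap P$, the diffuse part of $Q$ is nontrivial; any type I direct summand of $Q$ is automatically amenable, and the central projection picking it out is $\sigma^{\varphi}$-fixed and sits in the continuous core in such a way that the corner continues to have amenable Pinsker algebras, so one reduces to the case $Q$ diffuse. Applying Proposition~\ref{prop: nonfull entropy free} using the non-fullness hypothesis that $Q'\cap M^{\omega}$ is diffuse gives that $Q\leq M$ is a $\sigma$-entropy free inclusion, and Proposition~\ref{prop: apparently free entropy is useless} then upgrades this to the amenability of $Q$.

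Next, I would extract a diffuse amenable subalgebra $A\leq Q\cap P$ with $A\leq M$. By \cite[Theorem 11.1]{HSFlow} applied to the diffuse $\sigma$-finite algebra $Q\cap P$, there is a faithful normal state $\psi$ on $Q\cap P$ whose centralizer $(Q\cap P)^{\psi}$ is diffuse. Any diffuse abelian $A\leq (Q\cap P)^{\psi}$ is then diffuse and amenable (being abelian), and lives inside $M$ \emph{without} any expectation being imposed on $A$.

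Finally, I would invoke the Peterson--Thom property of $M$ applied to $A$: the set of amenable, with-expectation subalgebras of $M$ containing $A$ admits a largest element $\widetilde{P}$. Both $Q$ and $P$ belong to this set (they contain $A\subseteq Q\cap P$ and are amenable with expectation in $M$), so $Q, P\leq \widetilde{P}$. Since $\widetilde{P}$ is itself amenable with expectation in $M$ and contains $P$, the maximality of $P$ forces $\widetilde{P}=P$, and hence $Q\leq P$. The hardest step to set up is arguably the amenability of $Q$, because the hypothesis on $Q'\cap M^\omega$ must be paired with the structural assumption on the continuous core; but once amenability of $Q$ is in hand, the Peterson--Thom absorption mechanism is what does the real work, with the diffuse abelian $A$ serving as the ``glue'' that places $Q$ and $P$ in the same Peterson--Thom set.
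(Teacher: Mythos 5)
Your overall architecture matches the paper's: first use the hypothesis on $Q'\cap M^{\omega}$ to prove $Q$ is amenable, then use the diffuse intersection $Q\cap P$ and the maximality of $P$ to absorb $Q$ into $P$. But both halves contain gaps. For the amenability of $Q$, Proposition~\ref{prop: nonfull entropy free} requires $Q$ to be diffuse, and this does not follow from the hypotheses: a unital diffuse subalgebra does not force the ambient algebra to be diffuse (e.g.\ $L^{\infty}[0,1]\subseteq B(L^{2}[0,1])$), so ``$Q\cap P$ diffuse'' does not yield ``$Q$ diffuse.'' Your proposed reduction then rests on the unproved assertion that the corner $zMz$ again has a continuous core with amenable Pinsker algebras; the paper's lemma to that effect covers unital subalgebras with expectation, not corners, so this needs an argument. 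The detour is avoidable: the paper simply invokes ultrastrong solidity (Corollary~\ref{cor: weak ultramega solid}, via the argument of Proposition~\ref{prop: defns work}) --- pick a diffuse abelian $A\leq (Q'\cap M^{\omega})^{\psi}$ with expectation, note $Q\leq \cN_{M^{\omega}}(A)''\cap M$ is with expectation in $M$, and conclude $Q$ is amenable with no diffuseness of $Q$ required.

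In the absorption step you apply the Peterson--Thom property to a diffuse abelian $A\leq (Q\cap P)^{\psi}$ while explicitly conceding that no expectation of $M$ onto $A$ is available. That is precisely the problem: the Peterson--Thom property as the paper actually establishes it (Proposition~\ref{prop: apparently free entropy is useless} via Corollary~\ref{cor: its over vince carter gif}) is built on the $\sigma$-entropy-free machinery, and every uniqueness/absorption statement there requires the common diffuse subalgebra to be \emph{with expectation in $M$}, since this is how one manufactures a single state whose modular group preserves both algebras simultaneously. So the existence of your $\widetilde{P}$ for such an $A$ is not covered by what is proved. (The paper's own proof is exposed to the same issue, as it cites Corollary~\ref{cor: its over vince carter gif}.(\ref{item: extrinsic diffuse absoprtion}), whose hypothesis is that $Q\cap P$ contains a diffuse subalgebra with expectation in $M$, not merely that $Q\cap P$ is diffuse.) To close your argument you would need either to produce a diffuse subalgebra of $Q\cap P$ with expectation in $M$, or to extend the Peterson--Thom property to seeds without expectation; neither is done in the proposal.
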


\begin{proof}
By Proposition \ref{prop: apparently free entropy is useless} we have that $P$ is $\sigma$-Pinsker. Since $M$ is ultrastrongly solid, we see that $Q$ is amenable. Hence, the result follows from Corollary~\ref{cor: its over vince carter gif}.(\ref{item: extrinsic diffuse absoprtion}).
\end{proof}

\section{Applications to free Araki--Woods factors}\label{sec:FAWF}

The free Araki--Woods factors were introduced by Shlyakhtenko in \cite{DIMAFAW} as von Neumann algebras $\Gamma(U)''$ associated to orthogonal representations $U\colon\bR \curvearrowright \cH_{\bR}$ of the real numbers on real Hilbert spaces. They are generated by self-adjoint operators $s(\xi)$, $\xi\in \bH_\bR$, and they come equipped with a natural state called the \emph{free quasi-free state} $\varphi_U$ whose modular automorphism group is determined by
    \begin{align}\label{eqn:FAWF_mod_aut}
        \sigma_t^{\varphi_U}(s(\xi)) = s(U_t \xi) \qquad \qquad t\in \bR,\ \xi\in \cH_{\bR}.
    \end{align}
For the trivial representation, $U=\text{id}_{\cH_{\bR}}$, this construction yields free group factors $\Gamma(\text{id}_{\cH_{\bR}})''\cong L(\bF_{\dim(\cH_{\bR})})$ (or $L(\bZ)$ if $\dim(\cH_{\bR})=1$), and otherwise $\Gamma(U)''$ is a type $\mathrm{III}$ factor (see \cite[Corollary 6.11]{DIMAFAW}). Consequently, $\Gamma(U)''$ for $U$ non-trivial are regarded as purely infinite analogs of the free group factors, especially from the perspective of free probability (see \cite{DimaAVal,DimaFreeFish,BrentTranspo}). We refer the reader to \cite[Section 2]{DIMAFAW} for further details, but for our purposes it will suffice to know the following facts:
    \begin{enumerate}[(I)]
        \item For a pair of orthogonal representations $U,V$ of $\bR$ one has
            \[
                \left(\Gamma( U\oplus V)'', \varphi_{U\oplus V}\right) \cong (\Gamma(U)'',\varphi_U)* (\Gamma(V)'',\varphi_V).
            \]\label{fact:FAWF_free_product_decomp}

        \item For an orthogonal representation $U\colon \bR\curvearrowright \cH_{\bR}$ with $\dim(\cH_{\bR})\geq 2$, $\Gamma(U)''$ is a non-amenable factor.\label{fact:FAWF_non-amenable_factor}
    \end{enumerate}
The first item follows from \cite[Theorem 2.11]{DIMAFAW}. For the second, the factoriality follows from a combination of \cite[Corollary 6.11]{DIMAFAW} and $\Gamma(\text{id}_{\cH_{\bR}})'' \cong L(\bF_{\dim(\cH_{\bR})})$. To see the non-amenability, first recall that every orthogonal representation decomposes as a direct sum $U=U_{ap}\oplus U_{wm}$ of an almost periodic (i.e. compact) representation and a weakly mixing representation. Thus by (\ref{fact:FAWF_free_product_decomp}) we have
    \[
        \left(\Gamma(U)'',\varphi_U \right) \cong (\Gamma(U_{ap})'', \varphi_{U_{ap}}) * (\Gamma(U_{wm})'', \varphi_{U_{wm}}),
    \]
and since each factor in the free product is with expectation it suffices to show at least one of them is non-amenable. If $U_{ap}$ is  over a space of dimension at least two, then \cite[Theorem 6.1]{DIMAFAW} implies $\Gamma(U_{ap})''$ is a full and diffuse factor, and hence non-amenable. Otherwise, $\dim(\cH_{\bR})\geq 2$ implies $U_{wm}$ must be over a non-zero (and hence infinite) dimensional space $\cK_{\bR}$. If $A$ is the infinitesimal generator of  the complexification of $U_{wm}$ acting on $\cK_{\bR}\otimes_{\bR} \bC$ (i.e. $\exp(it A) = U_t\otimes 1$ for all $t\in \bR$), then this representation being weakly mixing implies that the spectral measure of $A$ is diffuse. Consequently, we can find a non-trivial decomposition $U_{wm} = V_1\oplus V_2$ so that $(\Gamma(U_{wm})'', \varphi_{U_{wm}})$ decomposes as a free product of infinite dimensional algebras. Non-amenability then follows from \cite[Remark 4.2]{UedaTypeIIIfreeproduct}.

In the following results, we denote the Lebesgue measure on $\bR$ by $m$. We also denote by $A_U$ the infinitesimal generator of the complexification $U_{\bC}$ of $U$ acting on $\cH_{\bR}\otimes_{\bR} \bC$, and write $A_U\preceq \mu$ or $A_U \perp \mu$ when the spectral measure of $A_U$ is absolutely continuous or singular, respectively, with respect to a Borel measure $\mu$ on $\bR$. 
% We first identify examples of free Araki--Woods factors having the Peterson--Thom property by way of Theorem~\ref{thm: PT III intro}. 
Recall from \cite{PopaWeakInter} that if $(M,\tau)$ is a tracial von Neumann algebra, then $P\leq M$ is \emph{coarsely embedded} if $L^{2}(M,\tau)\ominus L^{2}(P,\tau)$ as a $P$-$P$ bimodule embeds into an infinite direct sum of the coarse bimodule.

\begin{prop}\label{prop: lets go FAW ourselves}
Let $U\colon \bR\actson \cH_{\bR}$ be an orthogonal representation.
\begin{enumerate}[(a)]
    \item If $\cH_{\bR}$ is separable and $A_U \preceq m$, then there is a von Neumann algebra $N$ and a faithful normal state $\psi$ on $N$ so that $(\Gamma(U)'',\varphi_{U})*(N,\psi)$ has continuous core isomorphic to $L(\bF_{\infty})\overline{\otimes}B(\ell^2)$. \label{item: freely complemented inside nice continuous core 1}
    
    \item If $U$ is cyclic and $A_U\preceq m+\delta_0$, then there is a von Neumann algebra $N$ and a faithful normal state $\psi$ on $N$ so that $(\Gamma(U)'',\varphi_{U})*(N,\psi)$ has continuous core isomorphic to $L(\bF_{\infty})\overline{\otimes}B(\ell^2)$. \label{item: freely complemented inside nice continuous core 2}

    \item Suppose $A_U\perp m$ and $\dim(\cH_{\bR})\geq 2$. Let $N$ be any von Neumann algebra so that any maximal amenable subalgebra of a finite corner of its continuous core is coarsely embedded.
    Then $\Gamma(U)''$ does not embed with expectation into $N$. In particular, this applies when the continuous core of $N$ is isomorphic to $L(\bF_{\infty})\overline{\otimes}B(\ell^{2})$.
    \label{item: nice embeddings ain't gonna happen}
\end{enumerate}
\end{prop}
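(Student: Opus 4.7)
For (a), I would invoke the free product description (\ref{fact:FAWF_free_product_decomp}) and take $N:=\Gamma(V)''$, $\psi:=\varphi_V$, where $V\colon \bR\actson \cK_{\bR}$ is an orthogonal representation on a separable real Hilbert space whose infinitesimal generator $A_V$ has Lebesgue spectrum of infinite multiplicity (for instance, the left regular representation of $\bR$ with countably infinite multiplicity). Then
\[
(\Gamma(U)'',\varphi_U)*(N,\psi) \cong (\Gamma(U\oplus V)'',\varphi_{U\oplus V}),
\]
and $A_{U\oplus V}$ still has Lebesgue spectrum of infinite multiplicity because $A_V$ does and $A_U\preceq m$ only contributes further Lebesgue-absolutely-continuous mass. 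Shlyakhtenko's computation of the continuous core of $\Gamma(W)''$ when $A_W$ has Lebesgue spectrum of infinite multiplicity then identifies the core with $L(\bF_\infty)\overline{\otimes}B(\ell^{2})$. For (b), I would use the cyclicity of $U$ together with $A_U\preceq m+\delta_0$ to decompose $U=U_0\oplus U_1$, where $U_0$ is the (at most one-dimensional) trivial subrepresentation coming from the possible atom at $0$ and $U_1$ satisfies $A_{U_1}\preceq m$; applying (a) to $U_1$ and absorbing the trivial summand $U_0$ via an additional free product with a free group factor --- using that $L(\bF_\infty)\overline{\otimes}B(\ell^{2})$ is free-absorbing for tracial free group factors --- reduces (b) to (a).

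For (c), I would argue by contradiction. Suppose $\iota\colon\Gamma(U)''\hookrightarrow N$ is with a faithful normal conditional expectation $\cE\colon N\to\Gamma(U)''$, and set $\psi:=\varphi_U\circ\cE$. The equality $\sigma_t^\psi\circ\iota = \iota\circ\sigma_t^{\varphi_U}$ of modular automorphism groups lifts $\iota$ to a with-expectation embedding $\widetilde\iota\colon\widetilde{\Gamma(U)''}\hookrightarrow\widetilde{N}$ of continuous cores, intertwining the canonical tracial weights. Fix a finite trace projection $p\in\widetilde{\Gamma(U)''}$, choose a maximal amenable subalgebra $P\leq p\widetilde{\Gamma(U)''}p$, and extend it --- using the Peterson--Thom property of $\widetilde{N}$, which follows from the hypothesis via Proposition \ref{prop: apparently free entropy is useless} --- to a maximal amenable $P'\leq p\widetilde{N}p$ containing $P$. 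Maximality of $P$ forces $P'\cap p\widetilde{\Gamma(U)''}p = P$, so $L^{2}(P)=L^{2}(P')\cap L^{2}(p\widetilde{\Gamma(U)''}p)$, and hence the restriction of the orthogonal projection $1-e_{P'}$ to $L^{2}(p\widetilde{\Gamma(U)''}p)\ominus L^{2}(P)$ is an injective $P$-$P$ bimodular map into $L^{2}(p\widetilde{N}p)\ominus L^{2}(P')$. Since $P'$ is coarsely embedded in $p\widetilde{N}p$ by hypothesis, we conclude that $P$ is coarsely embedded in $p\widetilde{\Gamma(U)''}p$.

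The main obstacle is then to exhibit, for every orthogonal representation $U$ with $A_U\perp m$ and $\dim\cH_{\bR}\geq 2$, a finite trace projection $p\in\widetilde{\Gamma(U)''}$ together with a maximal amenable subalgebra of $p\widetilde{\Gamma(U)''}p$ that fails to be coarsely embedded, contradicting the conclusion of the previous paragraph. The intuition is spectral: singularity of $A_U$ with respect to Lebesgue measure forces an ``almost periodic'' or discrete component of the modular flow that persists in the continuous core, producing an abelian subalgebra whose normalizer carries enough non-coarse structure to obstruct the coarse embedding of its maximal amenable extension. Making this spectral picture precise --- for instance by analysing the $\varphi_U$-centralizer when $A_U$ has atoms, or by using singular-continuous spectral subspaces in the general case --- is the technical heart of (c). The ``in particular'' statement is then immediate: by the resolution of the coarseness conjecture, $L(\bF_\infty)\overline{\otimes}B(\ell^{2})$ satisfies the hypothesis of (c).
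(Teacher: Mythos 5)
Parts (a) and (b) are essentially on track: for (a) you take the same route as the paper (complete $U$ to an infinite multiple of the left regular representation and invoke Shlyakhtenko's computation of the core). For (b), however, your ``free-absorption'' step is imprecise as stated: the continuous core of a free product $(M_1,\varphi_1)*(M_2,\varphi_2)$ is an amalgamated free product of the cores over $L_\varphi(\bR)$, not a plain free product, so the fact that $L(\bF_\infty)\overline{\otimes}B(\ell^2)$ absorbs tracial free group factors under ordinary free products does not directly apply. The correct statement you need --- that $\Gamma(W)''$ has core $L(\bF_\infty)\overline{\otimes}B(\ell^2)$ when $W_{\bC}\cong 1\oplus\lambda$ --- is exactly \cite[Proposition 7.5]{HSV19}, which is what the paper cites; this is a citation-level repair rather than a conceptual one.

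The genuine gap is in (c), and you have identified it yourself: your reduction ends with the need to exhibit a maximal amenable subalgebra of a finite corner of $\core(\Gamma(U)'')$ that fails to be coarsely embedded, and you offer only a vague spectral heuristic for why one should exist. This is not how the hypothesis $A_U\perp m$ enters, and it is doubtful your route can be completed as stated (one does not directly display a non-coarsely-embedded maximal amenable subalgebra). The paper's mechanism is different: writing $\psi=\varphi_U\circ\cE$ on $N$, the conjugation action $\zeta\mapsto\lambda_\psi(t)\cdot\zeta\cdot\lambda_\psi(-t)$ on $L^2(\bR)\otimes\cH_{\bC}\subseteq L^2(\Gamma(U)''\rtimes_{\sigma^\psi}\bR)$ is isomorphic to $\id\otimes U_{\bC}$, and $A_U\perp m$ makes this \emph{disjoint from the left regular representation}, which places the generators $\lambda_\psi(f)s(\xi)$ in the anticoarse space $L^2_{\cross}(L_\psi(\bR)\leq N\rtimes_{\sigma^\psi}\bR,\tau_\psi)$. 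Compressing by finite projections $p\in L_\psi(\bR)$ and taking $Q_p$ maximal amenable in $p(N\rtimes_{\sigma^\psi}\bR)p$ containing $L_\psi(\bR)p$, the coarse-embedding hypothesis forces the anticoarse space of $L_\psi(\bR)p$ into $L^2(Q_p)$, hence $p\big(\Gamma(U)''\rtimes_{\sigma^\psi}\bR\big)p$ is amenable; letting $p\to 1$ contradicts the non-amenability of $\Gamma(U)''$ (which is where $\dim\cH_{\bR}\geq 2$ is used). So the argument is by showing that coarse embeddability of \emph{all} such maximal amenable subalgebras would force $\Gamma(U)''$ to be amenable --- the disjointness of $U_{\bC}$ from $\lambda$ is the missing key idea, and without it your proof of (c) is incomplete. (Your intermediate transfer of coarse embeddability from corners of $\core(N)$ to corners of $\core(\Gamma(U)'')$ is correct but unnecessary, and your appeal to Proposition \ref{prop: apparently free entropy is useless} is misplaced since the hypothesis of (c) concerns coarse embedding, not amenability of Pinsker algebras; Zorn's lemma alone produces the maximal amenable extension you need.)
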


\begin{proof}
\textbf{(\ref{item: freely complemented inside nice continuous core 1}):} Our assumptions imply that there is a orthogonal representation $V$ of $\bR$ so that the complexification $V_{\bC}$ is such that $(U\oplus V)_{\bC}$ is isomorphic with $\lambda^{\oplus \bN}$, where $\lambda\colon \bR \actson L^2(\bR,m)$ is the (unitary) left regular representation. 
We have
    \[
         (\Gamma(U)'',\varphi_U)* (\Gamma(V)'',\varphi_V) \cong \left(\Gamma( U\oplus V)'', \varphi_{U\oplus V}\right),
    \]
by  (\ref{fact:FAWF_free_product_decomp}), and the continuous core of the right-hand side is isomorphic to $L(\bF_{\infty})\overline{\otimes}B(\ell^2)$ by \cite[Theorem 5.2]{DImaFullIII} (see also \cite[Theorem 4.8]{DimaFreeAmalgam}).\\

\noindent\textbf{(\ref{item: freely complemented inside nice continuous core 2}):} Our assumptions imply that there is a orthogonal rep $V$ of $\bR$ so that the complexification of $V_{\bC}$ is such that $(U\oplus V)_{\bC}$ is isomorphic with the direct sum of the trivial and left regular representations of $\bR$. 
We now argue as in  (\ref{item: freely complemented inside nice continuous core 1}), using \cite[Proposition 7.5]{HSV19}.\\

\noindent\textbf{(\ref{item: nice embeddings ain't gonna happen}):} 
Suppose, for contradiction, that $\Gamma(U)''$ embeds with expectation into $N$. Let $\psi$ be the extension of $\varphi_U$ to $N$ obtained by precomposing with a faithful normal conditional expectation.
First, let us show that $\overline{\Span\{\lambda_{\psi}(f)a:f\in L(\bR),a\in \cH_{\bC}\}}$ is in the  anticoarse space of $L_{\psi}(\bR)$ inside $N\rtimes_{\sigma^{\psi}}\bR$ with respect to $\tau_{\psi}$. 
% First, let us note that $\overline{\{s(\xi)+is(\eta)\colon \xi,\eta \in \cH_{\bR}\}}^{\|\cdot\|_{\varphi_U}}$ is in the dynamical anti-coarse space of $\bC\leq N$ with respect to $\psi$.  
This is the same proof as in \cite[Corollary 4.18]{Hayes2018} but we repeat it here. Throughout the proof, we use the identification
    \[
        \cH_{\bC} \cong \overline{\{s(\xi)+is(\eta)\colon \xi,\eta \in \cH_{\bR}\}}^{\|\cdot\|_{\psi}}. 
    \]
Hence we can view $L^{2}(\bR)\otimes \cH_{\bC}$ as an $L_{\psi}(\bR)$-subbimodule of $L^{2}(\Gamma(U)''\rtimes_{\sigma^{\psi}}\bR,\widetilde{\psi})$. We use this subbimodule structure to define a unitary representation $\pi$ of $\bR$ via
    \[
        \pi(t)\zeta=\lambda_{\psi}(t)\cdot \zeta\cdot \lambda_{\psi}(-t).
    \]
Using (\ref{eqn:FAWF_mod_aut}), this representation is isomorphic with $\id\otimes U_{\bC}$ when restricted to the invariant subspace $L^{2}(\bR)\otimes \cH_{\bC}$. By assumption, this implies that $\pi$ restricted to this invariant subspace is disjoint from the left regular representation. In particular, we see that $L^{2}(\bR)\otimes \cH_{\bC}\subseteq L^{2}_{\cross}(L_{\psi}(\bR)\leq N\rtimes_{\sigma^{\psi}}\bR,\widetilde{\psi})$, and arguing as in the last paragraph of the proof of Theorem~\ref{thm:quasi anticoarse stuff} this implies that 
    \[
        \cK:=\overline{\Span\{\lambda_{\psi}(f)s(\xi):f\in L^{1}(\bR)\cap L^{2}(\bR),\, \xi\in \cH_{\bR}\}}^{\|\cdot\|_{\psi}}\subseteq L^{2}_{\cross}(L_{\psi}(\bR)\leq N\rtimes_{\sigma^{\psi}}\bR,\tau_{\psi}).
    \]
Fix a projection $p\in \dom(\tau_\psi|_{L_{\psi}(\bR)})$ and let $Q_{p}$ be any maximal amenable subalgebra of $ p(N\rtimes_{\sigma^{\psi}}\bR)p$ containing $L_{\psi}(\bR)$. It follows from Proposition~\ref{prop:compressions_varphi_anticoarse_space} that 
    \[
        p\cdot\cK\cdot p\subseteq L^{2}_{\cross}(pL_{\psi}(\bR)\leq p(N\rtimes_{\sigma^{\psi}}\bR )p,\tau_{\psi}).
    \]
By assumption, $L^{2}(p(N\rtimes_{\sigma^{\psi}}\bR )p,\tau_{\psi})\ominus L^{2}(Q_{p},\tau_{\psi})$ embeds into an infinite direct sum of the coarse, and so we must have
    \[
        L^{2}_{\cross}(pL_{\psi}(\bR)\leq p(N\rtimes_{\sigma^{\psi}}\bR )p,\tau_{\psi})\subseteq L^{2}(Q_{p},\tau_{\psi}).
    \]
Thus it follows that $p\cdot \cK\cdot p\subseteq L^{2}(Q_{p},\tau_\psi)$. In particular, $W^{*}(\lambda_{\psi}(f)p s(\xi) p:f\in L^{1}(\bR)\cap L^{2}(\bR),\, \xi\in \cH_{\bR})$ is amenable. 

Now, let $(p_{n})_{n\in \bN} \subset \dom(\tau_\psi|_{L_\psi(\bR)})$ be an increasing sequence of projections converging to $1$ in the strong operator topology. Note that 
    \[
        \Gamma(U)''\rtimes_{\sigma^{\varphi_{U}}}\bR= \overline{\bigcup_{n}W^{*}(\lambda_{\psi}(f) p_n s(\xi) p_n :f\in L^{1}(\bR)\cap L^{2}(\bR),\, \xi\in \cH_{\bR})}.
    \]
Since the closure of an increasing union of injective von Neumann algebras is injective, the right-hand side of the above equation is injective. Hence \cite{Connes} implies that $\Gamma(U)''\rtimes_{\sigma^{\varphi_{U}}}\bR$ is amenable. As in the proof of Proposition~\ref{prop: apparently free entropy is useless}, this implies $\Gamma(U)''$ is amenable which, by (\ref{fact:FAWF_non-amenable_factor}),  contradicts our assumption that $\dim_{\bR}(\cH_{\bR}))\geq 2$.
\end{proof}

% As an application, we present new non-embedding results for free Araki--Woods factors, which proves Theorem \ref{introthm: FAW nonembedding}.

\begin{cor}\label{cor:FAWF_non-embedding}
Let $U\colon \bR\actson \cH_{\bR}$ be an orthogonal representation and let $U=U_a\oplus U_s$ be the decomposition over $\cH_{\bR}=\cH_{a}\oplus \cH_{s}$ satisfying $A_{U_a}\preceq m$ and $A_{U_s}\perp m$. Assume that $\dim_{\bR}(\cH_{s})\geq 2$.
\begin{enumerate}[(i)]
    \item If $V$ is an orthogonal representation of $\bR$ on a separable Hilbert space so that $A_{V}\preceq m$, then $\Gamma(U)''$ does not embed into $\Gamma(V)''$ with expectation. \label{item: no embed for you 1}
    \item If $V$ is a cyclic orthogonal representation of $\bR$ so that $A_{V}\preceq m+\delta_{0}$, then $\Gamma(U)''$ does not embed into $\Gamma(V)''$ with expectation. \label{item: no embed for you 2}
\end{enumerate}
\end{cor}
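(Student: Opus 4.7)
The plan is to reduce both parts to Proposition~\ref{prop: lets go FAW ourselves}.(\ref{item: nice embeddings ain't gonna happen}) by first isolating the singular part of $U$ as a free factor of $\Gamma(U)''$, and then enlarging $\Gamma(V)''$ freely so that the resulting continuous core becomes $L(\bF_\infty)\overline{\otimes}B(\ell^2)$.

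First I will use fact (\ref{fact:FAWF_free_product_decomp}) to write the state-preserving free product decomposition
\[
(\Gamma(U)'', \varphi_U)\cong (\Gamma(U_a)'',\varphi_{U_a})*(\Gamma(U_s)'', \varphi_{U_s}),
\]
from which the canonical free product conditional expectation onto $\Gamma(U_s)''$ shows that $\Gamma(U_s)''\leq \Gamma(U)''$ is with (faithful normal) expectation. Arguing by contradiction, if a faithful normal conditional expectation from $\Gamma(U)''$ onto a copy of $\Gamma(V)''$ existed, composing with the free product expectation would yield an embedding $\Gamma(U_s)''\hookrightarrow \Gamma(V)''$ that is with expectation.

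Next, I will invoke Proposition~\ref{prop: lets go FAW ourselves}.(\ref{item: freely complemented inside nice continuous core 1}) in case (\ref{item: no embed for you 1}) (noting that the separability hypothesis is satisfied by such a $V$), and Proposition~\ref{prop: lets go FAW ourselves}.(\ref{item: freely complemented inside nice continuous core 2}) in case (\ref{item: no embed for you 2}), to produce a von Neumann algebra $N$ with a faithful normal state $\psi$ such that $M:=(\Gamma(V)'',\varphi_V)*(N,\psi)$ has continuous core isomorphic to $L(\bF_\infty)\overline{\otimes}B(\ell^2)$. Since $\Gamma(V)''$ sits with expectation in $M$ via the free product expectation, chaining these three expectations shows that $\Gamma(U_s)''$ embeds with expectation into $M$. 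However, $A_{U_s}\perp m$ and $\dim_{\bR}(\cH_s)\geq 2$, so Proposition~\ref{prop: lets go FAW ourselves}.(\ref{item: nice embeddings ain't gonna happen}) applied to $M$ forbids any such embedding: the ``in particular'' clause there is triggered precisely because the continuous core of $M$ is $L(\bF_\infty)\overline{\otimes}B(\ell^2)$, for which maximal amenable subalgebras of any finite corner are coarsely embedded by the resolution of the coarseness conjecture.

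The nontrivial content is carried entirely by Proposition~\ref{prop: lets go FAW ourselves}; the proof of the corollary is essentially a formal reduction. The one conceptual observation that needs to be made is that a direct-sum splitting of the orthogonal representation $U$ manifests as a state-preserving free product decomposition of $\Gamma(U)''$ in which each free factor is with expectation, so that the presumed embedding of $\Gamma(U)''$ forces the (now impossible) embedding of the purely singular piece $\Gamma(U_s)''$. I do not anticipate a genuine obstacle, provided the bookkeeping on conditional expectations and the separability/cyclicity hypotheses needed to invoke the two parts of Proposition~\ref{prop: lets go FAW ourselves} are handled carefully.
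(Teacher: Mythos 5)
Your proposal is correct and follows essentially the same route as the paper: reduce to $U=U_s$ via the free product expectation coming from fact (I), then combine Proposition~\ref{prop: lets go FAW ourselves}.(\ref{item: freely complemented inside nice continuous core 1}) or (\ref{item: freely complemented inside nice continuous core 2}) with (\ref{item: nice embeddings ain't gonna happen}). The bookkeeping of composing the faithful normal conditional expectations is exactly what the paper leaves implicit.
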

\begin{proof}
Note $\Gamma(U_{s})''$ embeds into $\Gamma(U)''$ with expectation. So we may, and will, assume that $U=U_{s}$. Then the first claim follows from Proposition~\ref{prop: lets go FAW ourselves}.(\ref{item: freely complemented inside nice continuous core 1}) and (\ref{item: nice embeddings ain't gonna happen}), and the second claim follows from Proposition~\ref{prop: lets go FAW ourselves} (\ref{item: freely complemented inside nice continuous core 2}) and (\ref{item: nice embeddings ain't gonna happen}).
\end{proof}

\begin{remark}
The non-isomorphism results implied by the non-embedding of $\Gamma(U)''$ into $\Gamma(V)''$ are also new. A theorem of Shlyakhtenko \cite[Theorem 4.4]{DImaFullIII} combined with \cite[Proposition 7.5]{HSV19},  proves non-isomorphism in the case that all convolution powers of the spectral measure of $U$ are singular with respect to Lebesgue measures. Shlyakhtenko's result was generalized in \cite[Corollary 4.18]{Hayes2018}, and when combined with \cite[Proposition 7.5]{HSV19} one obtains that in either item of the above theorem $\Gamma(U)''\not\cong \Gamma(V)''$ when $\cH_{a}=0$. The above theorem improves this by allowing $\cH_{a}\neq 0$ and achieving the stronger conclusion that embedding with expectation is also not possible. If the spectral measure of $U$ has an atomic part which is not $\delta_{0}$, then the fact that $\Gamma(U)''$ and $\Gamma(V)''$ are not isomorphic is handled by \cite[Theorem A]{HSV19}.
This does not imply our result, since we can allow $A_{U_s}$ to have an atomless spectral measure. From \cite[Corollary C]{HSV19}, it follows that if the atomic part of the spectral measure of $A_{U}$ is concentrated at $\{0\}$ and if $\dim(\ker(A_{U}))\leq 1$, then all of the centralizers of $\Gamma(U)''$ are amenable. This implies, for instance, that $\Gamma(U)''$ cannot embed into $\Gamma(W)''$ with expectation, if $W$ is an almost periodic orthogonal representation of $\bR$ (see e.g. \cite[Remark 5.2]{HSV19}). The reader is invited to compare the dichotomy  ``singular/absolutely continuous'' in Corollary \ref{cor:FAWF_non-embedding} with the dichotomy ``atomic/atomless'' appearing in the results from \cite{HSV19}.
\end{remark}

\begin{remark}
For $U$ and $V$ as in Corollary~\ref{cor:FAWF_non-embedding}, it also follows that the q-deformed Araki--Woods factor $\Gamma_q(U)''$ (see \cite{HiaiQAWF, KSM23}) does not embed with expectation into $\Gamma(V)''$ for any $-1<q<1$. Indeed, we first note the proof of Proposition~\ref{prop: lets go FAW ourselves}.(\ref{item: nice embeddings ain't gonna happen}) can be applied to $\Gamma_q(U_s)''$ since non-amenability holds by \cite[Theorem 2.5]{KSM23} and the analogue of (\ref{eqn:FAWF_mod_aut}) is \cite[(1.3)]{HiaiQAWF}. Then, even though $\Gamma_q(U_s)''$ need not be freely complemented in $\Gamma_q(U)''$, it is still $\sigma^{\varphi_U}$-invariant and therefore with expectation, and so the claim holds by arguing as in Corollary~\ref{cor:FAWF_non-embedding}.
\end{remark}

\appendix

\section{Peterson-Thom property for type III factors, by Stefaan Vaes}\label{Vaes}

By \cite{hayespt, belinschi2022strong, bordenave2023norm}, we know that the free group factors $P = L(\mathbb{F}_n)$, $2 \leq n \leq +\infty$, satisfy the Peterson-Thom conjecture: for all amenable von Neumann subalgebras $Q_1 \subset P$ and $Q_2 \subset P$ with diffuse intersection $Q_1 \cap Q_2$, the von Neumann algebra $Q_1 \vee Q_2$ generated by $Q_1$ and $Q_2$ remains amenable. By an application of Zorn's lemma, this property is equivalent to saying that any diffuse amenable von Neumann subalgebra $Q \subset P$ is contained in a unique maximal amenable von Neumann subalgebra $Q_1 \subset P$.

Given a von Neumann algebra $M$, we denote by $\AmEx(M)$ the collection of all its amenable von Neumann subalgebras with expectation. Recall from Section~\ref{sec: PT property} that $M$ has the Peterson--Thom property if for every diffuse $Q\in \AmEx(M)$, the von Neumann algebra 
    \[
        \bigvee \bigl\{Q_1 \bigm| Q \subset Q_1 \subset M \;\;\text{and}\;\; Q_1 \in \AmEx(M) \bigr\} 
    \]
still belongs to $\AmEx(M)$.

% \begin{defn}\label{def.PT-arbitrary-type}
% We say that a factor $M$ has the Peterson-Thom property if for every diffuse $Q \in \AmEx(M)$, the von Neumann algebra $\bigvee \bigl\{Q_1 \bigm| Q \subset Q_1 \subset M \;\;\text{and}\;\; Q_1 \in \AmEx(M) \bigr\}$ still belongs to $\AmEx(M)$.
% \end{defn}

\begin{thm}\label{thm.main}
Let $M$ be a type III$_1$ factor with separable predual and continuous core $\core(M)$. If for some nonzero finite projection $p \in \core(M)$, the II$_1$ factor $p \core(M) p$ has the Peterson--Thom property, then $M$ itself has the Peterson--Thom property.

Also, if $\vphi$ is a faithful normal state on $M$, there is a unique amenable, globally $(\si^\vphi_t)_{t \in \R}$-invariant von Neumann subalgebra $Q \subset M$ that contains all amenable, globally $(\si^\vphi_t)_{t \in \R}$-invariant von Neumann subalgebras of $M$.
\end{thm}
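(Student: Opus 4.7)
The plan is to transfer the Peterson--Thom property from finite corners of $\core(M)$ up to $\core(M)$ itself, and then down to $M$ through Lemma~\ref{lem:continuous_core_recognition_lemma}. Since $M$ is type III$_1$, $\core(M)$ is a type II$_\infty$ factor and any two nonzero finite projections sit inside a common finite corner; combined with the compatibility of maximal amenable extensions under compression (compare \cite[Proposition 3.4]{UselessResolutionOfPT}), this upgrades the hypothesis to the Peterson--Thom property for every finite corner $q\core(M) q$. Fix a faithful normal state $\varphi$ on $M$ and let $\tau_\varphi$ be the induced tracial weight, and pick an increasing sequence $(p_n) \subseteq L_\varphi(\R)$ of $\tau_\varphi$-finite projections converging strongly to $1$. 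For a diffuse amenable $\widetilde Q \leq \core(M)$ that is with expectation and contains $L_\varphi(\R)$, each compression $p_n \widetilde Q p_n$ is diffuse and amenable inside $p_n \core(M) p_n$; the Peterson--Thom property there supplies a maximal amenable $\widetilde Q_1^{(n)} \supseteq p_n \widetilde Q p_n$ with expectation, and compression compatibility gives $p_n \widetilde Q_1^{(m)} p_n = \widetilde Q_1^{(n)}$ for $n \leq m$. The strong closure $\widetilde Q_1 := \overline{\bigcup_n \widetilde Q_1^{(n)}}^{SOT}$ is then the unique largest amenable subalgebra of $\core(M)$ with expectation containing $\widetilde Q$.

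To prove the Peterson--Thom property for $M$, take a diffuse $Q \in \AmEx(M)$ and choose $\varphi$ so that $Q$ is globally $\sigma^\varphi$-invariant. Then $\widetilde Q := Q \rtimes_{\sigma^\varphi} \R$ is amenable by the argument of Proposition~\ref{prop:amenable}, diffuse by Lemma~\ref{lem:continuous_core_is_diffuse}, with $\tau_\varphi$-expectation, and contains $L_\varphi(\R)$. The construction above produces $\widetilde Q_1 \leq \core(M)$; by uniqueness $\widetilde Q_1$ is globally invariant under the dual action $\theta$, and since it contains $L_\varphi(\R)$, Lemma~\ref{lem:continuous_core_recognition_lemma} yields a $\sigma^\varphi$-invariant $Q_1 \leq M$ with $\widetilde Q_1 = Q_1 \rtimes_{\sigma^\varphi} \R$. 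Amenability of $Q_1$ follows from that of $\widetilde Q_1$ via the non-normal expectation of \cite[Proposition 2.6]{ADAmenableCorr}, and $Q_1 \in \AmEx(M)$ contains $Q$. To see that $Q_1$ absorbs any other $Q' \in \AmEx(M)$ containing $Q$, I pick a faithful normal state $\psi$ on $M$ for which both $Q$ and $Q'$ are $\sigma^\psi$-invariant and rerun the construction inside $M \rtimes_{\sigma^\psi} \R$; the canonical isomorphism $\pi_{\varphi,\psi}$ from (\ref{eqn:formula_isomorphism_continuous_cores}) fixes $M$ pointwise and, by a direct check on generators, intertwines the two dual actions, so the two maximal extensions are identified and $Q' \subseteq \widetilde Q_1 \cap M = Q_1$.

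For the ``Also'' part, I apply the same semifinite argument with $\widetilde Q := L_\varphi(\R)$ itself, which is abelian and is with $\tau_\varphi$-expectation. The resulting $Q \leq M$ is amenable and globally $\sigma^\varphi$-invariant, and any amenable $\sigma^\varphi$-invariant $P \leq M$ yields an amenable $P \rtimes_{\sigma^\varphi} \R$ extending $L_\varphi(\R)$ in $\core(M)$, which by maximality is contained in $Q \rtimes_{\sigma^\varphi} \R$, giving $P \subseteq Q$. The main obstacle is the patching step producing $\widetilde Q_1$: verifying that the coherent family $\widetilde Q_1^{(n)}$ glues to an algebra that is amenable, with expectation in $\core(M)$, and still maximal globally (not merely finite-corner-wise) requires carefully leveraging the compression compatibility together with the semifiniteness of $\tau_\varphi$ on $\widetilde Q_1$ via Lemma~\ref{lem:semifinite_iff_approx_proj_unit}.
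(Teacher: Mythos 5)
Your architecture (pass to the core, compress by finite projections in $L_\varphi(\R)$, invoke the Peterson--Thom property of the corners, glue back, descend via the dual action) is the right one, but two steps are genuinely gapped. The more serious one is the identification of the maximal extensions built from two different states. You correctly observe that $\pi_{\varphi,\psi}$ fixes $M$ and intertwines the dual actions, but $\pi_{\varphi,\psi}$ sends $\lambda_\psi(t)$ to $(D\psi\colon D\varphi)_t\,\lambda_\varphi(t)$, and the Connes cocycles $(D\psi\colon D\varphi)_t$ are unitaries in $M$ with no relation to $Q$; hence $\pi_{\varphi,\psi}(Q\rtimes_{\sigma^\psi}\R)$ is \emph{not} $Q\rtimes_{\sigma^\varphi}\R$, and the two constructions produce largest amenable extensions of two \emph{different} subalgebras of $\core(M)$. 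A ``direct check on generators'' does not identify them. To identify them you would still have to prove that two amenable subalgebras of $\core(M)$ with semifinite trace whose intersection contains a diffuse algebra generate an amenable algebra --- and that is the actual crux of the theorem, which your outline never supplies. This is exactly where the paper's proof does its real work: it first enlarges $Q$ to $Q\vee(A'\cap M)$ (for $A\leq Q$ diffuse abelian with expectation, so that $A\subseteq M^\varphi$ commutes with $L_\varphi(\R)$ and $Ap_n$ is diffuse in both compressed algebras), uses the resulting trivial relative commutant to force \emph{uniqueness} of the conditional expectation and hence a \emph{common} invariant state for any two competitors, and only then runs the corner argument to see that their join is amenable. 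Your composition-of-expectations trick does produce a common state for the nested pair $Q\subseteq Q'$, but it merely pushes the same difficulty one level up, to comparing $Q_1^\varphi$ with $Q_1^\psi$, where your proposed resolution fails.

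The second gap is the gluing step you yourself flag. The inclusion $p_n\widetilde Q_1^{(m)}p_n\subseteq\widetilde Q_1^{(n)}$ is clear, but the reverse inclusion --- needed even to know that the family $(\widetilde Q_1^{(n)})_n$ is increasing so that the union makes sense and is amenable --- requires showing that the non-unital subalgebra $\widetilde Q_1^{(n)}$ together with $p_m\widetilde Q p_m$ generates an amenable subalgebra of $p_m\core(M)p_m$; the citation of the Pinsker-algebra corner result is only an analogy, and a partial-isometry amplification argument would have to be written out. Note that the paper avoids this entirely: it never takes maximal extensions in the corners. It applies Zorn's lemma to $\AmEx(M)$ directly (made legitimate by the uniqueness-of-expectation device, which guarantees that increasing unions stay with expectation) and uses the corners only to verify amenability of joins, where plain amenability passes trivially to increasing unions and no corner-wise maximality or compression compatibility is ever needed.
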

\begin{proof}
Throughout the proof, we repeatedly use the following facts.
\begin{enumerate}
\item \cite[Theorem IX.4.2]{TakesakiII}. If $\vphi$ is a faithful normal state on $M$ and $Q \subset M$ is a von Neumann subalgebra, then $Q$ is globally invariant under $(\si_t^\vphi)_{t \in \R}$ if and only if there exists a faithful normal $\vphi$-preserving conditional expectation of $M$ onto $Q$.
\item \cite[Proposition IX.4.3]{TakesakiII}. If $Q \subset M$ is a von Neumann subalgebra such that $Q' \cap M = \cZ(Q)$, there is at most one faithful normal conditional expectation of $M$ onto $Q$.
\item Since $\core(M)$ is a II$_\infty$ factor that admits a trace scaling action of $\R$, all nonzero finite corners of $\core(M)$ are isomorphic and hence, all satisfy the Peterson-Thom property.
\end{enumerate}

\noindent{\bf Step 1.} Assume that $Q \in \AmEx(M)$ and that $A \subset Q$ is a diffuse abelian von Neumann subalgebra with expectation. Then $Q \vee (A' \cap M)$ belongs to $\AmEx(M)$.

Choose faithful normal conditional expectations $E_1 : Q \to A$ and $E_2 : M \to Q$. Define $E = E_1 \circ E_2$. Choose a faithful normal state $\vphi_0$ on $A$ and write $\vphi = \vphi_0 \circ E$. It follows that $(\si_t^\vphi)_{t \in \R}$ globally preserves $Q$ and $A$, and hence also $A' \cap M$ and $Q \vee (A' \cap M)$, which is therefore a von Neumann subalgebra of $M$ with expectation.

We realize $\core(M) = M \rtimes_{\si^\vphi} \R$ and choose an increasing sequence of finite projections $p_n \in \core(M)$ that belong to the group von Neumann algebra $L(\R)$ in this crossed product, such that $p_n \to 1$ strongly. Since $A$ is abelian, $\vphi_0$ is tracial on $A$ and thus $A$ commutes with $L(\R)$.

Fix $n$. Write $P = Q \rtimes_{\si^\vphi} \R$ and $B = (A' \cap M) \rtimes_{\si^\vphi} \R$. Since $p_n B p_n$ commutes with the diffuse abelian $A p_n$, it follows from Lemma \ref{lem.solid} below that $p_n B p_n$ is amenable. Since $A p_n \subset p_n P p_n \cap p_n B p_n$ and since $p_n \core(M) p_n$ has the Peterson-Thom property, we conclude that $T_n := p_n P p_n \vee p_n B p_n$ is amenable for every $n$. Fix $k$. Then $(p_k T_n p_k)_{n \geq k}$ is an increasing sequence of amenable von Neumann subalgebras of $p_k \core(M) p_k$, generating $p_k (P \vee B) p_k$. So, $p_k (P \vee B) p_k$ is amenable for every $k$. It follows that $P \vee B$ is amenable. By construction,
$$P \vee B = (Q \vee (A' \cap M)) \rtimes_{\si^\vphi} \R \; ,$$
so that $Q \vee (A' \cap M)$ is amenable. This concludes the proof of step~1.\\

\noindent{\bf Step 2.} Let $Q \in \AmEx(M)$ be diffuse. Then the set
\begin{equation}\label{eq.set-S}
\cS := \{Q_1 \mid Q \subset Q_1 \subset M \;\;\text{and}\;\; Q_1 \in \AmEx(M) \;\}
\end{equation}
ordered by inclusion admits a maximal element that contains $Q \vee (Q' \cap M)$.

Since $Q$ is diffuse, by \cite[Theorem 11.1]{HS90}, we can choose a diffuse abelian von Neumann subalgebra $A \subset Q$ with expectation. Define $Q_0 = Q \vee (A' \cap M)$. By Step 1, $Q_0 \subset M$ belongs to $\AmEx(M)$. By construction, $Q_0' \cap M = \cZ(Q_0)$ and $Q \vee (Q' \cap M) \subset Q_0$.

By Zorn's lemma, it thus suffices to prove that for every increasing net $(Q_i)_{i \in I}$ of von Neumann subalgebras $Q_i \subset M$ that belong to $\AmEx(M)$ and contain $Q_0$, also $\bigvee_{i \in I} Q_i$ belongs to $\AmEx(M)$. Choose faithful normal conditional expectations $E_i : M \to Q_i$ and $E_0 : M \to Q_0$. Since $Q_0' \cap M = \cZ(Q_0)$, $E_0$ is the unique faithful normal conditional expectation $M \to Q_0$. Therefore, $E_0 \circ E_i = E_0$ for all $i \in I$. Choose a faithful normal state $\vphi_0$ on $Q_0$ and define $\vphi = \vphi_0 \circ E_0$. Then $\vphi = \vphi \circ E_i$ for all $i \in I$. So, $(\si_t^\vphi)_{t \in \R}$ globally preserves $Q_i$ for every $i$ and thus also globally preserves $\bigvee_{i \in I} Q_i$, which is therefore with expectation. As a direct limit of amenable von Neumann algebras, $\bigvee_{i \in I} Q_i$ is amenable. This concludes the proof of step~2.\\

\noindent{\bf Proof of the Peterson-Thom property.} Define $\cS$ as in \eqref{eq.set-S}. By step~2, we can choose a maximal element $Q_0 \in \cS$ that contains $Q \vee (Q' \cap M)$. Fix any $Q_1 \in \cS$. It suffices to prove that $Q_1 \subset Q_0$.

Since $Q$ is diffuse, again by \cite[Theorem 11.1]{HS90}, we can choose a diffuse abelian von Neumann subalgebra $A \subset Q$ with expectation. Define $Q_2 = Q_1 \vee (A' \cap M)$. By step~1, $Q_2 \subset M$ belongs to $\AmEx(M)$. Write $S = Q \vee (Q' \cap M)$. Since $Q \subset M$ is with expectation, we can choose a faithful normal state $\psi$ on $M$ such that $(\si_t^\psi)_{t \in \R}$ globally preserves $Q$. Then $(\si_t^\psi)_{t \in \R}$ also globally preserves $Q' \cap M$ and thus $S$, which is therefore with expectation. Choose a faithful normal conditional expectation $E_S : M \to S$. Since $S' \cap M = \cZ(S)$, this $E_S$ is the unique faithful normal conditional expectation of $M$ onto $S$. Choose a faithful normal state $\vphi_S$ on $S$ and define $\vphi = \vphi_S \circ E_S$.

Choose faithful normal conditional expectations $E_0 : M \to Q_0$ and $E_2 : M \to Q_2$. By uniqueness of $E_S$, we get that $E_S \circ E_0 = E_S = E_S \circ E_2$, so that $\vphi \circ E_0 = \vphi = \vphi \circ E_2$. It follows that $(\si_t^\vphi)_{t \in \R}$ globally preserves $Q_0$ and $Q_2$. Then $(\si_t^\vphi)_{t \in \R}$ also globally preserves $Q_0 \vee Q_2$, so that $Q_0 \vee Q_2 \subset M$ is with expectation.

We realize $\core(M) = M \rtimes_{\si^\vphi} \R$ and choose an increasing sequence of finite projections $p_n \in \core(M)$ that belong to the group von Neumann algebra $L(\R)$ in this crossed product, such that $p_n \to 1$ strongly. Then $p_n (Q_0 \rtimes_{\si^\vphi} \R) p_n$ and $p_n (Q_2 \rtimes_{\si^\vphi} \R) p_n$ are amenable von Neumann subalgebras of $p_n \core(M) p_n$ whose intersection contains $p_n (S \rtimes_{\si^\vphi} \R) p_n$, which is diffuse. Since $p_n \core(M) p_n$ has the Peterson-Thom property, the von Neumann algebra generated by both remains amenable. Taking $n \to \infty$ and reasoning as in the proof of step~1, it follows that $(Q_0 \vee Q_2) \rtimes_{\si^\vphi} \R$ is amenable.

So, $Q_0 \vee Q_2$ is amenable. We have seen above that $Q_0 \vee Q_2 \subset M$ is with expectation. So, $Q_0 \vee Q_2 \in \cS$. Since $Q_0$ is a maximal element of $\cS$, it follows that $Q_2 \subset Q_0$. So also $Q_1 \subset Q_0$.\\

\noindent{\bf Proof of the final statement.} We realize $\core(M)$ as $M \rtimes_{\si^\vphi} \R$. Let $(Q_i)_{i \in I}$ be a family of amenable, globally $(\si^\vphi_t)_{t \in \R}$-invariant von Neumann subalgebras of $M$. Take projections $p_n \in L(\R)$ as above and note that $p_n (Q_i \rtimes_{\si^\vphi} \R) p_n$ is a family of amenable von Neumann subalgebras of $p_n \core(M) p_n$ with diffuse intersection $L(\R) p_n$. Since $p_n \core(M) p_n$ has the Peterson-Thom property, the same reasoning as above gives us that $\bigvee_{i \in I} Q_i$ is amenable.
\end{proof}

\begin{lemma}\label{lem.solid}
A II$_1$ factor $P$ satisfying the Peterson-Thom property is solid in the sense of \cite{OzawaSolidActa}: for every diffuse von Neumann subalgebra $A \subset P$, the relative commutant $A' \cap P$ is amenable.
\end{lemma}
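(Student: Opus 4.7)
The plan is to reduce solidity of $P$ to the existence of a \emph{largest} amenable extension of a diffuse subalgebra, which is exactly what the Peterson--Thom property provides in a II$_1$ factor (where every von Neumann subalgebra is automatically with trace-preserving conditional expectation).

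First I would reduce to the case that $A$ is diffuse abelian: pick a Haar unitary $u_{0}\in A$ and set $A_{0}:=W^{*}(u_{0})$. Then $A_{0}\subseteq A$ is diffuse abelian, and $A'\cap P\subseteq A_{0}'\cap P$ with (tracial) expectation, so it suffices to show that $A_{0}'\cap P$ is amenable.

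Next, invoking the Peterson--Thom property, let $Q_{0}\leq P$ be the largest amenable subalgebra containing $A_{0}$. For any unitary $u\in A_{0}'\cap P$, the subalgebras $A_{0}$ and $W^{*}(u)$ are each abelian and commute elementwise, so $A_{0}\vee W^{*}(u)$ is generated by a commuting family of normal operators and is itself abelian --- in particular amenable and containing $A_{0}$. The \emph{largest}-element property of $Q_{0}$ then forces $A_{0}\vee W^{*}(u)\subseteq Q_{0}$, hence $u\in Q_{0}$. Since a von Neumann algebra is generated by its unitary group, we conclude $A_{0}'\cap P\subseteq Q_{0}$. As a subalgebra with expectation of the amenable algebra $Q_{0}$, $A_{0}'\cap P$ is itself amenable.

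There is really no serious obstacle here: the argument is a soft manipulation once one has the \emph{largest}-element (rather than merely maximal) formulation of Peterson--Thom, which allows the commuting-unitary trick to be applied uniformly over all $u\in\cU(A_{0}'\cap P)$ with no Zorn-type bookkeeping. Alternatively, one could invoke Proposition~\ref{prop:PT foundation}(ii) directly, with $\{Q_{u}\}_{u\in\cU(A_{0}'\cap P)}$ where $Q_{u}:=A_{0}\vee W^{*}(u)$, to conclude that any $B\leq\bigvee_{u}Q_{u}=A_{0}'\cap P$ is amenable.
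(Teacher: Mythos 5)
Your proof is correct and follows essentially the same route as the paper: reduce to $A$ diffuse abelian, observe that $A$ joined with any commuting abelian piece of $A'\cap P$ is abelian hence amenable, and then use the Peterson--Thom property to absorb all of these into a single amenable algebra containing $A'\cap P$. The only cosmetic difference is that you invoke the largest-element formulation directly (absorbing each $W^{*}(u)$ into the largest amenable extension $Q_{0}$ of $A_{0}$), whereas the paper iterates the join property over finite families and passes to the increasing union; both are valid instances of the same idea.
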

\begin{proof}
Since we may replace $A$ by a diffuse abelian von Neumann subalgebra, we may assume that $A$ is abelian. Choose a family $A_i \subset A' \cap P$, $i \in I$, of abelian von Neumann subalgebras such that $\bigvee_{i \in I} A_i = A' \cap P$. For every $i \in I$, write $B_i = A \vee A_i$ and note that $B_i$ is abelian, hence amenable. Repeatedly applying the Peterson-Thom property, it follows that $\bigvee_{i \in \cF} B_i$ is amenable for every finite subset $\cF \subset I$. Then also $\bigvee_{i \in I} B_i$ is amenable. A fortiori, $A' \cap P$ is amenable.
\end{proof}

%\bibliographystyle{abbrv}
%\bibliography{typeIIIPT}

\end{document}